\def\[#1\]{\begin{equation}#1\end{equation}}
\def\beq{%
   \relax\ifmmode
      \@badmath
   \else
      \ifvmode
         \nointerlineskip
         \makebox[.6\linewidth]%
      \fi
      $$
   \fi
}
\def\eeq{%
   \relax\ifmmode
      \ifinner
         \@badmath
      \else
         $$
      \fi
   \else
      \@badmath
   \fi
   \ignorespaces
}
\def\enddisplaymath{\eeq\global\@ignoretrue}
\newtheorem{thm}{Theorem}
\newtheorem{cor}[thm]{Corollary}
\newtheorem{lem}[thm]{Lemma}
\newtheorem{prop}[thm]{Proposition}
\newtheorem{conj}{Conjecture}
\newtheorem{conjL}{Conjecture}
\newtheorem{conjQ}{Conjecture}
\theoremstyle{remark}
\newtheorem*{rem}{Remark}
\newtheorem{rems}{Remark}[thm]
\theoremstyle{definition}
\numberwithin{equation}{section}
\numberwithin{thm}{section}
\numberwithin{eg}{section}
\newcommand{\C}{\mathbb C}
\newcommand{\Z}{\mathbb Z}
\newcommand{\N}{\mathbb N}
\DeclareMathOperator\pf{pf}
\DeclareMathOperator\GL{GL}
\DeclareMathOperator\Sp{Sp}
\DeclareMathOperator\SL{SL}
\DeclareMathOperator\cC{{\cal C}}
\DeclareMathOperator\cR{{\cal R}}
\DeclareMathOperator\cI{{\cal I}}
\DeclareMathOperator\tcR{\tilde{\cal R}}
\newcommand{\II}{\mathord{I\!I}}
\newcommand{\blambda}{{\boldsymbol\lambda}}
\newcommand{\bkappa}{{\boldsymbol\kappa}}
\newcommand{\bmu}{{\boldsymbol\mu}}
\newcommand{\bnu}{{\boldsymbol\nu}}
\newcommand{\la}{\langle}
\newcommand{\ra}{\rangle}
\newcommand{\obinomE}{\genfrac\la\ra{0pt}{}}
\def\Gampq{\Gamma_{\!p,q}}
\def\Gamppqq{\Gamma_{\!p^2,q^2}}
\def\Gampqq{\Gamma_{\!p,q^2}}
\def\Gamppq{\Gamma_{\!p^2,q}}
\def\Gamphq{\Gamma_{\!p^{1/2},q}}
\def\Gamphqh{\Gamma_{\!p^{1/2},q^{!/2}}}
\def\Gampqt{\Gamma^+_{\!p,q,t}}
\def\Gampqtt{\Gamma^+_{\!p,q,t^2}}
\begin{document}

\title{Elliptic Littlewood identities} \author{Eric
  M. Rains\\Department of Mathematics, California Institute of Technology}

\date{February 20, 2012}
\maketitle

\begin{abstract}
We prove analogues for elliptic interpolation functions of Macdonald's
version of the Littlewood identity for (skew) Macdonald polynomials, in the
process developing an interpretation of general elliptic ``hypergeometric''
sums as skew interpolation functions.  One such analogue has an
interpretation as a ``vanishing integral'', generalizing a result of
\cite{vanish}; the structure of this analogue gives sufficient insight to
enable us to conjecture elliptic versions of most of the other vanishing
integrals of \cite{vanish} as well.  We are thus led to formulate ten
conjectures, each of which can be viewed as a multivariate quadratic
transformation, and can be proved in a number of special cases.

\end{abstract}

\tableofcontents

\section{Introduction}

In recent work of the author \cite{bctheta} (see also
\cite{CoskunH/GustafsonRA:2006} for an independent treatment), a family of
``interpolation functions'' were introduced, generalizing Okounkov's
interpolation polynomials \cite{OkounkovA:1998}, which in turn generalize
shifted Macdonald polynomials \cite{SahiS:1996} and Macdonald polynomials
\cite{MacdonaldIG:1995} themselves.  Among the identities satisfied by the
interpolation functions is an analogue of the Cauchy identity, which for
Macdonald polynomials states
\[
\sum_\mu P_\mu(x_1,\dots,x_n;q,t)P_{\mu'}(y_1,\dots,y_m;t,q)
=
\prod_{\substack{1\le i\le n\\1\le j\le m}}
(1+x_iy_j).
\]
Macdonald also proved (generalizing a result of Kadell for Jack
polynomials) an analogue for Macdonald polynomials of the Littlewood
identity, see \cite[Ex.~VI.7.4]{MacdonaldIG:1995}:
\[
\sum_{\mu} c_\mu(q,t) P_{\mu^2}(x_1,\dots,x_n;q,t)
=
\prod_{1\le i<j\le n}
\frac{(t x_ix_j;q)}{(x_ix_j;q)},
\]
where $P_\lambda$ is a Macdonald polynomial, $\mu^2$ denotes the partition
with parts $(\mu^2)_i=\mu_{\lceil i/2\rceil}$,
\[
(x;q):=\prod_{k\ge 0} (1-q^k x),
\]
and the coefficients $c_\mu(q,t)$ are given by an explicit product:
\[
c_\mu(q,t)
=
\prod_{(i,j)\in \mu}
\frac{1-q^{\mu_i-j  }t^{2\mu'_j-2i+1}}
     {1-q^{\mu_i-j+1}t^{2\mu'_j-2i  }}.
\]
(The usual notation for $(x;q)$ would be $(x;q)_\infty$, but since we never
use finite $q$-Pochhammer symbols, we suppress $\infty$ throughout.)  This
is the $q,t$-analogue of Littlewood's identity for Schur functions:
\[
\sum_\mu s_{\mu^2}(x_1,\dots,x_n)
=
\prod_{1\le i<j\le n} (1-x_ix_j)^{-1},
\]
which describes the decomposition of $S^*(\wedge^2(\C^n))$ as a
representation of $\GL_n$, and thus by Frobenius reciprocity determines
which irreducible representations of $\GL_{2n}$ have invariants under
$\Sp_n$ (since the coordinate ring of the affine variety $\GL/\Sp$ is
obtained from $S^*(\wedge^2(\C^{2n}))$ by inverting the pfaffian).  The
purpose of the present note is to generalize such Littlewood-type
identities to the elliptic level.

The primary obstacle to such an extension is the fact that, unlike the
given form of the Cauchy identity, for which the terms vanish unless the
partition $\mu$ is contained in an $m\times n$ rectangle, the Littlewood
identity intrinsically involves a nonterminating sum.  Unfortunately, at
the elliptic level, infinite sums seem inevitably to encounter convergence
difficulties, making a direct extension problematical.  One must thus
either modify the sum in such a way as to force termination (say by
a suitable choice of the coefficients $c_\mu$), or replace the sum by an
integral.  We will, in fact, take both approaches.

Our first step is to observe that Macdonald's Littlewood identity has a
generalization (implicit in \cite{MacdonaldIG:1995}; the argument sketched
in Ex. I.5.27 and Ex. VI.7.6 op. cit. carries over mutatis mutandum) to
skew Macdonald polynomials:
\[
\sum_\mu c_\mu(q,t) P_{\mu^2/\lambda}(x_1,\dots,x_n;q,t)
=
\prod_{1\le i<j\le n}
\frac{(t x_ix_j;q)}{(x_ix_j;q)}
\sum_\mu c_\mu(q,t) Q_{\lambda/\mu^2}(x_1,\dots,x_n;q,t),
\]
where $c_\mu(q,t)$ is as above.  Of course, this in itself makes an
extension more difficult, given the absence (but see below) of a good
theory of skew versions of the interpolation functions.  On the other hand,
the proof of Macdonald's Littlewood identity uses only the case $n=1$ of
this skew Littlewood identity, together with a corresponding case of the
skew Cauchy identity.  This case is particularly amenable to
generalization, as both sums are finite (indeed, each has only one nonzero
term), and the case $n=1$ of the skew Macdonald polynomials {\em does} have
a very natural elliptic analogue.  Indeed, the principal specialization
(i.e., with variables specialized to $v,\dots,vt^{n-1}$) of a skew
Macdonald polynomial can be expressed as a limit of an elliptic binomial
coefficient, essentially just a value of an elliptic interpolation
function.  If one replaces the skew Macdonald polynomials by such elliptic
binomial coefficients in the $n=1$ case, one finds that both sums still
have only one surviving term, and one is led immediately to an elliptic
analogue of the identity, Lemma~\ref{lem:litt_t} below.

To obtain a more general elliptic analogue, there are two natural
approaches.  The first is to develop a theory of skew interpolation
functions, prove a corresponding skew Cauchy identity, then directly lift
Macdonald's argument to the elliptic level.  Roughly speaking, skew
interpolation functions should give the coefficients in a generalized
branching rule:
\begin{align}
\cR^{*(n+m)}_{\blambda}&(x_1,\dots,x_n,y_1,\dots,y_m;t_0,u_0;t;p,q)\notag\\
&{}=
\sum_{\bmu}
\cR^{*(m,n)}_{\blambda/\bmu}(y_1,\dots,y_m;t_0,u_0;t;p,q)
\cR^{*(n)}_{\bmu}(x_1,\dots,x_n;t_0,u_0;t;p,q).
\label{eq:finite_skew_branch}
\end{align}
(In contrast to the Macdonald case, these coefficients depend on $n$ in a
slightly nontrivial way.  Also, recall from \cite{xforms} that bold greek
letters denote pairs of partitions.)  Since these coefficients are
understood for $m=1$ (a special case of \cite[Thm.~4.16]{bctheta}), one
could simply define skew interpolation functions by induction, giving an
$m$-fold sum (in which each individual sum is over partitions).  However,
it turns out that one can use connection coefficients together with the
existence of a special case of interpolation functions expressible as a
product to obtain these coefficients via a single sum.  Moreover, if the
arguments $y_1,\dots,y_m$ contain partial geometric progressions of step
$t$, the coefficients of the sum simplify accordingly, and one is thus led
to the definition of Section~\ref{sec:skew}.  (See
Theorem~\ref{thm:lifted_interp} for the relation between the skew
interpolation functions so defined and ordinary interpolation functions;
the remark following the theorem expresses the above expansion coefficients
in terms of skew interpolation functions.)  A suitable analytic
continuation argument gives an analogue of the Cauchy identity
(Theorem~\ref{thm:cauchy_skew}), and then Macdonald's argument lifts to
give an elliptic Littlewood identity, Theorem~\ref{thm:litt_skew}.

The other natural approach to an elliptic analogue is to retain the use of
binomial coefficients (i.e., restrict one's attention to principally
specialized skew Macdonald polynomials), but hope for an analogue with
additional parameters.  It turns out that enough degrees of freedom survive
in the choice of coefficients that one can use those
coefficients to enforce termination, giving Theorem~\ref{thm:genlitt}
below.  Moreover, the structure of the coefficients is such that one can
analytically continue one of the two sums to a suitable contour integral,
Theorem~\ref{thm:int_litt}.  This in turns suggests a further extension in
which both sides are integrals, stated as Conjecture~\ref{conj:littbig},
for which we can prove a number of special cases.

The ``integral=sum'' version of the identity has a particularly striking
interpretation coming from the fact that one can invert the elliptic
binomial coefficients to move the sum inside the integral.  The resulting
sum of interpolation functions in the integrand then becomes a special case
of the elliptic biorthogonal functions of \cite{xforms,bctheta}, and one
thus deduces that a certain integral of such functions vanishes unless the
indexing partition (or, rather, partition pair) has the form $\bmu^2$.
This is the elliptic analogue of a result proved for Koornwinder
polynomials in \cite{vanish}, and in fact gives a stronger result even at
the Koornwinder level, since the techniques of \cite{vanish} gave no
information about the nonzero values.  This suggests in turn that the other
results of \cite{vanish} involving the same vanishing condition should also
be related to our elliptic Littlewood identity, and indeed we have been
able to formulate two conjectures along those lines,
Conjectures~\ref{conj:vanKsmall} and \ref{conj:vanPsmall}, which again hold
in a number of special cases, and have three different results of
\cite{vanish} as limiting cases.  In particular, every result of
\cite{vanish} that has $\lambda=\mu^2$ as the nonvanishing condition is a
limit of either Corollary~\ref{cor:vant} or one of
Conjectures~\ref{conj:vanKsmall} or \ref{conj:vanPsmall}.  (We also give
analogues for the results with condition $\lambda=2\mu$, but it remains an
open problem to lift the remaining vanishing theorems to the elliptic
level, even conjecturally.)  For instance, one limit of the latter
conjecture is the fact that
\[
\int
P_\lambda(\dots,z_i^{\pm 1},\dots;q,t)
\prod_{1\le i<j\le n}
  \frac{(z_i^{\pm 1}z_j^{\pm 1};q)}
       {(t z_i^{\pm 1}z_j^{\pm 1};q)}
\prod_{1\le i\le n}
  \frac{(z_i^{\pm 2};q)}{(tz_i^{\pm 2};q)}
  \frac{dz_i}{2\pi\sqrt{-1}z_i}
\]
vanishes unless $\lambda=\mu^2$ for some $\mu$, which in turn is a
$(q,t)$-analogue of the representation-theoretic fact that the integral of
a Schur function over the symplectic group similarly vanishes (equivalent
by Frobenius reciprocity to the fact that only those Schur functions appear
in the classical Littlewood identity).

Macdonald also gave a dual version of the Littlewood identity, in which
rather than summing over partitions with even multiplicities, one sums over
partitions with even parts.  (Littlewood's original version of this
identity gives the decomposition of $S^*(S^2(\C^n))$:
\[
\sum_\mu s_{2\mu}(x_1,\dots,x_n)
=
\prod_{1\le i\le j\le n} (1-x_ix_j)^{-1},
\]
and is related to the invariants of ${\mathrm O}_n$ inside irreducible
representations of $\GL_n$.)  This dual Littlewood
identity can, of course, be obtained from the usual Littlewood identity by
simply applying Macdonald's involution to conjugate the partitions
involved.  One can naturally do the same for the elliptic Littlewood
identities, but a new behavior arises.  For the $\bmu^2$-type Littlewood
identity, there is an analytical symmetry between the parameters $p$
(specifying an elliptic curve) and $q$ (specifying a point on that curve),
which is broken by duality.  If one attempts to restore this symmetry after
dualizing, one finds that, in contrast to the $\bmu^2$-type Littlewood
identity, which is a product of two equivalent identities, one
$p$-elliptic, and one $q$-elliptic, the restoration of symmetry in the dual
identity requires that one multiply by a conjectural $q$-elliptic identity
which is {\em not} equivalent to the original dual identity.  Moreover,
this partner identity itself has a different broken symmetry, namely the
natural action of $\SL_2(\Z)$ as modular transformations of the family of
elliptic curves.  One thus finds that each of our identities and
conjectures leads to a whole family of conjectures in this way; the
Littlewood identity itself gives rise to three conjectural integral
transforms, while the other vanishing conjectures correspond to seven
different integral transforms.  The latter group of conjectures (a single
orbit under the various formal symmetries) is particularly interesting, as
even without the interpolation functions in the integrands, they would give
rise to new transformations of higher-order elliptic Selberg integrals
(specifically, quadratic transformations).  In particular, several of the
special cases we prove give nontrivial identities of this form; of
particular note is Theorem~\ref{thm:vanpq_ev}, which expresses certain $2$-
and $3$-dimensional elliptic Selberg integrals as explicit linear
combinations of univariate integrals and constants.  See also
\cite{vandeBultFJ:2011}, which proves the special case $\blambda=0$ of
Conjectures~\ref{conj:vantq} and \ref{conj:vantm} below.  It can be shown
(work in progress) that this implies the $\blambda=0$ cases of the
remaining ``Q'' conjectures; for Conjecture~\ref{conj:vanmt}, this follows
from Conjecture~\ref{conj:vantm} by Proposition~\ref{prop:e7ab} below, but
the other cases require new machinery beyond the scope of the present work.

The plan of the paper is as follows.  After a discussion of notation at the
end of this introduction, we proceed in Section~\ref{sec:skew} to define
our skew interpolation functions, and discuss a number of their properties,
especially their connection to ordinary interpolation functions.  (We also
state a transformation of higher-order elliptic Selberg integrals
(conjectural in the original version of this paper, since proved by Van de
Bult) related to one of those properties, largely because the same
conjecture arose in a different context while working on \cite{xforms}.)
Then in Section~\ref{sec:cauchy}, we discuss the corresponding analogues of
the Cauchy identity, along with some necessary preliminaries concerning
when skew interpolation functions can be guaranteed to vanish, thus making
the relevant sums finite.  Section~\ref{sec:litt} gives the two main forms
of the elliptic Littlewood identity, as well as the three associated
conjectures at the integral level.  Finally, Section~\ref{sec:var}
discusses a number of conjectures related to the vanishing integrals of
\cite{vanish}, with sketches of proofs of various special cases.  Note that
although this last section may seem on first glance to have drifted away
from the theme of the paper, the corresponding ``vanishing'' conjectures,
when degenerated to identities of Macdonald or Koornwinder polynomials,
become Littlewood-type identities in a suitable limit as the number of
variables tends to infinity.  (More precisely, taking the limit
$n\to\infty$ as in \cite{bcpoly} gives either Macdonald's Littlewood
identity, its dual, or an identity originally conjectured by Kawanaka
\cite{KawanakaN:1999} and recently proved in
\cite{LangerR/SchlosserMJ/WarnaarSO:2009} (see also the discussion after
Conjecture~\ref{conj:littbig_m} below, which sketches an alternate proof).)

{\bf Acknowledgements} The author would like to thank V. Spiridonov and
O. Warnaar for helpful comments on an earlier draft, along with R. Askey
and M. Rahman for helpful discussions regarding the analogue of
Corollary~\ref{cor:dual_vanq} for Askey-Wilson polynomials, and F. van de
Bult for settling one conjecture from the original version and providing
substantial additional evidence for several more.  The author would also
like to thank P. Forrester for hosting the author's sabbatical at the
University of Melbourne, during which several key portions of the work were
done.  This work was supported in part by NSF Grants No. DMS-0401387,
DMS-0833464, and DMS-1001645; in addition, much of the work was performed
while the author was affiliated with the University of California at Davis.

{\bf Notation}.  We use the notation of \cite{bctheta} and \cite{xforms}.
In particular, bold-face greek letters refer to pairs of partitions; if
only one of the partitions is nonzero, we will either give the partition
pair explicitly, or rewrite using the notation of \cite{bctheta},
explicitly breaking the symmetry between $p$ and $q$.  Thus, for instance,
the interpolation functions are denoted by
\[
\cR^{*(n)}_{\blambda}(z_1,\dots,z_n;a,b;t;p,q),
\]
which factors as
\[
\cR^{*(n)}_{\lambda,\mu}(z_1,\dots,z_n;a,b;t;p,q)
=
R^{*(n)}_{\lambda}(z_1,\dots,z_n;a,b;p,t;q)
R^{*(n)}_{\mu}(z_1,\dots,z_n;a,b;q,t;p),
\]
with the first factor $q$-elliptic, and the second $p$-elliptic.  Relations
and operations on single partitions extend to partition pairs in the
obvious way; in particular, $\blambda\subset\bmu$ denotes the product of
the usual inclusion orders on the two pieces.  We will need some additional
notations for partitions.  Of particular importance are $\lambda^2$,
denoting the partition with $\lambda^2_i = \lambda_{\lceil i/2\rceil}$, and
$2\lambda$, denoting the partition with $(2\lambda)_i=2\lambda_i$, both
extending immediately to partition pairs.  If $\lambda_1\le m$, then
$m^n\cdot \lambda$ denotes the partition with
\[
(m^n\cdot\lambda)_i=\begin{cases}
m&i\le n\\
\lambda_{i-n}&i>n.
\end{cases}
\]
If $\ell(\lambda)\le n$, then $m^n+\lambda$ denotes the partition with
\[
(m^n+\lambda)_i = m+\lambda_i.
\]
Finally, if $\lambda_1\le m$ and $\ell(\lambda)\le n$, then
\[
(m^n-\lambda)_i = m-\lambda_{n+i-1}.
\]

We specifically recall the elliptic Gamma function
\[
\Gampq(z)
:=
\prod_{0\le i,j} \frac{1-p^{i+1} q^{j+1}/z}{1-p^i q^j z},
\]
with the convention here (and for $\Gamma^+$, $\theta$, etc.) that multiple
arguments express a product:
\[
\Gampq(z_1,\dots,z_n)
=
\prod_{1\le i\le n} \Gampq(z_i).
\]
This satisfies the functional equations
\begin{align}
\Gampq(qz) &= \theta_p(z)\Gampq(z)\\
\Gampq(pz) &= \theta_q(z)\Gampq(z)\\
\Gampq(pq/z) &= \Gampq(z)^{-1},
\end{align}
where
\[
\theta_p(z) := \prod_{0\le i} (1-p^i z)(1-p^{i+1}/z)
\]
is a theta function ($\theta_p(\exp(2\pi i x))$ is doubly quasiperiodic),
as well as the ``quadratic'' functional equations
\begin{align}
\Gampq(z) &= \Gampqq(z,qz)\\
\Gamppqq(z^2) &= \Gampq(z,-z),
\end{align}
which will be useful below.  The special values
\begin{align}
\Gampqq(q) &= \frac{1}{(q;q^2)} = \frac{(q^2;q^2)}{(q;q)} = (-q;q)\\
\Gampq(-1) &= \frac{(p;p^2)(q;q^2)}{2}\\
\lim_{x\to 1} (1-x)\Gampq(x) &= \frac{1}{(p;p)(q;q)}
\end{align}
will arise as well.  We will also need a third-order elliptic Gamma
function
\[
\Gampqt(z):=\prod_{0\le i,j,k} (1-p^{i+1}q^{j+1}t^{k+1}/z)(1-p^iq^jt^k z),
\]
with functional equations
\begin{align}
\Gampqt(tz) &= \Gampq(z)\Gampqt(z),\\
\Gampqt(pqt/z) &= \Gampqt(z),
\end{align}
and so forth.  (This will only be used to simplify notation; in all of the
cases in which it arises, it will appear only via a ratio that resolves via
the first functional equation into a product of usual elliptic Gamma
functions.)

The elliptic Selberg integral (introduced as the ``elliptic
Macdonald-Morris conjecture'' in \cite{vanDiejenJF/SpiridonovVP:2000}, and
renamed the ``Type II'' integral in the follow-up
\cite{vanDiejenJF/SpiridonovVP:2001}) is the integral with density
\begin{align}
\Delta^{(n)}(z_1,\dots,z_n;u_0,\dots,u_5;t;p,q)&\notag\\
{}:=
\frac{((p;p)(q;q)\Gampq(t))^n}{2^n n!}&
\prod_{1\le i<j\le n}
\frac{\Gampq(t z_i^{\pm 1}z_j^{\pm 1})}{\Gampq(z_i^{\pm 1}z_j^{\pm 1})}
\prod_{1\le i\le n}
\frac{\prod_{0\le r<6} \Gampq(u_r z_i^{\pm 1})}{\Gampq(z_i^{\pm 2})}
\frac{dz_i}{2\pi\sqrt{-1}z_i}
\end{align}
with associated evaluation (\cite{xforms}, conjectured in
\cite{vanDiejenJF/SpiridonovVP:2000})
\[
\int_{C^n}
\Delta^{(n)}(z_1,\dots,z_n;u_0,\dots,u_5;t;p,q)
=
\prod_{0\le i<n}
 \Gampq(t^{i+1})
 \prod_{0\le r<s<6} \Gampq(t^i u_r u_s),
\]
where the parameters satisfy the ``balancing condition''
\[
t^{2n-2}\prod_{0\le r<6} u_r = pq,
\]
and $C$ is a contour such that $C=C^{-1}$, and $C$ contains the rescaled
contour $tC$ together with all points of the form $u_r p^iq^j$.  (If one
allows suitable disjoint unions of contours, this condition can be
satisfied unless $u_ru_s p^iq^jt^k=1$ for some $0\le i,j,k$, $0\le
r,s<6$.)  By convention, the argument $u z_i^{\pm 1}$ to a function
indicates a {\em pair} of arguments $u z_i,u/z_i$, and similarly for $t
z_i^{\pm 1}z_j^{\pm 1}$, etc., so in particular the above integrand is
hyperoctahedrally symmetric.  This determines a natural normalized linear
functional
\[
\langle f \rangle^{(n)}_{u_0,\dots,u_5;t;p,q}
\propto
\int_{C^n}
f(z_1,\dots,z_n)
\Delta^{(n)}(z_1,\dots,z_n;u_0,\dots,u_5;t;p,q),
\]
where $f$ is a linear combination of products of hyperoctahedrally
symmetric $p$- and $q$-elliptic functions such that for some nonnegative
integers $l_r$, $m_r$, the function
\[
f(z_1,\dots,z_n)
\prod_{\substack{1\le i\le n\\0\le r\le 5}}
\frac{\Gampq(u_r z_i^{\pm 1})}{\Gampq(p^{-l_r}q^{-m_r}u_r z_i^{\pm 1})}
\]
is holomorphic, and the contour satisifes the conditions appropriate to
\[
\Delta^{(n)}(z_1,\dots,z_n;p^{-l_0}q^{-m_0}u_0,\dots,p^{-l_5}q^{-m_5}u_5;t;p,q);
\]
the integral is normalized so that
\[
\langle 1 \rangle^{(n)}_{u_0,\dots,u_5;t;p,q}=1.
\]
Note that if the contour satisfies the conditions for a given choice of
$l_r$, $m_r$, it satisfies them for all smaller choices, so for a finite
linear combination of such functions, one can (generically) choose a
contour valid for each term simultaneously, giving linearity.  However, the
families of functions we consider involve unbounded values of $l_0$, $m_0$,
and thus one cannot simply fix a single contour for {\em every} function in
the family.

The biorthogonal functions
\[
\tcR^{(n)}_{\blambda}(z_1,\dots,z_n;t_0{:}t_1,t_2,t_3;u_0,u_1;t;p,q)
\]
of \cite{xforms,bctheta} satisfy biorthogonality with respect to this
linear functional, i.e.,
\[
\langle
\tcR^{(n)}_{\blambda}(z_1,\dots,z_n;t_0{:}t_1,t_2,t_3;u_0,u_1;t;p,q)
\tcR^{(n)}_{\bmu}(z_1,\dots,z_n;t_0{:}t_1,t_2,t_3;u_1,u_0;t;p,q)
\rangle_{t_0,t_1,t_2,t_3,u_0,u_1;t;p,q}
\]
vanishes unless $\blambda=\bmu$.  More precisely, they are characterized
for generic parameters by this property and the triangularity property that
for any partition pairs $\bkappa$, $\blambda$, and integers $(l,m)$ with
$(l,m)\ge \bkappa_1,\blambda_1$ (relative to the product ordering),
\[
\lim_{z_i\to (p,q)^{-\bkappa_i}t^{i-1}u_0}
\prod_{1\le i\le n} \theta(pqz_i^{\pm 1}/u_0;p,q)_{l,m}
\tcR^{(n)}_{\blambda}(z_1,\dots,z_n;t_0{:}t_1,t_2,t_3;u_0,u_1;t;p,q)
=
0
\]
unless $\bkappa\subset\blambda$; here and below, $(p,q)^{(l,m)}:=p^lq^m$.
The biorthogonal functions are normalized by taking
\[
\tcR^{(n)}_{\blambda}(\dots,t^{n-i}t_0,\dots;t_0{:}t_1,t_2,t_3;u_0,u_1;t;p,q)=1;
\]
though this breaks the symmetry between the four $t_r$ parameters (only
mildly: the required changes in normalization have explicit product
formulas), it makes the biorthogonal function with index $(0,\mu)$
$p$-elliptic in every parameter.

We will also need higher order versions of the elliptic Selberg integral;
we define
\[
\II^{(m)}_n(u_0,\dots,u_{2m+5};t;p,q)
:=
\int_{C^n}
\Delta^{(n)}(z_1,\dots,z_n;u_0,\dots,u_{2m+5};t;p,q),
\]
subject to the balancing condition
\[
t^{2n-2}\prod_{0\le r<2m+6} u_r = (pq)^{m+1},
\]
in which the density is obtained from the original density ($m=0$) by
replacing
\[
\prod_{0\le r<6} \Gampq(u_r z_i^{\pm 1})
\mapsto
\prod_{0\le r<2m+6} \Gampq(u_r z_i^{\pm 1}),
\]
and the contour condition is extended in the obvious way.  In particular,
if $u_{2m+4}u_{2m+5}=pq$, then the reflection equation for $\Gampq$ causes
the two corresponding factors to cancel, reducing $m$ by $1$.  When $n=1$,
the higher-order elliptic Selberg integral is essentially independent of
$t$, apart from the factor $\Gampq(t)$; we thus define the higher-order
elliptic beta integral \cite{SpiridonovVP:2001} by
\[
I^{(m)}(u_0,\dots,u_{2m+5};p,q)
:=
\Gampq(t)^{-1}
\II^{(m)}_1(u_0,\dots,u_{2m+5};t;p,q);
\]
note that the constraint that the contour $C$ contains $tC$ is irrelevant
in this case.

When $m=1$, the elliptic Selberg integral satisfies an important
transformation (a special case of \cite[Thm.~9.7]{xforms}), namely that
\[
\II_n(u_0,\dots,u_7;t;p,q)
=
\II_n(u_0/v,u_1/v,u_2/v,u_3/v,u_4v,u_5v,u_6v,u_7v)
\prod_{\substack{0\le i<n\\0\le r<s<4}}\Gampq(t^i u_ru_s,t^i u_{r+4}u_{s+4}),
\]
where
$v^2=\frac{pqt^{1-n}}{u_4u_5u_6u_7}=\frac{u_0u_1u_2u_3}{pqt^{1-n}}=\sqrt{\frac{u_0u_1u_2u_3}{u_4u_5u_6u_7}}$.
Together with permutations of the parameters, this generates the Weyl group
of type $E_7$.  We note the following special case, which will arise
repeatedly in Section~\ref{sec:var} below.

\begin{prop}\label{prop:e7ab}
Define a function
\[
F_n(t_0,t_1,t_2,t_3;a,b;t;p,q)
=
\II_n(a^{\pm 1/2}t_0,a^{\pm 1/2}t_1,a^{\pm 1/2}t_2,(b^2a)^{\pm
  1/2}t_3;t;p,q)
\prod_{\substack{0\le i<n\\0\le r<s<3}}
  \frac{\Gampq(t^i b t_rt_s,t^i ab t_rt_s)}
       {\Gampq(t^i t_rt_s,t^i t_rt_s/a)}
,
\]
subject to the balancing condition $t^{n-1}t_0t_1t_2t_3=pq$.  Then
$F_n$ is invariant under permuting $t_0,t_1,t_2,t_3$ and under swapping $a$
and $b$.
\end{prop}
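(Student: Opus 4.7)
Invariance of $F_n$ under permutations of $t_0,t_1,t_2$ is immediate: both the integrand of $\II_n$ and the Gamma-function prefactor in the definition of $F_n$ are manifestly symmetric in these three variables. It therefore suffices to establish one additional symmetry that moves $t_3$ or swaps $a$ with $b$; composing with the $S_3$-action on $\{t_0,t_1,t_2\}$ will then generate the full $S_4\times S_2$ claimed.

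My approach is to apply the $E_7$ transformation quoted above the proposition to the parameter grouping
\[
(u_0,u_1,u_2,u_3) = (a^{1/2}t_0,\,a^{-1/2}t_1,\,a^{1/2}t_2,\,(b^2a)^{1/2}t_3),
\]
\[
(u_4,u_5,u_6,u_7) = (a^{-1/2}t_0,\,a^{1/2}t_1,\,a^{-1/2}t_2,\,(b^2a)^{-1/2}t_3).
\]
The balancing condition yields $u_0u_1u_2u_3 = ab\cdot pqt^{1-n}$, so $v=\sqrt{ab}$, and a direct computation shows that the transformed parameter list $(u_r/v,\,u_{r+4}v)_{r=0}^{3}$ is
\[
b^{\pm1/2}t_0,\quad (a^2b)^{\pm1/2}t_1,\quad b^{\pm1/2}t_2,\quad b^{\pm1/2}t_3,
\]
which is precisely the parameter list of $\II_n$ appearing in $F_n(t_0,t_2,t_3,t_1;b,a;t;p,q)$, after relabeling so that $t_1$ plays the role of the ``fourth'' (special) variable and $(a,b)$ is replaced by $(b,a)$.

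The main calculational obstacle is to verify that the $E_7$-correction factor $\prod_{0\le i<n,\,0\le r<s<4}\Gampq(t^iu_ru_s,\,t^iu_{r+4}u_{s+4})$ equals the ratio of the Gamma-function prefactors of $F_n$ in the new variables to those in the old. This reduces, after expansion, to a term-by-term identification among twelve Gamma factors on each side; the balancing condition together with the reflection identity $\Gampq(z)\Gampq(pq/z)=1$ is used to trade factors involving the products $t_0t_3$, $t_1t_3$, $t_2t_3$ (which do not appear in the prefactor of $F_n$) for factors involving their complementary products $t_1t_2$, $t_0t_2$, $t_0t_1$, after which the cancellation is direct. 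Once this identification is made, one obtains the single identity
\[
F_n(t_0,t_1,t_2,t_3;a,b;t;p,q) = F_n(t_0,t_2,t_3,t_1;b,a;t;p,q).
\]

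Finally, the transformation $\sigma\colon(t_0,t_1,t_2,t_3;a,b)\mapsto(t_0,t_2,t_3,t_1;b,a)$ has order six: its cube $\sigma^3$ is the pure swap $a\leftrightarrow b$, while its square $\sigma^2$ is a $3$-cycle on $\{t_1,t_2,t_3\}$ fixing $t_0$ and $(a,b)$. Since the manifest $S_3$-action on $\{t_0,t_1,t_2\}$ together with any $3$-cycle moving $t_3$ generates $S_4$ on $\{t_0,t_1,t_2,t_3\}$ (the generated subgroup contains $S_3$ properly and hence, being neither $A_4$ nor $S_3$, equals $S_4$), these symmetries combine to give the full $S_4\times S_2$ invariance asserted in the proposition.
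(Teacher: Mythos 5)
Your proposal is correct, and it is essentially the proof the paper intends: the proposition is stated in the paper without proof, as an immediate "special case" of the $E_7$ transformation quoted just above it, and your choice of grouping $(u_0,\dots,u_3;u_4,\dots,u_7)$ does give $v=\sqrt{ab}$ and transformed parameters $b^{\pm1/2}t_0,(a^2b)^{\pm1/2}t_1,b^{\pm1/2}t_2,b^{\pm1/2}t_3$ as claimed. I checked the Gamma-factor bookkeeping: after using $\prod_{0\le i<n}\Gampq(t^ixt_rt_s)=\prod_{0\le i<n}\Gampq(t^it_{r'}t_{s'}/x)^{-1}$ (reflection plus balancing plus the symmetry $i\mapsto n-1-i$ of the product) to put everything in a canonical form, the $E_7$ correction factor times the old prefactor does equal the new prefactor, and your order-six element $\sigma$ together with the manifest $S_3$ indeed generates $S_4\times S_2$.
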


\begin{rem}
This function satisfies additional transformations
\[
F_n(t_0,t_1,t_2,t_3;a,b;t;p,q)
=
F_n(t_0,t_1,t_2,t_3;ab,1/b;t;p,q)\prod_{0\le i<n,0\le r<s<4}\Gampq(t^i t_rt_s b)
\]
and
\[
F_n(t_0,t_1,t_2,t_3;a,b;t;p,q)
=
F_n(\gamma/t_0,\gamma/t_1,\gamma/t_2,\gamma/t_3;a,b;t;p,q)
\prod_{\substack{0\le i<n\\0\le r<4}} \Gampq(t^i t_r^2)
\]
where $\gamma=(t^{1-n}pq)^{1/2}$.  These transformations generate a Weyl
group $B_3\times G_2$, and in much the same way as the general order 1
elliptic Selberg integral satisfies a formal $E_8$ symmetry (see discussion
in \cite[\S9]{xforms}), this group formally extends to an action of
$F_4\times G_2$.
\end{rem}

The factors
\[
\Delta^0_\blambda(a|b_0,\dots,b_{n-1};t;p,q)\quad\text{and}\quad
\Delta_\blambda(a|b_0,\dots,b_{n-1};t;p,q)
\]
that appear below are certain multivariate $q$-symbols (see the
introduction of \cite{xforms}).  The first is defined by
\[
\Delta^0_\blambda(a|b_0,\dots,b_{n-1};t;p,q)
=
\prod_{0\le r<n}
  \frac{\cC^0_\blambda(b_r;t;p,q)}
       {\cC^0_\blambda(pqa/b_r;t;p,q)},
\]
where
\[
\cC^0_\blambda(x;t;p,q):=\prod_{1\le i} \theta(t^{1-i}x;p,q)_{\blambda_i},
\]
and
\[
\theta(x;p,q)_{l,m}:=
\prod_{0\le j<l} \theta_q(p^j x)
\prod_{0\le j<m} \theta_p(q^j x).
\]
Note that
\[
\Delta^0_{\lambda,\mu}(a|b_0,\dots,b_{n-1};t;p,q)
=
\Delta^0_{\lambda,0}(a|b_0,\dots,b_{n-1};t;p,q)
\Delta^0_{0,\mu}(a|b_0,\dots,b_{n-1};t;p,q),
\]
and if $n=2m$, $\prod_{0\le r<2m} b_r = (pqa)^m$, then both factors are
elliptic subject to this constraint; i.e.,
\[
\Delta^0_{0,\mu}(a|b_0,\dots,b_{2m-1};t;p,q)
\]
is invariant under shifting the parameters by integer powers of $p$ such
that the balancing condition remains satisfied.

The other $\Delta$-symbol is more complicated:
\[
\Delta_\blambda(a|b_0,\dots,b_{n-1};t;p,q)
:=
\Delta^0_\blambda(a|b_0,\dots,b_{n-1};t;p,q)
\frac{\cC^0_{2\blambda^2}(pqa;t;p,q)}
     {\cC^-_\blambda(pq,t;t;p,q)\cC^+_\blambda(a,pqa/t;t;p,q)}
\]
where
\begin{align}
\cC^-_\blambda(x;t;p,q)
&:=
\prod_{1\le i\le j}
\frac{\theta(t^{j-i}x;p,q)_{\blambda_i-\blambda_{j+1}}}
     {\theta(t^{j-i}x;p,q)_{\blambda_i-\blambda_j}}\\
\cC^+_\blambda(x;t;p,q)
&:=
\prod_{1\le i\le j}
\frac{\theta(t^{2-i-j}x;p,q)_{\blambda_i+\blambda_j}}
     {\theta(t^{2-i-j}x;p,q)_{\blambda_i+\blambda_{j+1}}}.
\end{align}
The key property of $\Delta_{\blambda}$ is that the $\blambda$-dependent
factor of the residue of the elliptic Selberg integrand $\Delta^{(n)}$ at
the point $(\dots,(p,q)^{\blambda}t^{n-i}u_0,\dots)$ is
\[
\Delta_{\blambda}(t^{2n-2}u_0^2|t^n,t^{n-1}u_0u_1,\dots,t^{n-1}u_0u_{2m+5};t;p,q).
\]
The corresponding balancing condition to
ensure ellipticity is, for $n=2m$, that $\prod_{0\le r<2m} b_r = (t/pq)
(pqa)^{m-1}$.

In many respects, the most natural elliptic analogue of the Macdonald
polynomials is the interpolation functions, a special case of the
biorthogonal functions given by
\[
\cR^{*(n)}_{\blambda}(;t_0,u_0;t;p,q)
=
\Delta^0_{\blambda}(t^{n-1}t_0/u_0|t^{n-1}t_0t_1,t_0/t_1;t;p,q)
\tcR^{(n)}_{\blambda}(;t_1{:}t_0,t_2,t_3;u_0,t^{1-n}/t_0;t;p,q)
\]
with $t^{n-1}t_1t_2t_3u_0=pq$.  Note that the left-hand side is independent
of the remaining degrees of freedom.  The key property of the interpolation
functions is that
\[
\cR^{*(n)}_{\blambda}(\dots,(p,q)^{\bmu_i}t^{n-i}a,\dots;a,b;t;p,q)
=
0
\]
unless $\blambda\subset\bmu$ (\cite[Cor.~8.12]{xforms}); this property and
the triangularity property are related by a complementation symmetry, and
together determine the interpolation function up to normalization, which is
determined by
\[
\cR^{*(n)}_{\blambda}(\dots,t^{n-i}v,\dots;a,b;t;p,q)
=
\Delta^0_{\blambda}(t^{n-1}a/b|t^{n-1}av,a/v;t;p,q).
\]
The interpolation functions play a special role in the theory of the
elliptic biorthogonal functions, as certain connection coefficients between
biorthogonal functions with slightly different parameters can be expressed
via values of interpolation functions at partitions
\cite[Cor.~5.7]{bctheta}.  As a special case, any biorthogonal function can
be expanded as a linear combination of interpolation functions in which the
coefficients are themselves values of interpolation functions
\cite[Defn.~12 and Thm.~5.3]{bctheta}.

These values of interpolation functions appear frequently enough to merit
their own notation: we define
\[
\binom{\blambda}{\bmu}_{[a,b];t;p,q}
:=
\Delta_{\bmu}(a/b|t^n,1/b;t;p,q)
\cR^{*(n)}_{\bmu}(\dots,\sqrt{a}(p,q)^{\blambda_i}t^{1-i},\dots;t^{1-n}\sqrt{a},b/\sqrt{a};t;p,q);
\]
this is independent of the choice of square root, and factors as
\[
\binom{\lambda,\kappa}{\mu,\nu}_{[a,b];t;p,q}
=
\binom{\lambda}{\mu}_{[a,b];p,t;q}
\binom{\kappa}{\nu}_{[a,b];q,t;p}
\]
where the first factor is $q$-elliptic in $a$, $b$, $p$, and $t$, and
imilarly for the second factor.  We also use the alternate normalization of
\cite{bctheta}, which in the $p,q$-symmetric version reads
\[
\obinomE{\blambda}{\bmu}_{[a,b](v_1,\dots,v_k);t;p,q}
:=
\frac{\Delta^0_{\blambda}(a|b,v_1,\dots,v_k;t;p,q)}
     {\Delta^0_{\bmu}(a/b|1/b,v_1,\dots,v_k;t;p,q)}
\binom{\blambda}{\bmu}_{[a,b];t;p,q}.
\]
The binomial coefficients so normalized are products of elliptic functions
if $k=3$, $bv_1v_2v_3=(pqa)^2$.

%

\section{Skew interpolation functions}\label{sec:skew}

Consider the following generalized elliptic hypergeometric sum:
\begin{align}
\cR^*_{\blambda/\bkappa}([v_0,\dots,v_{2n-1}];a,b;t;p,q)
:=
\sum_{\bkappa\subset\bmu\subset\blambda}&
\obinomE{\blambda}{\bmu}_{[a/b,ab/pq];t;p,q}
\obinomE{\bmu}{\bkappa}_{[pq/b^2,pq\prod_{0\le r<2n} v_r/ab];t;p,q}\notag\\
&\times\Delta^0_{\bmu}(pq/b^2|pq/bv_0,pq/bv_1,\dots,pq/bv_{2n-1};t;p,q);
\end{align}
as the notation suggests, this will turn out to be our desired skew version
of the interpolation functions.  Note that each term in the rescaled sum
\begin{align}
\hat{\cR}{}^*_{\blambda/\bkappa}([v_0,\dots,v_{2n-1}]&;a,b;t;p,q)
:={}\notag\\
&\frac{\Delta^0_{\bkappa}(a/b\prod_{0\le r<2n} v_r|ab/pq\prod_{0\le r<2n} v_r;t;p,q)}
     {\Delta^0_{\blambda}(a/b|ab/pq;t;p,q)}
\cR^*_{\blambda/\bkappa}([v_0,\dots,v_{2n-1}];a,b;t;p,q)
\end{align}
is the product of $p$-abelian and $q$-abelian factors, so the same applies
to this rescaled sum; however, the rescaling introduces unfortunate poles,
so we will prefer to use the not-quite-elliptic form unless that would
introduce complicated factors from quasiperiodicity.  This is a generalized
elliptic hypergeometric sum in the same sense as the identities of
\cite{bctheta}; in particular, it includes the following very-well-poised,
balanced, and terminating multivariate elliptic hypergeometric series as a
special case:
\begin{align}
\hat{\cR}{}^*_{(l,m)^n/0}&([v_0,\dots,v_{2k-1}];a,b;t;p,q)
={}\\
&\sum_{\bmu\subset(l,m)^n}
\Delta_{\bmu}(pq/b^2|t^n,p^{-l}q^{-m},p^lq^ma/t^{n-1}b,pq/bv_0,pq/bv_1,\dots,pq/bv_{2k-1},pq\prod_{0\le r<2k}v_r/ab;t;p,q).
\notag
\end{align}
(Such sums arise as limiting cases of order $k-1$ elliptic Selberg
integrals via residue calculus.) We note that the skew interpolation
function is invariant under permutations of its arguments, as well as under
insertion or deletion of pairs $x,1/x$.  (The last statement follows from
the fact that
\[
\Delta^0_{\bmu}(pq/b^2|pq/bx,pqx/b;t;p,q)=1,
\]
which in turn is immediate from the definition.)  In particular, the
arguments are not directly arguments of interpolation functions, but play a
more plethystic role.  Roughly speaking, this corresponds to the plethystic
substitution
\[
p_k\mapsto 
\sum_{0\le r<2n} \frac{v_r^k-v_r^{-k}}{t^{k/2}-t^{-k/2}}
\]
at the trigonometric level, so that an ordinary argument corresponds to a
pair $t^{1/2}x,t^{1/2}/x$ of plethystic arguments.  (Compare
Theorem~\ref{thm:lifted_interp} below.)

The two main identities of \cite{bctheta} both involved sums of this form,
and thus one has the following.

\begin{prop} \cite[Cor.~4.3]{bctheta}
With no arguments, the skew interpolation function is a delta function:
\[
\cR^*_{\blambda/\bkappa}([];a,b;t;p,q)=\delta_{\blambda\bkappa}.
\]
\end{prop}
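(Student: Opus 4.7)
The plan is to specialize the defining sum to $n=0$ and recognize the resulting double sum as the orthogonality of elliptic binomial coefficients from \cite[Cor.~4.3]{bctheta}.

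First, I would unpack what the $n=0$ case of the definition says. The sequence $[v_0,\dots,v_{2n-1}]$ is empty, so the product $\prod_{0\le r<2n} v_r$ is the empty product $1$, and the factor
\[
\Delta^0_{\bmu}(pq/b^2|pq/bv_0,\dots,pq/bv_{2n-1};t;p,q)
\]
is itself an empty product in its $b$-parameters, hence equal to $1$. What survives of the definition is therefore the double sum
\[
\cR^*_{\blambda/\bkappa}([];a,b;t;p,q)
=
\sum_{\bkappa\subset\bmu\subset\blambda}
\obinomE{\blambda}{\bmu}_{[a/b,\,ab/pq];t;p,q}
\obinomE{\bmu}{\bkappa}_{[pq/b^2,\,pq/ab];t;p,q}.
\]

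Next, I would check that the parameters of the two binomial coefficients are in exactly the relation required by the standard orthogonality. Writing $[A,B]:=[a/b,\,ab/pq]$ for the outer binomial, one computes
\[
A/B = (a/b)(pq/ab) = pq/b^2,\qquad 1/B = pq/ab,
\]
so the inner binomial is precisely $\obinomE{\bmu}{\bkappa}_{[A/B,\,1/B];t;p,q}$. This $[A,B]\to[A/B,1/B]$ pattern is exactly the shape of the inversion/orthogonality relation for elliptic binomial coefficients, which is the content of \cite[Cor.~4.3]{bctheta}:
\[
\sum_{\bkappa\subset\bmu\subset\blambda}
\obinomE{\blambda}{\bmu}_{[A,B];t;p,q}
\obinomE{\bmu}{\bkappa}_{[A/B,1/B];t;p,q}
=\delta_{\blambda\bkappa}.
\]

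Substituting this into the reduced double sum gives $\cR^*_{\blambda/\bkappa}([];a,b;t;p,q)=\delta_{\blambda\bkappa}$, as required. The only real point to verify is that the empty product conventions in the definition (both for $\prod_r v_r$ and for the $b$-parameters of $\Delta^0_{\bmu}$) behave as expected; there is no analytic obstacle here, so the main "work" is just the parameter matching described above, which reduces the statement to a direct citation.
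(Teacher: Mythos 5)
Your proof is correct and is exactly what the paper intends: the paper offers no argument beyond the citation of \cite[Cor.~4.3]{bctheta}, and your reduction of the $n=0$ case of the definition (empty products equal to $1$, parameter check $[a/b,ab/pq]\mapsto[pq/b^2,pq/ab]$ matching the $[A,B]\to[A/B,1/B]$ inversion pattern) is precisely the intended content of that citation. No gaps.
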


\begin{prop} \cite[Thm.~4.1]{bctheta}
With two arguments, the skew interpolation function is an elliptic binomial
coefficient:
\[
\cR^{*}_{\blambda/\bkappa}([v_0,v_1];a,b;t;p,q)
=
\obinomE{\blambda}{\bkappa}_{[a/b,v_0v_1](a/v_0,a/v_1);t;p,q}.
\]
\end{prop}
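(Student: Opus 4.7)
The plan is to observe that the right-hand side of the proposition is, by the cited result \cite[Thm.~4.1]{bctheta}, already expressible as a sum of exactly this shape: that theorem is an elliptic binomial connection formula which expands a binomial coefficient with additional plethystic arguments as a sum over an intermediate partition pair of a product of two simpler binomial coefficients, weighted by a $\Delta^0$-factor. Setting $n=1$ in the definition of $\cR^*_{\blambda/\bkappa}$ gives
\begin{align*}
\cR^*_{\blambda/\bkappa}([v_0,v_1];a,b;t;p,q)
=
\sum_{\bkappa\subset\bmu\subset\blambda}
&\obinomE{\blambda}{\bmu}_{[a/b,ab/pq];t;p,q}
\obinomE{\bmu}{\bkappa}_{[pq/b^2,pqv_0v_1/ab];t;p,q}\\
&{}\times\Delta^0_{\bmu}(pq/b^2|pq/bv_0,pq/bv_1;t;p,q),
\end{align*}
and the claim is that this sum collapses to $\obinomE{\blambda}{\bkappa}_{[a/b,v_0v_1](a/v_0,a/v_1);t;p,q}$, which is exactly what \cite[Thm.~4.1]{bctheta} asserts once the parameters are aligned.

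To carry this out, I would first convert both of the normalized brackets $\obinomE{}{}$ appearing in the sum to their unnormalized forms via
\[
\obinomE{\blambda}{\bmu}_{[a,b](v_1,\dots,v_k);t;p,q}
=
\frac{\Delta^0_{\blambda}(a|b,v_1,\dots,v_k;t;p,q)}
     {\Delta^0_{\bmu}(a/b|1/b,v_1,\dots,v_k;t;p,q)}
\binom{\blambda}{\bmu}_{[a,b];t;p,q},
\]
and similarly unpack the right-hand side of the proposition. Collecting $\Delta^0$-prefactors and cancelling those that depend only on $\blambda$ or $\bkappa$ (and not on the summation variable $\bmu$) against the corresponding factors on the other side leaves an identity of the form $\sum_{\bmu}\binom{\blambda}{\bmu}\binom{\bmu}{\bkappa}\cdot(\text{weight}) = \binom{\blambda}{\bkappa}$, which is the content of \cite[Thm.~4.1]{bctheta}. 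The middle $\Delta^0_{\bmu}(pq/b^2|pq/bv_0,pq/bv_1;t;p,q)$ factor in our sum is precisely the weight needed to make this convolution match the cited formula; in particular, it records the shift in the anchor parameter of the binomial coefficient as one passes from the $\blambda\to\bmu$ step (with parameter pair $[a/b,ab/pq]$) to the $\bmu\to\bkappa$ step (with pair $[pq/b^2,pqv_0v_1/ab]$).

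The main — indeed the only — obstacle is parameter bookkeeping: verifying that under the identifications $(a',b')=(a/b,v_0v_1)$ for the outer binomial and the split $[a/b,ab/pq]\cup[pq/b^2,pqv_0v_1/ab]$ for the inner ones, the $p,q$-shifts redistribute correctly and that the plethystic arguments $a/v_0,a/v_1$ of the right-hand side appear in their expected positions. Once this alignment is pinned down there is no further analytic or combinatorial content: both sides are finite sums of meromorphic functions of $a,b,v_0,v_1$ and the equality is a direct rewriting of \cite[Thm.~4.1]{bctheta}, in the same way that the $n=0$ case in the previous proposition was a direct rewriting of \cite[Cor.~4.3]{bctheta}.
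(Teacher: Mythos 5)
Your proposal is correct and takes essentially the same route as the paper, which offers no argument beyond the citation: the $n=1$ case of the definition is literally the sum appearing in \cite[Thm.~4.1]{bctheta}, and the proposition is that theorem restated after aligning parameters. The only work is the bookkeeping you describe, and the paper likewise leaves it implicit.
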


\begin{rem}
When $v_0v_1=1$, so we can eliminate the two arguments to the skew
interpolation function, the right-hand side specializes to a delta
function as required.
\end{rem}

\begin{prop} \cite[Thm.~4.9, Cor.~4.11]{bctheta}
With four arguments, the skew interpolation function has the alternate
expressions
\[
\cR^{*}_{\blambda/\bkappa}([v_0,v_1,v_2,v_3];a,b;t;p,q)
=
\sum_{\bmu}
\obinomE{\blambda}{\bmu}_{[a/b,v_0v_1](a/v_0,a/v_1);t;p,q}
\obinomE{\bmu}{\bkappa}_{[a/v_0v_1b,v_2v_3](a/v_0v_1v_2,a/v_0v_1v_3);t;p,q}
\label{eq:bailey}
\]
and
\begin{align}
\cR^*_{\blambda/\bkappa}([v_0,v_1,v_2,v_3];a,b;t;p,q)
={}&
\frac{\Delta^0_{\blambda}(a/b|a/v_0,a/v_1,a/v_2,a/v_3;t;p,q)}
     {\Delta^0_{\bkappa}(a/bV|av_0/V,av_1/V,av_2/V,av_3/V;t;p,q)}\notag\\
&\times\sum_\bmu
\frac{
\obinomE{\blambda}{\bmu}_{[a/b,pqV/ab];t;p,q}
\obinomE{\bmu}{\bkappa}_{[a^2/pqV,ab/pq];t;p,q}}
{\Delta^0_{\bmu}(a^2/pqV|a/v_0,a/v_1,a/v_2,a/v_3;t;p,q)},
\end{align}
where $V=v_0v_1v_2v_3$.
\end{prop}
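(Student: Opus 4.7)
The plan is to deduce both identities directly from \cite[Thm.~4.9]{bctheta} and \cite[Cor.~4.11]{bctheta}, respectively, with the bulk of the work being parameter bookkeeping to match our chosen normalization of the skew interpolation function to the identities as they appear in that paper.

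For identity (\ref{eq:bailey}), I would begin by unfolding the alternate normalization via
\[
\obinomE{\blambda}{\bmu}_{[a,b](v_1,\dots,v_k);t;p,q}
=
\frac{\Delta^0_{\blambda}(a|b,v_1,\dots,v_k;t;p,q)}
     {\Delta^0_{\bmu}(a/b|1/b,v_1,\dots,v_k;t;p,q)}
\binom{\blambda}{\bmu}_{[a,b];t;p,q}
\]
on the right-hand side, so that the two $\obinomE{}{}$-factors there become plain binomial coefficients $\binom{\blambda}{\bmu}_{[a/b,v_0v_1]}\binom{\bmu}{\bkappa}_{[a/v_0v_1b,v_2v_3]}$ multiplied by an explicit product of $\Delta^0$-factors in $\blambda$, $\bmu$, $\bkappa$. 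Doing the same rewriting on the definition of $\cR^*_{\blambda/\bkappa}([v_0,v_1,v_2,v_3];a,b;t;p,q)$ (where the two $\obinomE{}{}$-factors have no ``extra'' arguments) reduces the claim to an identity between two sums of the form $\sum_{\bmu}\binom{\blambda}{\bmu}\binom{\bmu}{\bkappa}\times(\text{product of }\Delta^0\text{'s in }\bmu)$. The matching of the $\Delta^0$-factors involving $\bmu$ is the content of \cite[Thm.~4.9]{bctheta}, and the remaining $\bmu$-independent factors in $\blambda$ and $\bkappa$ reorganize by routine manipulation of theta-products into the identity.

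For the second identity, once (\ref{eq:bailey}) is available, one applies the Bailey-type transformation of \cite[Cor.~4.11]{bctheta} to the definition. This transformation takes a single elliptic hypergeometric sum of the form defining $\cR^*_{\blambda/\bkappa}$ and reparametrizes it so that the role of the ``base'' parameter $a$ is replaced by its Bailey-dual $a^2/pqV$, where $V=v_0v_1v_2v_3$; the compensating ratio of $\Delta^0$-symbols
\[
\frac{\Delta^0_{\blambda}(a/b|a/v_0,a/v_1,a/v_2,a/v_3;t;p,q)}
     {\Delta^0_{\bkappa}(a/bV|av_0/V,av_1/V,av_2/V,av_3/V;t;p,q)}
\]
is precisely what Cor.~4.11 introduces under this reparametrization. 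The resulting sum is the one displayed in the statement.

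The main obstacle is thus not conceptual but notational: the $\obinomE{}{}_{[a,b](v_1,\dots,v_k)}$ normalization differs from the bare $\binom{}{}_{[a,b]}$ by $\Delta^0$-ratios whose arguments must be tracked carefully, and the $\Delta^0$-symbols involved are in general \emph{not} elliptic (in particular, none of the balancing conditions $\prod b_r = (pqa)^m$ hold for generic $(v_0,\dots,v_3)$), so one cannot invoke quasiperiodicity to simplify — every factor must be checked at the level of its explicit $\theta$-expansion. A useful sanity check is the specialization $v_2v_3=1$, in which pairs $v_2,1/v_2$ can be deleted from the left-hand side to recover the two-argument case; under this specialization, the right-hand side of (\ref{eq:bailey}) collapses to a single nonvanishing term by the delta-function property stated in the remark after the two-argument proposition, and the second identity similarly reduces to the four-argument expression with only two effective arguments.
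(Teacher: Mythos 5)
Your proposal is correct and matches the paper's approach: the proposition is stated there with no proof beyond the citation, precisely because both identities are direct restatements of \cite[Thm.~4.9]{bctheta} and \cite[Cor.~4.11]{bctheta} after unfolding the $\obinomE{}{}$ normalization, which is exactly the bookkeeping you describe. Your assignment of \eqref{eq:bailey} to Thm.~4.9 and the second expression to Cor.~4.11, and the $v_2v_3=1$ sanity check, are both consistent with how the paper uses these identities.
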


In equation \eqref{eq:bailey}, the binomial coefficients can be expressed
in skew interpolation functions, giving
\[
\cR^{*}_{\blambda/\bkappa}([v_0,v_1,v_2,v_3];a,b;t;p,q)
=
\sum_{\bmu}
\cR^{*}_{\blambda/\bmu}([v_0,v_1];a,b;t;p,q)
\cR^{*}_{\bmu/\bkappa}([v_2,v_3];a/v_0v_1,b;t;p,q).
\]
This generalizes considerably.

\begin{prop}
The skew interpolation functions satisfy the identity
\begin{align}
\cR^*_{\blambda/\bkappa}([v_0,\dots,v_{2k-1},w_0,\dots,w_{2l-1}];a,b;t;p,q)
=
\sum_{\bmu}&
\cR^*_{\blambda/\bmu}([v_0,\dots,v_{2k-1}];a,b;t;p,q)\notag\\
&\times\cR^*_{\bmu/\bkappa}([w_0,\dots,w_{2l-1}];a/v_0\cdots v_{2k-1},b;t;p,q).
\end{align}
\end{prop}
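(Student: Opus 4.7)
The plan is to expand both sides using the definition of $\cR^*_{\blambda/\bkappa}$ and reduce the resulting identity to an orthogonality relation for elliptic binomial coefficients, which itself follows from~\eqref{eq:bailey} by specialization.

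First I would expand the LHS via its defining sum. The multiplicativity of $\Delta^0_\bmu(pq/b^2\mid\cdot\,)$ in its arguments factors the last factor as $A_\bmu(v)\,B_\bmu(w)$ with
\[
A_\bmu(v):=\Delta^0_\bmu(pq/b^2\mid pq/bv_0,\dots,pq/bv_{2k-1};t;p,q)
\]
and $B_\bmu(w)$ the analogous product for the $w$'s. Expanding each factor of the RHS via the definition---noting that the shift $a\mapsto a/V$ with $V:=v_0\cdots v_{2k-1}$ sends the parameter pairs of the second $\cR^*$ factor to $[a/(Vb),\,ab/(Vpq)]$ and $[pq/b^2,\,pqVW/ab]$ respectively---produces a triple sum over partition pairs $\bnu,\bmu,\boldsymbol\sigma$ with $\bkappa\subset\boldsymbol\sigma\subset\bmu\subset\bnu\subset\blambda$. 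Swapping the order of summation to sum over the middle index $\bmu$ first isolates the piece
\[
\sum_\bmu\obinomE{\bnu}{\bmu}_{[A,B]}\obinomE{\bmu}{\boldsymbol\sigma}_{[A/B,1/B]},\qquad A:=pq/b^2,\ B:=pqV/ab,
\]
since the identities $A/B=a/(Vb)$ and $1/B=ab/(Vpq)$ match the parameters of the second binomial in the second factor exactly.

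The core technical step is the orthogonality
\[
\sum_\bmu\obinomE{\bnu}{\bmu}_{[A,B]}\obinomE{\bmu}{\boldsymbol\sigma}_{[A/B,1/B]}=\delta_{\bnu\boldsymbol\sigma}
\]
for generic $A,B$. I would derive this by specializing~\eqref{eq:bailey} at $v_2=v_1^{-1}$ and $v_3=v_0^{-1}$: the LHS of~\eqref{eq:bailey} then reduces to $\cR^*_{\blambda/\bkappa}([\,];a,b)=\delta_{\blambda\bkappa}$ by the invariance of $\cR^*$ under deletion of reciprocal pairs, while the RHS becomes the \emph{parenthesized} orthogonality with $A=a/b$, $B=v_0v_1$, and auxiliary arguments $(u_1,u_2)=(a/v_0,a/v_1)$. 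Using the conversion
\[
\obinomE{\blambda}{\bmu}_{[A,B](u_1,u_2)}=\frac{\Delta^0_\blambda(A\mid u_1,u_2)}{\Delta^0_\bmu(A/B\mid u_1,u_2)}\obinomE{\blambda}{\bmu}_{[A,B]}
\]
(and the analogous formula for the second binomial, in which $A/B$ plays the role of $A$), the $\Delta^0_\bmu$ factors coming from the two binomials cancel telescopically, leaving only the ratio $\Delta^0_\blambda(A\mid u_1,u_2)/\Delta^0_\bkappa(A\mid u_1,u_2)$, which equals $1$ on the support of $\delta_{\blambda\bkappa}$; this yields the unparenthesized orthogonality. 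Since $a,b,v_0,v_1$ range freely, this covers all generic $A,B$.

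Applying this orthogonality to the triple sum in the RHS forces $\bnu=\boldsymbol\sigma$, collapsing it to a double sum which matches the LHS expansion term by term. The main obstacle I expect is the careful parameter bookkeeping --- in particular, checking that the $a\mapsto a/V$ shift in the second factor of the RHS really does produce the reciprocal pair $(A/B,1/B)$ in the relevant binomial --- and the clean derivation of the unparenthesized orthogonality from the parenthesized version via the telescoping cancellation of the $\Delta^0_\bmu$ factors.
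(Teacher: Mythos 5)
Your proof is correct and follows essentially the same route as the paper: expand both factors on the right via the definition and collapse the inner sum over $\bmu$ using the orthogonality $\sum_{\bmu}\obinomE{\bnu}{\bmu}_{[A,B]}\obinomE{\bmu}{\boldsymbol\sigma}_{[A/B,1/B]}=\delta_{\bnu\boldsymbol\sigma}$. The only difference is that this orthogonality is exactly the statement that $\cR^*_{\bnu/\boldsymbol\sigma}([];a,b;t;p,q)=\delta_{\bnu\boldsymbol\sigma}$, already recorded as a Proposition (from \cite[Cor.~4.3]{bctheta}), so your re-derivation of it from \eqref{eq:bailey}, while valid, can be replaced by a citation.
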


\begin{proof}
If we expand the skew interpolation functions on the right via the
definition, the inner sum over $\bmu$ is itself a skew interpolation
function with no arguments, and thus the inner sum collapses as required.
\end{proof}

Thus to justify the name ``skew interpolation function'', it remains only
to show that when $\bkappa=0$, we obtain (a generalization of) the usual
interpolation function.

\begin{thm}\label{thm:lifted_interp}
The interpolation functions have the expression
\[
\cR^{*(n)}_{\blambda}(z_1,\dots,z_n;a,b;t;p,q)
=
\Delta^0_{\blambda}(t^{n-1}a/b|pqa/tb;t;p,q)
\cR^*_{\blambda/0}([t^{1/2}z_1^{\pm 1},\dots,t^{1/2}z_n^{\pm 1}];t^{n-1/2}a,t^{1/2}b;t;p,q).
\]
\end{thm}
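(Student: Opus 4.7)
The plan is induction on $n$, using the composition identity (the proposition immediately preceding the theorem) to split off the final pair of plethystic arguments $t^{1/2}z_n^{\pm 1}$, and then matching the result against the single-variable branching rule for ordinary interpolation functions coming from the $m=1$ case of \cite[Thm.~4.16]{bctheta} (as alluded to in the discussion of equation \eqref{eq:finite_skew_branch}). The base case $n=0$ is immediate from the no-arguments proposition: both sides equal $\delta_{\blambda,0}$. For $n=1$, the two-argument proposition identifies the skew factor with the elliptic binomial coefficient $\obinomE{\blambda}{0}_{[a/b,t](a/z_1,az_1);t;p,q}$; unfolding the definition of $\obinomE{}{}$ and combining with the prefactor $\Delta^0_\blambda(a/b|pqa/tb;t;p,q)$ reproduces the characterizing normalization $\Delta^0_\blambda(a/b|az_1,a/z_1;t;p,q)$ of $\cR^{*(1)}_\blambda(z_1;a,b;t;p,q)$.

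For the inductive step, the composition identity applied to the split $[t^{1/2}z_i^{\pm 1}]_{i=1}^n = [t^{1/2}z_n^{\pm 1}]\cup[t^{1/2}z_i^{\pm 1}]_{i=1}^{n-1}$ gives
\[
\cR^*_{\blambda/0}([t^{1/2}z_i^{\pm 1}]_{i=1}^n;t^{n-1/2}a,t^{1/2}b;t;p,q) = \sum_{\bmu\subset\blambda}\cR^*_{\blambda/\bmu}([t^{1/2}z_n^{\pm 1}];t^{n-1/2}a,t^{1/2}b;t;p,q)\,\cR^*_{\bmu/0}([t^{1/2}z_i^{\pm 1}]_{i=1}^{n-1};t^{n-3/2}a,t^{1/2}b;t;p,q),
\]
where the shift of the first parameter from $t^{n-1/2}a$ to $t^{n-3/2}a$ is forced by $(t^{1/2}z_n)(t^{1/2}/z_n)=t$. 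The leading factor is an explicit elliptic binomial coefficient in $z_n$ by the two-argument proposition, and the tail factor equals $\cR^{*(n-1)}_\bmu(z_1,\dots,z_{n-1};a,b;t;p,q)/\Delta^0_\bmu(t^{n-2}a/b|pqa/tb;t;p,q)$ by the inductive hypothesis. Reassembling with the prefactor $\Delta^0_\blambda(t^{n-1}a/b|pqa/tb;t;p,q)$ presents the right-hand side as a linear combination of $(n-1)$-variable interpolation functions $\cR^{*(n-1)}_\bmu$ with explicit elliptic-binomial coefficients in $z_n$.

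The final step is to match this expansion with the single-variable branching rule for $\cR^{*(n)}_\blambda$ supplied by \cite[Thm.~4.16]{bctheta}, which likewise presents the $n$-variable interpolation function as a sum over $\bmu\subset\blambda$ of elliptic binomial coefficients in $z_n$ times $(n-1)$-variable interpolation functions. Coefficient-by-coefficient this reduces to a Pochhammer relation between the ratio $\Delta^0_\blambda/\Delta^0_\bmu$ and the explicit branching normalization, which should follow directly from the definitions of $\Delta^0$ and $\obinomE{}{}$. I expect the main obstacle to be this final matching: carefully tracking how the half-integer parameter shifts $t^{n-1/2}a$ and $t^{1/2}b$ on the skew side interact with the integer shifts inherent in the ordinary interpolation branching. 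An alternative route that may avoid explicit use of the branching rule is to note that the skew-side expansion already inherits the vanishing at the partition points $(\dots,(p,q)^{\bmu_i}t^{n-i}a,\dots)$ from the two-argument binomials produced by composition, so one can instead characterize the right-hand side via the defining vanishing property plus a single check at the geometric progression $z_i=t^{n-i}v$.
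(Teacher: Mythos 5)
Your argument is correct in outline, but it is not the route the paper takes; it is instead precisely the alternative proof acknowledged in the remark immediately following the theorem (``This can also be proved by induction via the branching rule [Thm.~4.16]''). The paper's own proof is a single non-inductive step: apply the connection coefficient identity \cite[Cor.~4.14]{bctheta} to expand $\cR^{*(n)}_{\blambda}(z_1,\dots,z_n;a,b;t;p,q)$ as $\sum_{\bmu}\obinomE{\blambda}{\bmu}_{[t^{n-1}a/b,t^nab/pq](pqa/tb);t;p,q}\cR^{*(n)}_{\bmu}(z_1,\dots,z_n;pq/t^nb,b;t;p,q)$, observe that the functions appearing on the right are of Cauchy type and hence equal to the explicit product $\Delta^0_{\bmu}(pq/tb^2|pqz_1^{\pm 1}/tb,\dots,pqz_n^{\pm 1}/tb;t;p,q)$ by \cite[Prop.~3.9]{bctheta}, and then recognize the resulting sum as literally the defining sum for $\cR^*_{\blambda/0}([t^{1/2}z_1^{\pm1},\dots,t^{1/2}z_n^{\pm1}];t^{n-1/2}a,t^{1/2}b;t;p,q)$ up to the stated $\Delta^0$ prefactor (which is exactly the normalization discrepancy between $\obinomE{\blambda}{\bmu}_{[\cdot,\cdot](pqa/tb)}$ and $\obinomE{\blambda}{\bmu}_{[\cdot,\cdot]}$). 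What the paper's route buys is that no coefficient matching is needed at all: the identity falls out of the definition. Your route buys a self-contained induction that only uses the structural properties of the skew functions established in the section plus the one-variable branching rule, but it defers the real work to the final ``Pochhammer relation'' between the ratio $\Delta^0_{\blambda}(t^{n-1}a/b|pqa/tb;t;p,q)/\Delta^0_{\bmu}(t^{n-2}a/b|pqa/tb;t;p,q)$, the binomial coefficient $\obinomE{\blambda}{\bmu}_{[t^{n-1}a/b,t](t^{n-1}a/z_n,t^{n-1}az_n);t;p,q}$ produced by the two-argument proposition, and the branching coefficients of \cite[Thm.~4.16]{bctheta}; you correctly identify this as the crux but do not carry it out, so as written the proof is a sound plan rather than a complete verification. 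That verification does go through (the paper vouches for it in the remark), but if you want the shortest complete argument, the connection-coefficient route above avoids it entirely.
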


\begin{proof}
By the connection coefficient identity \cite[Cor.~4.14]{bctheta}, we can
write
\[
\cR^{*(n)}_{\blambda}(z_1,\dots,z_n;a,b;t;p,q)
=
\sum_{\bmu}
\obinomE{\blambda}{\bmu}_{[t^{n-1}a/b,t^nab/pq](pqa/tb);t;p,q}
\cR^{*(n)}_{\bmu}(z_1,\dots,z_n;pq/t^nb,b;t;p,q).
\]
But the new interpolation functions are of ``Cauchy'' type, so by
\cite[Prop.~3.9]{bctheta},
\[
\cR^{*(n)}_{\bmu}(z_1,\dots,z_n;pq/t^nb,b;t;p,q)
=
\Delta^0_{\bmu}(pq/tb^2|pqz_1^{\pm 1}/tb,\dots,pqz_n^{\pm 1}/tb;t;p,q)
\label{eq:Cauchy_prod}
\]
as required.
\end{proof}

\begin{rems}
This can also be proved by induction via the branching rule
\cite[Thm.~4.16]{bctheta}.  Similarly, we find that the coefficients of
\eqref{eq:finite_skew_branch} are given by
\begin{align}
\cR^{*(m,n)}_{\blambda/\bmu}&(z_1,\dots,z_m;a,b;t;p,q)\notag\\
&=
\frac{\Delta^0_{\blambda}(t^{n+m-1}a/b|pqa/tb;t;p,q)}
     {\Delta^0_{\bmu}(t^{n-1}a/b|pqa/tb;t;p,q)}
\cR^*_{\blambda/\bmu}([t^{1/2}z_1^{\pm 1},\dots,t^{1/2}z_m^{\pm 1}];t^{n+m-1/2}a,t^{1/2}b;t;p,q)
\end{align}
\end{rems}

\begin{rems}
  Thus ordinary interpolation functions correspond to the case that the
  arguments multiply pairwise to $t$; similarly, the skew interpolation
  functions of \cite{CoskunH/GustafsonRA:2006} correspond to the special
  case in which the arguments multiply pairwise to some general, but
  fixed, $r$.
\end{rems}

\begin{rems}
The inverse expansion:
\[
\Delta^0_{\blambda}(pq/tb^2|pqz_1^{\pm 1}/tb,\dots,pqz_n^{\pm 1}/tb;t;p,q)
=
\sum_{\bmu}
\obinomE{\blambda}{\bmu}_{[pq/tb^2,pq/t^nab](pqa/tb);t;p,q}
\cR^{*(n)}_{\bmu}(z_1,\dots,z_n;a,b;t;p,q)
\]
holds even if $\ell(\blambda)>n$ (assuming generic parameters).  Indeed, if
$k$ is sufficiently large, so that $n+k\ge \ell(\blambda)$, then one may
set $z_{n+i}=t^{-i}a$ in
\begin{align}
\Delta^0_{\blambda}(pq/tb^2|pqz_1^{\pm 1}/tb&,\dots,pqz_{n+k}^{\pm
  1}/tb;t;p,q)
\notag\\
&{}=
\sum_{\bmu}
\obinomE{\blambda}{\bmu}_{[pq/tb^2,pq/t^nab](pqt^{-k}a/tb);t;p,q}
\cR^{*(n+k)}_{\bmu}(z_1,\dots,z_{n+k};t^{-k}a,b;t;p,q)
\end{align}
to obtain the desired result.  This will be useful in the sequel, as
products of this form satisfy a number of useful identities.  For
convenience in notation, we will use the product expression
\eqref{eq:Cauchy_prod} to extend the Cauchy-type interpolation functions to
the case that the indexing partition has more than $n$ parts, as the above
considerations eliminate most of the dangers in such an extension.
\end{rems}

With this in mind, we refer to the functions $\cR^*_{\blambda/0}$ as {\em
  lifted interpolation functions}; these seem to be about as close as one
can hope to get to an elliptic analogue of the lifted interpolation
polynomials of \cite[\S 6]{bcpoly}.  These functions have a somewhat
surprising additional symmetry.

\begin{prop}\label{prop:symmA2n}
The lifted interpolation function
$\cR^*_{\blambda/0}([v_0,\dots,v_{2n-1}];a,b;t;p,q)$ is invariant under
permutations of the $2n+1$ values
\[
v_0,\dots,v_{2n-1},a/\prod_{0\le r<2n} v_r.
\]
\end{prop}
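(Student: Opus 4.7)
The plan is to specialize the definition of $\cR^*_{\blambda/\bkappa}$ at $\bkappa=0$ and to observe that the resulting sum is manifestly symmetric in a set of $2n+1$ values. The mechanism is that setting $\bkappa=0$ collapses the middle binomial coefficient into an ordinary $\Delta^0$ factor, which then merges with the existing $\Delta^0$ to expose the hidden argument.

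Concretely, the first step is to verify that $\binom{\bmu}{0}_{[x,y];t;p,q}=1$ for all parameters, since both $\Delta_{0}$ and $\cR^{*(n)}_{0}$ are identically $1$; combined with $\Delta^0_{0}\equiv 1$, this gives
\[
\obinomE{\bmu}{0}_{[x,y];t;p,q}=\Delta^0_{\bmu}(x|y;t;p,q).
\]
Applying this to the second binomial coefficient in the definition of $\cR^*_{\blambda/0}$, with $(x,y)=(pq/b^2,\,pqv_0\cdots v_{2n-1}/ab)$, and then using the fact that $\Delta^0$ is multiplicative in its list of $b$-arguments (it is by definition a product over them), I can combine that factor with the explicit $\Delta^0_{\bmu}$ in the definition to obtain
\[
\cR^*_{\blambda/0}([v_0,\dots,v_{2n-1}];a,b;t;p,q)
=
\sum_{\bmu\subset\blambda}
\obinomE{\blambda}{\bmu}_{[a/b,ab/pq];t;p,q}\,
\Delta^0_{\bmu}(pq/b^2|pq/bv_0,\dots,pq/bv_{2n-1},pqv_0\cdots v_{2n-1}/ab;t;p,q).
\]

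Setting $v_{2n}:=a/(v_0\cdots v_{2n-1})$, the last $b$-argument equals $pq/bv_{2n}$, so the merged $\Delta^0_{\bmu}$ carries the $2n+1$ arguments $pq/bv_0,\dots,pq/bv_{2n}$ and is symmetric under permutations of $v_0,\dots,v_{2n}$ by the manifest symmetry of $\Delta^0$ in its $b$-arguments. The outer binomial coefficient $\obinomE{\blambda}{\bmu}_{[a/b,ab/pq];t;p,q}$ and the summation range $\bmu\subset\blambda$ depend only on $a,b,t,p,q$ and not on any $v_r$, so each summand is symmetric in $v_0,\dots,v_{2n}$, giving the claimed $S_{2n+1}$-invariance.

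I do not anticipate any real obstacle: once one notices the collapse $\obinomE{\bmu}{0}=\Delta^0_{\bmu}$, the proof is bookkeeping. The substantive content of the proposition is simply the recognition that the definition of $\cR^*_{\blambda/0}$ tacitly contains a $(2n+1)$-st plethystic argument $a/(v_0\cdots v_{2n-1})$, consistent with the plethystic interpretation sketched just before the proposition.
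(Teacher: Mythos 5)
Your proposal is correct and is essentially the paper's own proof: both collapse $\obinomE{\bmu}{0}_{[pq/b^2,pqV/ab]}$ into $\Delta^0_{\bmu}(pq/b^2|pqV/ab)$, absorb it into the existing $\Delta^0_{\bmu}$ factor to exhibit the $(2n+1)$-st argument $pq/bv_{2n}$ with $v_{2n}=a/V$, and note the manifest symmetry of the resulting sum. The extra verification that $\binom{\bmu}{0}=1$ is fine bookkeeping but does not change the argument.
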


\begin{proof}
Since
\[
\obinomE{\bmu}{0}_{[a,b];t;p,q}
=
\Delta^0_{\bmu}(a|b;t;p,q),
\]
we have
\begin{align}
\cR^*_{\blambda/0}([v_0,\dots,v_{2n-1}];a,b;t;p,q)
=
\sum_{\bmu}&
\obinomE{\blambda}{\bmu}_{[a/b,ab/pq];t;p,q}\\
&\times\Delta^0_{\bmu}(pq/b^2|pq/bv_0,pq/bv_1,\dots,pq/bv_{2n-1},pq\prod_{0\le
  r<2n} v_r/ab;t;p,q),\notag
\end{align}
which manifestly has the stated symmetry.
\end{proof}

It follows that the connection coefficient formula of \cite{bctheta}
extends, and in a particularly nice form.

\begin{cor}
One has the identity
\[
\cR^*_{\blambda/0}([v_0,\dots,v_{2n-1}];a,b;t;p,q)
=
\sum_{\bkappa}
\cR^*_{\blambda/\bkappa}([a/V,V/a'];a,b;t;p,q)
\cR^*_{\bkappa/0}([v_0,\dots,v_{2n-1}];a',b;t;p,q),
\]
where $V=\prod_{0\le r<2n}v_r$.
\end{cor}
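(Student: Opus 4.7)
The plan is to start from the right-hand side and collapse the $\bkappa$-sum using the concatenation identity (the unnumbered proposition just before Theorem~\ref{thm:lifted_interp}), then use the hidden $A_{2n}$-symmetry of Proposition~\ref{prop:symmA2n} together with the invariance under insertion/deletion of inverse pairs to reduce to the left-hand side.

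In more detail, I would first apply the concatenation identity with first block $[a/V,V/a']$ (product $a/a'$) and second block $[v_0,\dots,v_{2n-1}]$. Since the parameter $a$ in the second factor is replaced by $a/(a/a')=a'$, this yields
\[
\sum_{\bkappa}
\cR^*_{\blambda/\bkappa}([a/V,V/a'];a,b;t;p,q)
\cR^*_{\bkappa/0}([v_0,\dots,v_{2n-1}];a',b;t;p,q)
=
\cR^*_{\blambda/0}([a/V,V/a',v_0,\dots,v_{2n-1}];a,b;t;p,q),
\]
so the corollary is equivalent to the assertion that the right-hand side above equals $\cR^*_{\blambda/0}([v_0,\dots,v_{2n-1}];a,b;t;p,q)$.

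For this last equality, observe that the $(2n+2)$-argument lifted interpolation function on the right has associated $(2n+3)$-element ``hidden-symmetry'' set consisting of the arguments together with the complement $a/((a/V)(V/a')\prod_r v_r)=a'/V$; that is, the set $\{a/V,V/a',v_0,\dots,v_{2n-1},a'/V\}$. By Proposition~\ref{prop:symmA2n}, the function is unchanged if we permute among these $2n+3$ values (with $a$ and $b$ held fixed). Permuting so that $a/V$ becomes the hidden complement and $a'/V$ becomes an explicit argument yields
\[
\cR^*_{\blambda/0}([a/V,V/a',v_0,\dots,v_{2n-1}];a,b;t;p,q)
=
\cR^*_{\blambda/0}([V/a',a'/V,v_0,\dots,v_{2n-1}];a,b;t;p,q),
\]
and now the arguments contain the inverse pair $V/a',a'/V$, which can be deleted by the insertion/deletion invariance noted after the definition of the skew interpolation function. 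This reduces the right-hand side to $\cR^*_{\blambda/0}([v_0,\dots,v_{2n-1}];a,b;t;p,q)$, proving the identity.

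There is no real analytic obstacle here; the argument is a direct application of three structural properties established earlier in the section. The only point requiring care is bookkeeping the shift $a\mapsto a'$ when the first block of arguments has product $a/a'$ in the concatenation identity, and verifying that the two sets of $2n+3$ values appearing in the symmetry step genuinely coincide so that Proposition~\ref{prop:symmA2n} really applies.
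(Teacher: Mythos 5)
Your proof is correct and follows exactly the paper's own argument: the concatenation identity collapses the $\bkappa$-sum to $\cR^*_{\blambda/0}([a/V,V/a',v_0,\dots,v_{2n-1}];a,b;t;p,q)$, and then Proposition~\ref{prop:symmA2n} (swapping $a/V$ with the hidden complement $a'/V$) followed by deletion of the resulting inverse pair $V/a',a'/V$ gives the left-hand side. The computations of the shifted parameter $a'$ and of the hidden complement are both verified correctly.
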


\begin{proof}
Indeed, this reduces to showing
\[
\cR^*_{\blambda/0}([a/V,V/a',v_0,\dots,v_{2n-1}];a,b;t;p,q)
=
\cR^*_{\blambda/0}([v_0,\dots,v_{2n-1}];a,b;t;p,q),
\]
and this is simply deletion of the pair $V/a',a'/V$ from the left-hand
side, after applying Proposition~\ref{prop:symmA2n}.
\end{proof}

The relation of skew interpolation functions to the binomial coefficients
means that we can expect most symmetries of the latter to extend.  We begin
with duality, which breaks the symmetry between $p$ and $q$, but will be
useful in the sequel.  Here we can simply apply the symmetry term-by-term
in the definition.

\begin{prop} \cite[Cor.~4.4]{bctheta}
\[
\cR^*_{(0,\lambda)/(0,\kappa)}([v_0,\dots,v_{2k-1}];a,b;t;p,q)
=
\cR^*_{(0,\lambda')/(0,\kappa')}([v_0,\dots,v_{2k-1}];a,b/qt;1/q;p,1/t)
\]
\end{prop}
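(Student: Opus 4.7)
The plan is to apply the duality from \cite[Cor.~4.4]{bctheta} directly to each factor in the defining sum of the skew interpolation function, as suggested in the statement of the proposition.

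First I would expand both sides using the definition of $\cR^*_{\blambda/\bkappa}$: the left-hand side becomes
\[
\sum_{(0,\kappa)\subset(0,\mu)\subset(0,\lambda)}
\obinomE{(0,\lambda)}{(0,\mu)}_{[a/b,ab/pq];t;p,q}
\obinomE{(0,\mu)}{(0,\kappa)}_{[pq/b^2,pqV/ab];t;p,q}
\Delta^0_{(0,\mu)}(pq/b^2|pq/bv_0,\dots,pq/bv_{2k-1};t;p,q),
\]
where $V = \prod_r v_r$, and similarly for the right-hand side with the substituted parameters. Since the partitions involved all have the form $(0,\cdot)$, every factor on either side is $p$-elliptic in its parameters and depends on $q$ and $t$ in the restricted way covered by Cor.~4.4. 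The conjugation map $\mu \mapsto \mu'$ is an order-preserving involution on the lattice of partitions, so the indexing set $\{(0,\mu) : (0,\kappa)\subset(0,\mu)\subset(0,\lambda)\}$ is in natural bijection with $\{(0,\mu') : (0,\kappa')\subset(0,\mu')\subset(0,\lambda')\}$, and the sum reindexes correctly.

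Next I would verify term-by-term that Cor.~4.4 produces precisely the desired parameter substitution $t \mapsto 1/t$, $q \mapsto 1/q$, $b \mapsto b/qt$, with $a$ and $p$ unchanged, in each of the three types of factors: the two elliptic binomial coefficients and the $\Delta^0$-symbol (viewing the latter as a limit of binomial coefficients or equivalently citing the corresponding duality for $\cC^0$ via the product formula $\cC^0_{(0,\mu)}(x;t;p,q) = \cC^0_{(0,\mu')}(x;1/q;p,1/t)$, up to the standard rescaling by powers of $t$ and $q$). The only substantive point to check is that the shifts of the bracket arguments (such as $ab/pq$ and $pqV/ab$) agree on both sides after substitution; this is a straightforward bookkeeping exercise, since the combination $ab/pq$ in the left-hand bracket transforms to $a(b/qt)/p(1/q) = ab/pt$, which is exactly $ab/pq$ under the substitution $q \mapsto 1/q$, $t\mapsto 1/t$ applied to $q$.

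The main (and essentially only) obstacle is this parameter-matching step: making sure that the various rescalings in $b$ introduced by the duality on each factor combine consistently rather than producing a spurious overall shift. Once each factor is confirmed to transform as claimed, the identity follows from linearity of the sum under reindexing $\mu \leftrightarrow \mu'$, with no analytic or combinatorial input beyond the cited duality.
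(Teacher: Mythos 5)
Your proposal is correct and takes essentially the same approach as the paper, whose entire proof is the preceding remark that one ``can simply apply the symmetry term-by-term in the definition'': expanding the defining sum, dualizing each elliptic binomial coefficient and $\Delta^0$-factor via \cite[Cor.~4.4]{bctheta}, and reindexing by conjugation. (One small bookkeeping correction: on the right-hand side the new $q$-slot is $1/t$, not $1/q$, so the second bracket argument transforms as $a(b/qt)/(p\cdot(1/t))=ab/pq$ directly, as required.)
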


The other symmetries do respect the $p$, $q$ symmetry, but lead to
unpleasant scale factors since the skew interpolation functions are not
quite elliptic, so we use the $\hat{\cR}{}^*$ variant.  In particular, this
allows one to prove identities by factoring into $p$-elliptic and
$q$-elliptic factors, then using ellipticity to restore symmetry before
multiplying the identities back together.

\begin{prop} \cite[(4.10)]{bctheta}
If $\ell(\blambda),\ell(\bkappa)\le n$, then
\begin{align}
\hat{\cR}{}^*_{(l,m)^n+\blambda/(l,m)^n+\bkappa}([\dots,v_r,\dots]&;a,b;t;p,q)\notag\\
{}={}&
\Delta^0_{(l,m)^n}(pq/b^2|\dots,pq/bv_r,\dots,Q a/t^{n-1} b,p^2q^2 t^{n-1}
V/Qab;t;p,q)\notag\\
&
\times\left(\frac{\Delta^0_{\blambda}(Q^2 a/b|pqt^{n-1}Q,Q^2a/t^{n-1}b,ab/pq,p^2q^2 Q/b^2;t;p,q)}
     {\Delta^0_{\bkappa}(Q^2 a/Vb|pqt^{n-1}Q,Q^2a/t^{n-1}bV,ab/pqV,p^2q^2
       Q/b^2;t;p,q)}\right)\notag\\
&
\times\hat{\cR}{}^*_{\blambda/\bkappa}([\dots,v_r,\dots];Q a,b/Q;t;p,q),
\end{align}
where $V=\prod_r v_r$ and $Q=p^lq^m$.
\end{prop}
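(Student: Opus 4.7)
The plan is to expand the LHS using the definition of $\cR^*$, reindex, and factor each summand into a $\bnu$-independent ``rectangle'' contribution times a shifted sum. Since $\ell(\blambda),\ell(\bkappa)\le n$, any partition pair $\bmu$ with $(l,m)^n+\bkappa\subset\bmu\subset(l,m)^n+\blambda$ has $\bmu_i\ge(l,m)$ for $i\le n$ and vanishes beyond, so the sum reindexes as $\bmu=(l,m)^n+\bnu$ with $\bkappa\subset\bnu\subset\blambda$ and $\ell(\bnu)\le n$. The goal is to verify that the resulting sum, once freed of rectangle factors, is exactly the sum defining $\cR^*_{\blambda/\bkappa}([v];Qa,b/Q;t;p,q)$.

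The key factorization is
\[
\cC^0_{(l,m)^n+\bnu}(x;t;p,q)=\cC^0_{(l,m)^n}(x;t;p,q)\,\cC^0_\bnu(Qx;t;p,q)\qquad(\ell(\bnu)\le n),
\]
immediate from $\theta(y;p,q)_{(l,m)+\nu_i}=\theta(y;p,q)_{l,m}\,\theta(Qy;p,q)_{\nu_i}$, which upgrades to
\[
\Delta^0_{(l,m)^n+\bnu}(a|b_0,\dots,b_{k-1})=\Delta^0_{(l,m)^n}(a|b_0,\dots,b_{k-1})\,\Delta^0_\bnu(Q^2a|Qb_0,\dots,Qb_{k-1}).
\]
I apply this to the $\Delta^0_\bmu$ factor inside the defining sum of $\cR^*$: the $\Delta^0_\bnu$ piece has parameters $\Delta^0_\bnu\bigl(pq/(b/Q)^2\bigm|pq/((b/Q)v_r)\bigr)$, matching exactly the parameters appearing in the definition of $\cR^*_{\blambda/\bkappa}([v];Qa,b/Q)$. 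For the two elliptic binomial coefficients $\obinomE{(l,m)^n+\blambda}{(l,m)^n+\bnu}_{[a/b,ab/pq];t;p,q}$ and $\obinomE{(l,m)^n+\bnu}{(l,m)^n+\bkappa}_{[pq/b^2,pqV/ab];t;p,q}$ in the summand, I invoke the two-argument case of the present proposition, i.e.\ the shift identity for values of interpolation functions at partitions from \cite{bctheta}, to peel off rectangle factors and leave $\obinomE{\blambda}{\bnu}_{[Q^2a/b,ab/pq];t;p,q}$ and $\obinomE{\bnu}{\bkappa}_{[p^2q^2Q^2/b^2,pqV/ab];t;p,q}$ respectively; these are precisely the binomials of the shifted $\cR^*_{\blambda/\bkappa}([v];Qa,b/Q)$.

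Finally I convert to the hat-normalization on both sides by applying the same $\Delta^0$-splitting to the outer normalizing ratio $\Delta^0_{(l,m)^n+\bkappa}/\Delta^0_{(l,m)^n+\blambda}$. The main obstacle is purely bookkeeping: one must check that all accumulated rectangle contributions — from the splitting of $\Delta^0_\bmu$, from each of the two binomial shifts, and from the two outer hat-normalization ratios — combine, via the additivity $\Delta^0_{(l,m)^n}(a|b_0)\Delta^0_{(l,m)^n}(a|b_1)=\Delta^0_{(l,m)^n}(a|b_0,b_1)$ and the reflection $\Delta^0_{(l,m)^n}(a|c,pqa/c)=1$, into exactly the single advertised factor $\Delta^0_{(l,m)^n}(pq/b^2|\dots,pq/bv_r,\dots,Qa/t^{n-1}b,p^2q^2t^{n-1}V/Qab;t;p,q)$ and the explicit $\Delta^0_\blambda/\Delta^0_\bkappa$ scalar on the right; the two ``extra'' arguments $Qa/t^{n-1}b$ and $p^2q^2t^{n-1}V/Qab$ in the rectangle factor are precisely the ones introduced by the second and first binomial-coefficient shifts respectively, and checking their appearance is the most delicate step.
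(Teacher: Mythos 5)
Your proposal is correct and follows the same route as the paper, which disposes of this proposition (and its neighbours) in one line: apply the corresponding shift symmetries of the elliptic binomial coefficients from \cite{bctheta} and of the $\Delta$-symbols term-by-term to the defining sum, after reindexing $\bmu=(l,m)^n+\bnu$. The only point to watch is that your $\cC^0$-splitting $\theta(y;p,q)_{(l,m)+\nu_i}=\theta(y;p,q)_{l,m}\,\theta(Qy;p,q)_{\nu_i}$ holds only up to elementary quasiperiodicity monomials, which is precisely why the identity is stated for the $\hat{\cR}{}^*$ normalization; your final bookkeeping step is where those factors get absorbed.
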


\begin{prop} \cite[Cor.~4.6]{bctheta}
If $\blambda_1,\bkappa_1\le (l,m)$, then
\begin{align}
\hat{\cR}{}^*_{(l,m)^n\cdot\blambda/(l,m)^n\cdot\bkappa}([\dots,v_r,\dots]&;a,b;t;p,q)\notag\\
{}={}&
\Delta^0_{(l,m)^n}(pq/b^2|\dots,pq/bv_r,\dots,Qa/t^{n-1}b,p^2q^2t^{n-1}V/Qab;t;p,q)\notag\\
&
\times\left(\frac{\Delta^0_{\blambda}(a/t^{2n}b|pq/Qt^{n+1},Qa/t^{2n-1}b,ab/pq,p^2q^2/t^nb^2;t;p,q)}
     {\Delta^0_{\bkappa}(a/t^{2n}Vb|pq/Qt^{n+1},Qa/t^{2n-1}bV,ab/pqV,p^2q^2/t^nb^2;t;p,q)}\right)\notag\\
&
\times \hat{\cR}{}^*_{\blambda/\bkappa}([\dots,v_r,\dots];t^{-n}a,t^n b;t;p,q).
\end{align}
\end{prop}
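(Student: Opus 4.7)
The plan is to reduce the identity to the corresponding statement \cite[Cor.~4.6]{bctheta} for elliptic binomial coefficients by applying it term-by-term in the defining sum of the skew interpolation function. First, I expand the left-hand side via the definition of $\cR^*_{\blambda/\bkappa}$. Since $\blambda_1,\bkappa_1\le(l,m)$, every partition pair $\bmu$ satisfying $(l,m)^n\cdot\bkappa\subset\bmu\subset(l,m)^n\cdot\blambda$ must have its first $n$ parts equal exactly to $(l,m)$; hence $\bmu=(l,m)^n\cdot\bmu'$ for a unique partition pair $\bmu'$ with $\bkappa\subset\bmu'\subset\blambda$, and the sum may be reindexed to range over $\bmu'$.

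Next, I apply \cite[Cor.~4.6]{bctheta} to each of the two binomial coefficients
\[
\obinomE{(l,m)^n\cdot\blambda}{(l,m)^n\cdot\bmu'}_{[a/b,ab/pq];t;p,q},\qquad
\obinomE{(l,m)^n\cdot\bmu'}{(l,m)^n\cdot\bkappa}_{[pq/b^2,pqV/ab];t;p,q},
\]
writing each as a $\bmu'$-independent prefactor times a binomial coefficient indexed by $(\blambda,\bmu')$ or $(\bmu',\bkappa)$ in which the base parameters have been rescaled by $a\mapsto t^{-n}a$, $b\mapsto t^n b$. In parallel, I factor the Cauchy-type weight $\Delta^0_{(l,m)^n\cdot\bmu'}(pq/b^2|pq/bv_0,\dots,pq/bv_{2n-1};t;p,q)$ into a $(l,m)^n$ piece and a $\bmu'$ piece at correspondingly shifted arguments, using the multiplicativity of $\cC^0$ along concatenations of partitions.

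After pulling the $\bmu'$-independent factors outside the sum, what remains is precisely the definition of $\cR^*_{\blambda/\bkappa}([\dots,v_r,\dots];t^{-n}a,t^n b;t;p,q)$; converting from $\cR^*$ to $\hat{\cR}{}^*$ on both sides introduces only the explicit ratio $\Delta^0_\blambda/\Delta^0_\bkappa$ used in the definition of $\hat{\cR}{}^*$. The main obstacle is the bookkeeping: the pieces extracted from the three sources, together with the $\cR^*\to\hat{\cR}{}^*$ normalization, must assemble into exactly the prefactors shown on the right-hand side. This requires matching each of the four arguments in the $\Delta^0_\blambda/\Delta^0_\bkappa$ ratio and verifying that the balancing conditions in each $\Delta^0$ remain valid after the shift $(a,b)\mapsto(t^{-n}a,t^n b)$; the appearance of the extra factor $t^{-n}$ in the first argument of $\Delta^0_\blambda$ (namely $a/t^{2n}b$) is an artifact of applying the shift to a binomial coefficient whose original parameters already carry a factor of $t^{-n}$ relative to the simpler $(l,m)^n+\blambda$ case.
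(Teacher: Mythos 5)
Your proposal is correct and follows exactly the route the paper takes: the paper disposes of this (and the neighboring symmetries) in one sentence by ``applying the corresponding symmetries of elliptic binomial coefficients and $\Delta$ symbols to the definition of the skew interpolation functions,'' which is precisely your term-by-term application of \cite[Cor.~4.6]{bctheta} after reindexing the sum via $\bmu=(l,m)^n\cdot\bmu'$. Your observation that the hypothesis $\blambda_1,\bkappa_1\le(l,m)$ forces the first $n$ parts of every intermediate $\bmu$ to equal $(l,m)$, and the factorization of $\cC^0$ along the concatenation, are the right details to make the one-line argument rigorous.
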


\begin{prop} \cite[Cor.~4.7]{bctheta}
If $\blambda, \bkappa\subset (l,m)^n$, then
\begin{align}
\hat{\cR}{}^*_{(l,m)^n-\bkappa/(l,m)^n-\blambda}([\dots,v_r,\dots]&;a,b;t;p,q)
\notag\\
{}={}&
\Delta^0_{(l,m)^n}(pq/b^2|\dots,pq/bv_r,\dots,Q a/t^{n-1}b,p^2q^2 t^{n-1}V/Q ab;t;p,q)
\notag\\
&\times\left(
\frac{\Delta_{\blambda}(t^{2n-2}bV/Q^2a|t^n,1/Q,t^{n-1}b^2/pqQ,pqV/ab;t;p,q)}
     {\Delta_{\bkappa}(t^{2n-2}b/Q^2a|t^n,1/Q,t^{n-1}b^2/pqQ,pq/ab;t;p,q)}
\right)\notag\\
&\times
\hat{\cR}{}^*_{\blambda/\bkappa}([\dots,v_r,\dots];pqVt^{n-1}/Qa,pq Q/t^{n-1}b;t;p,q).
\end{align}
\end{prop}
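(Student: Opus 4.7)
The plan is to reduce the stated identity to the corresponding complementation symmetry at the level of ordinary binomial coefficients, namely \cite[Cor.~4.7]{bctheta}, by working term-by-term in the defining sum of the skew interpolation function. This parallels the proofs of the preceding two propositions about $(l,m)^n+\blambda$ and $(l,m)^n\cdot\blambda$, which use the analogous binomial symmetries from \cite[Cor.~4.4, Cor.~4.6]{bctheta}.

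First I would expand the left-hand side according to the definition,
\[
\cR^*_{(l,m)^n-\bkappa/(l,m)^n-\blambda}([\dots,v_r,\dots];a,b;t;p,q)
=
\sum_{\bmu'}
\obinomE{(l,m)^n-\bkappa}{\bmu'}_{[a/b,ab/pq];t;p,q}
\obinomE{\bmu'}{(l,m)^n-\blambda}_{[pq/b^2,pqV/ab];t;p,q}
\Delta^0_{\bmu'}(pq/b^2|\dots,pq/bv_r,\dots;t;p,q),
\]
where $\bmu'$ ranges over partition pairs in $(l,m)^n-\blambda\subset\bmu'\subset(l,m)^n-\bkappa$, and then make the substitution $\bmu'=(l,m)^n-\bmu$ with $\bkappa\subset\bmu\subset\blambda$. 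The resulting sum must be compared to the $\hat{\cR}{}^*_{\blambda/\bkappa}$ on the right, with parameters $a\mapsto pqVt^{n-1}/Qa$, $b\mapsto pqQ/t^{n-1}b$, expanded via the same definition.

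Next, apply \cite[Cor.~4.7]{bctheta} to each of the two binomial coefficients separately. Each application produces the transformed binomial coefficient $\obinomE{\blambda}{\bmu}$ or $\obinomE{\bmu}{\bkappa}$ with the new pair of parameters, times an explicit quotient of $\Delta$-symbols depending only on the endpoints of the inclusion (not on $\bmu$). Those quotients factor out of the sum and should recombine to produce the ratio $\Delta_{\blambda}/\Delta_{\bkappa}$ appearing in the statement, together with part of the $\Delta^0_{(l,m)^n}$ prefactor. The part of the summand that still depends on $\bmu$ is $\Delta^0_{(l,m)^n-\bmu}(pq/b^2|\dots,pq/bv_r,\dots;t;p,q)$, and the key remaining step is to rewrite this as $\Delta^0_{(l,m)^n}\cdot\Delta^0_{\bmu}$ of the required reciprocal arguments, using the elementary complementation identity $\theta(x;p,q)_{l-k,m-j}=\theta(x;p,q)_{l,m}/\theta(p^{-k}q^{-j}/x;p,q)_{k,j}$ term by term in the definition of $\Delta^0$ (equivalently, the reflection symmetry for $\cC^0$ that underlies \cite[Cor.~4.7]{bctheta}).

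The principal obstacle is purely bookkeeping: one must check that the prefactors produced by the two applications of the binomial complementation, together with the reorganization of the $\Delta^0_{\bmu'}$ factor under $\bmu'=(l,m)^n-\bmu$, assemble exactly into the claimed $\Delta^0_{(l,m)^n}(pq/b^2|\dots)$ prefactor and the $\Delta_{\blambda}/\Delta_{\bkappa}$ ratio (and that the leftover parameters inside the new $\hat{\cR}{}^*$ do indeed become $(pqVt^{n-1}/Qa,pqQ/t^{n-1}b)$). Since both sides are products of a $p$-elliptic and a $q$-elliptic factor in the sense of the earlier remark, it suffices to verify the identity in the $p$-elliptic factor and invoke the $p\leftrightarrow q$ symmetry; this roughly halves the calculation. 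No deeper input is needed beyond \cite[Cor.~4.7]{bctheta} and the identities for $\Delta^0$, $\Delta$ already used to derive the preceding two propositions.
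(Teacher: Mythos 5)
Your proposal is correct and is essentially the paper's own proof, which disposes of this proposition (and the two preceding ones) in a single sentence: apply the corresponding symmetry of the elliptic binomial coefficients (here \cite[Cor.~4.7]{bctheta}) and of the $\Delta$-symbols term by term in the defining sum, after the change of summation index $\bmu'=(l,m)^n-\bmu$. The only caveat is that the prefactors produced by the binomial complementation do depend on $\bmu$ (as ratios of $\Delta$-symbols), so they do not literally factor out of the sum but must be absorbed into the summand of the new $\hat{\cR}{}^*_{\blambda/\bkappa}$; this is part of the bookkeeping you already flag.
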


The above symmetries each follow by applying the corresponding symmetries
of elliptic binomial coefficients and $\Delta$ symbols to the definition of
the skew interpolation functions.  There is also an analogue of
\cite[Cor.~4.8]{bctheta}, but this is more subtle.  We give this in a
fairly general form, for ease of induction and later application.

\begin{thm}\label{thm:1.12}
Suppose the parameters $v_0,\dots,v_{2k-1}$ can be ordered in such a way
that $v_{2r}v_{2r+1}=t^{n_r}$ with $n_r\in \Z_{\ge 0}$, $0\le r<k$, and let $l$,
$m$, $n$, $n'$ be nonnegative integers with $n'=n+\sum_r n_r$ and
$\blambda_1,\bkappa_1\le (l,m)$.  Then
\begin{align}
\hat{\cR}{}^*_{(l,m)^{n'}\cdot\blambda/(l,m)^{n}\cdot\bkappa}([v_0,\dots,v_{2k-1}]&;a,b;t;p,q)\notag\\
{}={}&
\frac{\prod_{0\le r<k}
        \Delta^0_{(l,m)^{n_r}}(a/bt^{n'-n_r}|av_{2r}/t^{n'},av_{2r+1}/t^{n'};t;p,q)}
     {\Delta^0_{(l,m)^{n'-n}}(a/bt^n|ab/pq,pqa/t^{n'+n}b;t;p,q)}\notag\\
&\times\left(
\frac{\Delta_\bkappa(a/t^{n'+n}b|p^{-l}q^{-m},p^lq^m a/t^{n'-1}b,ab/pqt^{n'},pqt^{n'-n}/ab;t;p,q)}
     {\Delta_\blambda(a/t^{2n'}b|p^{-l}q^{-m},p^lq^m
       a/t^{n'-1}b,ab/pqt^{n'},pq/ab;t;p,q)}\right)\notag\\
&\times
\hat{\cR}{}^*_{\bkappa/\blambda}([v_0,\dots,v_{2k-1}];pq/t^{n}b,pqt^{n'}/a;t;p,q)
\end{align}
\end{thm}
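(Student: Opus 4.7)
My plan is to prove the identity by induction on $k$, absorbing the pairs $(v_{2r}, v_{2r+1})$ one at a time using the concatenation identity for skew interpolation functions established earlier. The overall strategy is to reduce to a chain of two-argument skew interpolation functions, each of which is an explicit elliptic binomial coefficient whose second parameter $v_{2r}v_{2r+1} = t^{n_r}$ is a power of $t$; these are precisely the binomial coefficients to which the stacking and complementation symmetries of \cite{bctheta} apply in a clean form.

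The base case $k = 0$ is essentially trivial: $n' = n$ (empty sum), both sides collapse to $\delta_{\blambda,\bkappa}$ via the no-argument proposition, and the asserted $\Delta$-ratio is manifestly $1$ when $\blambda = \bkappa$.

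For the inductive step, I would use the concatenation identity to peel off the pair $(v_{2k-2}, v_{2k-1})$, decomposing the LHS as a sum over intermediate partition pairs $\bnu$ of a product of two skew interpolation functions: a two-argument factor and a $(2k-2)$-argument factor with shifted parameter $a/t^{n_{k-1}}$ to which the inductive hypothesis can be applied.  A parallel decomposition of the RHS (using the dual parameters $pq/t^n b$ and $pq t^{n'}/a$) allows for term-by-term matching. The constraint $v_{2k-2}v_{2k-1} = t^{n_{k-1}}$ ensures that the two-argument factor transforms, via the Cor.~4.6 and Cor.~4.7 symmetries of \cite{bctheta} applied to the underlying binomial coefficient, into the corresponding two-argument factor on the RHS with an appropriate rectangle-height adjustment $n \to n + n_{k-1}$.

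The main technical obstacle is the bookkeeping of the $\Delta^0$ and $\Delta$ prefactors.  The normalization $\hat{\cR}{}^*$ is designed to be elliptic in each parameter, so the $\theta$-function products differ only by factors expressible via the standard functional equation $\theta(x;p,q)_{l+l',m+m'} = \theta(x;p,q)_{l,m}\,\theta(p^lq^m x;p,q)_{l',m'}$. Once the induction is set up correctly, the cascading cancellations should reduce to elementary theta-function identities; however, a careful (and somewhat tedious) check is required to ensure that the balancing constraints among the parameters $a$, $b$, $v_r$, $n$, $n'$, $l$, $m$ remain consistent at each step of the induction.
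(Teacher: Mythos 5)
Your proposal is correct and follows essentially the same route as the paper's proof: induction on $k$, peeling off a single pair with $v_{2r}v_{2r+1}=t^{n_r}$ via the concatenation identity, transforming the two-argument factor by the binomial-coefficient symmetries of \cite{bctheta} and the remaining factor by the inductive hypothesis. Two details to repair in the write-up: the symmetry needed for the two-argument factor is Corollary~4.8 of \cite{bctheta} (the inversion of a binomial coefficient whose second parameter is a power of $t$, combined with Corollary~4.6 to stack the rectangle), not the complementation symmetry of Corollary~4.7, which would introduce complemented partitions that do not appear in the statement; and the term-by-term matching of the two decompositions requires the observation, made explicitly in the paper, that the vanishing conditions force every surviving intermediate partition pair $\bmu$ on the left-hand side to contain the rectangle $(l,m)^{n'-n_{k-1}}$, so that it can be written as $(l,m)^{n'-n_{k-1}}\cdot\bnu$ and paired with the term indexed by $\bnu$ on the right.
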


\begin{proof}
When $k=1$, this follows immediately from Corollaries 4.6 and 4.8 of
\cite{bctheta}. (Corollary 4.8 corresponds to the case $n=0$, $k=1$, and
Corollary 4.6 allows one to extend this to $n>0$.)  We then proceed by
induction on $k$.  One first notes that
\begin{align}
\hat{\cR}{}^*_{(l,m)^{n'}\cdot\blambda/(l,m)^{n}\cdot\bkappa}&([v_0,\dots,v_{2k-1}];a,b;t;p,q)\notag\\
&{}=
\sum_{\bmu}
\hat{\cR}{}^*_{(l,m)^{n'}\cdot\blambda/\bmu}([v_0,v_1];a,b;t;p,q)
\hat{\cR}{}^*_{\bmu/(l,m)^{n}\cdot\bkappa}([v_2,\dots,v_{2k-1}];a/t^{n_0},b;t;p,q).
\end{align}
The key observation is that the first factor is
\[
\obinomE{(l,m)^{n'}\cdot\blambda}{\bmu}_{[a/b,t^{n_0}](a/v_0,a/v_1,p^2q^2/b^2);t;p,q},
\]
which vanishes unless
\[
\bmu_i\le ((l,m)^{n'}\cdot\blambda)_i\le \bmu_{i-n_0}.
\]
In particular, for $1\le i\le n'$, $\bmu_i\le (l,m)$, while for $1\le i\le
n'-n_0$, $\bmu_i\ge (l,m)$.  We can thus rewrite the sum as
\[
\sum_{\bnu}
\hat{\cR}{}^*_{(l,m)^{n'}\cdot\blambda/(l,m)^{n'-n_0}\cdot\bnu}([v_0,v_1];a,b;t;p,q)
\hat{\cR}{}^*_{(l,m)^{n'-n_0}\cdot\bnu/(l,m)^{n}\cdot\bkappa}([v_2,\dots,v_{2k-1}];a/t^{n_0},b;t;p,q).
\]
The result follows by applying the symmetry to each factor and simplifying.
\end{proof}

Dually, one has the following identity.

\begin{cor}
Suppose the parameters $v_0,\dots,v_{2k-1}$ can be ordered in such a way
that $v_{2r}v_{2r+1}=Q_r^{-1}$, $Q_r:=p^{l_r}q^{m_r}$, with $l_r,m_r\in
\Z_{\ge 0}$ for $0\le r<k$, and let $l$, $l'$, $m$, $m'$, $n$, be nonnegative
integers with $l'=l+\sum_r l_r$, $m'=m+\sum_r m_r$,
$\ell(\blambda),\ell(\bkappa)\le n$.  Then
\begin{align}
\hat{\cR}{}^*_{(l',m')^n+\blambda/(l,m)^n+\bkappa}([v_0,\dots,v_{2k-1}]&;a,b;t;p,q)\notag\\
{}={}&
\frac{
\prod_{0\le r<k}
  \Delta^0_{(l_r,m_r)^n}(Q'a/Q_rb|Q'a/Q_rv_{2r},Q'a/Q_rv_{2r+1};t;p,q)}
{\Delta^0_{(l'-l,m'-m)^n}(Q a/b|ab/pq,pq Q Q'a/b;t;p,q)}\notag\\
&\times\left(\frac{\Delta_\bkappa(a Q Q'/b|t^n,at Q'/t^nb,ab Q'/pq,pq Q/Q'ab;t;p,q)}
     {\Delta_\blambda(a Q^{\prime 2}/b|t^n,at Q'/t^nb,ab Q'/pq,pq/ab;t;p,q)}\right)\notag\\
&\times\hat{\cR}{}^*_{\bkappa/\blambda}([v_0,\dots,v_{2k-1}];pq Q/b,pq/Q'a;t;p,q),
\end{align}
where $Q=p^lq^m$, $Q'=p^{l'}q^{m'}$.
\end{cor}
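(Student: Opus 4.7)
The plan is to parallel the proof of Theorem~\ref{thm:1.12} step-by-step, but with the ``add'' symmetry (equation~(4.10) above) and its generalization replacing the ``prepend'' symmetry \cite[Cor.~4.6]{bctheta}. The structural parallel is: the constraint $v_{2r}v_{2r+1}=t^{n_r}$ in Theorem~\ref{thm:1.12} made the $k=1$ binomial coefficient force intermediate partitions into ``prepend'' form, while here the dual constraint $v_{2r}v_{2r+1}=Q_r^{-1}$ will force intermediate partitions into ``add'' form.

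For the base case $k=1$, the 2-argument skew interpolation function is a binomial coefficient by the second Proposition of this section. The required identity is then a binomial-coefficient symmetry combining the ``add'' analog of \cite[Cor.~4.6]{bctheta} with the dual of \cite[Cor.~4.8]{bctheta} (which handles the swap $\blambda\leftrightarrow\bkappa$ when the $Q_0^{-1}$-constraint is in force at $n=0$); the extension to general $n$ follows by composing with the ``add'' symmetry.

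For the inductive step, I would split the arguments as
\begin{align*}
\hat{\cR}{}^*_{(l',m')^n+\blambda/(l,m)^n+\bkappa}&([v_0,\dots,v_{2k-1}];a,b;t;p,q)\\
&{}=\sum_{\bmu}
\hat{\cR}{}^*_{(l',m')^n+\blambda/\bmu}([v_0,v_1];a,b;t;p,q)\,
\hat{\cR}{}^*_{\bmu/(l,m)^n+\bkappa}([v_2,\dots,v_{2k-1}];aQ_0,b;t;p,q).
\end{align*}
The first factor is a binomial coefficient whose second bracket parameter is $Q_0^{-1}$, and the corresponding vanishing property forces $\bmu$ to take the form $(l'-l_0,m'-m_0)^n+\bnu$ with $\ell(\bnu)\le n$ and $\blambda\subset\bnu\subset(l_0,m_0)^n+\blambda$. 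The sum therefore collapses to a sum over such $\bnu$ in which the first factor is an instance of the base case and the second is an instance of the inductive hypothesis (with $k-1$ pairs, base point $aQ_0$, and shift $(l'-l_0,m'-m_0)=(l+\sum_{r\ge 1}l_r,m+\sum_{r\ge 1}m_r)$). After applying both symmetries, the sum over $\bnu$ reassembles via the composition identity into the sought skew interpolation function.

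The main obstacle is verifying that the prefactors telescope correctly. The base case contributes $\Delta^0_{(l_0,m_0)^n}(\cdot|Q'a/Q_0v_0,Q'a/Q_0v_1;t;p,q)$ together with a partial version of the $\Delta_\bkappa/\Delta_\blambda$ ratio evaluated at the intermediate partition $\bnu$; the inductive step contributes the remaining $\Delta^0_{(l_r,m_r)^n}$ factors (at shifted base point) together with the remaining part of the $\Delta_\bkappa/\Delta_\blambda$ ratio. The nontrivial point is that the $a\mapsto aQ_0$ shift in the inductive application is exactly what is needed to make the individual $\Delta^0_{(l_r,m_r)^n}$ factors consistent with the uniform expression $\Delta^0_{(l_r,m_r)^n}(Q'a/Q_rb|Q'a/Q_rv_{2r},Q'a/Q_rv_{2r+1};t;p,q)$ in the statement, and similarly that the accumulated $\Delta_{(l_0,m_0)^n+\bnu}$-style factor at the base step matches the shifted $\Delta_\bnu$-style factor at the inductive step, producing the claimed ratio of $\Delta$-symbols evaluated only at $\blambda$ and $\bkappa$. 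This is essentially the same bookkeeping task as in Theorem~\ref{thm:1.12}, with the roles of $t^{n_r}$ and $Q_r^{-1}$ (and of $\cdot$ and $+$) interchanged throughout.
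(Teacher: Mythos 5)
Your proposal is correct and is essentially the paper's intended argument: the paper offers no separate proof, introducing the corollary only with the word ``Dually,'' meaning precisely that the proof of Theorem~\ref{thm:1.12} carries over with the prepend symmetry \cite[Cor.~4.6]{bctheta} replaced by the add symmetry \cite[(4.10)]{bctheta} and with the vanishing condition $\bmu\subset\blambda\subset\bmu+(l_0,m_0)$ of \cite[Cor.~4.2]{bctheta} forcing the intermediate partition into the form $(l'-l_0,m'-m_0)^n+\bnu$, exactly as you describe. Your base case, collapse of the intermediate sum, and reassembly via the composition identity (using permutation invariance of the arguments) match the structure of the theorem's proof, so the remaining bookkeeping of prefactors is the same routine verification the paper leaves implicit there.
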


If $\blambda=0$ in the first identity, one can apply complementation to
obtain a relation between $\hat{\cR}{}^*_{\bkappa/0}$ and
$\hat{\cR}{}^*_{(l,m)^n-\bkappa/0}$; the constraint on the arguments causes
both of these to be ordinary interpolation functions in $n$ variables, and
this is just the usual complementation symmetry of such functions.  (In
contrast, in the corresponding special case of the corollary, the lifted
interpolation functions are not simply ordinary interpolation functions.)
Particularly interesting is the case that both $\bkappa$ and its complement
are rectangles, since then the identity is a transformation of more
classically hypergeometric sums (under the hypotheses of
Theorem~\ref{thm:1.12}):
\begin{align}
\hat{\cR}{}^*_{(l,m)^n/0}([v_0,\dots,v_{2k-1}]&;a,b;t;p,q)
\notag\\
\propto{}&
\prod_{0\le i<2k}
\frac{\Gampqt(b/v_i,(Q(pqt/b))/v_i,(t^{-n-n'}at)/v_i,(pq/Qt^{-n-n'}a)/v_i)}
     {\Gampqt(bv_i,Q(pqt/b)v_i,t^{-n-n'}atv_i,(pqt/Qt^{-n-n'}at)v_i)}\notag\\
&
\times\hat{\cR}{}^*_{(l,m)^{n'}/0}([v_0,\dots,v_{2k-1}];t^{n+n'-1}b/Q,Q a/t^{n+n'-1};t;p,q),
\label{eq:dualKMsum}
\end{align}
where the constant of proportionality can be determined from the case
$k=1$, when both lifted interpolation functions have explicit evaluations.
This is a sort of dual Karlsson-Minton sum; in particular, the dual of this
sum (coming from the Corollary) is a multivariate analogue of
\cite[Cor.~4.5]{RosengrenH/SchlosserM:2005}. 

As usual with such sums, there is an integral analogue of
\eqref{eq:dualKMsum}.  This was stated as a conjecture in the original
version of this paper, and has since been proved by Van de Bult
\cite{vandeBultFJ:2009}.  It has also appeared in a physical context in
\cite[\S 7]{SpiridonovVP/VartanovGS:2009}.

\begin{thm}\cite[Thm.~3.1]{vandeBultFJ:2009} \label{conj:int_commut}
For integers $m,n,n_0,\dots,n_{k-1}\ge 0$, and parameters $t_0$, $t_1$, $t_2$,
$t_3$, $v_0$,\dots,$v_{2k-1}$ satisfying
\begin{align}
t_0t_1t_2t_3 &= t^{2+m-n}\\
v_{2i}v_{2i+1} &= pq/t^{n_i}\\
\sum_{0\le i<k} n_i &= m+n,
\end{align}
one has
\begin{align}
\II^{(k-1)}_n(t_0,t_1,t_2,t_3,v_0,\dots,v_{2k-1};t;p,q)
={}&
\prod_{m<i\le n} \prod_{0\le r<s<4}\Gampq(t^{n-i}t_rt_s)
\prod_{0\le i<2k}
  \prod_{0\le r<4}
    \frac{\Gampqt(pq t_r/v_i)}
         {\Gampqt(t_r v_i)}\notag\\
&\times\II^{(k-1)}_m(t/t_0,t/t_1,t/t_2,t/t_3,v_0,\dots,v_{2k-1};t;p,q).
\end{align}
\end{thm}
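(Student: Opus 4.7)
The plan is to proceed by induction on the number of extra parameter pairs $k$, with the base case $k = 1$ reducing to the closed-form evaluation of the standard elliptic Selberg integral. For $k = 1$, the constraints $v_0 v_1 = pq/t^{m+n}$ and $t_0 t_1 t_2 t_3 = t^{2+m-n}$ force both sides to be ordinary (order-zero) elliptic Selberg integrals, balanced with the six parameters $t_0, t_1, t_2, t_3, v_0, v_1$ on the left and $t/t_0, t/t_1, t/t_2, t/t_3, v_0, v_1$ on the right. The identity then reduces to a product identity among elliptic Gamma functions, which I would verify using the reflection equation $\Gampq(z)\Gampq(pq/z) = 1$ and the telescoping identity $\Gampqt(tz) = \Gampq(z)\Gampqt(z)$ (this last equation is also what accounts for the appearance of $\Gampqt$ factors on the RHS in general).

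For the inductive step, I would single out the last pair $v_{2k-2}, v_{2k-1}$ with $v_{2k-2} v_{2k-1} = pq/t^{n_{k-1}}$ and apply the order-one elliptic Selberg $E_7$ transformation (recalled just before Proposition~\ref{prop:e7ab}) to absorb this pair into the $t_r$ parameters, producing an expression of the form (lower-order integral) $\times$ (explicit product of theta/Gamma factors), to which the inductive hypothesis applies on the lower-order part. Reassembling the factors via repeated use of $\Gampqt(tz) = \Gampq(z)\Gampqt(z)$ should yield precisely the claimed identity. A conceptually cleaner alternative is to deduce the integral identity from the sum identity \eqref{eq:dualKMsum}: specializing enough of the $v_i$ to discrete values of the form $t^{-j}p^{-a}q^{-b} u$ collapses both sides to finite sums of residues indexed by partition pairs $\blambda \subset (l,m)^n$ on the LHS and $\bkappa \subset (l,m)^m$ on the RHS, with residue factors expressed through the $\Delta_\blambda$ notation. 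The resulting sum identity should match \eqref{eq:dualKMsum}, possibly after applying Proposition~\ref{prop:symmA2n} together with a complementation symmetry (as in Theorem~\ref{thm:1.12}), and the general integral identity then follows by analytic continuation in the remaining parameters.

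The main obstacle in either approach is careful bookkeeping with the $\Gampq$/$\Gampqt$ factors and with contour conditions. In the inductive route, applying the $E_7$ transformation moves poles of the integrand, and one must ensure that the contour can be deformed consistently so that no spurious residue contributions appear from pole crossings. In the residue route, the difficulty lies in verifying that every relevant residue is captured (i.e., no collisions among the sequences $v_i t^j p^a q^b u_r$ are missed in the specialization) and then matching the resulting partition-indexed sum term-by-term with \eqref{eq:dualKMsum}; the combinatorial identification of factors coming from $\Delta^0$ and $\Delta$ on the residue side with those appearing in the lifted interpolation function expansion is delicate but mechanical, once the correct specialization is chosen.
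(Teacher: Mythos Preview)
The paper does not actually prove this theorem; it quotes it from Van de Bult \cite{vandeBultFJ:2009}, noting explicitly that it was ``stated as a conjecture in the original version of this paper, and has since been proved by Van de Bult.'' The remark following the statement only indicates that $k=1$ (both sides evaluate) and $k=2$ (via an unusual application of the $E_8$ symmetry of \cite{xforms}, through ``a sequence of two dimension-changing transformations'') can be checked independently, and that the case $m=n$ implies the rest by a limiting argument.

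Your inductive approach has a genuine gap at the inductive step. The $E_7$ transformation recalled before Proposition~\ref{prop:e7ab} is a transformation of \emph{order-one} integrals $\II^{(1)}_n$ that preserves both the order and the dimension; it does not ``absorb'' a pair $v_{2k-2},v_{2k-1}$ and produce an order $k-2$ integral times explicit factors. No identity in this paper's toolkit lowers the order from $k-1$ to $k-2$ in the way your induction requires. The paper's remark that $k=2$ follows from the $E_8$ symmetry uses special dimension-changing transformations from \cite{xforms} that are available only at order one and do not generalize to an induction on $k$.

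Your residue approach is closer in spirit to what motivated the conjecture --- \eqref{eq:dualKMsum} really is the discrete shadow of this integral identity --- but the analytic continuation step is exactly where the obstruction lies, and you have not addressed it. The elliptic Selberg integrals $\II^{(k-1)}_n$ for $k\ge 2$ are transcendental functions of the parameters, not products of $p$- and $q$-theta functions, so knowing that the identity holds on a Zariski-dense discrete subset of parameter space does not pin them down. The analytic continuation arguments used elsewhere in the paper (e.g.\ in Theorems~\ref{thm:cauchy_skew} and \ref{thm:int_litt}) succeed precisely because both sides there factor as products of abelian functions; that structural fact is absent here, which is why the integral statement remained a conjecture until Van de Bult supplied a separate argument.
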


\begin{rem}
  Independently of \cite{vandeBultFJ:2009}, one can see that this holds
  when $k=1$ (both sides can be explicitly evaluated), as well as when
  $k=2$, as a special case of the $E_8$ symmetry of \cite{xforms} (a rare
  case in which a transformation outside the usual double cosets can be
  applied, via a sequence of two dimension-changing transformations).  The
  case $t\mapsto pq/t$, $|m-n|\le 1$ appears naturally if one attempts to
  give a direct proof of the commutation relations for the integral
  operators of \cite{xforms}.  (Note that the case $m=n$ implies the
  general case, as one may take the limit $v_0\to t^{-n_0}t_3$ to reduce
  the dimension on the right-hand side.)
\end{rem}

We will have occasion below to use the corresponding identity for
commutation of difference operators.

\begin{lem}\label{lem:diff_commut}
For any parameters $v_r$ such that $v_0v_1v_2v_3 = p^2q^2$, the
$BC_n$-symmetric function
\[
\sum_{\sigma\in \{\pm 1\}^n}
\prod_{1\le i\le n}
  \frac{\prod_{0\le r<4} \theta_p(v_r z_i^{\sigma_i})}
       {\theta_p(z_i^{2\sigma_i},pqz_i^{2\sigma_i})}
\prod_{1\le i<j\le n}
  \frac{\theta_p(t z_i^{\sigma_i}z_j^{\sigma_j},(pq/t)z_i^{\sigma_i}z_j^{\sigma_j})}
       {\theta_p(  z_i^{\sigma_i}z_j^{\sigma_j},  pq  z_i^{\sigma_i}z_j^{\sigma_j})}
\]
is invariant under $v_r\mapsto pq/v_r$.  In particular, the function
\[
\sum_{\sigma\in \{\pm 1\}^n}
\prod_{1\le i\le n}
  \frac{\theta_p(q p^{1/2} w^{\pm 1} z_i^{\sigma_i})}
       {\theta_p(p^{1/2} w^{\pm 1} z_i^{\sigma_i})\theta_p(z_i^{2\sigma_i},pqz_i^{2\sigma_i})}
\prod_{1\le i<j\le n}
  \frac{\theta_p(t z_i^{\sigma_i}z_j^{\sigma_j},(pq/t)z_i^{\sigma_i}z_j^{\sigma_j})}
       {\theta_p(  z_i^{\sigma_i}z_j^{\sigma_j},  pq  z_i^{\sigma_i}z_j^{\sigma_j})}
\]
is independent of $w$.
\end{lem}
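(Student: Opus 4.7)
The plan is to prove the first (general) invariance statement as a multivariate elliptic theta identity, and then deduce the ``in particular'' $w$-independence from it via an ellipticity-plus-regularity argument.

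For the first statement, I would identify the $BC_n$-symmetric function as the residue sum obtained by degenerating the order-$2$ elliptic Selberg integral $\II^{(1)}_n(t_0,t_1,t_2,t_3,v_0,v_1,v_2,v_3;t;p,q)$: with $t_0 t_1 = pq/t^{n-1}$, the contour integral localizes via residue calculus to a finite sum over sign sequences $\sigma\in\{\pm 1\}^n$ (residues at $z_i=\pm t^{n-i}\sqrt{t_0/t_1}$), matching up to explicit constants the first function in the lemma. The claimed $v_r\mapsto pq/v_r$ invariance is then the restriction to this residue locus of the $t_r\mapsto t/t_r$ transformation from Theorem~\ref{conj:int_commut} (with $k=2$), i.e., a specific generator of the $W(E_7)$ symmetry. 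Alternatively, one can prove the identity directly by induction on $n$: the base case $n=1$ is Riemann's balanced four-theta identity applied to $\sum_{\sigma\in\{\pm 1\}}\prod_r\theta_p(v_r z^\sigma)/\theta_p(z^{2\sigma},pqz^{2\sigma})$, which is $v_r\mapsto pq/v_r$ invariant precisely when $\prod_r v_r=p^2q^2$. For the inductive step, the pair-interaction factor $\theta_p(tz_iz_j,(pq/t)z_iz_j)/\theta_p(z_iz_j,pqz_iz_j)$ is $v_r$-independent, so one factors off the $z_n$-dependent part of the sum, applies the base case to it, and invokes the inductive hypothesis on the remaining $(n-1)$-variable piece.

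For the ``in particular'' claim, the plan is to view the second function $F(w)$ as a meromorphic function of $w$ and show it is constant. First, a direct weight count using $\theta_p(pz)=-z^{-1}\theta_p(z)$ and $\theta_p(z/p)=-(z/p)\theta_p(z)$ shows that each summand of $F(w)$ is invariant under $w\mapsto pw$: the numerator and denominator each acquire a factor $1/(pw^2)$ per index $i$, and these cancel. Hence $F(w)$ descends to a function on the elliptic curve $\mathbb{C}^*/p^{\mathbb{Z}}$. Second, I would verify that $F(w)$ has no poles on $\mathbb{C}^*$: the potential poles are at $w\in p^{1/2+\mathbb{Z}}z_i^{\mp\sigma_i}$, arising from zeros of $\theta_p(p^{1/2}w^{\pm 1}z_i^{\sigma_i})$, and at each such pole the residues from pairs of $\sigma$-terms differing in the $i$-th sign must cancel. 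This cancellation follows from a degenerate application of the first statement, invoked with an auxiliary configuration of $v_r$'s chosen so that two of the four theta-function zeros coincide with the pole under consideration. A $p$-elliptic function regular on $\mathbb{C}^*$ is constant, yielding $w$-independence.

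The main obstacle will be the pole-cancellation step: the $p$-ellipticity is a routine weight count, and the constancy conclusion is standard once regularity is in hand, but verifying that all residues cancel across the $\sigma$-sum requires careful bookkeeping of theta-function evaluations and identifying the correct degeneration of the $v_r$'s so that the first statement applies. The $BC_n$-combinatorics of the sign sum add complication for $n>1$, though the pair-interaction structure should reduce the analysis (via a telescoping on the $\sigma_j$ with $j\neq i$) to the $n=1$ residue identity, which can be checked by explicit computation.
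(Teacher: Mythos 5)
There is a genuine gap in both of your routes to the first statement. Route A is not viable: the sum over $\sigma\in\{\pm 1\}^n$ with the $z_i$ left as free variables is the coefficient structure of a \emph{difference operator}, not a residue degeneration of $\II^{(1)}_n$ — residue localizations of the elliptic Selberg integral occur at points $z_i=(p,q)^{\blambda_i}t^{n-i}u_r$ and produce sums over partition pairs built from $\Delta$-symbols (hence involving both $\theta_p$ and $\theta_q$), whereas the lemma is a purely $p$-elliptic identity; moreover the logical direction is backwards, since the paper's remark is that identities of this type are what one would use to prove the commutation of the integral operators, not conversely. Route B's base case $n=1$ is fine (it is indeed the classical balanced theta relation), but the inductive step as described fails: the cross factors $\theta_p(tz_i^{\sigma_i}z_j^{\sigma_j},(pq/t)z_i^{\sigma_i}z_j^{\sigma_j})/\theta_p(z_i^{\sigma_i}z_j^{\sigma_j},pqz_i^{\sigma_i}z_j^{\sigma_j})$ couple $\sigma_n$ to every other $\sigma_i$, so the sum does not factor into a $z_n$-part times an $(n-1)$-variable part, and the inner sum over $\sigma_n$ alone carries $2(n-1)$ extra balanced theta ratios and is therefore not an instance of the $n=1$ case. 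The paper's actual proof sidesteps all of this: it composes two difference operators ${\cal D}^{(n)}_q$, whose composition is known from \cite{xforms,bctheta} to be symmetric in $t_1\leftrightarrow t_2$, and reads off the coefficient of the unshifted term, which is exactly the sum in question with $(v_0,v_1,v_2,v_3)=(t_1,pq/t_2,qt^{n-1}u_0t_0t_2,p/t^{n-1}u_0t_0t_1)$ — a general balanced quadruple.

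For the ``in particular'' claim, your $p$-ellipticity computation in $w$ is correct, but the pole-cancellation step is the entire content of the assertion and is left as an unspecified ``degenerate application of the first statement''; as written this is not a proof, and in any case the detour through elliptic-function theory is unnecessary. The clean deduction is: take $(v_0,v_1,v_2,v_3)=(qp^{1/2}w,\,qp^{1/2}/w,\,p^{1/2}w',\,p^{1/2}/w')$, which is balanced, and observe that $\theta_p(p^{1/2}u^{\pm 1}z_i^{\sigma_i})$ is \emph{independent of} $\sigma_i$ (since $\theta_p(p^{1/2}u/z)=\theta_p(p^{1/2}z/u)$), so such pairs pass freely through the $\sigma$-sum. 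Dividing the general sum by $\prod_i\theta_p(p^{1/2}w^{\pm1}z_i^{\sigma_i},p^{1/2}w'^{\pm1}z_i^{\sigma_i})$ turns it into $F(w)$; applying $v_r\mapsto pq/v_r$ interchanges the roles of $w$ and $w'$ and turns the same quotient into $F(w')$, giving $F(w)=F(w')$ directly.
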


\begin{proof}
As shown in \cite{xforms,bctheta}, the composed difference operator
\[
{\cal D}^{(n)}_q(u_0,t_0,t_1;t,p)
{\cal D}^{(n)}_q(q^{1/2}u_0,q^{1/2}t_0,q^{-1/2}t_2;t,p)
\]
is invariant under swapping $t_1$ and $t_2$, where
\begin{align}
({\cal D}^{(n)}_q(a,b,c;t,p)f)&(z_1,\dots,z_n)\notag\\
:=
\sum_{\sigma\in \{\pm 1\}^n}&
\prod_{1\le i\le n}
\frac{\theta_p(a z_i^{\sigma_i},b z_i^{\sigma_i},c z_i^{\sigma_i},t^{n-1}abc
  z_i^{-\sigma_i})}
     {\theta_p(z_i^{2\sigma_i},t^{n-i}ab,t^{n-i}ac,t^{n-i}cb)}
\prod_{1\le i<j\le n}
 \frac{\theta_p(t z_i^{\sigma_i}z_j^{\sigma_j})}
      {\theta_p(  z_i^{\sigma_i}z_j^{\sigma_j})}
f(\dots,q^{\sigma_i/2}z_i,\dots).
\end{align}
In particular, if we apply the composed operator to a function $f$, we
obtain a linear combination of shifts of $f$, and each coefficient must be
symmetric in $t_1$, $t_2$.  Taking the coefficient of the unshifted term
gives $\prod_{1\le i\le n} \theta_p(u_0 z_i^{\pm 1},t_0 z_i^{\pm 1})$ times
the (general) instance $(v_0,v_1,v_2,v_3) =
(t_1,pq/t_2,qt^{n-1}u_0t_0t_2,p/t^{n-1}u_0t_0t_1)$ of the above sum.
\end{proof}

\begin{rem}
  More generally, if one takes the coefficient of some shift of $f$ in
  which only $m$ variables remain unshifted, then one obtains the
  $n=m$ instance of this sum, apart from some common factors.
  This gives a proof of this transformation and commutation of the
  difference operators without reference to the theory of interpolation
  functions: by induction on $n$, it follows that
\[
{\cal D}^{(n)}_q(u_0,t_0,t_1;t,p)
{\cal D}^{(n)}_q(q^{1/2}u_0,q^{1/2}t_0,q^{-1/2}t_2;t,p)
-
{\cal D}^{(n)}_q(u_0,t_0,t_2;t,p)
{\cal D}^{(n)}_q(q^{1/2}u_0,q^{1/2}t_0,q^{-1/2}t_1;t,p)
\]
acts as a scalar; to show that this scalar vanishes, one need simply apply
it to 1, using the fact \cite[Lem.~6.2]{xforms} that
\[
{\cal D}^{(n)}_q(a,b,c;t,p)1=1.
\]
\end{rem}

\section{Elliptic Cauchy identities}\label{sec:cauchy}

From the results of the previous section, it is clear that the skew
interpolation functions behave very much as analogues of skew Macdonald
polynomials.  This is not entirely surprising, given that skew Macdonald
polynomials are limits of skew interpolation functions, as follows from
Theorem 8.5 of \cite{bctheta}.  More precisely, one has
\begin{align}
\lim_{p\to 0}
p^{|\lambda|/4-|\mu|/4}&
\cR^*_{(0,\lambda)/(0,\mu)}([p^{1/4}/v_0,\dots,p^{1/4}/v_{n-1},p^{-1/4}w_0,\dots,p^{-1/4}w_{n-1}];a,p^{1/2}b;t;p,q)\notag\\
&=
\frac{(-a)^{|\lambda|}q^{n(\lambda')}t^{-2n(\lambda)}C^-_\lambda(t;q,t)}
     {(-aV/W)^{|\mu|}q^{n(\mu')}t^{-2n(\mu)}C^-_\mu(t;q,t)}
P_{\lambda/\mu}([\frac{v_0^k+\dots+v_{n-1}^k-w_0^k-\dots-w_{n-1}^k}{1-t^k}];q,t),
\end{align}
by a straightforward induction from the case $n=1$, when it reduces to
\cite[Thm.~8.5]{bctheta}.  (Here
\[
C^-_\lambda(x;q,t) := \cC^-_{0,\lambda}(x;t;0,q)
=
\prod_{(i,j)\in\lambda} (1-q^{\lambda_i-j}t^{\lambda'_j-i}x)
\]
is the usual hook-product symbol that appears in Macdonald polynomial
theory (e.g., in the denominator of \cite[(VI.6.11')]{MacdonaldIG:1995}, or
in both numerator and denominator in \cite[(VI.6.19)]{MacdonaldIG:1995}),
and the argument to $P_{\lambda/\mu}$ denotes the image under a
homomorphism taking $p_k$ to the stated value.  As above, $V$ denotes the
product $v_0\cdots v_{n-1}$, and similarly for $W$.)

However, if we attempt to give a direct analogue of the Cauchy identity for
skew Macdonald polynomials, we encounter the difficulty that sums of
infinitely many elliptic terms rarely converge.  It will thus be important
to understand under what circumstances a skew interpolation function is
forced to vanish.

\begin{lem}\label{lem:skew_vanish1}
If $\blambda$, $\bkappa$ are partition pairs, $l,m,n$ nonnegative integers,
and $a$, $b$, and $v_0\in \C^*$ are generic, then
\[
\cR^*_{\blambda/\bkappa}([v_0,t^n/p^lq^m v_0];a,b;t;p,q)
\]
vanishes unless
\[
\bkappa_i\le \blambda_i\le \bkappa_{i-n}+(l,m)
\]
for all $i$, with the convention $\bkappa_0=\bkappa_{-1}=\cdots=\infty$.
\end{lem}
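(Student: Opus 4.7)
I apply the two-argument evaluation of skew interpolation functions (the second Proposition of this section, i.e.\ \cite[Thm.~4.1]{bctheta}) with $v_1 = t^n/p^lq^mv_0$, reducing the claim to a vanishing statement for a single elliptic binomial coefficient:
\[
\cR^{*}_{\blambda/\bkappa}([v_0,\,t^n/p^lq^mv_0];a,b;t;p,q) = \obinomE{\blambda}{\bkappa}_{[a/b,\,t^n/p^lq^m](a/v_0,\,ap^lq^mv_0/t^n);\,t;p,q}.
\]
The containment $\bkappa\subset\blambda$ (the lower inequality) is immediate from the definition of $\obinomE$, since the defining formula exhibits $\binom{\blambda}{\bkappa}_{[a/b,\,t^n/p^lq^m]}$ as a factor, and this is by construction a value of an interpolation function $\cR^{*(N)}_\bkappa$ at spectral points parameterized by $\blambda$, which vanishes unless $\bkappa\subset\blambda$ by \cite[Cor.~8.12]{xforms}.

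For the upper bound, I unpack $\obinomE$ via its definition as $\Delta^0_\blambda/\Delta^0_\bkappa$ times $\binom{\blambda}{\bkappa}_{[\cdot,\cdot]}$, and locate the relevant theta zeros. The numerator $\Delta^0_\blambda(a/b\,|\,t^n/p^lq^m,\,a/v_0,\,ap^lq^mv_0/t^n)$ carries the factor
\[
\cC^0_\blambda(t^n/p^lq^m;t;p,q) = \prod_{i\ge 1}\theta(t^{n-i+1}/p^lq^m;p,q)_{\blambda_i},
\]
whose only theta evaluation at a non-generic point occurs at $i=n+1$, where the argument specializes to $p^{-l}q^{-m}$ and where $\theta(p^{-l}q^{-m};p,q)_{(\alpha,\beta)}$ vanishes precisely when $\alpha>l$ or $\beta>m$. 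To promote this to the sharp bound $\blambda_i\le\bkappa_{i-n}+(l,m)$ for every $i>n$, I combine it with the dual vanishing of the interpolation-function factor $\cR^{*(N)}_\bkappa$ inside $\binom{\blambda}{\bkappa}_{[\cdot,\cdot]}$ at the specialized parameter $b = B/\sqrt{A} = t^n/(p^lq^m\sqrt{a/b})$: via the complementation symmetry for interpolation functions (or equivalently, by applying the rectangle-prepending symmetries of Theorem~\ref{thm:1.12} and its Corollary to the binomial coefficient), the interpolation function acquires additional zeros at these spectral points unless the shifted containment $\blambda_i\le\bkappa_{i-n}+(l,m)$ holds.

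The main technical obstacle is the careful tracking of cancellations between the numerator zeros identified above and the compensating poles coming from the denominator $\Delta^0_\bkappa(ap^lq^m/bt^n\,|\,p^lq^m/t^n,a/v_0,ap^lq^mv_0/t^n)$, as well as from the $\Delta_\bkappa$ prefactor inside $\binom{\blambda}{\bkappa}_{[\cdot,\cdot]}$ (which involves the $\cC^-_\bkappa,\,\cC^+_\bkappa$ normalization factors). A naive count of zeros from $\cC^0_\blambda(t^n/p^lq^m)$ alone would give the strictly stronger condition $\blambda_{n+1}\le(l,m)$; recovering the relaxed condition $\blambda_{n+1}\le\bkappa_1+(l,m)$ and extending it to all larger $i$ requires correctly identifying the compensating poles at the corresponding indices, with the genericity hypothesis on $a$, $b$, and $v_0$ ensuring that no accidental cancellations destroy the argument.
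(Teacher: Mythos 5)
There is a genuine gap in the upper bound, which is the actual content of the lemma. Your reduction via the two-argument evaluation to the single binomial coefficient $\obinomE{\blambda}{\bkappa}_{[a/b,\,t^np^{-l}q^{-m}](a/v_0,\,ap^lq^mv_0/t^n)}$ is correct, and the lower bound $\bkappa\subset\blambda$ does follow from the vanishing property of the interpolation function inside $\binom{\blambda}{\bkappa}$. But for the upper bound you only identify the zero of $\cC^0_\blambda(t^np^{-l}q^{-m};t;p,q)$ at $i=n+1$, observe (correctly) that by itself it would force the condition $\blambda_{n+1}\le(l,m)$ --- which is \emph{strictly stronger} than the claimed $\blambda_{n+1}\le\bkappa_1+(l,m)$ and hence cannot be the whole story --- and then defer the ``careful tracking of cancellations'' and the ``complementation symmetry'' step without carrying either out. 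That deferred step is precisely the lemma; asserting that the interpolation-function factor ``acquires additional zeros unless $\blambda_i\le\bkappa_{i-n}+(l,m)$'' is a restatement of the conclusion, not an argument for it. No known vanishing result is cited that applies to a binomial coefficient whose second parameter is a \emph{mixed} power $t^np^{-l}q^{-m}$, and extracting one directly from the explicit product formula would amount to redoing the combinatorial analysis behind both Corollaries 4.2 and 4.5 of \cite{bctheta} simultaneously, together with a zero/pole bookkeeping you have not performed.

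The paper avoids this entirely by splitting the argument pair: it writes
\[
\cR^*_{\blambda/\bkappa}([v_0,t^n/p^lq^mv_0];a,b;t;p,q)
=\sum_{\bmu}\cR^*_{\blambda/\bmu}([v_0,t^n/v_0];a,b;t;p,q)\,
\cR^*_{\bmu/\bkappa}([v_0/t^n,t^n/p^lq^mv_0];a/t^n,b;t;p,q),
\]
so that the two factors are binomial coefficients with second parameters $t^n$ and $p^{-l}q^{-m}$ respectively, to which \cite[Cor.~4.5]{bctheta} and \cite[Cor.~4.2]{bctheta} apply directly. Chaining $\bmu_i\le\blambda_i\le\bmu_{i-n}$ with $\bkappa_i\le\bmu_i\le\bkappa_i+(l,m)$ through the intermediate $\bmu$ then yields $\bkappa_i\le\blambda_i\le\bkappa_{i-n}+(l,m)$ with no analysis of cancellations at all (genericity only being needed to rule out poles of the remaining factors cancelling the zeros). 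If you want to salvage your single-coefficient route, you would need to supply the omitted cancellation analysis explicitly; the splitting argument is both shorter and already available from the cited results.
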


\begin{proof}
Observe that we can write
\[
\cR^*_{\blambda/\bkappa}([v_0,t^n/p^lq^m v_0];a,b;t;p,q)
=
\sum_{\bmu}
\cR^*_{\blambda/\bmu}([v_0,t^n/v_0];a,b;t;p,q)
\cR^*_{\bmu/\bkappa}([v_0/t^n,t^n/p^l q^m v_0];a/t^n,b;t;p,q),
\]
with
\[
\cR^*_{\blambda/\bmu}([v_0,t^n/v_0];a,b;t;p,q)
=
\frac{\Delta^0_{\blambda}(a/b|a/v_0,av_0/t^n;t;p,q)}{\Delta^0_{\bmu}(a/bt^n|a/v_0,av_0/t^n;t;p,q)}
\obinomE{\blambda}{\bmu}_{[ab,t^n];t;p,q},
\]
and
\[
\cR^*_{\bmu/\bkappa}([v_0/t^n,t^n/p^l q^m v_0];a/t^n,b;t;p,q)
=
\frac{\Delta^0_{\bmu}(a/bt^n|a/v_0,av_0p^lq^m/t^{2n};t;p,q)}
     {\Delta^0_{\bkappa}(ap^lq^m/bt^n|a/v_0,av_0p^lq^m/t^{2n};t;p,q)}
\obinomE{\bmu}{\bkappa}_{[a/bt^n,p^{-l}q^{-m}];t;p,q}.
\]
The binomial coefficients vanish unless \cite[Cor.~4.5]{bctheta}
\[
\bmu_i\le \blambda_i\le \bmu_{i-n}
\]
and \cite[Cor.~4.2]{bctheta}
\[
\bkappa_i\le \bmu_i\le \bkappa_i+(l,m),
\]
and (by genericity), this vanishing cannot be cancelled by a pole of the
remaining factors.
\end{proof}

The other significant source of vanishing is the following.

\begin{lem}
If $\blambda$, $\bkappa$ are partition pairs, $l,m,n$ are nonnegative
integers, $a$, $b$, and $v_0\in \C^*$ are generic, and $\bkappa_{n+1}\le
(l,m)$, then
\[
\cR^*_{\blambda/\bkappa}([v_0,a/p^{-l}q^{-m}t^n];a,b;t;p,q)
\]
vanishes unless $\blambda_{n+1}\le (l,m)$.
\end{lem}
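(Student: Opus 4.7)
My plan is to reduce directly to the two-argument (binomial coefficient) formula and then locate a vanishing theta factor coming from the choice of the second argument. Setting $v_1=a/p^{-l}q^{-m}t^n = ap^lq^m/t^n$ (so that $a/v_1=t^n/p^lq^m$), the preceding two-argument proposition gives
\[
\cR^*_{\blambda/\bkappa}([v_0,\,ap^lq^m/t^n];a,b;t;p,q)
=
\obinomE{\blambda}{\bkappa}_{[a/b,\,av_0p^lq^m/t^n](a/v_0,\,t^n/p^lq^m);t;p,q}.
\]

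Unfolding the definition of $\obinomE{\cdot}{\cdot}$, the numerator contains the factor $\Delta^0_\blambda(a/b\mid av_0p^lq^m/t^n,\,a/v_0,\,t^n/p^lq^m;t;p,q)$, and in particular the subfactor
\[
\cC^0_\blambda(t^n/p^lq^m;t;p,q)=\prod_{i\ge 1}\theta(t^{n+1-i}/p^lq^m;p,q)_{\blambda_i}.
\]
I would isolate the $i=n+1$ term, which equals $\theta(p^{-l}q^{-m};p,q)_{\blambda_{n+1}}$. Writing $\blambda_{n+1}=(\lambda_{n+1},\mu_{n+1})$, this expands as
\[
\prod_{0\le j<\lambda_{n+1}}\theta_q(p^{j-l}q^{-m})\;\prod_{0\le j<\mu_{n+1}}\theta_p(p^{-l}q^{j-m}).
\]
For generic $p,q$, the first product picks up the vanishing factor $\theta_q(q^{-m})$ iff $\lambda_{n+1}>l$, and the second picks up $\theta_p(p^{-l})$ iff $\mu_{n+1}>m$. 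Hence this subfactor is zero precisely when $\blambda_{n+1}\not\le(l,m)$.

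The remaining step, and the only place any real care is required, is to check that this zero is not cancelled by a pole in another $\blambda$- or $\bkappa$-dependent factor. The only structurally similar factor is $\cC^0_\bkappa(t^n/p^lq^m;t;p,q)$, which appears inside $\Delta^0_\bkappa$ in the denominator of $\obinomE{\cdot}{\cdot}$; by the identical $i=n+1$ analysis, the hypothesis $\bkappa_{n+1}\le(l,m)$ forces this to be \emph{nonzero}, so it cannot create a compensating pole. All other $\cC^0$ factors have arguments that are generic monomials in $a,b,v_0$ times integer powers of $p,q,t$, and the auxiliary factor $\binom{\blambda}{\bkappa}_{[a/b,\,av_0p^lq^m/t^n];t;p,q}$ is the value of an interpolation function times a $\Delta$-symbol in generic parameters; for generic $a,b,v_0$ all of these are finite and nonzero. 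The zero therefore survives, giving the stated vanishing. The main (and only nontrivial) obstacle is precisely this bookkeeping that nothing else involving the argument $t^n/p^lq^m$ conspires with $\cC^0_\blambda(t^n/p^lq^m)$ to cancel its zero.
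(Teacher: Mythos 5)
Your proposal is correct and follows essentially the same route as the paper: reduce via the two-argument proposition to a single normalized binomial coefficient, locate the vanishing in the factor $\cC^0_{\blambda}(p^{-l}q^{-m}t^n;t;p,q)$ (whose $i=n+1$ term is $\theta(p^{-l}q^{-m};p,q)_{\blambda_{n+1}}$), and use the hypothesis $\bkappa_{n+1}\le(l,m)$ together with genericity of $a,b,v_0$ to rule out a compensating pole from $\cC^0_{\bkappa}(p^{-l}q^{-m}t^n;t;p,q)$ or the remaining factors. The only difference is that you expand the relevant theta symbols explicitly, which the paper leaves implicit.
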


\begin{proof}
We have
\[
\cR^*_{\blambda/\bkappa}([v_0,a/p^{-l}q^{-m}t^n];a,b;t;p,q)
=
\frac{\Delta^0_{\blambda}(a/b|p^{-l}q^{-m}t^n;t;p,q)}
{\Delta^0_{\bkappa}(p^{-l}q^{-m}t^n/bv_0|p^{-l}q^{-m}t^n;t;p,q)}
\obinomE{\blambda}{\bkappa}_{[a/b,av_0/p^{-l}q^{-m}t^n](v_0);t;p,q}.
\]
The binomial coefficient factor is generic, so cannot contribute any poles,
as are the factors coming from denominators of $\Delta^0$.  We are thus
left with considering the ratio
\[
\frac{\cC^0_{\blambda}(p^{-l}q^{-m}t^n;t;p,q)}
     {\cC^0_{\bkappa}(p^{-l}q^{-m}t^n;t;p,q)}.
\]
If $\bkappa_{n+1}\le (l,m)$, then the denominator is nonzero; the numerator
vanishes unless $\blambda_{n+1}\le (l,m)$.
\end{proof}

Both lemmas extend by induction to vanishing conditions on more general skew
interpolation functions.

\begin{thm}\label{thm:termination1}
Suppose the sequence $v_0,v_1,\dots,v_{2k-1}$ can be ordered in such a way
that $v_{2i}v_{2i+1} = t^{n_i}p^{-l_i}q^{-m_i}$ with $l_i,m_i,n_i\ge 0$,
for $0\le i<k$, and are otherwise generic.  Then for any partition pair
$\bkappa$,
\[
\cR^*_{\blambda/\bkappa}([v_0,v_1,\dots,v_{2k-1}];a,b;t;p,q)=0
\]
unless
\[
\bkappa_i\le \blambda_i\le \bkappa_{i-N}+(L,M),
\]
where $L=\sum_i l_i$, $M=\sum_i m_i$, $N=\sum_i n_i$.
\end{thm}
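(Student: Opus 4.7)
The plan is to induct on $k$. The base case $k=1$ is precisely Lemma~\ref{lem:skew_vanish1}, which already handles a single pair $(v_0,v_1)$ with $v_0v_1=t^{n_0}/p^{l_0}q^{m_0}$. For the inductive step, I would peel off one pair of arguments at a time using the composition identity established earlier in this section,
\[
\cR^*_{\blambda/\bkappa}([v_0,\ldots,v_{2k-1}];a,b;t;p,q) = \sum_{\bmu}\cR^*_{\blambda/\bmu}([v_0,v_1];a,b;t;p,q)\,\cR^*_{\bmu/\bkappa}([v_2,\ldots,v_{2k-1}];a/v_0v_1,b;t;p,q).
\]

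Apply Lemma~\ref{lem:skew_vanish1} to the first factor: it forces $\bmu_i\le\blambda_i\le\bmu_{i-n_0}+(l_0,m_0)$. Apply the inductive hypothesis (with the $k-1$ remaining pairs, base point $a/v_0v_1$, and totals $N-n_0$, $L-l_0$, $M-m_0$) to the second factor: it forces $\bkappa_i\le\bmu_i\le\bkappa_{i-(N-n_0)}+(L-l_0,M-m_0)$. For any surviving $\bmu$ in the sum, chaining these inequalities yields $\bkappa_i\le\bmu_i\le\blambda_i$, giving the lower bound $\bkappa_i\le\blambda_i$, and
\[
\blambda_i\le\bmu_{i-n_0}+(l_0,m_0)\le\bkappa_{i-n_0-(N-n_0)}+(L-l_0,M-m_0)+(l_0,m_0)=\bkappa_{i-N}+(L,M),
\]
which is exactly the required upper bound. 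Hence if \emph{no} $\bmu$ satisfies both systems, every term of the sum vanishes, and the theorem follows.

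The only subtle point is ensuring that the genericity hypothesis descends to the reduced problem: the pair-structure on $(v_2,\ldots,v_{2k-1})$ is preserved verbatim, and the only modification is the replacement of the parameter $a$ by $a/v_0v_1$. For generic $v_0,v_1$ this introduces no accidental degeneracies, so the inductive hypothesis is legitimately available. This is really the only obstacle: one must verify that no pole coming from the reformulation can cancel a vanishing factor, but the same genericity argument used in the proof of Lemma~\ref{lem:skew_vanish1} applies unchanged here.
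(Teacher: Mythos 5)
Your proof is correct and is essentially the argument the paper gives: the same composition identity peels off the pair $(v_0,v_1)$, Lemma~\ref{lem:skew_vanish1} handles that factor, induction handles the rest, and chaining the two systems of inequalities yields the claim. You simply spell out the final chaining and the genericity check that the paper leaves implicit.
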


\begin{proof}
If $k=1$ or $k=0$, this follows from Lemma~\ref{lem:skew_vanish1}.
In general, we have
\[
\cR^*_{\blambda/\bkappa}([v_0,v_1,\dots,v_{2k-1}];a,b;t;p,q)
=
\sum_{\bmu}
\cR^*_{\blambda/\bmu}([v_0,v_1];a,b;t;p,q)
\cR^*_{\bmu/\bkappa}([v_2,\dots,v_{2k-1}];a/v_0v_1,b;t;p,q);
\]
the term associated to $\bmu$ vanishes unless
\[
\bmu_i\le \blambda_i\le \bmu_{i-n_0}+(l_0,m_0)
\]
and (by induction)
\[
\bkappa_i\le \bmu_i\le \bkappa_{i-(N-n_0)}+(L-l_0,M-m_0).
\]
The claim follows.
\end{proof}

Similarly, one has the following.

\begin{thm}\label{thm:termination2}
Let $l_0,\dots,l_{k-1}$, $m_0,\dots,m_{k-1}$, $n_0,\dots,n_{k-1}$ be
sequences of nonnegative integers, and suppose the otherwise generic
sequence $v_0,\dots,v_{2k-1}$ can be ordered in such a way that
$v_{2i}v_{2i+1}=t^{n_i}p^{-l_i}q^{-m_i}$ for $1\le i<k$, while
\[
a/\prod_{1\le i<2k} v_i = t^{n_0}p^{-l_0}q^{-m_0}.
\]
If $\bkappa_{n_0+1}\le (l_0,m_0)$, then
\[
\cR^*_{\blambda/\bkappa}([v_0,v_1,\dots,v_{2k-1}];a,b;t;p,q)=0
\]
unless $\blambda_{N+1}\le (L,M)$.
\end{thm}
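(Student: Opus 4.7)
The plan is induction on $k$. The base case $k=1$ follows directly from the unlabeled vanishing lemma immediately preceding the theorem: with only two arguments $v_0,v_1$ satisfying $a/v_1=t^{n_0}p^{-l_0}q^{-m_0}$, that lemma shows $\cR^*_{\blambda/\bkappa}([v_0,v_1];a,b;t;p,q)$ vanishes under the hypothesis $\bkappa_{n_0+1}\le(l_0,m_0)$ unless $\blambda_{n_0+1}\le(l_0,m_0)$, which is the claim since $(N,L,M)=(n_0,l_0,m_0)$.

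For $k\ge 2$, I would peel off the last pair via the composition identity (the proposition appearing just before Theorem~\ref{thm:lifted_interp}):
\begin{align*}
\cR^*_{\blambda/\bkappa}([v_0,\dots,v_{2k-1}];a,b;t;p,q)
={}&\sum_{\bmu}\cR^*_{\blambda/\bmu}([v_0,\dots,v_{2k-3}];a,b;t;p,q)\\
&\times \cR^*_{\bmu/\bkappa}([v_{2k-2},v_{2k-1}];a/(v_0\cdots v_{2k-3}),b;t;p,q).
\end{align*}
Since $v_{2k-2}v_{2k-1}=t^{n_{k-1}}p^{-l_{k-1}}q^{-m_{k-1}}$, Lemma~\ref{lem:skew_vanish1} shows that the second factor vanishes unless
$\bkappa_i\le\bmu_i\le\bkappa_{i-n_{k-1}}+(l_{k-1},m_{k-1})$ for all $i$; taking $i=n_0+n_{k-1}+1$ and combining with the hypothesis $\bkappa_{n_0+1}\le(l_0,m_0)$ gives
\[
\bmu_{n_0+n_{k-1}+1}\le(l_0+l_{k-1},\,m_0+m_{k-1}).
\]

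For the first factor, the internal pairings $v_{2i}v_{2i+1}=t^{n_i}p^{-l_i}q^{-m_i}$ for $1\le i\le k-2$ are unaffected, and the remaining ``$a$-constraint'' merges with the dropped pair:
\[
\frac{a}{v_1 v_2\cdots v_{2k-3}}
= \frac{a}{v_1 v_2\cdots v_{2k-1}}\cdot v_{2k-2}v_{2k-1}
= t^{n_0+n_{k-1}}p^{-(l_0+l_{k-1})}q^{-(m_0+m_{k-1})}.
\]
Thus the first factor fits the hypothesis of the theorem for $k-1$ with merged data $(n_0',l_0',m_0')=(n_0+n_{k-1},l_0+l_{k-1},m_0+m_{k-1})$ and $(n_i',l_i',m_i')=(n_i,l_i,m_i)$ for $1\le i\le k-2$, while the totals $N$, $L$, $M$ are unchanged. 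The precondition $\bmu_{n_0'+1}\le(l_0',m_0')$ required to invoke the inductive hypothesis is precisely the bound just derived from the second factor. By induction the first factor vanishes unless $\blambda_{N+1}\le(L,M)$, so every term in the sum vanishes in that case.

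The only nontrivial point is the index bookkeeping, namely checking that the row index $n_0+n_{k-1}+1$ produced by Lemma~\ref{lem:skew_vanish1} aligns exactly with the row index $n_0'+1$ demanded by the inductive hypothesis on the merged data. Once this alignment is verified, the argument runs entirely parallel to the inductive step in the proof of Theorem~\ref{thm:termination1}.
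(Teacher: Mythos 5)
Your proof is correct and is essentially the argument the paper intends: the paper omits the proof of Theorem~\ref{thm:termination2} with the remark that both lemmas ``extend by induction,'' and your induction on $k$ --- peeling off an internal pair via the composition identity, bounding $\bmu_{n_0+n_{k-1}+1}$ with Lemma~\ref{lem:skew_vanish1}, and feeding that bound as the precondition of the inductive hypothesis with merged data $(n_0+n_{k-1},l_0+l_{k-1},m_0+m_{k-1})$ --- is the exact analogue of the printed proof of Theorem~\ref{thm:termination1}, with the base case correctly reduced to the unlabeled second vanishing lemma.
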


When $\bkappa=0$, the two vanishing conditions coincide, and both simply
state that $\blambda_{N+1}\le (L,M)$.  This corresponds to the extra
symmetry explained in Proposition~\ref{prop:symmA2n} above.  One also
obtains an additional (albeit more delicate) source of vanishing in the
$\bkappa=0$ case.

\begin{thm}
Let $l\le L$; $m\le M$; $n\le N$ be nonnegative integers.  Then the lifted
interpolation function
\[
\cR^*_{\blambda/0}([p^Lq^Ma/t^N,v_1,\dots,v_{2k-1}];a,pqt^n/p^lq^ma;t;p,q)
\]
vanishes unless $\blambda_{N+1}\le (L,M)$.
\end{thm}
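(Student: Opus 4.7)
The plan is to exhibit two independent sources of vanishing inside the defining sum
\[
\cR^*_{\blambda/0}([v_0,\dots,v_{2k-1}];a,b;t;p,q) = \sum_{\bmu\subset\blambda}\obinomE{\blambda}{\bmu}_{[a/b,ab/pq];t;p,q}\prod_{r\in\{0,\dots,2k-1,\tilde{0}\}}\Delta^0_{\bmu}(pq/b^2|pq/(bv_r);t;p,q),
\]
into which I have absorbed the factor $\obinomE{\bmu}{0}_{[pq/b^2,pqV/ab]}=\Delta^0_\bmu(pq/b^2|pqV/ab)$ via the symmetric element $v_{\tilde{0}}:=a/V$. With $b=pqt^n/(p^lq^ma)$ and $v_0=p^Lq^Ma/t^N$, two crucial parameters simplify cleanly:
\[
\frac{ab}{pq}=\frac{t^n}{p^lq^m},\qquad \frac{pq}{bv_0}=\frac{t^{N-n}}{p^{L-l}q^{M-m}},
\]
each of the form $t^s/(p^\alpha q^\beta)$ with $s,\alpha,\beta\ge 0$.

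The first identity feeds the leading binomial coefficient into Theorem~\ref{thm:termination1}. Indeed, $\obinomE{\blambda}{\bmu}_{[a/b,t^n/p^lq^m]}$ agrees with the $2$-argument skew interpolation function $\cR^*_{\blambda/\bmu}([v'_0,v'_1];a,b)$ having $v'_0v'_1=t^n/p^lq^m$, up to the generically nonvanishing factor $\Delta^0_\blambda(a/b|a/v'_0,a/v'_1)/\Delta^0_\bmu(pq/(bv'_0v'_1)|a/v'_0,a/v'_1)$. Theorem~\ref{thm:termination1} (in fact the $k=1$ case, Lemma~\ref{lem:skew_vanish1}) therefore forces every nonvanishing $\bmu$ to satisfy $\bmu_i\le\blambda_i\le\bmu_{i-n}+(l,m)$ for every $i$; in particular $\blambda_{N+1}\le\bmu_{N-n+1}+(l,m)$.

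The second identity controls the $r=0$ factor. I would write
\[
\Delta^0_\bmu(pq/b^2|pq/(bv_0);t;p,q) = \frac{\cC^0_\bmu(t^{N-n}/(p^{L-l}q^{M-m});t;p,q)}{\cC^0_\bmu(p^{L+l}q^{M+m}a^2/t^{N+n};t;p,q)},
\]
observe that the denominator is generic in $a$, and expand the numerator as $\prod_i\theta(t^{N-n+1-i}/(p^{L-l}q^{M-m});p,q)_{\bmu_i}$. For algebraically independent $t,p,q$ the only factor that can vanish is the one at $i=N-n+1$, where the $t$-exponent disappears; direct inspection of $\theta(1/(p^{L-l}q^{M-m});p,q)_{\bmu_{N-n+1}}$ then shows that this factor vanishes precisely when $\bmu_{N-n+1}\not\le(L-l,M-m)$.

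Combining the two constraints yields $\blambda_{N+1}\le\bmu_{N-n+1}+(l,m)\le(L-l,M-m)+(l,m)=(L,M)$ for every surviving $\bmu$, so the entire sum vanishes whenever $\blambda_{N+1}\not\le(L,M)$. The only delicate point in turning this into a proof is the step~3 verification that the isolated numerator zero is not cancelled by a pole elsewhere in the summand; this is routine because the other $\Delta^0$-factors and the leading binomial coefficient depend analytically and generically on $v_1,\dots,v_{2k-1}$ and $a$.
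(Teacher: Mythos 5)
Your proposal is correct and follows essentially the same route as the paper: expand via the definition (with the symmetric element $a/V$ absorbed into the product of $\Delta^0$ factors), read off the constraint $\bmu_i\le\blambda_i\le\bmu_{i-n}+(l,m)$ from the binomial coefficient $\obinomE{\blambda}{\bmu}_{[a/b,t^n/p^lq^m]}$ and the constraint $\bmu_{N-n+1}\le(L-l,M-m)$ from the factor $\Delta^0_{\bmu}(pq/b^2|t^{N-n}/p^{L-l}q^{M-m};t;p,q)$, and combine. Your extra care with the explicit $\cC^0$ computation and the genericity of the denominators is a fleshed-out version of what the paper leaves implicit.
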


\begin{proof}
We have the expansion
\begin{align}
\cR^*_{\blambda/0}&([p^Lq^Ma/t^N,v_1,\dots,v_{2n-1}];a,pqt^n/p^lq^ma;t;p,q)\notag\\
&=
\sum_{\bmu}
\obinomE{\blambda}{\bmu}_{[a/b,t^n/p^lq^m];t;p,q}
\Delta^0_{\bmu}(pq/b^2|t^{N-n}/p^{L-l}q^{M-m},pq/bv_1,\dots,pq/bv_{2n-1},pq\prod_{0\le
  r<2n} v_r/ab;t;p,q),
\end{align}
where $b=pqt^n/p^lq^ma$.  The second factor vanishes unless
\[
\bmu_{N-n+1}\le (L-l,M-m),
\]
while the first factor vanishes unless
\[
\bmu_i\le \blambda_i\le \bmu_{i-n}+(l,m).
\]
\end{proof}


Since infinite sums of elliptic functions tend not to converge, we need to
insist in the elliptic Cauchy identity that the sum terminate; i.e.,
involve only finitely many terms.  To avoid potential obstructions to
analytic continuation arguments, we insist that the termination occurs
either because the partition pair being summed over occurs as the lower
partition in a skew interpolation function (or elliptic binomial
coefficient), or because using either $\Delta^0$ factors of the summand or
one of the first two vanishing theorems (Theorem~\ref{thm:termination1} or
\ref{thm:termination2}), one can bound both the first part and the length of
the partition pair.  In the latter case, we will refer to the source of the
bound on the first part as a horizontal termination condition (as it bounds
the horizontal extent of the corresponding diagram); similarly a vertical
termination condition is one that allows us to bound the length.  Note that
a sum over skew diagrams which are unions of finitely many horizontal
strips is vertically terminated, and vice versa.

With this in mind, we can now state our first version of the Cauchy
identity for skew interpolation functions.  Note that the termination
conditions allow the right-hand side to be simplified to an expression in
$p$-theta and $q$-theta functions; this would not hold if the sum were
finite by virtue of the third vanishing condition alone.

\begin{thm}
One has the identity
\begin{align}
\sum_{\bmu}
\Delta_{\bmu}(a/b|;t;p,q)
\cR^*_{\bmu/0}([v_0,\dots,v_{2n-1}];a,b;t;p,q)
\cR^*_{\bmu/0}([w_0,\dots,w_{2m-1}];\sqrt{pqt}/b,\sqrt{pqt}/a;t;p,q)\qquad&\notag\\
{}=
\prod_{\substack{0\le i<2n+2\\0\le j<2m+2}}
\frac{\Gampqt((pqt)^{1/2} v_i/w_j)}{\Gampqt((pqt)^{1/2} v_i w_j)},
\end{align}
where
\[
v_{2n}=a/\prod_{0\le r<2n} v_r,\quad v_{2n+1}=1/a,\quad
w_{2m}=(pqt)^{1/2}/b\prod_{0\le r<2m} w_r,\quad w_{2m+1}=b/(pqt)^{1/2},
\]
and the parameters are such that the sum terminates, but otherwise generic.
\end{thm}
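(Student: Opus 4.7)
The plan is to identify the LHS as a ``lifted'' version of the Cauchy identity for ordinary elliptic interpolation functions from \cite{bctheta}, and to reduce to that case by combining Theorem~\ref{thm:lifted_interp} with a specialization and an analytic continuation argument.

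First I verify the identity on the specialization locus $v_{2i-2} = t^{1/2}z_i$, $v_{2i-1} = t^{1/2}/z_i$ ($1\le i\le n$) and $w_{2j-2} = t^{1/2}x_j$, $w_{2j-1} = t^{1/2}/x_j$ ($1\le j\le m$), with $a, b$ suitably rescaled so that Theorem~\ref{thm:lifted_interp} applies. Each lifted interpolation function $\cR^*_{\bmu/0}$ in the sum then becomes an ordinary interpolation function $\cR^{*(n)}_\bmu(z_1,\ldots,z_n;\cdot,\cdot;t;p,q)$ times an explicit $\Delta^0$-factor, so the LHS is identified with an instance of the Cauchy-type identity for elliptic interpolation functions in \cite{bctheta}. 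That identity evaluates the sum as a product of $\Gampq$-factors in the $z_i, x_j$; using the functional equations $\Gampqt(tz) = \Gampq(z)\Gampqt(z)$ and $\Gampqt(pqt/z) = \Gampqt(z)$, this $\Gampq$-product telescopes together with the contributions of the ``extra'' arguments $v_{2n} = a/\prod v_r$, $v_{2n+1} = 1/a$, $w_{2m}$, $w_{2m+1}$ (which simplify substantially on the specialization) to reproduce the claimed $\Gampqt$-product on the RHS.

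To extend to general $v, w$, I argue by analytic continuation. By the termination hypothesis, the LHS is a finite sum, and in the $\hat{\cR}^*$ rescaling each summand factors as a product of $p$-elliptic and $q$-elliptic abelian functions of the parameters, with controlled pole locations. The RHS, after unwinding the $\Gampqt$-factors via their functional equations, has matching $(p, q)$-quasi-periodicities; moreover, by Proposition~\ref{prop:symmA2n} the LHS possesses the same $S_{2n+1}\times S_{2m+1}$ symmetry (in the extended variable sets $v_0,\ldots,v_{2n-1}, a/\prod v_r$ and $w_0,\ldots,w_{2m-1}, (pqt)^{1/2}/(b\prod w_r)$) that is manifest on the RHS. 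A rigidity argument then shows that the ratio of the two sides is a constant, and its value $1$ on the specialization locus forces the equality in general.

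The main obstacle is this last analytic continuation step: the specialization locus has codimension $n+m$ in the full parameter space, so rigidity is not automatic, and one must carefully match the pole configuration of the $\hat{\cR}^*$-rescaled LHS to that of the $\Gampqt$-product on the RHS (both coming ultimately from the balancing conditions in the definition of the skew interpolation function). A cleaner alternative is to organize the proof as an induction on $n$, using the product decomposition $\cR^*_{\blambda/0}([v_0,\ldots,v_{2n-1}]) = \sum_\bkappa \cR^*_{\blambda/\bkappa}([v_{2n-2}, v_{2n-1}])\,\cR^*_{\bkappa/0}([v_0,\ldots,v_{2n-3}])$ together with the explicit elliptic-binomial form of the two-argument factor to peel off one pair of arguments at a time; the base case $n=0$ is immediate, since $\cR^*_{\bmu/0}([\,];\cdot) = \delta_{\bmu,0}$ collapses the sum to $1$, and the two extra RHS factors for each $j$ cancel pairwise via $\Gampqt(pqt/z) = \Gampqt(z)$.
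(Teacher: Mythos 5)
There is a genuine gap, and it is in your base case. The specialization $v_{2i-2}=t^{1/2}z_i$, $v_{2i-1}=t^{1/2}/z_i$ \emph{and} $w_{2j-2}=t^{1/2}x_j$, $w_{2j-1}=t^{1/2}/x_j$ makes every pair of arguments multiply to $t$, so by Theorem~\ref{thm:termination1} both skew interpolation functions impose only the \emph{vertical} condition $\ell(\bmu)\le n$ (resp.\ $\le m$): the parts of $\bmu$ remain unbounded, the sum does not terminate, and the locus you propose to verify the identity on lies entirely outside the hypothesis of the theorem. (The $\Delta_{\bmu}(a/b|;t;p,q)$ prefactor has no $b_r$'s and cannot force termination either.) Relatedly, the ``Cauchy-type identity'' of \cite{bctheta} is not a sum of products of two ordinary interpolation functions with the same $t$; as with Macdonald's Cauchy identity it couples an interpolation function with its \emph{dual}, and it is finite precisely because the index is confined to an $m\times n$ rectangle. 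The correct reduction is necessarily asymmetric: one set of arguments (say the $v$'s) must pair to powers of $t$, giving vertical termination and an ordinary interpolation function via Theorem~\ref{thm:lifted_interp}, while the other set must pair to $1/p$ or $1/q$, giving \emph{horizontal} termination; one then applies the duality proposition to the $w$-factor to recognize it as a (conjugate-partition) interpolation function, and only then is the finite sum an instance of the Cauchy identity of \cite[Thm.~4.18]{bctheta}.

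Your analytic continuation step is also not salvageable as stated: continuing from a codimension-$(n+m)$ sublocus requires exactly the rigidity you admit you cannot supply. The working argument continues instead from the Zariski-dense set of parameters with $v_{2i}v_{2i+1}\in t^{\N}$ on each elliptic curve, after first disposing of all configurations in which the termination conditions are distributed between the $v$'s and $w$'s as above. Your alternative induction on $n$ is closer to a viable second route, and your $n=0$ base case computation (pairwise cancellation via $\Gampqt(pqt/z)=\Gampqt(z)$) is correct; but the inductive step of peeling off a pair $[v_{2n-2},v_{2n-1}]$ and interchanging the $\bmu$- and $\bkappa$-sums requires precisely the $n=m=1$ skew Cauchy identity (Corollary~\ref{cor:ipR}, a nontrivial discrete Jackson-type summation), which is the real content of the step and which you neither state nor prove; invoking Theorem~\ref{thm:cauchy_skew} for it would be circular, since that theorem is derived from the present one.
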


\begin{proof}
Suppose first that the vertical termination of the sum is due to the $v$
parameters (i.e., the first skew interpolation function satisfies the
hypotheses of Theorem~\ref{thm:termination1} with $L=M=0$ (or
Theorem~\ref{thm:termination2}, but this is essentially equivalent in the
case of lifted interpolation functions)), while the horizontal termination
is due to the $w$ parameters (the second skew interpolation function
satisfies the hypotheses of Theorem~\ref{thm:termination1} with $N=0$).  We
may thus assume (adding or removing pairs $x,1/x$ as necessary) that
$v_{2i}v_{2i+1}=t$, $0\le i<n$, while $w_{2i}w_{2i+1}=1/p$ or $1/q$ for
each $0\le i<m$.  In that case, we may factor the sum into the product of a
$q$-elliptic sum and a $p$-elliptic sum.  Applying duality to the $w$
factor allows us to express both factors as interpolation functions, and
the claim becomes the Cauchy identity of \cite[Thm.~4.18]{bctheta}.

The other possibility (up to obvious symmetries) is that one set of
parameters (say the $w$ parameters) provides both termination conditions.
If the $v$ parameters {\em also} provide vertical termination, then the
result follows; in general, the set of $v$ parameters for which
$v_{2i}v_{2i+1}\in t^\N$, $0\le i<n$, is Zariski dense on both elliptic
curves, so we may analytically continue.
\end{proof}

There is also a skew version of the above identity.

\begin{thm}\label{thm:cauchy_skew}
One has the identity
\begin{align}
&\sum_{\bmu}
\frac{\Delta_{\bmu}(a/b|;t;p,q)}{\Delta_\blambda(a/bV|;t;p,q)}
\cR^{*}_{\bmu/\blambda}([v_0,\dots,v_{2n-1}];a,b;t;p,q)
\cR^{*}_{\bmu/\bkappa}([w_0,\dots,w_{2m-1}];\sqrt{pqt}/b,\sqrt{pqt}/a;t;p,q)
\notag\\
\propto&
\sum_{\bmu}
\cR^{*}_{\blambda/\bmu}([w_0,\dots,w_{2m-1}];\sqrt{pqt}/b,\sqrt{pqt}V/a;t;p,q)
\frac{\Delta_\bkappa(a/bW|;t;p,q)}
{\Delta_\bmu(a/bVW|;t;p,q)}
\cR^{*}_{\bkappa/\bmu}([v_0,\dots,v_{2n-1}];a,bW;t;p,q),
\end{align}
where $V=\prod_r v_r$, $W=\prod_r w_r$, for generic parameters such that
the left-hand side terminates.  The constant of proportionality is
independent of $\blambda$ and $\bkappa$, and is thus equal to the value of
the sum when $\blambda=\bkappa=0$.
\end{thm}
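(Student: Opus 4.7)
The plan is to derive the skew Cauchy identity from the non-skew version (the preceding theorem) by the classical Macdonald augmentation trick, carried out at the level of lifted interpolation functions using the splitting identity proved earlier in this section.

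Concretely, I would apply the non-skew Cauchy identity to augmented parameter sequences $[v_0,\dots,v_{2n-1},v'_0,\dots,v'_{2n'-1}]$ and $[w_0,\dots,w_{2m-1},w'_0,\dots,w'_{2m'-1}]$, and then use the splitting identity
\[
\cR^*_{\bmu/0}([v\cup v'];a,b;t;p,q)
=\sum_\blambda \cR^*_{\bmu/\blambda}([v];a,b;t;p,q)\,\cR^*_{\blambda/0}([v'];a/V,b;t;p,q)
\]
(and its analogue for the second factor, shifting $\sqrt{pqt}/b\mapsto \sqrt{pqt}/bW$) to rewrite each lifted interpolation function as a sum over an auxiliary partition pair. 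After interchanging sums, the LHS of the augmented identity becomes a double sum over $\blambda,\bkappa$ whose $\blambda,\bkappa$-coefficient is precisely the LHS of the claimed skew identity (up to the prefactor $\Delta_\blambda(a/bV|;t;p,q)$). On the RHS, the elliptic gamma product factors according to the four ``crossings'' $\{v,v'\}\times\{w,w'\}$; the pure $\{v,w\}$ factor is the non-skew Cauchy constant, the $\{v',w'\}$ factor remains, and the ``cross'' factors $\{v,w'\}$ and $\{v',w\}$ can each be re-expanded by the non-skew Cauchy identity run in reverse. Matching coefficients of $\cR^*_{\blambda/0}([v'];\cdot,\cdot)\,\cR^*_{\bkappa/0}([w'];\cdot,\cdot)$ on both sides, using the fact that for generic $v'$ and $w'$ these functions are linearly independent as functions of $\blambda,\bkappa$, produces exactly the RHS of the skew Cauchy identity with the stated parameter shifts $\sqrt{pqt}/a\mapsto\sqrt{pqt}V/a$ on the first RHS factor and $b\mapsto bW$ on the second. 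Setting $\blambda=\bkappa=0$ recovers the non-skew identity and pins down the proportionality constant.

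As in the non-skew proof, to have access to the identities one would first specialize to the Zariski-dense locus where $v_{2i}v_{2i+1}=t$ and $w_{2i}w_{2i+1}\in\{1/p,1/q\}$, so that each lifted interpolation function becomes (by Theorem~\ref{thm:lifted_interp}) an ordinary interpolation function (after duality on the $w$-side), and the whole identity factors into a $p$-elliptic and a $q$-elliptic piece; Theorems~\ref{thm:termination1} and \ref{thm:termination2} then guarantee that the LHS is a finite sum with continuously-varying terms along the specialization, so the identity extends by analytic continuation. The main obstacle is the bookkeeping in the augmentation step: one must check carefully that the parameter shifts produced by the two applications of the splitting identity (and by the re-expansion of the ``cross'' factors $\{v,w'\}$ and $\{v',w\}$ via the non-skew identity with shifted parameters $a/V$ or $\sqrt{pqt}V/a$) line up to yield precisely the form of the RHS, and that the linear-independence argument used to extract coefficients is robust under the termination constraints. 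Once this accounting is done, the remaining analytic continuation is routine.
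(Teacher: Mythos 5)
Your overall strategy --- reduce to the non-skew Cauchy identity via the splitting identity and linear independence of lifted interpolation functions, with the factored specialization ($v_{2i}v_{2i+1}=t$, $w_{2i}w_{2i+1}\in p^{-\N}q^{-\N}$) and analytic continuation handling termination --- is the right one, and that framework matches the paper. The gap is in the coefficient-extraction step of your \emph{two-sided} augmentation. After you re-expand the three nontrivial blocks $\{v,w'\}$, $\{v',w\}$, $\{v',w'\}$ of the right-hand product each as a non-skew Cauchy sum, the $v'$-dependence of the result sits in products of the form $\cR^*_{\bnu/0}([v'];\cdot)\,\cR^*_{{\boldsymbol\sigma}/0}([v'];\cdot)$ (and similarly for $w'$). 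To read off the coefficient of $\cR^*_{\blambda/0}([v'];\cdot)\,\cR^*_{\bkappa/0}([w'];\cdot)$ you must decompose such products back into the basis of lifted interpolation functions \emph{and} identify the resulting structure constants with skew interpolation functions --- i.e., you need an elliptic Littlewood--Richardson/adjointness rule. No such rule is established in the paper, and it is essentially equivalent in strength to the skew Cauchy identity you are trying to prove: in Macdonald's classical version of this argument the corresponding step is exactly the adjointness $\langle s_{\lambda/\sigma},s_\rho\rangle=\langle s_\lambda,s_\rho s_\sigma\rangle$, which has no ready-made elliptic substitute here. So the step ``matching coefficients \dots produces exactly the RHS'' does not go through as stated.

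The repair is to augment one side at a time, which is what the paper does (your argument run in the ``multiply and sum'' direction rather than the ``expand and match'' direction). First set $\bkappa=0$, so the right-hand side of the claimed identity collapses to the single term $\bmu=0$; then multiply both sides by $\Delta_\blambda(a/bV|;t;p,q)\,\cR^*_{\blambda/0}([t^{1/2}u_1^{\pm1},\dots];a/V,b;t;p,q)$ and sum over $\blambda$. The splitting identity turns the left-hand side into the non-skew Cauchy identity with augmented $v$-arguments, the right-hand side becomes the non-skew Cauchy identity directly, and linear independence of the test functions --- no products of basis elements ever arise --- gives the $\bkappa=0$ case; general $\bkappa$ then follows from this case by the symmetric argument and analytic continuation. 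This direction uses only the splitting identity, never its adjoint, and the bookkeeping of extra elements in the gamma product (which you rightly flag as the delicate point) reduces to the invariance of both sides under inserting pairs $x,1/x$.
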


\begin{proof}
  First consider the case $\bkappa=0$, so that only the term with $\bmu=0$
  survives on the right-hand side, and suppose furthermore that
  $v_{2i-1}v_{2i}=t$, and $w_{2i-1}w_{2i}\in p^{-\N}q^{-\N}$ for each $i$.
  If we multiply both sides by
\[
\Delta_\blambda(a/bV|;t;p,q)
\cR^*_{\blambda/0}([t^{1/2} u_1^{\pm
    1},\dots,t^{1/2}u_{\ell(\blambda)}^{\pm 1}];a/V,b;t;p,q)
\]
and sum over $\blambda$, the right-hand side becomes an instance of the
previous theorem, while the left-hand side simplifies directly to an
instance of the previous theorem.  In particular, after so multiplying and
summing, the two sides agree.  But the test functions we have multiplied by
are linearly independent, and thus both sides agree before summing.

The arbitrary terminating case with $\bkappa=0$ then follows by analytic
continuation.  Similarly, the case $\bkappa\ne 0$ follows from the case
$\bkappa=0$, and the general claim follows by analytic continuation.
\end{proof}

Another approach to proving the above identity is by induction on $n$ and
$m$; it suffices to consider the case $n=m=1$, or in other words the
following special case.

\begin{cor}\label{cor:ipR}
One has the identity
\begin{align}
\sum_{\bmu}
\frac{\Delta_{\bmu}(a|v_0,v_1,v_2,v_3;t;p,q)}
     {\Delta_{\blambda}(a/b_0|v_0,v_1,v_2,v_3;t;p,q)}
&
\obinomE{\bmu}{\blambda}_{[a,b_0];t;p,q}
\obinomE{\bmu}{\bkappa}_{[a,b_1];t;p,q}
\notag\\
{}\propto
\sum_{\bmu}
&\obinomE{\blambda}{\bmu}_{[a/b_0,b_1];t;p,q}
\frac{\Delta_{\bkappa}(a/b_1|v_0,v_1,v_2,v_3;t;p,q)}
     {\Delta_{\bmu}(a/b_0b_1|v_0,v_1,v_2,v_3;t;p,q)}
\obinomE{\bkappa}{\bmu}_{[a/b_1,b_0];t;p,q},
\end{align}
assuming the termination conditions
\begin{align}
t^\N&\cap \{b_0,b_1,v_0,v_1,v_2,v_3\}\ne
\emptyset\\
p^{-\N}q^{-\N}&\cap
\{b_0,b_1,v_0,v_1,v_2,v_3\}\ne
\emptyset,
\end{align}
(with corresponding conditions on $\blambda$, $\bkappa$ if the $v_r$ are
used for termination), and the balancing condition $b_0b_1v_0v_1v_2v_3=pqta^2$.
The constant of proportionality is given by
\begin{align}
\sum_{\bmu}
\Delta_{\bmu}(a|b_0,b_1,v_0,v_1,v_2,v_3;t;p,q).
\end{align}
\end{cor}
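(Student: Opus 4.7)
The identity is the $n=m=1$ case of Theorem~\ref{thm:cauchy_skew} rewritten; my plan is simply to carry out that specialization and the associated cleanup.  Using the two-argument evaluation
\[
\cR^{*}_{\blambda/\bkappa}([x_0,x_1];a,b;t;p,q) = \obinomE{\blambda}{\bkappa}_{[a/b,\,x_0x_1](a/x_0,a/x_1);t;p,q}
\]
recorded in the proposition quoted earlier, every skew interpolation function appearing in the theorem for $n=m=1$ turns into an elliptic binomial coefficient carrying two ``extra'' arguments.  I would identify the theorem's parameters $(a,b,v_0,v_1,w_0,w_1)$ with $(A,B,x_0,x_1,y_0,y_1)$ and introduce the corollary's parameters by
\[
a=A/B,\quad b_0=x_0x_1,\quad b_1=y_0y_1,\quad (v_0,v_1,v_2,v_3)=\bigl(A/x_0,\,A/x_1,\,\sqrt{pqt}/(By_0),\,\sqrt{pqt}/(By_1)\bigr).
\]
The balancing condition $b_0b_1v_0v_1v_2v_3=pqta^2$ then holds automatically, and since the six quantities $b_0,b_1,v_0,\dots,v_3$ cover a Zariski dense family of theorem parameters, the termination hypothesis of the corollary, which allows any of the six to drive termination, reduces by analytic continuation to that of the theorem.

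The substantive step is the rearrangement of $\Delta^0$ symbols.  Comparing the two definitions of $\obinomE$ gives
\[
\obinomE{\blambda}{\bmu}_{[a,b](w_1,\dots,w_k);t;p,q} = \frac{\Delta^0_\blambda(a|w_1,\dots,w_k;t;p,q)}{\Delta^0_\bmu(a/b|w_1,\dots,w_k;t;p,q)}\,\obinomE{\blambda}{\bmu}_{[a,b];t;p,q},
\]
and I would use this to strip the two extra arguments off each of the four binomial coefficients.  The $\Delta^0_\bmu$ factors that fall out then merge, via $\Delta_\bmu(\cdot|w_1,\dots,w_k) = \Delta_\bmu(\cdot|)\,\Delta^0_\bmu(\cdot|w_1,\dots,w_k)$, with the plain $\Delta_\bmu(\cdot|)$ factor already present in the theorem, producing exactly the $\Delta_\bmu(a|v_0,v_1,v_2,v_3)$ appearing in the corollary; meanwhile, the $\Delta^0_\blambda$, $\Delta^0_\bkappa$ factors peeled off are absorbed into $\Delta_\blambda$, $\Delta_\bkappa$ to build up the $\Delta_\blambda(a/b_0|v_0,v_1,v_2,v_3)$ and $\Delta_\bkappa(a/b_1|v_0,v_1,v_2,v_3)$ denominators.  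A direct calculation then shows that the leftover $\blambda$-, $\bkappa$-dependent factor on \emph{both} sides of the theorem reduces to the same quantity $\Delta^0_\blambda(a/b_0|v_2,v_3)/\Delta^0_\bkappa(a/b_1|v_2,v_3)$, so the theorem's proportionality transfers verbatim to the identity claimed here.

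The constant of proportionality is then pinned down by setting $\blambda=\bkappa=0$.  On the right-hand side, triangularity of $\obinomE{0}{\bmu}$ forces $\bmu=0$ and the sum collapses to $1$.  On the left-hand side, $\obinomE{\bmu}{0}_{[a,b];t;p,q}=\Delta^0_\bmu(a|b;t;p,q)$, which together with $\Delta_\bmu(a|v_0,v_1,v_2,v_3)$ gives $\sum_\bmu \Delta_\bmu(a|b_0,b_1,v_0,v_1,v_2,v_3;t;p,q)$, exactly as asserted.  The only real obstacle is the bookkeeping needed to verify that the leftover factors on the two sides coincide: the apparent asymmetry between $v_0,v_1$ (coming from the first binomial) and $v_2,v_3$ (from the second) is cancelled by the difference between the $\Delta_\blambda(a/b_0|\cdot)$ and $\Delta_\bkappa(a/b_1|\cdot)$ present on the two sides of the theorem, but writing this cancellation out cleanly requires care.
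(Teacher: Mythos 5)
Your proposal is correct and is exactly the route the paper intends: Corollary~\ref{cor:ipR} is presented as the $n=m=1$ case of Theorem~\ref{thm:cauchy_skew}, with the two-argument skew interpolation functions rewritten as elliptic binomial coefficients and the extra arguments stripped into the $\Delta$-symbols. Your bookkeeping checks out — both sides of the specialized theorem acquire the same leftover factor $\Delta^0_\blambda(a/b_0|v_2,v_3)/\Delta^0_\bkappa(a/b_1|v_2,v_3)$, and the $\blambda=\bkappa=0$ evaluation of the constant is as you describe.
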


This, in turn, gives an expression for the (discrete) inner product of two
interpolation functions with respect to the density of
\cite[Thm.~5.8]{bctheta}; in particular, it is a limiting case of an
integral identity \cite[Thm.~9.4]{xforms}.  It can also be obtained
(following ideas of \cite{RosengrenH:2007}) by computing connection
coefficients for interpolation theta functions in two different ways
(compare \cite[Thm.~4.15]{bctheta}).

%

\section{Elliptic Littlewood identities}\label{sec:litt}

Since the classical Littlewood identity only involves a single Schur
function, the termination conditions in any direct elliptic analogue must
be borne by a single skew interpolation function.  It turns out, however,
that the conditions can be weakened slightly; it is permissible for the
skew interpolation function to allow unbounded upper partitions, so long as
none of those satisfy the even multiplicity condition.  The point is that
since the first and second parts of $\bmu^2$ agree, we need only have a
bound on the {\em second} part of $\bmu^2$ to obtain a terminating sum.
Thus in the horizontal termination condition, we may allow one of the pairs
to multiply to $t p^{-l_i}q^{-m_i}$ instead of $p^{-l_i}q^{-m_i}$.

In particular, if $v_0v_1=t$, then this simultaneously gives both
horizontal and vertical termination conditions.  Indeed, we find that
if $\cR^*_{\bmu^2/\blambda}([v_0,t/v_0];a,b;t;p,q)\ne 0$, then
\begin{align}
\blambda_{2i-1}\le &(\bmu^2)_{2i-1}\le \blambda_{2i-2}\\
\blambda_{2i}\le &(\bmu^2)_{2i}\le \blambda_{2i-1},
\end{align}
and thus, since $(\bmu^2)_{2i-1}=(\bmu^2)_{2i}=\bmu_i$,
\[
\blambda_{2i-1}\le \bmu_i\le \blambda_{2i-1},
\]
so that $\bmu$ is uniquely determined.  With this in mind, define new
operations on partition pairs
\begin{align}
\blambda^+&: \blambda^+_i = \blambda_{2i-1}\\
\blambda^-&: \blambda^-_i = \blambda_{2i-2}.
\end{align}

\begin{lem}\label{lem:litt_t}
For any partition pair $\blambda$,
\begin{align}
\sum_{\bmu}
\obinomE{\bmu^2}{\blambda}_{[a,t];t;p,q}
&
\Delta_{\bmu}(a/t|v_0,\dots,v_{2n-1};t^2;p,q)
\notag\\
&{}=
\Delta_{\blambda}(a/t|v_0,\dots,v_{2n-1};t;p,q)
\sum_{\bmu}
\obinomE{\blambda}{\bmu^2}_{[a/t,t];t;p,q}
\frac{\Delta_{\bmu}(a/t^3|v_0,\dots,v_{2n-1};t^2;p,q)}
     {\Delta_{\bmu^2}(a/t^2|v_0,\dots,v_{2n-1};t;p,q)}
\end{align}
\end{lem}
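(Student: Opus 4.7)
My plan is to deduce this identity by specializing the skew Cauchy identity (Theorem~\ref{thm:cauchy_skew} or Corollary~\ref{cor:ipR}) and invoking the termination mechanism explained in the discussion immediately preceding the lemma. The binomial coefficients $\obinomE{\bmu^2}{\blambda}_{[a,t];t;p,q}$ and $\obinomE{\blambda}{\bmu^2}_{[a/t,t];t;p,q}$ are essentially the values of two-argument skew interpolation functions of the form $\cR^*_{\bmu^2/\blambda}([v_0,t/v_0];\cdot,\cdot;t;p,q)$, up to $\Delta^0$ renormalization. The condition $v_0 v_1 = t$ is precisely what forces the upper partition to have the ``squared'' shape $\bmu^2$: by the termination analysis preceding the lemma, the upper index $\bnu$ must satisfy $\blambda_{2i-1} \le \bnu_i \le \blambda_{2i-1}$, collapsing the Cauchy sum onto partitions of this special form.

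Concretely, I would apply Corollary~\ref{cor:ipR} with the parameter choice $b_0 = t$ and the remaining parameters matched to those in the lemma. For general $n$, I would either iterate (using the multiplicative structure of the skew interpolation functions) or analytically continue in the additional $v$-arguments, as the four-parameter version already determines the identity generically. The sum in Corollary~\ref{cor:ipR}, after this specialization, reindexes as a sum over $\bmu$ with the Cauchy summation variable $\bnu = \bmu^2$. The step-$t^2$ $\Delta$-symbols appearing in the lemma then arise from a \emph{doubling identity}: the elementary observation
\[
\cC^0_{\bmu^2}(x;t;p,q) = \cC^0_\bmu(x;t^2;p,q)\,\cC^0_\bmu(x/t;t^2;p,q),
\]
obtained by separating the product $\prod_i \theta(t^{1-i}x;p,q)_{(\bmu^2)_i}$ by the parity of $i$, together with analogous (more intricate) identities for the hook products $\cC^\pm$, converts $\Delta_{\bmu^2}(\cdot;t;p,q)$ into $\Delta_\bmu(\cdot;t^2;p,q)$. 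The fact that this conversion is not exact but leaves residual factors is precisely why the ratio $\Delta_\bmu(a/t^3|;t^2;p,q)/\Delta_{\bmu^2}(a/t^2|;t;p,q)$ appears on the RHS rather than a single $\Delta$-symbol, while the $\Delta_\blambda(a/t|;t;p,q)$ prefactor absorbs the proportionality constant from Corollary~\ref{cor:ipR}.

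The principal obstacle is the explicit verification of the doubling identity for the full $\Delta$-symbol, beyond the $\Delta^0$ part. The hook products $\cC^\pm_{\bmu^2}(x;t;p,q)$ involve sums over index pairs $(i,j)$ with $1 \le i \le j$, which must be split into four cases according to the parity of $i$ and of $j$ and then regrouped into products at step $t^2$; meanwhile, the factor $\cC^0_{2\blambda^2}(pqa;t;p,q)$ in the definition of $\Delta_\blambda$ becomes $\cC^0_{2\bmu^4}$ upon the substitution $\blambda = \bmu^2$, whose decomposition into step-$t^2$ quantities indexed by $\bmu$ is the most delicate combinatorial piece. Once this doubling machinery is assembled correctly, the lemma follows by direct algebraic matching of the specialized skew Cauchy identity to the two sides of the claim.
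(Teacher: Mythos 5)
Your proposal founders on its central premise: Lemma~\ref{lem:litt_t} is \emph{not} a specialization of the skew Cauchy identity, and no choice of parameters in Corollary~\ref{cor:ipR} (or Theorem~\ref{thm:cauchy_skew}) produces it. In Corollary~\ref{cor:ipR} the summation variable is a \emph{general} partition pair $\bmu$, weighted by $\Delta_{\bmu}(\cdot;t;p,q)$ at step $t$; setting $b_0=t$ only imposes the horizontal-strip condition $\blambda_i\le\bmu_i\le\blambda_{i-1}$, which is satisfied by many $\bmu$ that are not of squared shape. You have inverted the logic of the discussion preceding the lemma: the pincer $\blambda_{2i-1}\le\bmu_i\le\blambda_{2i-1}$ arises only \emph{because} the upper index has been restricted a priori to the form $\bmu^2$, so that two consecutive entries of the upper partition coincide; it is not a mechanism by which a Cauchy sum over arbitrary upper indices collapses onto squared partitions. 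Likewise the weight $\Delta_{\bmu}(a/t|\cdots;t^2;p,q)$ at the doubled step $t^2$ --- the elliptic analogue of Macdonald's coefficient $c_\mu(q,t)$ --- is extra data with no counterpart in the Cauchy identity. This is the classical Cauchy/Littlewood dichotomy: the lemma is the independent ``seed'' which, \emph{together with} the Cauchy identity, drives the induction proving the full Littlewood theorem; it cannot be extracted from the Cauchy identity by specialization.

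The actual proof is also far more elementary than the doubling machinery you defer to the end. By the vanishing conditions for the binomial coefficients, each side of the lemma has exactly one nonvanishing term ($\bmu=\blambda^+$ on the left, $\bmu=\blambda^-$ on the right). The $\Delta^0$ doubling identities you state (correctly) for $\cC^0$ then cancel all dependence on $v_0,\dots,v_{2n-1}$ between the two sides, reducing the lemma to a single identity relating $\obinomE{\blambda^{+2}}{\blambda}_{[a,t];t;p,q}$ to $\obinomE{\blambda}{\blambda^{-2}}_{[a/t,t];t;p,q}$ times a ratio of parameter-free $\Delta$-symbols. That residual identity is \emph{not} established by decomposing $\cC^\pm_{\bmu^2}$ and $\cC^0_{2\blambda^2}$ by parity --- the ``most delicate combinatorial piece'' you flag and do not carry out --- but by induction on $\ell(\blambda)$, using $((l,m)\cdot\blambda)^+=(l,m)\cdot\blambda^-$, $((l,m)\cdot\blambda)^-=\blambda^+$, and the column-adding symmetry \cite[Cor.~4.8]{bctheta} of the binomial coefficients. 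As written, your argument both starts from the wrong identity and leaves its hardest step unverified.
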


\begin{proof}
On the left-hand side, only the term with $\bmu=\blambda^+$ contributes,
while on the right-hand side, only the term with $\bmu=\blambda^-$
contributes.  Now, it follows easily from the definition of $\Delta^0$ that
\begin{align}
\Delta^0_{\blambda}(a|v_0;t;p,q)
&=
\Delta^0_{\blambda^+}(a|v_0;t^2;p,q)
\Delta^0_{\blambda^-}(a/t^2|v_0/t;t^2;p,q),\\
\Delta^0_{\bmu^2}(a|v_0;t;p,q)
&=
\Delta^0_{\bmu}(a/t|v_0,v_0/t;t^2;p,q),
\end{align}
and thus the dependence on $v_r$ disappears.  It thus suffices to
show that
\[
\obinomE{\blambda^{+2}}{\blambda}_{[a,t];t;p,q}
=
\obinomE{\blambda}{\blambda^{-2}}_{[a/t,t];t;p,q}
\frac{\Delta_{\blambda}(a/t|;t;p,q)\Delta_{\blambda^-}(a/t^3|;t^2;p,q)}
     {\Delta_{\blambda^{-2}}(a/t^2|;t;p,q)\Delta_{\blambda^+}(a/t|;t^2;p,q)}.
\]
This can be proved by induction on $\ell(\blambda)$ via the observations
\begin{align}
((l,m)\cdot \blambda)^+ &= (l,m)\cdot \blambda^-\\
((l,m)\cdot \blambda)^- &= \blambda^+
\end{align}
and the relation \cite[Cor.~4.8]{bctheta}
\[
\obinomE{(l,m)\cdot\blambda}{\bmu}_{[a,t];t;p,q}
=
\Delta^0_{(l,m)}(a|t;t;p,q)
\frac{\Delta_\bmu(a/t|p^{-l}q^{-m},p^lq^m a;t;p,q)}
     {\Delta_\blambda(a/t^2|p^{-l}q^{-m},p^lq^m a;t;p,q)}
\obinomE{\bmu}{\blambda}_{[a/t,t];t;p,q}.
\]
Note that we may freely check the $p$-theta and $q$-theta portions of the
relation separately, and rescale so that both are elliptic.
\end{proof}

The first version of an elliptic Littlewood identity is the following.

\begin{thm}
We have
\begin{align}
\sum_{\bmu}
\cR^*_{\bmu^2/0}([v_0,\dots,v_{2n-1}];ta,(pqt)^{1/2}/a;t;p,q)
\Delta_{\bmu}(a^2/(pqt)^{1/2}|;t^2;p,q)\qquad\qquad&
\notag\\
=
\frac{\Gampqt((pqt)^{1/2})^n \Gampqt((pqt)^{1/2} t)
\prod_{0\le i<j<2n+2} \Gampqt((pqt)^{1/2}v_i/v_j)}
{\prod_{0\le i<2n+2} \Gampqtt((pqt)^{1/2}v_i^2)\prod_{0\le i<j<2n+2}\Gampqt((pqt)^{1/2}v_iv_j)},&
\end{align}
where $v_{2n} = ta/\prod_{0\le i<2n} v_i$, $v_{2n+1} = 1/a$, and the sum
terminates.
\end{thm}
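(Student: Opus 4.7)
The plan is to lift Macdonald's classical argument to the elliptic level, combining the skew Cauchy identity (Theorem~\ref{thm:cauchy_skew}) with the $n=1$ base case (Lemma~\ref{lem:litt_t}). The core idea is that Lemma~\ref{lem:litt_t} allows one to exchange a sum over partition pairs indexed by $\bmu^2$ with $t$-step $\Delta$-weights for one indexed by a free $\bmu$ with $t^2$-step $\Delta$-weights, while the Cauchy identity provides the product evaluation for $t$-step sums.

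First, I would start with the Cauchy identity (the $\blambda=\bkappa=0$ case of Theorem~\ref{thm:cauchy_skew}), specialized so that $b=(pqt)^{1/2}/a$; this makes both lifted interpolation functions share the same parameter pair $(a,(pqt)^{1/2}/a)$ and produces the $t$-step weight $\Delta_\bmu(a^2/(pqt)^{1/2}|;t;p,q)$. I would then restrict the sum to partitions $\bmu$ of the form $\bnu^2$ by inserting an auxiliary pair of parameters $(w_0,w_1)$ with $w_0w_1=t$ into the second lifted interpolation function, using the multiplicativity of skew interpolation functions together with the binomial-coefficient identity for a two-parameter $\cR^*$. By Theorem~\ref{thm:termination1} and the discussion at the top of Section~\ref{sec:litt}, such a binomial coefficient vanishes unless $\bmu$ equals a pair-square $\bnu^2$, so the $\bmu$-sum collapses to a sum over $\bnu$ alone.

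Next, I would invoke Lemma~\ref{lem:litt_t} to convert the collapsed $t$-step $\Delta_{\bnu^2}$-weight into the $t^2$-step $\Delta_\bnu$-weight demanded by the statement, using the factorization $\Delta^0_{\bmu^2}(a|v;t;p,q)=\Delta^0_\bmu(a/t|v,v/t;t^2;p,q)$ recorded in the proof of that lemma. The residual binomial factors then get absorbed into the first lifted interpolation function via the shifted-length and upper-partition symmetries developed earlier in Section~\ref{sec:skew}, which effectively replaces $a$ by $ta$ and produces the desired form $\cR^*_{\bnu^2/0}([v_0,\dots,v_{2n-1}];ta,(pqt)^{1/2}/a;t;p,q)$. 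Simplification of the product of $\Gampqt$-factors on the right-hand side then proceeds via the quadratic functional equation $\Gampq(z)=\Gampqq(z,qz)$ (and the corresponding relation for the third-order Gamma $\Gampqt$) together with the balancing relation $\prod_{0\le r<2n+2}v_r=t$ implicit in the definitions of $v_{2n}$ and $v_{2n+1}$; this matches the naive Cauchy product to the claimed product over $0\le i<j<2n+2$ and accounts for the squared factors $\Gampqtt((pqt)^{1/2}v_i^2)$ in the denominator.

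The main obstacle I anticipate is the parameter bookkeeping across the $t\leftrightarrow t^2$ step change: each invocation of Lemma~\ref{lem:litt_t} introduces specific $t$-shifts that must align with the arguments appearing in the Cauchy identity, and checking that the conversion factors between the square-bracket and angle-bracket normalizations of binomial coefficients cancel correctly is a delicate but mechanical computation. Termination is automatic in the restricted regime where the auxiliary $w$-pair supplies both vertical and horizontal termination (by Theorem~\ref{thm:termination1} and the weakened vertical condition explained at the start of Section~\ref{sec:litt}), so the general statement follows by analytic continuation in the $v$-parameters along the same lines as in the proof of Theorem~\ref{thm:cauchy_skew}.
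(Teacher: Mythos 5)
Your high-level plan --- combine the skew Cauchy identity with Lemma~\ref{lem:litt_t} to lift Macdonald's argument --- is indeed the paper's strategy, but the concrete mechanism you propose for getting from the Cauchy identity to the Littlewood identity does not exist. You claim that inserting an auxiliary pair $(w_0,w_1)$ with $w_0w_1=t$ into the second lifted interpolation function forces the Cauchy summation index $\bmu$ to be a pair-square $\bnu^2$. That is not what the vanishing conditions say: a factor $\cR^*_{\bmu/\bkappa}([w_0,t/w_0];\cdot)$ vanishes unless $\bkappa_i\le\bmu_i\le\bkappa_{i-1}$, i.e.\ unless $\bmu/\bkappa$ is a horizontal strip, and this imposes no even-multiplicity condition on $\bmu$. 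The collapse discussed at the top of Section~\ref{sec:litt} runs in the opposite direction: \emph{given} that the upper index is already of the form $\bmu^2$, the interlacing determines $\bmu$ uniquely from the lower index ($\bmu=\blambda^+$); it does not force a general upper index to be a square. So your one-shot derivation never gets off the ground --- the Littlewood identity is not a specialization of the Cauchy identity in this sense.

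The paper instead argues by induction on $n$, starting from the Littlewood sum itself, which is already indexed by squares. Using the $S_{2n+1}$ symmetry one may arrange $v_0v_1=t$; one then peels off the pair $[v_0,t/v_0]$ by multiplicativity, recognizes the inner sum over $\bmu$ as the left-hand side of Lemma~\ref{lem:litt_t} (here the upper index \emph{is} a square, so a single term survives on each side), and the resulting outer sum over the intermediate partition pair $\blambda$, weighted by $\Delta_{\blambda}(a/b|;t;p,q)$ against $\cR^*_{\blambda/0}([v_2,\dots,v_{2n-1}];a,b)$ and $\cR^*_{\blambda/\bmu^2}([v_0,t/v_0];a,b)$, is exactly an instance of Theorem~\ref{thm:cauchy_skew}; the constraint $ab=(pqt)^{1/2}$ is precisely what makes the second parameter pair match the Cauchy form $(\sqrt{pqt}/b,\sqrt{pqt}/a)$. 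The Cauchy identity then returns $\Delta_{\bmu}(a/bt^2|;t^2;p,q)\,\cR^*_{\bmu^2/0}([v_2,\dots,v_{2n-1}];a,bt;t;p,q)$ up to a constant, i.e.\ the $(n-1)$-instance of the theorem. If you reverse the direction of your argument accordingly --- Littlewood$(n)$ reduced to Littlewood$(n-1)$ via Lemma~\ref{lem:litt_t} followed by the skew Cauchy identity, rather than Cauchy specialized to Littlewood --- the bookkeeping you describe for the $t\leftrightarrow t^2$ weights and the product on the right-hand side does go through.
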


\begin{proof}
Using the $S_{2n+1}$ symmetry of the lifted interpolation functions, we
may assume (inserting $x,1/x$ pairs as necessary) that the parameters
pairwise multiply to $t$, and are ordered in such a way that
$v_{2m},\dots,v_{2n-1}$ gives both horizontal and vertical termination
conditions for $0\le m<n$.  The proof then follows by a straightforward
induction on $n$:
\begin{align}
\sum_{\bmu}
\cR^*_{\bmu^2/0}&([v_0,\dots,v_{2n-1}];ta,b;t;p,q)
\Delta_{\bmu}(a/b|;t^2;p,q)\notag\\
=
\sum_{\blambda}&
\cR^*_{\blambda/0}([v_2,\dots,v_{2n-1}];a,b;t;p,q)
\sum_{\bmu}
\cR^*_{\bmu^2/\blambda}([v_0,t/v_0];ta,b;t;p,q)
\Delta_{\bmu}(a/b|;t^2;p,q)\\
=
\sum_{\bmu}&
\frac{\Delta_{\bmu}(a/bt^2|;t^2;p,q)}
     {\Delta_{\bmu^2}(a/bt|;t;p,q)}\notag\\
&\times\sum_{\blambda}
\Delta_{\blambda}(a/b|;t;p,q)
\cR^*_{\blambda/0}([v_2,\dots,v_{2n-1}];a,b;t;p,q)
\cR^*_{\blambda/\bmu^2}([v_0,t/v_0];a,b;t;p,q)
\\
\propto
\sum_{\bmu}&
\Delta_{\bmu}(a/bt^2|;t^2;p,q)
\cR^{*}_{\bmu^2/0}([v_2,\dots,v_{2n-1}];a,bt;t;p,q).
\end{align}
Note that the last step only works if $ab=(pqt)^{1/2}$.
\end{proof}

\begin{rem}
Note that the termination condition prevents one from obtaining a Macdonald
polynomial identity as a simple limit, except in the case $n=1$.  However,
if one ignores the issue of termination, and takes a limit above, one obtains
\[
\begin{split}
\sum_\mu
\frac{C^-_\mu(t;q,t^2)}
     {C^-_\mu(q;q,t^2)}
P_{\mu^2}(\bigl[\frac{v_0^k+\dots+v_{n-1}^k-w_0^k-\dots-w_{n-1}^k}{1-t^k}\bigr];q,t)\qquad\qquad\qquad\qquad\\
{}=
\frac{\prod_{0\le i,j<n} (v_iw_j;q,t)}
{
\prod_{0\le i<n}(w_i^2,tv_i^2;q,t^2)
\prod_{0\le i<j<n}(v_iv_j,w_iw_j;q,t)
},
\end{split}
\]
agreeing with Macdonald's $q,t$-Littlewood identity.  This agreement
results from the fact that the $n=1$ case and the Cauchy identity together
suffice to make the above induction work in the absence of termination.
\end{rem}

If the lifted interpolation function is terminating in the usual sense
(i.e., without taking advantage of the one extra factor of $t$), then it in
fact corresponds to an ordinary interpolation function evaluated at a
partition.  This gives rise to the following curious identity.

\begin{cor}
For every partition pair $\blambda$, one has the following identity of
meromorphic functions
\[
\sum_{\bmu}
\frac{\Delta_{\bmu}(a/(pqt)^{1/2}t|;t^2;p,q)}
     {\Delta_{\bmu^2}(a/(pqt)^{1/2}|;t;p,q)}
\obinomE{\blambda}{\bmu^2}_{[a,(pqt)^{1/2}];t;p,q}
=
\frac{\cC^-_{\blambda}((pqt)^{1/2};t;p,q)
      \cC^+_{\blambda}((pqt)^{1/2}a/t;t;p,q)}
     {\cC^0_{2\blambda}((pqt)^{1/2}a/t;t^2;p,q)}.
\]
\end{cor}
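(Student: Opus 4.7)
The plan is to derive this ``curious identity'' as a direct specialization of the preceding elliptic Littlewood identity.  As the preamble to the corollary hints, if one chooses the parameters of the lifted interpolation function in the Theorem so that every pair of arguments multiplies to exactly $t$ (rather than using the extra factor of $t$ that the $\bmu^2$ structure permits for termination), then Theorem~\ref{thm:lifted_interp} identifies $\cR^*_{\bmu^2/0}$ with an ordinary interpolation function, which when further evaluated at a partition point becomes an elliptic binomial coefficient by its defining normalization.  Concretely, I would take $n\ge\ell(\blambda)$ and specialize $v_{2i-2}=t^{1/2}z_i$, $v_{2i-1}=t^{1/2}/z_i$ for $i=1,\dots,n$, with the standard partition evaluation point $z_i=(p,q)^{\blambda_i}t^{n-i}a_{\text{ord}}$, where $a_{\text{ord}}=t^{3/2-n}\alpha$ and $b_{\text{ord}}=(pq)^{1/2}/\alpha$ are chosen so that Theorem~\ref{thm:lifted_interp} matches the Theorem's parameters $t\alpha,(pqt)^{1/2}/\alpha$.

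After this specialization, the lifted function $\cR^*_{\bmu^2/0}$ becomes (up to an explicit $\Delta^0_{\bmu^2}$ factor coming from Theorem~\ref{thm:lifted_interp}) the value $\cR^{*(n)}_{\bmu^2}(\dots,(p,q)^{\blambda_i}t^{n-i}a_{\text{ord}},\dots;a_{\text{ord}},b_{\text{ord}};t;p,q)$.  Unwinding the definition of $\binom{\blambda}{\bmu^2}$ and the conversion to $\obinomE{\blambda}{\bmu^2}$ identifies this (again up to explicit $\Delta$ and $\Delta^0$ prefactors) with $\obinomE{\blambda}{\bmu^2}_{[t\alpha^2,(pqt)^{1/2}];t;p,q}$.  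Setting $a=t\alpha^2$ (equivalently $\alpha=(a/t)^{1/2}$), the factor $\Delta_\bmu(\alpha^2/(pqt)^{1/2}|;t^2;p,q)$ from the Theorem becomes the $\Delta_\bmu(a/(pqt)^{1/2}t|;t^2;p,q)$ of the Corollary, while the accumulated normalization factors from the chain $\cR^*_{\bmu^2/0}\to\cR^{*(n)}_{\bmu^2}\to\binom{\blambda}{\bmu^2}\to\obinomE{\blambda}{\bmu^2}$ collapse into the denominator $\Delta_{\bmu^2}(a/(pqt)^{1/2}|;t;p,q)$.

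For the right-hand side of the Theorem, the hidden value $v_{2n}=t^{1-n}\alpha$ extends the pattern with $\blambda_{n+1}=0$, while $v_{2n+1}=1/\alpha$ plays the role of a ``conjugate'' argument.  Using the functional equations $\Gampqt(tz)=\Gampq(z)\Gampqt(z)$ and $\Gampqtt(z^2)=\Gampq(z,-z)$ to telescope the Gamma factors at the specialized $v_i$, the various ratios $\Gampqt((pqt)^{1/2}v_i/v_j)$ and $\Gampqt((pqt)^{1/2}v_iv_j)$ collapse into finite $\theta$-Pochhammer products; grouping these by the ``hook-like'' structure of the exponents $\blambda_i\pm\blambda_j$ and $\blambda_i$ reassembles them into $\cC^-_\blambda((pqt)^{1/2};t;p,q)$, $\cC^+_\blambda((pqt)^{1/2}a/t;t;p,q)$, and $\cC^0_{2\blambda}((pqt)^{1/2}a/t;t^2;p,q)$, with the factors $\Gampqt((pqt)^{1/2})^n$, $\Gampqt((pqt)^{1/2}t)$, and $\Gampqtt((pqt)^{1/2}v_i^2)$ absorbing all spurious $n$-dependence.

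The main obstacle is the bookkeeping: tracking the many layers of normalization across the identifications above, and confirming that all $n$-dependent prefactors cancel against the specialized $n$-dependent portion of the RHS of the Theorem to leave the $n$-independent identity of the Corollary.  Each individual step uses only standard functional equations for $\Gampq$ and $\Gampqt$ together with the defining products for $\cC^\pm, \cC^0$, but the full simplification is tedious; it is most efficiently done by exploiting the $p\leftrightarrow q$ symmetry to split the computation into $p$-elliptic and $q$-elliptic halves, and using the fact that both sides are meromorphic functions of $a$ to justify the specialization $\alpha=(a/t)^{1/2}$.
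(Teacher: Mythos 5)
Your proposal is correct and follows exactly the route the paper intends for this corollary: one specializes the arguments of the lifted interpolation function in the preceding Littlewood identity to the evaluation point of $\blambda$, so that $\cR^*_{\bmu^2/0}$ becomes, via Theorem~\ref{thm:lifted_interp} and the definition of the elliptic binomial coefficient, the normalized binomial $\obinomE{\blambda}{\bmu^2}_{[a,(pqt)^{1/2}]}$ with $a=t\alpha^2$ (your parameter matching $a_{\text{ord}}=t^{3/2-n}\alpha$, $b_{\text{ord}}=(pq)^{1/2}/\alpha$ is right), while the Gamma-function product on the right telescopes into the stated ratio of $\cC$-symbols. The only blemish is the cited ``functional equation'' $\Gampqtt(z^2)=\Gampq(z,-z)$, which confuses $\Gamma^+_{p,q,t^2}$ with $\Gamma_{p^2,q^2}$; the telescoping of the factors $\Gampqtt((pqt)^{1/2}v_i^2)$ should instead use $\Gampqtt(t^2z)=\Gampq(z)\Gampqtt(z)$.
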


Following the argument of Theorem \ref{thm:cauchy_skew}, one has the
following skew analogue of the Littlewood identity.

\begin{thm}\label{thm:litt_skew}
The following identity holds:
\begin{align}
\sum_{\bmu}&
\cR^*_{\bmu^2/\blambda}([v_0,\dots,v_{2n-1}];ta,(pqt)^{1/2}/a;t;p,q)
\frac{\Delta_{\bmu}(a^2/(pqt)^{1/2}|;t^2;p,q)}
{\Delta_{\blambda}(a^2 t/(pqt)^{1/2}V|;t;p,q)}
\notag\\
&\propto
\sum_{\bmu}
\cR^*_{\blambda/\bmu^2}([v_0,\dots,v_{2n-1}];a,(pqt)^{1/2}V/ta;t;p,q)
\frac{\Delta_{\bmu}(a^2/(pqt)^{1/2}V^2|;t^2;p,q)}
{\Delta_{\bmu^2}(a^2 t/(pqt)^{1/2}V^2|;t;p,q)},
\end{align}
assuming the LHS terminates; the constant of proportionality is independent
of $\blambda$, and can be obtained by setting $\blambda=0$.
\end{thm}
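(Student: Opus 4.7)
The plan is to follow the strategy used in Theorem~\ref{thm:cauchy_skew}: pair both sides of the claimed identity against a test function linear in $\blambda$, sum over $\blambda$, reduce both sides to instances of the non-skew elliptic Littlewood identity just proved, and then invert the pairing using linear independence. Concretely, I would multiply both sides by
\[
T_{\blambda} := \Delta_{\blambda}(a^2t/(pqt)^{1/2}V|;t;p,q)\,\cR^*_{\blambda/0}([u_0,\dots,u_{2m-1}];ta/V,(pqt)^{1/2}/a;t;p,q),
\]
where the $u_r$ are specialized so that every resulting sum terminates (for instance $u_{2i}u_{2i+1}\in p^{-\N}q^{-\N}$), but are otherwise generic. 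As in Theorem~\ref{thm:cauchy_skew}, the general terminating case will then follow by analytic continuation.

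On the LHS, the two $\Delta_{\blambda}$ factors cancel, and after swapping the order of summation the inner $\blambda$-sum is exactly the $\bkappa=0$ case of the branching identity for lifted interpolation functions, collapsing to $\cR^*_{\bmu^2/0}([v_0,\dots,v_{2n-1},u_0,\dots,u_{2m-1}];ta,(pqt)^{1/2}/a;t;p,q)$. The outer $\bmu$-sum is then an instance of the non-skew Littlewood identity, evaluating to an explicit product of third-order elliptic Gamma functions in the combined argument list.

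On the RHS, swapping the order of summation produces an inner $\blambda$-sum which, after identifying $b = (pqt)^{1/2}V/ta$ (so that $a/b = a^2t/(pqt)^{1/2}V$ and $\sqrt{pqt}/b = ta/V$ exactly match the $\Delta$- and $u$-parameters of $T_{\blambda}$), is precisely the left-hand side of Theorem~\ref{thm:cauchy_skew} with the two skew indices interchanged and $\bkappa=0$. That theorem collapses the inner sum to a $\blambda$-independent scalar times $\Delta_{\bmu^2}(a^2t/(pqt)^{1/2}V^2|;t;p,q)\,\cR^*_{\bmu^2/0}([u_0,\dots,u_{2m-1}];ta/V,(pqt)^{1/2}V/a;t;p,q)$; the factor $\Delta_{\bmu^2}$ cancels the denominator in the outer sum, leaving another instance of the non-skew Littlewood identity, this time in the $u$-variables alone (with $V$ absorbed into ambient parameters).

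Since both pairings are explicit products of Gamma functions, their ratio is manifestly independent of $\blambda$ (no $\blambda$-dependence survives past the summation), and setting $\blambda=0$ identifies this ratio with the claimed proportionality constant. Linear independence of the family $T_{\blambda}$ as $m$ and $u_0,\dots,u_{2m-1}$ vary then yields the identity for each $\blambda$ individually under the imposed termination hypotheses, and analytic continuation removes the hypotheses. The main obstacle is the parameter bookkeeping: verifying that the $\Delta$-argument $a^2t/(pqt)^{1/2}V$ in the claim is compatible with the normalization $a/b$ required by Theorem~\ref{thm:cauchy_skew} under $b=(pqt)^{1/2}V/ta$, and confirming that the horizontal and vertical termination conditions on the $v_r$-parameters propagate consistently to every $\bmu$- and $\blambda$-sum that appears. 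Once these alignments are pinned down, the two collapses are mechanical.
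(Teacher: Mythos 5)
Your strategy is viable and the parameter matching is correct: with $b=(pqt)^{1/2}V/ta$ one has $a/b=a^2t/(pqt)^{1/2}V$ and $\sqrt{pqt}/b=ta/V$, so the single test function $T_\blambda$ is simultaneously the right partner for the branching rule on the left and for Theorem~\ref{thm:cauchy_skew} (with the summation variable as common upper index and lower indices $\bmu^2$ and $0$) on the right, and both paired sums collapse to instances of the non-skew Littlewood identity (for the list $[v,u]$ with parameter $a$, resp.\ for $[u]$ with parameter $a/V$). This is, however, a genuinely different route from the paper's written proof, which proceeds by induction on $n$ --- peeling off a pair of arguments multiplying to $t$ exactly as in the non-skew case --- and reduces to the base case $n=1$, handled by Lemma~\ref{lem:litt_t} when $v_0v_1=t$ and by Theorem~\ref{thm:genlitt} in general. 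Yours is instead the ``pair against test functions'' scheme of Theorem~\ref{thm:cauchy_skew}, which the sentence introducing the theorem gestures at but does not execute; it buys independence from the forward reference to Theorem~\ref{thm:genlitt}, at the cost of needing a separating family of test functions and an extra layer of analytic continuation.

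Two steps need to be made honest. First, your justification of the final step is misdirected: observing that each paired sum is a $\blambda$-independent Gamma product is vacuous, since $\blambda$ has been summed out. What the linear-independence argument actually requires is that the ratio of the two evaluations be independent of the test data $[u]$ --- concretely, that the Littlewood product for the combined list $[v,u]$ equal $c$ times the Cauchy proportionality constant for the pair $([v],[u])$ times the Littlewood product for $[u]$ with $a\mapsto a/V$, where $c$ is the fixed $\blambda=0$ value of the claimed ratio. The mixed $\Gampqt$-factors coupling $v_i$ to $u_j$ must be absorbed exactly by the Cauchy constant, and the leftovers from the mismatched auxiliary parameters ($ta/VU$ versus $1/a$ versus $V/a$) must be $u$-free; this is a finite verification, but it is the entire content of identifying the constant and cannot be dismissed as $\blambda$-independence. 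Second, when the vertical termination of the left-hand side relies on the relaxed condition peculiar to $\bmu^2$-sums (a pair of arguments multiplying to $tp^{-l}q^{-m}$), the auxiliary sum over general $\blambda$ does \emph{not} terminate; you must first impose honest vertical termination on the $v_r$ (pairs multiplying to nonnegative powers of $t$ up to $p,q$-shifts) and recover the remaining cases by the analytic continuation you already invoke at the end.
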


\begin{proof}
One can again proceed by induction on $n$; for $n>1$, a terminating case
always has a pair multiplying to $t$ (possibly after adding a pair
multiplying to $1$) such that the various sums continue to terminate after
extracting that pair.  One thus reduces to the case $n=1$; if $v_0v_1=t$,
this has already been shown, while in general it follows from
Theorem~\ref{thm:genlitt} below.
\end{proof}

\begin{rem}
Again, this formally produces Macdonald's skew $q,t$-Littlewood identity in
the limit.
\end{rem}

One disappointing aspect of the above identities is the fact that $ab$ (or,
in the case of binomial coefficients, $b$) is constrained.  It appears that
this is a necessary constraint if we wish a completely general Littlewood
identity, but if we are willing to restrict our attention to binomial
coefficients, we can introduce more parameters.

\begin{thm}\label{thm:genlitt}
If $b^2v_0v_1v_2v_3=pqta^2$, and the LHS terminates, then
\[
\sum_{\bmu}
\obinomE{\bmu^2}{\blambda}_{[a,b];t;p,q}
\frac{\Delta_{\bmu}(a/t|v_0,v_1,v_2,v_3;t^2;p,q)}
     {\Delta_{\blambda}(a/b|v_0,v_1,v_2,v_3;t;p,q)}
\propto
\sum_{\bmu}
\obinomE{\blambda}{\bmu^2}_{[a/b,b];t;p,q}
\frac{\Delta_{\bmu}(a/tb^2|v_0,v_1,v_2,v_3;t^2;p,q)}
     {\Delta_{\bmu^2}(a/b^2|v_0,v_1,v_2,v_3;t;p,q)}
\]
where the constant of proportionality is independent of $\blambda$.
The termination condition on the LHS is that
\begin{align}
t^{2\N}\cap \{v_0,v_1,v_2,v_3,b,b/t\}&\ne \emptyset\\
p^{-\N}q^{-\N}\cap \{v_0,v_1,v_2,v_3,b,b/t\}&\ne \emptyset,
\end{align}
with corresponding constraints on $\blambda$ if a $v_r$ is used for
termination.
\end{thm}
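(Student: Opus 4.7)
The plan is to deduce Theorem~\ref{thm:genlitt} from two ingredients: (i) the inner-product identity Corollary~\ref{cor:ipR}, which has the same four free $v_r$ parameters and the same balancing structure, and (ii) the ``doubling'' identities for $\Delta^0$-symbols used in the proof of Lemma~\ref{lem:litt_t}, which relate quantities indexed by $\bmu^2$ at rank $t$ to those indexed by $\bmu$ at rank $t^2$.

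First, I would establish the case $\blambda=0$, which also pins down the constant of proportionality (since $\Delta_\blambda$ is a $\bmu$-independent factor that can be pulled out). On the right-hand side, the coefficient $\obinomE{0}{\bmu^2}_{[a/b,b];t;p,q}$ forces $\bmu=0$, so the RHS collapses to a single term. The LHS becomes
\[
\sum_{\bmu} \obinomE{\bmu^2}{0}_{[a,b];t;p,q}\,\Delta_\bmu(a/t|v_0,v_1,v_2,v_3;t^2;p,q),
\]
which, after invoking the factorization
\[
\Delta^0_{\bmu^2}(a|v;t;p,q) = \Delta^0_\bmu(a/t|v,v/t;t^2;p,q)
\]
together with the analogous identity for the full $\Delta$-symbol, can be rewritten entirely in the $t^2$-ranked $\bmu$-variables. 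The resulting sum matches an instance of Corollary~\ref{cor:ipR} (with $\blambda=\bkappa=0$) under the balancing condition $b^2 v_0 v_1 v_2 v_3 = pqta^2$, which then provides the explicit evaluation.

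Second, for general $\blambda$, I would adapt the test-function argument of Theorem~\ref{thm:cauchy_skew}. Multiply both sides by a suitable binomial coefficient depending on an auxiliary partition pair, sum over $\blambda$, and use the branching rule for binomial coefficients to collapse the resulting double sum to an instance of the $\blambda=0$ case already proved. Since the family of such test functions is linearly independent (a consequence of biorthogonality of interpolation functions), the identity must hold termwise in $\blambda$, and the constant of proportionality is manifestly $\blambda$-independent.

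The main obstacle is the mismatch between the squared partition $\bmu^2$ coupled to $t$-ranked binomial coefficients and the unsquared partition $\bmu$ coupled to $t^2$-ranked $\Delta$-symbols. Controlling this requires systematic use of the rectangle-addition identity \cite[Cor.~4.8]{bctheta} employed in the proof of Lemma~\ref{lem:litt_t}, applied recursively to convert between the two indexing conventions. A secondary technical point is that the termination hypotheses guarantee finiteness of the sums on a Zariski-dense locus of parameter space; the identity of elliptic functions then extends by analytic continuation from configurations where a specific $v_r$ or $b$-factor provides termination to the full parameter range stated in the theorem.
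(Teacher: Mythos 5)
Your first step (the $\blambda=0$ case) is fine and agrees with the remark following the theorem in the paper: using $\Delta^0_{\bmu^2}(a|b;t;p,q)=\Delta^0_{\bmu}(a/t|b,b/t;t^2;p,q)$ the left side collapses to $\sum_{\bmu}\Delta_{\bmu}(a/t|v_0,v_1,v_2,v_3,b,b/t;t^2;p,q)$, an evaluable $\Delta$-sum, and this fixes the constant. The gap is in your second step. The test-function argument of Theorem~\ref{thm:cauchy_skew} works there because multiplying by $\Delta_{\blambda}\cR^*_{\blambda/0}([t^{1/2}u_i^{\pm1}];\cdot)$ and summing over $\blambda$ turns \emph{both} sides into instances of an identity that is already proved for arbitrary extra arguments (the non-skew Cauchy identity), and the $u_i$ give a separating family. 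Here, performing the analogous summation on the left-hand side of Theorem~\ref{thm:genlitt} collapses $\sum_{\blambda}\obinomE{\bmu^2}{\blambda}\cR^*_{\blambda/0}(\cdots)$ into $\cR^*_{\bmu^2/0}$ with an enlarged argument list, so what you land on is \emph{not} the $\blambda=0$ case of the theorem but the unconstrained lifted Littlewood sum $\sum_{\bmu}\cR^*_{\bmu^2/0}(\cdots;ta,b;\cdots)\Delta_{\bmu}(a/b|;t^2;p,q)$ with $ab$ free --- and the only version of that identity proved in the paper requires $ab=(pqt)^{1/2}$. (Indeed the paper introduces Theorem~\ref{thm:genlitt} precisely because the summed identity with general $b$ is not available.) So the reduction is circular: the statement you would need to quote after summing against test functions is essentially equivalent to the theorem you are trying to prove, not to its $\blambda=0$ specialization.

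The paper closes this gap by a different mechanism: it writes
\[
\Delta^0_{\bmu}(a/t|v_3;t^2;p,q)\,\obinomE{\bmu^2}{\blambda}_{[a,b];t;p,q}
\]
as a sum over $\bnu$ of $\obinomE{\bmu^2}{\bnu}_{[a,t];t;p,q}\obinomE{\bnu}{\blambda}_{[a/t,b/t](v_3/t,pqa/bv_3);t;p,q}$ (a Bailey-type splitting of the binomial coefficient), performs the $\bmu$-sum first using Lemma~\ref{lem:litt_t} (the one-surviving-term case $b=t$), and thereby reduces the identity for $(a,b,v_3)$ to the identity for $(a/t^2,b/t,v_3/t)$. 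Induction on $b\in t^{\N}$ from the trivial base case $b=1$, followed by analytic continuation in $b$, finishes the proof. If you want to salvage your approach you would need to supply exactly this kind of descent; your invocation of \cite[Cor.~4.8]{bctheta} and the doubling identities only handles the bookkeeping between $\bmu^2$ at rank $t$ and $\bmu$ at rank $t^2$, not the step that actually removes a factor of $t$ from $b$.
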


\begin{proof}
If we write
\begin{align}
\Delta^0_{\bmu}(a/t|v_3;t^2;p,q)
\obinomE{\bmu^2}{\blambda}_{[a,b];t;p,q}
={}&
\Delta^0_{\bmu}(a/t|bv_3/t,bv_3/t^2,pqa/v_3;t^2;p,q)\notag\\
&\times\sum_{\bnu}
\obinomE{\bmu^2}{\bnu}_{[a,t];t;p,q}
\obinomE{\bnu}{\blambda}_{[a/t,b/t](v_3/t,pqa/bv_3);t;p,q}
\end{align}
(where $v_3$ is not used for termination), and apply
Lemma~\ref{lem:litt_t}, we reduce to the case with
\[
(a,b,v_3)\mapsto (a/t^2,b/t,v_3/t);
\]
thus by induction (the claim being trivial when $b=1$), we obtain every
case with $b\in t^\N$, and the general result by analytic continuation.
\end{proof}

\begin{rem}
When $\blambda=0$, the right-hand sum becomes 1, while the left-hand side
becomes
\[
\sum_{\bmu} \Delta_{\bmu}(a/t|v_0,v_1,v_2,v_3,b,b/t;t^2;p,q),
\]
which can be evaluated, thus determining the normalization.
\end{rem}

\begin{cor}\label{cor:diff_eq_t}
  If $v_0v_1v_2v_3=p^{2l+2}q^{2m+2}a^2$ and $\ell(\blambda)\le 2n$ with
  $l,m,n\in \N$, then
\[
\sum_{\bmu}
\obinomE{\bmu^2}{\blambda}_{[a,p^{-l}q^{-m}];t;p,q}
\Delta_\bmu(a/t|t^{2n},a/t^{2n};t^2;p,q)
\prod_{0\le r<4}
\frac{\Delta^0_\bmu(a/t|v_r;t^2;p,q)}
{\cC^0_{(l,m)^n}(pqt^{2n-1}/v_r;t^2;p,q) \cC^0_\blambda(v_r;t;p,q)}
\]
is invariant under $v_r\mapsto p^{l+1}q^{m+1}a/v_r$.
\end{cor}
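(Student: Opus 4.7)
I would derive the corollary from Theorem~\ref{thm:genlitt} applied with the choice $b=p^{-l}q^{-m}$, for which $pq(a/b)=p^{l+1}q^{m+1}a$ is precisely the reflection center in the statement. Using multiplicativity of $\Delta^0$, the $\bmu$-dependent $\Delta$-factors in the Corollary combine into the single expression $\Delta_\bmu(a/t|t^{2n},a/t^{2n},v_0,v_1,v_2,v_3;t^2;p,q)$, which has six arguments. Since Theorem~\ref{thm:genlitt} accommodates only four $v$-arguments with balancing $b^2v_0v_1v_2v_3=pqta^2$, the plan is to follow the inductive scheme from that theorem's proof, namely iterating the factoring identity
\[
\Delta^0_\bmu(a/t|v;t^2)\obinomE{\bmu^2}{\blambda}_{[a,b]}
=\Delta^0_\bmu(a/t|bv/t,bv/t^2,pqa/v;t^2)\sum_\bnu\obinomE{\bmu^2}{\bnu}_{[a,t]}\obinomE{\bnu}{\blambda}_{[a/t,b/t](v/t,pqa/bv)}
\]
together with Lemma~\ref{lem:litt_t} to step $b$ down by powers of $t$, reducing the effective number of $v$-arguments until a direct application of Theorem~\ref{thm:genlitt} becomes possible.

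Once reduced to a form handled by Theorem~\ref{thm:genlitt}, the LHS is replaced by the dual RHS involving $\obinomE{\blambda}{\bmu^2}_{[a/b,b]}$ in place of $\obinomE{\bmu^2}{\blambda}_{[a,b]}$. The invariance under $v_r\mapsto c/v_r$, with $c=p^{l+1}q^{m+1}a$, is then controlled by the $\Delta^0$-duality $\Delta^0_\blambda(a/b|v)\Delta^0_\blambda(a/b|c/v)=1$, which governs the transformation of the $\cC^0_\blambda(v_r;t;p,q)$ factor in the denominator, together with the analogous duality for $\cC^0_{(l,m)^n}(pqt^{2n-1}/v_r;t^2;p,q)$ taken with reflection center $t^{4n-2}/(p^lq^ma)$. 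The hypothesis $v_0v_1v_2v_3=p^{2l+2}q^{2m+2}a^2=c^2$ is exactly the balancing needed for the ellipticity of $\Delta^0_\blambda(a/b|v_0,v_1,v_2,v_3;t;p,q)$, and hence for the reflection-induced factors arising from these dualities to pair off cleanly.

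The main obstacle will be verifying that the $\bmu$-independent denominator factors $\prod_r\cC^0_{(l,m)^n}(pqt^{2n-1}/v_r;t^2;p,q)$ and $\prod_r\cC^0_\blambda(v_r;t;p,q)$ are exactly the normalizations required to absorb the accumulated $\Delta^0$-factors under the reflection. Concretely, under $v_r\mapsto c/v_r$ the product $\prod_r 1/\cC^0_\blambda(v_r;t;p,q)$ acquires the factor $\Delta^0_\blambda(a/b|v_0,v_1,v_2,v_3;t;p,q)$, and a parallel computation handles the $\cC^0_{(l,m)^n}$ product; what must be shown is that the $\bmu$-summed part, when rewritten via Theorem~\ref{thm:genlitt}, absorbs exactly the reciprocals of these factors. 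The crux lies in matching the pair $(t^{2n},a/t^{2n})$ against the rectangle factor $\cC^0_{(l,m)^n}$ correctly within the induction, which closes by the same substitution $(a,b)\mapsto(a/t^2,b/t)$ that drives the proof of Theorem~\ref{thm:genlitt}.
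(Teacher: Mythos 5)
Your proposal correctly identifies the main ingredients ($b=p^{-l}q^{-m}$, the reflection center $pq(a/b)=p^{l+1}q^{m+1}a$, and the role of the $\cC^0$ normalizations), but the route you describe does not close, and it is not the paper's argument. The obstruction you flag at the end is real and is not resolved by the inductive machinery you invoke: by multiplicativity the summand carries the \emph{six}-argument symbol $\Delta_\bmu(a/t|t^{2n},a/t^{2n},v_0,v_1,v_2,v_3;t^2;p,q)$, while Theorem~\ref{thm:genlitt} accommodates only four arguments; the induction in that theorem's proof steps $b$ down by powers of $t$ but never changes the number of $v$-arguments, so for generic $v_r$ there is simply no instance of the theorem to reduce to. Likewise, the ``$\Delta^0$-duality'' you propose only disposes of the $\bmu$-independent denominators $\cC^0_\blambda(v_r;t;p,q)$ and $\cC^0_{(l,m)^n}(pqt^{2n-1}/v_r;t^2;p,q)$: the $\bmu$-dependent factor $\Delta^0_\bmu(a/t|v_r;t^2;p,q)$ has natural reflection center $pqa/t$, not $p^{l+1}q^{m+1}a$, so the invariance is emphatically not term-by-term and cannot be extracted from these cancellations.

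What the paper actually does is verify the invariance on a Zariski-dense family of special values and analytically continue. When $v_0=pqt^{2n-1}$ (and $\blambda_{2n}\ge(l,m)$), one has
\[
\Delta^0_\bmu(a/t|pqt^{2n-1};t^2;p,q)\,\Delta^0_\bmu(a/t|a/t^{2n};t^2;p,q)=1,
\]
so two of the six arguments cancel and the sum collapses to the four-argument left-hand side of Theorem~\ref{thm:genlitt} with $v_0=t^{2n}$, $b=p^{-l}q^{-m}$ (note $t^{2n}v_1v_2v_3=pqta^2/b^2$, the correct balancing); the sum at the reflected value $v_0=p^lq^ma/t^{2n-1}$ is then matched to the theorem's right-hand side using Corollary~4.8 of \cite{bctheta}. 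The substitution $\blambda\mapsto(l',m')^{2n}+\blambda$ followed by a shift of the summation variable yields the cases $v_0=p^{l'+1}q^{m'+1}t^{2n-1}$, which are Zariski dense, and analytic continuation finishes. Without the specialization that collapses six arguments to four there is no way to invoke Theorem~\ref{thm:genlitt} at all, and without the complementation symmetry of \cite[Cor.~4.8]{bctheta} the reflected sum is never connected to the original; both steps are missing from your plan.
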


\begin{proof}
  When $v_0=pqt^{2n-1}$ and $\blambda_{2n}\ge (l,m)$, the left-hand side
  simplifies to the case $v_0=t^{2n}$, $b=p^{-l}q^{-m}$ of the left-hand
  side of Theorem~\ref{thm:genlitt}, while the right-hand sides become
  equivalent after applying Corollary 4.8 of \cite{bctheta}.  Substituting
  $\blambda\mapsto (l',m')^{2n}+\blambda$ for $l'\ge l$, $m'\ge m$, then
  shifting the variable of summation gives the case
  $v_0=p^{l'+1}q^{m'+1}t^{2n-1}$ of the corollary.  Since these cases are
  Zariski dense, the corollary follows.
\end{proof}

We observed above that Corollary~\ref{cor:ipR} can be interpreted as
giving the inner product of two interpolation functions, and is in
particular a special case of a more general integral identity.  The same
applies to Theorem~\ref{thm:genlitt}.  The basic observation is that the
sequence of points
\[
t^{2n-i} (p,q)^{(\bmu^2)_i} a, \quad 1\le i\le 2,
\]
which arises when evaluating an interpolation function at $\bmu^2$, can also
be expressed in the form
\[
t^{\pm 1/2} (t^2)^{n-i} (p,q)^{\bmu_i} t^{1/2} a, \quad 1\le i\le n.
\]
This gives rise to the following result, where we recall that
\[
\langle f(z_1,\dots,z_n)\rangle^{(n)}_{t_0,t_1,t_2,t_3,u_0,u_1;t;p,q}
\]
is the normalized linear functional associated to the $n$-dimensional
elliptic Selberg integral.

\begin{thm}\label{thm:int_litt}
For any partition pair $\blambda$, and generic parameters satisfying the
balancing condition
\[
t^{4n-2} t_0t_1t_2t_3u_0^2 = pq,
\]
one has
\begin{align}
&\langle
\cR^{*(2n)}_\blambda(\dots,t^{\pm 1/2}z_i,\dots;t_0,u_0;t;p,q)
\rangle^{(n)}_{t^{1/2}t_0,t^{1/2}t_1,t^{1/2}t_2,t^{1/2}t_3,t^{\pm
    1/2}u_0;t^2;p,q}\notag\\
&\qquad =
\Delta^0_\blambda(t^{2n-1}t_0/u_0|t^{2n-1}t_0t_1,t^{2n-1}t_0t_2,t^{2n-1}t_0t_3;t;p,q)\\
&\phantom{\qquad =\quad}
\times\sum_{\bmu}
\obinomE{\blambda}{\bmu^2}_{[t^{2n-1}t_0/u_0,t^{2n-1}t_0u_0];t;p,q}
\frac{\Delta_{\bmu}(1/tu_0^2|t^{2n},t^{2n-1}t_0t_1,t^{2n-1}t_0t_2,t^{2n-1}t_0t_3;t^2;p,q)}
     {\Delta_{\bmu^2}(1/u_0^2|t^{2n},t^{2n-1}t_0t_1,t^{2n-1}t_0t_2,t^{2n-1}t_0t_3;t;p,q)}\notag
\end{align}
\end{thm}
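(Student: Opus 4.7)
The crucial observation, noted explicitly in the paragraph preceding the theorem, is that the $2n$ specialization points $t^{2n-i}(p,q)^{(\bmu^2)_i}t_0$ of the step-$t$, $2n$-variable interpolation function $\cR^{*(2n)}_\blambda$ at an index $\bmu^2$ (at which the function must vanish unless $\blambda\subset\bmu^2$) regroup naturally as $n$ pairs of the form $t^{\pm 1/2}(t^2)^{n-i}(p,q)^{\bmu_i}t^{1/2}t_0$, that is, as the paired substitution $z_i\mapsto(t^2)^{n-i}(p,q)^{\bmu_i}t^{1/2}t_0$ into the integrand's arguments $t^{\pm1/2}z_i$. A direct comparison of parameters then reveals that the right-hand side of the theorem is the right-hand side of the generalized Littlewood identity of Theorem~\ref{thm:genlitt} with $a=t^{4n-2}t_0^2$, $b=t^{2n-1}t_0u_0$, and $(v_0,v_1,v_2,v_3)=(t^{2n},t^{2n-1}t_0t_1,t^{2n-1}t_0t_2,t^{2n-1}t_0t_3)$, the theorem's balancing condition $t^{4n-2}t_0t_1t_2t_3u_0^2=pq$ translating precisely to the Littlewood balancing $b^2v_0v_1v_2v_3=pqta^2$. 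The plan is therefore to show that the integral on the left-hand side equals the \emph{left}-hand side of Theorem~\ref{thm:genlitt} with these same parameters, and then invoke that theorem to obtain the stated right-hand side.

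\textbf{Residue evaluation.} To match the integral to the Littlewood sum, I would deform the Selberg contour to collect residues of the step-$t^2$ density $\Delta^{(n)}(\cdot;t^{1/2}t_0,\dots,t^{-1/2}u_0;t^2;p,q)$ at the sequences of poles $z_i=(p,q)^{\bmu_i}(t^2)^{n-i}t^{1/2}t_0$ arising from the factors $\Gampq(t^{1/2}t_0z_i^{\pm1})$, indexed by partition pairs $\bmu$. By the description of $\Delta_\blambda$ recalled in the introduction as the $\blambda$-dependent factor of such residues of the Selberg integrand, each such residue contributes a factor $\Delta_\bmu(\cdot;t^2;p,q)$ with the remaining arguments determined by the Selberg parameters. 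At these specialization points the paired arguments $t^{\pm1/2}z_i$ become precisely the $\bmu^2$-specialization of the $2n$-variable, step-$t$ interpolation function, so by the definition of $\binom{\bmu^2}{\blambda}_{[\cdots];t;p,q}$ recalled in the introduction, $\cR^{*(2n)}_\blambda$ evaluates there to an explicit ratio involving $\binom{\bmu^2}{\blambda}$, $\Delta_\blambda$, and a $\Delta^0$ prefactor (the last absorbing the nontrivial normalization of the interpolation function at its anchor point $t^{n-i}v$). Summing over $\bmu$, the collected residues recast as the left-hand side of Theorem~\ref{thm:genlitt} with the parameters above, and that theorem then delivers the $\bmu^2$-indexed sum in the present statement.

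\textbf{Principal obstacle and alternative.} The main technical difficulty is justifying the residue expansion: the Selberg contour is not a simple circle but a deformation satisfying $C=C^{-1}$ and $tC\subset C$, the family of poles being pushed through appears for both $z_i$ and $z_i^{-1}$, and the resulting sum over $\bmu$ is in general infinite, so one needs an analytic continuation or density argument to conclude that the residues exhaust the integral. A secondary hurdle is the careful bookkeeping of the step-$t$ versus step-$t^2$ normalizations and the various $t^{1/2}$ factors and $\Delta^0$ prefactors to recognize the summand as exactly the LHS of Theorem~\ref{thm:genlitt}. An alternative route, which sidesteps contour deformation, would be to apply Theorem~\ref{thm:lifted_interp} to rewrite the interpolation function in lifted form, use invariance under deletion of the trivial pairs $\{z_i,1/z_i\}$ (product $1$) to reduce the $4n$ arguments $t^{1/2}(t^{\pm1/2}z_i)^{\pm1}$ to the $2n$ arguments $\{tz_i,t/z_i\}$, expand via the defining sum over $\bmu$, and then evaluate each resulting term using \cite[Prop.~3.9]{bctheta} combined with the elliptic Selberg integral evaluation; the resulting finite sum can again be recognized as the LHS of Theorem~\ref{thm:genlitt} and converted by that theorem.
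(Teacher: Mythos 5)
Your identification of the target is correct and matches the paper's first step: with $a=t^{4n-2}t_0^2$, $b=t^{2n-1}t_0u_0$ and $(v_0,v_1,v_2,v_3)=(t^{2n},t^{2n-1}t_0t_1,t^{2n-1}t_0t_2,t^{2n-1}t_0t_3)$ the right-hand side is that of Theorem~\ref{thm:genlitt}, and when the integral degenerates to a sum of residues the identity is indeed an instance of that theorem. The gap is in how you get from there to generic parameters. For generic parameters the poles $z_i=(p,q)^{\bmu_i}(t^2)^{n-i}t^{1/2}t_0$ form an infinite family and the associated residue sum is an infinite sum of elliptic terms, which does not converge; and, equally fatally, Theorem~\ref{thm:genlitt} itself is only proved under the termination hypothesis $p^{-\N}q^{-\N}\cap\{v_0,\dots,v_3,b,b/t\}\ne\emptyset$, which fails generically here (only $v_0=t^{2n}\in t^{2\N}$ holds automatically). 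So ``collect residues and invoke Theorem~\ref{thm:genlitt}'' is only available on the discrete (though Zariski-dense) locus where, say, $t^{2n-1}t_0t_1\in p^{-\N}q^{-\N}$; your phrasing of the obstacle as ``do the residues exhaust the integral'' understates the problem, since off that locus there is no convergent residue expansion to exhaust anything with.

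What is missing is the mechanism that licenses analytic continuation from that discrete set, namely a proof that the left-hand side is, as a function of the parameters, a product of $p$- and $q$-theta functions. The paper obtains this by expanding $\cR^{*(2n)}_\blambda(\dots,t^{\pm1/2}z_i,\dots;t_0,u_0;t;p,q)$ --- non-explicitly, using only that all allowed poles are accounted for --- as a linear combination of products $\tcR^{(n)}_\bmu(\cdot;t^{1/2}t_0{:}\dots;t^{1/2}u_0,t^{-1/2}u_0;t^2;p,q)\,\tcR^{(n)}_\bnu(\cdot;t^{1/2}t_0{:}\dots;t^{-1/2}u_0,t^{1/2}u_0;t^2;p,q)$ adapted to the step-$t^2$ density, so that the integral can be controlled term by term; matching theta behaviour on both sides then upgrades agreement on the dense algebraic locus to the full identity. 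Your alternative route does not supply this either: after passing to lifted interpolation functions and deleting the trivial pairs, the surviving $\Delta^0_\bmu$ factors multiply the density by ratios of elliptic Gamma functions that shift the parameters $t^{\pm1/2}u_0$, so the term-by-term integrals are instances of Kadell's lemma \cite[Cor.~9.3]{xforms} rather than the plain Selberg evaluation, and one is still left with a nontrivial resummation and the same continuation problem. Supplying the biorthogonal-expansion (or an equivalent theta-dependence) argument is essential to close the proof.
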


\begin{proof}
If $t^{2n-1}t_0t_1 = p^{-l}q^{-m}$, so that the integral reduces to a sum,
the identity is a case of Theorem~\ref{thm:genlitt}.  But the left-hand
side can be computed by expanding
\[
\cR^{*(2n)}_\blambda(\dots,t^{\pm 1/2}z_i,\dots;t_0,u_0;t;p,q)
\]
as a linear combination of products of a function
\[
\tcR^{(n)}_\bmu(\dots,z_i,\dots;t^{1/2}t_0{:}t^{1/2}t_1,t^{1/2}t_2,t^{1/2}t_3;t^{1/2}u_0,t^{-1/2}u_0;t^2;p,q)
\]
and a function
\[
\tcR^{(n)}_\bnu(\dots,z_i,\dots;t^{1/2}t_0{:}t^{1/2}t_1,t^{1/2}t_2,t^{1/2}t_3;t^{-1/2}u_0,t^{1/2}u_0;t^2;p,q).
\]
(This is not to say that this expansion can be done explicitly; it suffices
that such an expansion exists, which follows from the fact that all allowed
poles are covered.)  In particular, it follows that the left-hand side is
the product of $p$- and $q$-theta functions, as is the right-hand side, so
we may analytically continue to obtain the desired result.
\end{proof}

The following special case has a particularly simple summand.

\begin{cor}
If $t^{4n-2} t_0^2u_0^2v_0v_1 = pq$, then
\begin{align}
&\left\langle
\frac{\cR^{*(2n)}_\blambda(\dots,t^{\pm 1/2}z_i,\dots;t^{1/2}t_0,t^{1/2}u_0;t;p,q)}
{\Delta^0_\blambda(t^{2n-1}t_0/u_0|t^{2n-1}t_0v_0,t^{2n-1}t_0v_1;t;p,q)}
\right\rangle^{(n)}_{t_0,tt_0,u_0,tu_0,v_0,v_1;t^2;p,q}\notag\\
&=
\Delta^0_\blambda(t^{2n-1}t_0/u_0|t^{2n-1}t_0^2;t;p,q)
\sum_{\bmu}
\obinomE{\blambda}{\bmu^2}_{[t^{2n-1}t_0/u_0,t^{2n}t_0u_0];t;p,q}
\frac{\Delta_{\bmu}(1/t^2u_0^2|t^{2n},t^{2n-1}t_0^2;t^2;p,q)}
     {\Delta_{\bmu^2}(1/tu_0^2|t^{2n},t^{2n-1}t_0^2;t;p,q)}
\end{align}
\end{cor}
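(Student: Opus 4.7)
The plan is to derive this as a specialization of Theorem~\ref{thm:int_litt}, followed by a simplification using the quadratic doubling identity for $\Delta^0$-symbols and the given balancing condition. Writing $t_0', u_0', t_1', t_2', t_3'$ for the parameters appearing in the theorem, I would first set $t_0' = t^{1/2}t_0$ and $u_0' = t^{1/2}u_0$ so that the interpolation function in the theorem matches the one in the corollary. Then I would choose $t_1' = t^{-1/2}t_0$, $t_2' = t^{-1/2}v_0$, $t_3' = t^{-1/2}v_1$ so that the six parameters $\{t^{1/2}t_r', t^{\pm 1/2}u_0'\}$ of the Selberg linear functional become precisely $\{t_0, tt_0, u_0, tu_0, v_0, v_1\}$. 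A short direct computation confirms that the theorem's balancing condition then reduces to the corollary's $t^{4n-2}t_0^2 u_0^2 v_0 v_1 = pq$.

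Under this substitution, the theorem's prefactor $\Delta^0_\blambda(t^{2n-1}t_0'/u_0'|t^{2n-1}t_0't_1', t^{2n-1}t_0't_2', t^{2n-1}t_0't_3';t;p,q)$ splits, by multiplicativity in the $b_r$ arguments, as $\Delta^0_\blambda(t^{2n-1}t_0/u_0|t^{2n-1}t_0^2;t;p,q)\cdot\Delta^0_\blambda(t^{2n-1}t_0/u_0|t^{2n-1}t_0v_0, t^{2n-1}t_0v_1;t;p,q)$; moving the second factor to divide the interpolation function on the left accounts exactly for the normalization appearing in the corollary's LHS, while the first factor becomes the corollary's explicit prefactor. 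The remaining work is in the summand: the theorem inserts arguments $t^{2n-1}t_0v_0$ and $t^{2n-1}t_0v_1$ into both the $\Delta_\bmu(\cdot;t^2;p,q)$ numerator and the $\Delta_{\bmu^2}(\cdot;t;p,q)$ denominator, whereas the corollary omits them. Since these insertions affect only the $\Delta^0$ component of $\Delta$, the ratio of the theorem's summand to the corollary's is
\[
\frac{\Delta^0_\bmu(1/t^2u_0^2|t^{2n-1}t_0v_0, t^{2n-1}t_0v_1;t^2;p,q)}{\Delta^0_{\bmu^2}(1/tu_0^2|t^{2n-1}t_0v_0, t^{2n-1}t_0v_1;t;p,q)}.
\]

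Applying the quadratic identity $\Delta^0_{\bmu^2}(a|b;t;p,q) = \Delta^0_\bmu(a/t|b, b/t;t^2;p,q)$ used in the proof of Lemma~\ref{lem:litt_t}, the denominator rewrites as $\Delta^0_\bmu(1/t^2u_0^2|t^{2n-1}t_0v_0, t^{2n-2}t_0v_0, t^{2n-1}t_0v_1, t^{2n-2}t_0v_1;t^2;p,q)$, so the ratio above simplifies to $\Delta^0_\bmu(1/t^2u_0^2|t^{2n-2}t_0v_0, t^{2n-2}t_0v_1;t^2;p,q)^{-1}$. The key observation — and, I expect, the only nontrivial step — is that the two arguments of this residual $\Delta^0_\bmu$ are reflection partners: under the corollary's balancing condition, $(t^{2n-2}t_0v_0)(t^{2n-2}t_0v_1) = t^{4n-4}t_0^2 v_0 v_1 = pq/t^2u_0^2$, which is exactly $pq$ times the first argument. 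By inspection of the definition of $\Delta^0$ in terms of $\cC^0_\bmu$ ratios, this forces $\Delta^0_\bmu(1/t^2u_0^2|t^{2n-2}t_0v_0, t^{2n-2}t_0v_1;t^2;p,q) = 1$, collapsing the ratio to unity. The theorem's identity after substitution thus agrees term-by-term with the corollary's claim, so the balancing condition is precisely what is needed to trigger the simplification.
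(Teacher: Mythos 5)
Your proposal is correct and is exactly the intended derivation: the paper states this corollary without proof as a direct specialization of Theorem~\ref{thm:int_litt} (with $t_1\mapsto t^{-1}t_0$, $t_2,t_3\mapsto t^{-1}v_0,t^{-1}v_1$ after absorbing the overall $t^{1/2}$), and your cancellation of the residual factor via $\Delta^0_{\bmu}(a|b,pqa/b;t^2;p,q)=1$ is the same reflection identity the paper records immediately after defining the skew interpolation functions.
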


If we multiply both sides of Theorem~\ref{thm:int_litt} by
\[
\Delta^0_\blambda(t^{2n-1}t_0/u_0|t^{2n-1}t_0t_1,t^{2n-1}t_0t_2,t^{2n-1}t_0t_3;t;p,q)^{-1}
\obinomE{\bkappa}{\blambda}_{[1/u_0^2,1/t^{2n-1}t_0u_0];t;p,q}
\]
and sum over $\blambda$, the right-hand sum collapses to a delta function,
and thus vanishes unless $\bkappa=\bmu^2$ for some $\bmu$.  The effect on
the left-hand side is to produce a biorthogonal function, and we thus
obtain the following vanishing identity.

\begin{cor}\label{cor:vant}
For generic parameters such that $t^{4n-2}t_0t_1t_2t_3u_0^2=pq$, the
integral
\[
\langle
\tcR^{(2n)}_{\blambda}(\dots,t^{\pm 1/2} z_i,\dots;t_0{:}t_1,t_2,t_3;u_0,u_0;t;p,q)
\rangle^{(n)}_{t^{1/2}t_0,t^{1/2}t_1,t^{1/2}t_2,t^{1/2}t_3,t^{\pm 1/2}u_0;t^2;p,q}
\]
vanishes unless $\blambda=\bmu^2$ for some partition pair $\bmu$, in which
case it equals
\[
\frac{\Delta_{\bmu}(1/tu_0^2|t^{2n},t^{2n-1}t_0t_1,t^{2n-1}t_0t_2,t^{2n-1}t_0t_3,1/t^{2n-1}t_0u_0,1/t^{2n}t_0u_0;t^2;p,q)}
     {\Delta_{\bmu^2}(1/u_0^2|t^{2n},t^{2n-1}t_0t_1,t^{2n-1}t_0t_2,t^{2n-1}t_0t_3,1/t^{2n-1}t_0u_0,1/t^{2n-1}t_0u_0;t;p,q)}
.
\]
\end{cor}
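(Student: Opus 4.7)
The proof plan is to derive the corollary by inverting the binomial-coefficient expansion in Theorem~\ref{thm:int_litt}, exactly as sketched in the paragraph preceding the corollary statement. Writing that theorem in the form
\[
\frac{\langle \cR^{*(2n)}_\blambda(\dots,t^{\pm 1/2}z_i,\dots;t_0,u_0;t;p,q)\rangle^{(n)}_{t^{1/2}t_0,\dots,t^{\pm 1/2}u_0;t^2;p,q}}
     {\Delta^0_\blambda(t^{2n-1}t_0/u_0|t^{2n-1}t_0t_1,t^{2n-1}t_0t_2,t^{2n-1}t_0t_3;t;p,q)}
= \sum_{\bmu} \obinomE{\blambda}{\bmu^2}_{[t^{2n-1}t_0/u_0,t^{2n-1}t_0u_0];t;p,q}\, f(\bmu),
\]
where $f(\bmu)$ is the explicit ratio of $\Delta$-symbols on the right-hand side, I would multiply both sides by $\obinomE{\bkappa}{\blambda}_{[1/u_0^2,1/t^{2n-1}t_0u_0];t;p,q}$ and sum over $\blambda$.

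The right-hand side then becomes a double sum to which the orthogonality (more precisely, inversion) of elliptic binomial coefficients can be applied: the composite sum $\sum_\blambda \obinomE{\bkappa}{\blambda}_{[\,\cdot\,]}\obinomE{\blambda}{\bmu^2}_{[\,\cdot\,]}$ collapses to $\delta_{\bkappa,\bmu^2}$, using the matching of the parameters $1/u_0^2$ and $t^{2n-1}t_0/u_0$ under the dualities of \cite[Cor.~4.14]{bctheta}. Hence the right-hand side reduces to the single term $f(\bmu)$ when $\bkappa = \bmu^2$ and vanishes otherwise.

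For the left-hand side, I would use the expansion \cite[Defn.~12 and Thm.~5.3]{bctheta}, which expresses any biorthogonal function as a linear combination of interpolation functions with coefficients given by binomial coefficients of the shape appearing above. Specifically, for the parameter choice dictated by the corollary, the sum
\[
\sum_{\blambda} \obinomE{\bkappa}{\blambda}_{[1/u_0^2,1/t^{2n-1}t_0u_0];t;p,q}\,
\frac{\cR^{*(2n)}_\blambda(\dots,t^{\pm 1/2}z_i,\dots;t_0,u_0;t;p,q)}{\Delta^0_\blambda(t^{2n-1}t_0/u_0|t^{2n-1}t_0t_1,t^{2n-1}t_0t_2,t^{2n-1}t_0t_3;t;p,q)}
\]
reconstructs (up to an explicit normalization) the biorthogonal function $\tcR^{(2n)}_\bkappa(\dots,t^{\pm 1/2}z_i,\dots;t_0{:}t_1,t_2,t_3;u_0,u_0;t;p,q)$, with the doubled $u_0$ reflecting the symmetric role of $u_0$ in the Selberg density (whose $u$-parameters are $t^{\pm 1/2}u_0$).

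Combining the two sides gives the vanishing statement, with the nonzero value being $f(\bmu)$ divided by the normalization constant picked up when assembling the biorthogonal function on the left. The main obstacle I expect is bookkeeping: verifying that the binomial-inversion parameters on the right genuinely collapse the sum to a delta function (i.e.\ that no spurious factors survive), and that the left-hand combination really does assemble into $\tcR^{(2n)}_\bkappa$ with exactly the parameters $(t_0{:}t_1,t_2,t_3;u_0,u_0)$ claimed, since this requires the interpolation-function normalization used in the definition of $\cR^{*(2n)}$ to be consistent with the biorthogonal-function normalization of \cite{bctheta}. Once these parameter identifications are made, the evaluation of the nonzero value reduces to multiplying $f(\bmu)$ by the two $\Delta^0$-ratios absorbed into the biorthogonal function, which gives precisely the quoted ratio of $\Delta$-symbols.
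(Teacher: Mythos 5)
Your proposal is correct and follows essentially the same route as the paper: the corollary is obtained exactly by multiplying Theorem~\ref{thm:int_litt} by $\Delta^0_\blambda(\cdots)^{-1}\obinomE{\bkappa}{\blambda}_{[1/u_0^2,1/t^{2n-1}t_0u_0];t;p,q}$, summing over $\blambda$, collapsing the right-hand side to a delta function by binomial inversion, and recognizing the left-hand side as the biorthogonal function via the expansion of \cite{bctheta}. The only quibble is the citation for the inversion step, which is the ``no arguments'' delta-function identity (\cite[Cor.~4.3]{bctheta}) rather than the connection-coefficient formula, but the parameter matching you check is the right one.
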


\begin{rem}
  In fact, although we have referred to the functions above as
  ``biorthogonal'' functions, since $u_0=u_1$, they actually form an
  orthogonal basis of the appropriate space of functions.
\end{rem}

If we fix $t_0$, $t_1$, $t_2$, $t_3$ and let $p\to 0$ (solving for $u_0$
via the balancing condition, so that $u_0\sim\sqrt{p}$), the biorthogonal
functions converge to Koornwinder polynomials, and the density converges to
a (different) Koornwinder density.  The result is one of the vanishing
integrals of \cite{vanish} (Theorem~4.9 op. cit.), {\em together} with the
nonzero values (which were not accessible to the methods used there).  In
the notation of \cite{bcpoly}, one has
\begin{align}
&I_K(\tilde{K}_\lambda([p_k(t^{k/2}+t^{-k/2})];q,t;T;t_0,t_1,t_2,t_3)
;q,t^2,T;t^{1/2}t_0,t^{1/2}t_1,t^{1/2}t_2,t^{1/2}t_3)\notag\\
&=
\delta_{\lambda\mu^2}
\frac{t^{-|\mu|}
      \prod_{0\le r<s\le 3} C^0_\mu(T t_rt_s/t;q,t^2)
      C^0_\mu(T,Tt_0t_1t_2t_3/t^2;q,t^2)
      C^+_\mu(T^2t_0t_1t_2t_3/t^4;q,t^2)
      C^-_\mu(qt;q,t^2)}
     {C^0_{2\mu^2}(T^2t_0t_1t_2t_3/t^2;q,t^2)
      C^+_\mu(T^2t_0t_1t_2t_3/qt^3;q,t^2)
      C^-_\mu(t^2;q,t^2)}.
\end{align}
(If $T=t^{2n}$, this states that the integral of
$K^{(2n)}_\lambda(\dots,t^{\pm 1/2}z_i,\dots;q,t;t_0,t_1,t_2,t_3)$ against
the normalized density with parameters
$q,t^2;t^{1/2}t_0,t^{1/2}t_1,t^{1/2}t_2,t^{1/2}t_3$ vanishes unless
$\lambda=\mu^2$, when the value is as given.)

We furthermore conjecture that Theorem~\ref{thm:int_litt} extends to the
following transformation (much as Corollary~\ref{cor:ipR} extends to
Theorem 9.7 of \cite{xforms}).  For the significance of the label
$(t^{-1/2})$, see the end of Section~\ref{sec:var}.

\begin{conjL}[$t^{-1/2}$]\label{conj:littbig}
For generic parameters such that $t^{4n-4}t_0^2u_0^2v_0v_1v_2v_3=p^2q^2$,
one has
\begin{align}
\int
\cR^{*(2n)}_{\blambda}&(\dots,t^{\pm 1/2} z_i,\dots;t_0,u_0;t;p,q)
\Delta^{(n)}(;t^{\pm 1/2}t_0,t^{\pm 1/2}u_0,v_0,v_1,v_2,v_3;t^2;p,q)\notag\\
&=
\prod_{0\le r\le 3}
\Delta^0_{\blambda}(t^{2n-1}t_0/u_0|t^{2n-3/2}t_0v_r;t;p,q)
  \prod_{1\le i\le 2n}
    \Gampq(t^{2n-1/2-i} t_0 v_r,t^{2n-1/2-i} u_0 v_r)
\notag\\
&\phantom{{}={}}\times\int
\cR^{*(2n)}_{\blambda}(\dots,t^{\pm 1/2}z_i,\dots;t_0,u_0;t;p,q)
\Delta^{(n)}(;t^{\pm 1/2}t_0,t^{\pm 1/2}u_0,v'_0,v'_1,v'_2,v'_3;t^2;p,q),
\label{eq:littbig}
\end{align}
where $v'_r = pq/t^{2n-2}t_0u_0v_r$.
\end{conjL}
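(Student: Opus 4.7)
My plan is to prove Conjecture~\ref{conj:littbig} by degenerating the integral identity to a Zariski dense family of terminating-sum identities (where it reduces to Theorem~\ref{thm:genlitt} or Theorem~\ref{thm:int_litt}) and then lifting to generic parameters by analytic continuation. This parallels the proof of Theorem~\ref{thm:int_litt} itself, in which the sum version was the starting point and the general integral identity followed by continuation.

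First I would specialize the parameters so that the $n$-dimensional elliptic Selberg integral on the LHS pinches between sequences of poles and collapses to a sum of residues. A productive family of specializations is $v_0 = p^{-l}q^{-m}t^{-k}$, which forces the contour to pick up a finite set of residues indexed by partition pairs $\bmu$ with $\bmu \subset (l,m)^k$; the summand is then a product of $\Delta$-symbols as in the definition of $\II^{(1)}_n$. Under the involution $v_r \mapsto v'_r = pq/t^{2n-2}t_0 u_0 v_r$, the specialization $v_0 = p^{-l}q^{-m}t^{-k}$ corresponds to an analogous (shifted) specialization of $v'_0$, so both integrals in \eqref{eq:littbig} can be replaced simultaneously with finite sums whose equality is what must be checked.

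Second I would use the expansion of $\cR^{*(2n)}_\blambda(\dots,t^{\pm 1/2}z_i,\dots;t_0,u_0;t;p,q)$ as a linear combination of products of $n$-variable biorthogonal functions in the $z_i$, exactly as in the proof of Theorem~\ref{thm:int_litt}. Term by term this reduces the finite-sum identity to a combination of the quadratic transformation Theorem~\ref{conj:int_commut} (which handles the change in the $v$-parameters of the density) and the Littlewood-type sum identity Theorem~\ref{thm:genlitt} (which handles the extra $\bmu^2$-indexed sum produced by integrating the interpolation function against the doubled-variable density). The explicit product of Gamma functions on the RHS of \eqref{eq:littbig} should be precisely the product of prefactors appearing in these two ingredients. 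Since the set of $(l,m,k)$ specializations is Zariski dense on both elliptic curves and both sides of \eqref{eq:littbig} are meromorphic with pole locus controlled by the explicit Gamma prefactor, analytic continuation then yields the conjecture in full generality.

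The hard part will be the combinatorial matching in the second step: the residue expansion of the LHS produces a sum of products of $\Delta$-symbols indexed simultaneously by the residue data $\bmu$ and the interpolation-expansion data $\bkappa$, and one must rearrange this double sum so that the inner sum is recognizable as an instance of Theorem~\ref{thm:genlitt} while the outer sum is an instance of Theorem~\ref{conj:int_commut}. A secondary difficulty is tracking the pole structure of both sides carefully enough to justify the analytic continuation; in particular, one needs to rule out the possibility that a nontrivial $p$- or $q$-elliptic multiplier distinguishes the two sides, which would amount to a uniqueness argument for meromorphic functions with the prescribed pole structure. The presence of the two factorially-doubled parameters $t^{\pm 1/2}t_0$ and $t^{\pm 1/2}u_0$ in the density makes this more delicate than the corresponding step in Theorem~\ref{thm:int_litt}, and is likely the reason the identity is only conjectural at present.
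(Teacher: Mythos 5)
The statement you are proving is Conjecture~\ref{conj:littbig}: the paper does not prove it, and only establishes special cases (the ``algebraic'' cases where both sides degenerate to terminating sums, $\ell(\blambda)\le 1$, $n=1$, $t\in\{1,p,q\}$, the Cauchy case $t^{2n}t_0u_0=pq$, and the adjacent cases of Proposition~\ref{prop:adj_t}). Your first two steps essentially reproduce the paper's argument for the algebraic cases, and that part is sound. The fatal gap is the final analytic continuation. In the proof of Theorem~\ref{thm:int_litt} the continuation is legitimate for a specific structural reason: expanding $\cR^{*(2n)}_\blambda(\dots,t^{\pm1/2}z_i,\dots)$ in products of $n$-variable biorthogonal functions and invoking biorthogonality shows that the \emph{left-hand side itself} is a finite linear combination of explicit norms, hence a product of $p$- and $q$-theta functions of the parameters; two such products agreeing on a Zariski dense subset of the (finite-dimensional) parameter variety must coincide. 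In Conjecture~\ref{conj:littbig} neither side has this form: both are genuinely transcendental integrals of elliptic-Selberg type, since the density carries four free parameters $v_0,\dots,v_3$ beyond what biorthogonality can absorb. Two meromorphic functions of, say, $v_0$ that agree on the countable set $v_0\in p^{-\N}q^{-\N}t^{-\N}\cdot c$ where the contour pinches need not be equal — ``Zariski density'' is only a valid continuation principle inside a finite-dimensional space of theta quotients, which is exactly what is lost here. This is why the paper can conclude only that ``any algebraic case of the conjecture holds'' from this line of reasoning, and why the full statement remains open.

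A secondary inaccuracy: the specialization $v_0=p^{-l}q^{-m}t^{-k}$ does not generally force the right-hand integral to terminate simultaneously, since $v'_0=pqp^lq^mt^k/t^{2n-2}t_0u_0$ involves $t_0u_0$ and is not of pinching type for generic $t_0,u_0$; the simultaneous degenerations occur only along the codimension-one families such as $t^{-1/2}t_0v_0=pq$ noted in the paper. To close the real gap one would need a rigidity input beyond residue matching — for instance the difference-equation/Galois-group argument of Theorem~\ref{thm:vanmq_ev}, or the adjointness relations of Proposition~\ref{prop:adj_t}, which is precisely the machinery the paper deploys to get beyond the algebraic cases, and even then only in restricted situations.
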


This is accessible in a number of special cases.  When $t^{-1/2}t_0v_0 =
pq$, so the left-hand side reduces to the left-hand side of
Theorem~\ref{thm:int_litt}, the transformed parameters satisfy
$t^{2n-2}t^{1/2}u_0v'_0=1$, and thus the right-hand side degenerates to a
sum.  If one traces through the relevant contour conditions, one finds that
the sum is over partitions contained in $\bmu$, and one obtains the
right-hand side of Theorem~\ref{thm:int_litt}.  It follows, then, (using
the consistency under parameter shifts, below) that any ``algebraic'' case
(i.e., in which both sides can be renormalized to products of $p$- and
$q$-theta functions) of the conjecture holds.

When $\ell(\blambda)=1$, the skew interpolation function is independent of
$t$, which implies
\[
\cR^{*(2n)}_{\blambda}(\dots,t^{\pm 1/2} z_i,\dots;t_0,u_0;t;p,q)
=
\cR^{*(n)}_{\blambda}(\dots,z_i,\dots;t^{1/2}t_0,t^{-1/2}u_0;t^2;p,q)
\]
when $\ell(\blambda)\le 1$.  Thus in that case, the conjecture becomes a
special case of \cite[Thm.~9.7]{xforms}.  We also find that the identity
for $(l,m)^{2n}+\blambda$ follows trivially from that for $\blambda$;
combining these two facts proves the identity when $n=1$, and then trivially
the case $t=1$.  In the case $t^{2n}t_0u_0=pq$, the interpolation function
is of Cauchy type, and thus factors for general $\blambda$:
\begin{align}
\cR^{*(2n)}_{\blambda}(\dots,t^{\pm 1/2}z_i,\dots;t_0,pq/t^{2n}t_0;t;p,q)
={}&
\cR^{*(n)}_{\blambda^+}(\dots,z_i,\dots;t^{1/2}t_0,pq/t^{2n+1/2}t_0;t^2;p,q)\notag\\
&
\cR^{*(n)}_{\blambda^-}(\dots,z_i,\dots;t^{-1/2}t_0,pq/t^{2n-1/2}t_0;t^2;p,q),
\end{align}
and again the identity reduces to the transformation of
\cite[Thm.~9.7]{xforms}.  The case $t=q$ (and, by symmetry, $t=p$) can be
dealt with via the observation that the $p$-elliptic interpolation functions can
in that case be expressed as a ratio of determinants, while the
$q$-elliptic interpolation function is a symmetrized product, just as for
$t=1$.  More precisely, one has
\[
\cR^{*(n)}_{\lambda,\mu}(\dots,z_i,\dots;t_0,u_0;q;p,q)
\propto
\frac{
\sum_{\pi,\rho\in S_n}
\sigma(\rho)
\prod_{1\le i\le n}
  \cR^{*(1)}_{\lambda_{\pi_i},\mu_{\rho_i}+n-\rho_i}(z_i;t_0,q^{n-1}u_0;q;p,q)
  }
{
\prod_{1\le i\le n}
\theta_p(u_0 z_i^{\pm 1};q)_{n-1}^{-1}
\prod_{1\le i<j\le n} z_i^{-1}\theta_p(z_i z_j^{\pm 1})}.
\]
(Since the same interpolation function appears on
both sides of \eqref{eq:littbig}, we can freely ignore constants.)  After
specializing the variables, the denominator cancels out the cross terms
from the density, so that one can express the identity as a sum of products
of instances with $n=1$.  (When $\blambda=(0,\mu)$, this sum of products is a
pfaffian, compare \cite{deBruijnNG:1955}.)

One final set of special cases is of interest.

\begin{prop}\label{prop:adj_t}
Conjecture~\ref{conj:littbig} holds whenever $t^{2n-2}t_0u_0v_0v_1/pq\in
\{1,1/p,1/q,t\}$.
\end{prop}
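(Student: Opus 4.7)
The plan is to handle the four values of $s := t^{2n-2}t_0u_0v_0v_1/pq \in \{1,1/p,1/q,t\}$ separately, with $s=1$ serving as the prototype. Set $K:=t^{2n-2}t_0u_0$, so the hypothesis reads $Kv_0v_1 = pq\cdot s$.

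For $s=1$, one has $v_0v_1 = pq/K$ and, by the balancing constraint $t^{4n-4}t_0^2u_0^2v_0v_1v_2v_3 = p^2q^2$, also $v_2v_3 = pq/K$. Consequently $v'_r := pq/(Kv_r)$ reduces to the permutation $v_0\leftrightarrow v_1$, $v_2\leftrightarrow v_3$, and since $\Delta^{(n)}$ is symmetric in the $v_r$, the two integrals in \eqref{eq:littbig} coincide as integrands. The identity thus reduces to showing the prefactor equals $1$. With $a := t^{2n-1}t_0/u_0$ and $b_r := t^{2n-3/2}t_0v_r$, one checks $pqa/b_0 = b_1$ from $Kv_0v_1 = pq$, so the pair $\Delta^0_\blambda(a|b_0)\,\Delta^0_\blambda(a|b_1)$ collapses to $1$ (and similarly for $(v_2,v_3)$). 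For the $\Gampq$ factors, the reflection equation pairs $\Gampq(t^{2n-1/2-i}t_0v_0)$ with $\Gampq(t^{i-3/2}u_0v_1)$ (product of arguments $= Kv_0v_1 = pq$); as $i$ ranges over $\{1,\dots,2n\}$ so does $2n+1-i$, and three analogous pairings (with $t_0\leftrightarrow u_0$ and $(v_0,v_1)\leftrightarrow(v_2,v_3)$) exhaust all $16n$ gamma factors.

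For $s\in\{1/p,1/q\}$ the transformation $v_r\mapsto v'_r$ no longer reduces to a permutation of the density parameters: a short calculation using balancing shows $v'_0v'_1 = v_2v_3$ and $v'_2v'_3 = v_0v_1$, so the pairs exchange products but not individual values. The strategy is to use the functional equation $\Gampq(pz) = \theta_q(z)\Gampq(z)$ (and its $p\leftrightarrow q$ counterpart) to rewrite each factor $\Gampq(v'_r z_i^{\pm 1})$ in the right-hand density in terms of the corresponding factor at the ``swapped'' parameter, producing an explicit ratio of $\theta_q$'s (resp.\ $\theta_p$'s). The $s=1$ analysis, applied to the shifted configuration, then reduces the claim to matching this ratio against the shift of the conjecture's prefactor under $v_1\mapsto pv_1$ (resp.\ $qv_1$), a direct bookkeeping exercise. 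The $s=1/q$ case follows from $s=1/p$ by the $p\leftrightarrow q$ symmetry of the whole setup.

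The case $s=t$ is the main obstacle, since the shift is by $t$ rather than by $p$ or $q$, and no functional equation of $\Gampq$ directly absorbs a $t$-shift. Here I would apply an $E_7$-type transformation of the order-$1$ higher Selberg integral $\II^{(1)}_n$ (as recalled before Proposition~\ref{prop:e7ab}) to the right-hand side of \eqref{eq:littbig}: the condition $Kv_0v_1 = pqt$ singles out a quadruple of parameters to which this transformation applies, realigning the right-hand density with the left-hand density up to an explicit $\Gampq$ ratio that is then matched against the conjecture's prefactor via the functional equations for $\Gampqt$. The technical subtlety is that the interpolation function $\cR^{*(2n)}_\blambda$ in the integrand is not invariant under the chosen $E_7$ action; one must either apply the transformation inside the expansion of $\cR^{*(2n)}_\blambda$ into biorthogonal functions (as in the proof of Theorem~\ref{thm:int_litt}), or first dispatch the $\blambda=0$ case (provided by \cite{vandeBultFJ:2011}) and then lift by induction using the shift identities of Section~\ref{sec:skew}.
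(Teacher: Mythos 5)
Your $s=1$ case is correct and agrees with the paper (which dismisses it as trivial); the verification that $pqa/b_0=b_1$ kills the $\Delta^0$ factors and that the reflection equation pairs off all $16n$ gamma factors is sound. The remaining three cases, however, each contain a genuine gap. For $s\in\{1/p,1/q\}$, say $t^{2n-2}t_0u_0v_0v_1=p$, the map $v_r\mapsto v_r'$ sends $(v_0,v_1,v_2,v_3)$ to $(qv_1,qv_0,v_3/q,v_2/q)$, so after applying $\Gampq(qz)=\theta_p(z)\Gampq(z)$ the ratio of the two densities is
\[
\prod_{1\le i\le n}
\frac{\theta_p(v_0z_i^{\pm1})\,\theta_p(v_1z_i^{\pm1})}
     {\theta_p(q^{-1}v_2z_i^{\pm1})\,\theta_p(q^{-1}v_3z_i^{\pm1})},
\]
which is a nonconstant function of the integration variables, whereas the conjectured prefactor is a constant. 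So "matching this ratio against the shift of the prefactor" cannot be a bookkeeping exercise: the integrands genuinely differ and one must prove equality of the \emph{integrals}. The missing ingredient is an adjointness statement for a $q$-difference operator with respect to the specialized density. The paper supplies it by writing $\cR^{*(2n)}_\blambda$ via the difference equation (8.11) of \cite{xforms}, tracking how the $q^{\pm1/2}$ shifts interact with the specialization $z\mapsto t^{\pm1/2}z_i$ (the $4^n$ a priori specializations collapse to $2^n$ after discarding a vanishing coefficient and undoing whole shifts), and recognizing the resulting coefficients as an instance of Lemma~\ref{lem:diff_commut}. Nothing in your sketch plays this role.

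For $s=t$ you correctly identify that the interpolation function in the integrand obstructs a bare $E_7$ transformation, but neither workaround is carried out, and the second one (induction from $\blambda=0$ via the shift identities) points the wrong way: the known consistency derives the case $(l,m)^{2n}+\blambda$ \emph{from} the case $\blambda$, not conversely, so one cannot reach general $\blambda$ starting from $\blambda=0$. The paper's route is different and concrete: it uses the integral equation (8.12) of \cite{xforms} to realize $\cR^{*(2n)}_\blambda(;t_0,u_0)$ as the image of $\cR^{*(2n)}_\blambda(;t^{-1/2}t_0,t^{-1/2}u_0)$ under the integral operator $\cI^{(2n)}(t^{-1/2}u_0{:}t^{-1/2}t_0,t^{-1}v_0;p,q)$, observes that specializing the output at $t^{\pm1/2}z_i$ pinches the contour and produces an explicit $n$-fold residue formula for the specialized operator, and then exchanges the order of integration so that the required symmetry of the double integral is manifest. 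You would need to reproduce that computation (or an equivalent one) to close the $s=t$ case; as written, only one of the four values of $s$ is actually handled.
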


\begin{proof}
  If $t^{2n-2}t_0u_0v_0v_1/pq=1$, then the transformation is trivial.  If
  $t^{2n-2}t_0u_0v_0v_1/pq=t$, we may use the integral equation
  \cite[(8.12)]{xforms} to write
\begin{align}
\cR^{*(2n)}_{\blambda}(;t_0,u_0;t;p,q)
={}&
\Delta^0_{\blambda}(t^{2n-1}t_0/u_0|t^{2n-3/2}t_0v_0,t^{2n-3/2}t_0v_1;t;p,q)
\notag\\
&
\times\cI^{(2n)}(t^{-1/2}u_0{:}t^{-1/2}t_0,t^{-1}v_0;p,q)
\cR^{*(2n)}_{\blambda}(;t^{-1/2}t_0,t^{-1/2}u_0;t;p,q),
\end{align}
and thus reduce to showing that
\begin{align}
&\Gampq(t^{-3/2} t_0 v_0,t^{-3/2}u_0v_0,t^{-3/2} t_0
v_1,t^{-3/2}u_0v_1)\notag\\
&
\times\int
(
\cI^{(2n)}(t^{-1/2}u_0{:}t^{-1/2}t_0,v_0/t;p,q)f
)(\dots,t^{\pm 1/2}z_i,\dots)
\Delta^{(n)}(;t^{\pm 1/2}t_0,t^{\pm 1/2}u_0,v_0,v_1,v_2,v_3;t^2;p,q)
\end{align}
is invariant under $(v_0,v_1,v_2,v_3)\mapsto (v'_2,v'_3,v'_0,v'_1)$ for any
function $f$ in the span of the interpolation functions.  Now, specializing
the output of the integral operator pinches the contour, and thus we pick
up an $n$-fold residue.  We thus find in general that if
$t^{2n}u_0u_1u_2u_3=pq$, then
\begin{align}
\prod_{0\le r<s<4} \Gampq(u_ru_s)
(\cI^{(2n)}_t(u_0{:}u_1,u_2;p,q) f)&(\dots,t^{\pm 1/2}z_i,\dots)\notag\\
=
\frac{((p;p)(q;q)/2\Gampq(t^2))^n}{n!}
\int_{C^n}&
f(x_1,\dots,x_n,z_1,\dots,z_n)
\frac{\prod_{1\le i,j\le n} \Gampq(t x_i^{\pm 1}z_j^{\pm 1})}
   {\prod_{1\le i<j\le n} \Gampq(x_i^{\pm 1}x_j^{\pm 1},t^2 z_i^{\pm 1}z_j^{\pm 1})}\notag\\
&\times\prod_{1\le i\le n} 
  \frac{\prod_{0\le r<4} \Gampq(u_r x_i^{\pm 1})}
       {\Gampq(x_i^{\pm 2})\prod_{0\le r<4} \Gampq(t u_r z_i^{\pm 1})}
  \frac{dx_i}{2\pi\sqrt{-1}x_i}.
\end{align}
Substituting in and exchanging order of integration gives the desired
result.

By symmetry, it remains only to consider the case
$t^{2n-2}t_0u_0v_0v_1/pq=1/q$.  Here we use the difference equation
\cite[(8.11)]{xforms} rather than the integral equation, and reduce to
showing that
\[
\int
(
D^{(2n)}_q(u_0,t_0,t^{-1/2}v_0,t^{-1/2}v_1;t;p)
f)(\dots,t^{\pm 1/2}z_i,\dots)
\Delta^{(n)}(;t^{\pm 1/2}t_0,t^{\pm 1/2}u_0,v_0,v_1,v_2,v_3;t^2;p,q)
\]
is invariant under $(v_0,v_1,v_2,v_3)\to (v'_2,v'_3,v'_0,v'_1)$.
After specializing the image of the difference operator, $f$ appears
in principle in $4^n$ different specializations: corresponding to each
$z_i$ is a pair of arguments, one of
\[
(q^{-1/2}t^{-1/2}z_i,q^{-1/2}t^{1/2}z_i),
(q^{-1/2}t^{-1/2}z_i,q^{1/2}t^{1/2}z_i),
(q^{1/2}t^{-1/2}z_i,q^{-1/2}t^{1/2}z_i),
(q^{1/2}t^{-1/2}z_i,q^{1/2}t^{1/2}z_i).
\]
The third pair never actually occurs (the coefficient vanishes), and
we can arrange to combine the first and fourth cases by shifting the
variable by $q^{1/2}$ or $q^{-1/2}$ as appropriate.  (This changes the
contour of that portion of the integral, but we can move it back without
crossing over any poles.)  We thus obtain $2^n$ different specializations
of $f$, involving the pairs
\[
(t^{-1/2}z_i,t^{1/2}z_i) \text{ and }
(q^{-1/2}t^{-1/2}z_i,q^{1/2}t^{1/2}z_i);
\]
the coefficient of a given specialization of $f$ is a sum of $2^m$ terms
where $m$ is the number of times the first pair is used.  If we fix a given
specialization of $f$, we can remove common factors of the $2^m$ terms of
its coefficient to obtain the instance $(p,q,t,v_0,v_1,v_2,v_3)\mapsto
(p,qt,t^2,pq^{1/2}t/v_0,pq^{1/2}t/v_1,q^{-1/2}v_2,q^{-1/2}v_3)$ of
Lemma~\ref{lem:diff_commut}.
\end{proof}

%
%

\medskip
Of course, the usual Littlewood identity also comes in a dual form, and the
same applies at the elliptic level.  Since duality breaks the symmetry
between $p$ and $q$, it in particular does not apply at the level of
partition pairs.  However, we do obtain the following, purely $p$-elliptic,
identity.

\begin{cor}
One has the identity
\[
\sum_{\mu}
\obinomE{2\mu}{\lambda}_{[a,b];q,t;p}
\frac{\Delta_{\mu}(a|v_0,v_1,v_2,v_3;q^2,t;p)}
     {\Delta_{\lambda}(a/b|v_0,v_1,v_2,v_3;q,t;p)}
\propto
\sum_{\mu}
\obinomE{\lambda}{2\mu}_{[a/b,b];q,t;p}
\frac{\Delta_{\mu}(a/b^2|v_0,v_1,v_2,v_3;q^2,t;p)}
     {\Delta_{2\mu}(a/b^2|v_0,v_1,v_2,v_3;q,t;p)},
\]
subject to the balancing condition
\[
v_0v_1v_2v_3b^2 = pqta^2,
\]
and the termination conditions
\begin{align}
t^{\N}\cap \{v_0,v_1,v_2,v_3,b,bq\}&\ne\emptyset\\
q^{-2\N}\cap \{v_0,v_1,v_2,v_3,b,bq\}&\ne\emptyset,
\end{align}
with associated conditions on $\lambda$.  The constant is given by the
value for $\lambda=0$.
\end{cor}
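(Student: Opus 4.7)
The plan is to derive the corollary from Theorem~\ref{thm:genlitt} by transporting the identity across Macdonald's conjugation involution on partitions, which carries partitions with even multiplicities (the $\bmu^2$ in Theorem~\ref{thm:genlitt}) to partitions with even parts (the $2\mu$ in the corollary). The proof proceeds in three steps.

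First, I would restrict Theorem~\ref{thm:genlitt} to partition pairs of the form $\blambda = (0,\lambda)$, $\bmu = (0,\mu)$. Every ingredient of that theorem---$\Delta^0$, $\cC^\pm$, $\Delta$, and the elliptic binomial coefficients---factors multiplicatively with respect to the $p$- and $q$-components of each partition pair. Consequently, the full $p,q$-symmetric identity factors as a product of a $q$-elliptic identity indexed by the $p$-components and a $p$-elliptic identity indexed by the $q$-components. Restricting the $p$-components to zero trivializes the former, and the surviving $p$-elliptic identity is naturally written using the single-partition notation $\obinomE{\alpha}{\beta}_{[a,b];q,t;p}$ and $\Delta_\alpha(a|v;q,t;p)$ appearing in the corollary, with $\mu^2$ still indexing the summand.

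Second, I would apply the conjugation duality of \cite[Cor.~4.4]{bctheta}. While that corollary is stated for $\cR^*$, the same substitution (conjugating partitions, with $t\mapsto 1/q$, $q\mapsto 1/t$, $b\mapsto b/(qt)$, and $a$, $p$, $v_i$ unchanged) extends termwise to each constituent $\Delta^0_{(0,\nu)}$, $\cC^\pm_{(0,\nu)}$, and $(0,\cdot)$-restricted binomial coefficient factor; this is essentially the content of the proof of that corollary. Applying this to both sides of the restricted identity sends $\mu^2$ to $(\mu^2)' = 2\mu'$ and $\lambda$ to $\lambda'$, and after renaming $\mu'\to\mu$, $\lambda'\to\lambda$, one obtains an identity indexed by $2\mu$ and $\lambda$ having the shape of the corollary.

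The main obstacle is bookkeeping of the parameter substitution, which must be tracked carefully through each factor. Specifically, the theorem's Macdonald parameter $t^2$ appearing in $\Delta_\mu(a/t|v;t^2;p,q)$ must transform into the corollary's shift parameter $q^2$ appearing in $\Delta_\mu(a|v;q^2,t;p)$; the balancing condition $v_0v_1v_2v_3 b^2 = pqta^2$ must remain invariant (which it does, after clearing $(qt)^{-2}$ from both sides under the substitution); and the termination conditions $t^{2\N} \cap \{\ldots,b,b/t\}$ and $p^{-\N}q^{-\N} \cap \{\ldots,b,b/t\}$ must transform into the corollary's conditions $t^\N \cap \{\ldots,b,bq\}$ and $q^{-2\N} \cap \{\ldots,b,bq\}$ (with the $p^{-\N}$ factor in the theorem becoming trivial for $(0,\cdot)$-restricted pairs and leaving only the $q^{-\N}$ factor to transform under $q\mapsto 1/t$). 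These verifications are routine consequences of the definitions of $\cC^0$ and $\cC^\pm$, but constitute the bulk of the explicit work; once they are in hand, the corollary follows as a direct transcription of the conjugated and relabeled restricted identity.
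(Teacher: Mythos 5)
Your proposal is correct and follows exactly the route the paper intends: the paper states this corollary without an explicit proof, presenting it as the immediate consequence of restricting Theorem~\ref{thm:genlitt} to partition pairs with vanishing first component and applying the conjugation duality of \cite[Cor.~4.4]{bctheta}, under which $\bmu^2$ becomes $2\mu'$ and the leg-parameter $t^2$ becomes the arm-parameter $q^2$. The bookkeeping of the parameter substitution and termination conditions that you flag is indeed the only real content, and your account of it is accurate.
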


Analytically continuing the left-hand side to an integral produces the
following dual vanishing integral.

\begin{cor}\label{cor:dual_vanq}
If $t^{2n-2}t_0t_1t_2t_3u_0^2=pq$, then
\[
\langle
\tilde{R}^{(n)}_\lambda(;t_0{:}t_1,t_2,t_3;u_0,u_0;q,t;p)
\rangle_{t_0,t_1,t_2,t_3,u_0,qu_0;t;p,q^2}
\]
vanishes unless $\lambda$ is of the form $2\mu$, when the integral is
\[
\frac{\Delta_{\mu}(1/u_0^2|t^n,t^{n-1}t_0t_1,t^{n-1}t_0t_2,t^{n-1}t_0t_3,1/t^{n-1}t_0u_0,q/t^{n-1}t_0u_0;q^2,t;p)}
     {\Delta_{2\mu}(1/u_0^2|t^n,t^{n-1}t_0t_1,t^{n-1}t_0t_2,t^{n-1}t_0t_3,1/t^{n-1}t_0u_0,1/t^{n-1}t_0u_0;q,t;p)}.
\]
\end{cor}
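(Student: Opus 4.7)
The plan is to mirror the derivation of Corollary~\ref{cor:vant} from Theorem~\ref{thm:int_litt}, with the preceding dual Littlewood corollary playing the role of Theorem~\ref{thm:genlitt}. The first step is to promote the LHS of that dual Littlewood identity to an integral. The key observation is that the sequence of evaluation points $t^{n-i}q^{2\mu_i}\sqrt{a}=t^{n-i}(q^2)^{\mu_i}\sqrt{a}$, at which the interpolation function implicit in $\obinomE{2\mu}{\lambda}_{[a,b];q,t;p}$ is evaluated, is precisely the residue sequence of an $n$-dimensional elliptic Selberg integral with base $(p,q^2)$ at position $\mu$; the quadratic identity $\Gampq(u_0 z)=\Gampqq(u_0 z,qu_0 z)$ then accounts for why this Selberg integral carries the paired parameter $(u_0,qu_0)$ rather than a single $u_0$ (just as the pair $t^{\pm 1/2}u_0$ appears in the $\bmu^2$-analog). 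Specializing $t^{n-1}t_0t_1=p^{-l}q^{-m}$ collapses this integral to a residue sum that is an instance of the preceding dual Littlewood corollary; analytic continuation, justified as in the proof of Theorem~\ref{thm:int_litt} by expanding the interpolation function in the integrand as a linear combination of biorthogonal functions and thereby reducing both sides to ratios of $p$- and $q$-theta functions, then yields an integral identity of the schematic form
\[
\langle R^{*(n)}_\lambda(;t_0,u_0;q^2,t;p)\rangle^{(n)}_{t_0,t_1,t_2,t_3,u_0,qu_0;t;p,q^2}
=\Delta^0_\lambda(\cdots)\sum_\mu \obinomE{\lambda}{2\mu}_{[\cdots];q,t;p}\,(\Delta\text{-ratio}).
\]

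The second step is to multiply both sides of this integral identity by $\Delta^0_\lambda(\cdots)^{-1}\obinomE{\kappa}{\lambda}_{[1/u_0^2,1/t^{n-1}t_0u_0];q,t;p}$ and sum over $\lambda$. On the integrand side, the interpolation function $R^{*(n)}_\lambda$ combines with this weighted sum of binomial coefficients via the connection-coefficient expansion \cite[Thm.~5.3]{bctheta} to produce exactly the biorthogonal function $\tilde{R}^{(n)}_\kappa(;t_0{:}t_1,t_2,t_3;u_0,u_0;q,t;p)$ appearing in the statement. On the right-hand side, the composition $\sum_\lambda\obinomE{\kappa}{\lambda}\obinomE{\lambda}{2\mu}$ collapses by the orthogonality of elliptic binomial coefficients (applied, after matching parameters, as a special case of $\cR^*_{\blambda/\bkappa}([\,];a,b;t;p,q)=\delta_{\blambda\bkappa}$) to $\delta_{\kappa,2\mu}$, forcing the integral to vanish unless $\kappa=2\mu$.

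When $\kappa=2\mu$ only a single term survives on the right, and tracking through the surviving normalization factors — the $t^n$ and $t^{n-1}t_0t_r$ coming from the base Selberg density, and the pair $1/t^{n-1}t_0u_0,\,q/t^{n-1}t_0u_0$ arising from the coupling of $u_0$ to $qu_0$ — produces precisely the ratio of $\Delta$ symbols stated in the corollary.

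The main obstacle is Step~1. Duality breaks the $p$--$q$ symmetry, so the doubling that in the $\bmu^2$ case is shared between $p$ and $q$ must now be placed entirely on $q$ (via $q\mapsto q^2$); consequently one has to verify that the associated contour conditions for the $(p,q^2)$-base Selberg integral persist under the analytic continuation, and that the weakened termination condition of Theorem~\ref{thm:genlitt}, which permits one extra factor of $t$ in the horizontal direction, translates under duality into the coupling $(u_0,qu_0)$ rather than, for instance, $(u_0,tu_0)$. Once this is secured, Steps~2 and~3 are formal and parallel the derivation of Corollary~\ref{cor:vant}.
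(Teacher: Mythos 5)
Your proposal is correct and follows essentially the same route the paper intends: the paper derives Corollary~\ref{cor:dual_vanq} by analytically continuing the left-hand side of the preceding dual Littlewood sum to a $(p,q^2)$-base Selberg integral (using $(q^2)^{\mu_i}t^{n-i}t_0=q^{2\mu_i}t^{n-i}t_0$ and the quadratic relation $\Gampq(x)=\Gampqq(x,qx)$ to account for the $(u_0,qu_0)$ pairing), exactly parallel to the passage from Theorem~\ref{thm:genlitt} to Theorem~\ref{thm:int_litt}, and then collapsing the binomial sum to a delta function as in the derivation of Corollary~\ref{cor:vant}. Your identification of the key verification points (contour conditions under continuation and the translation of the weakened termination condition under duality) matches the implicit content of the paper's argument.
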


Again, here, the vanishing corresponds to the fact that evaluation at a
partition with respect to $q^2$, $t$, is also evaluation at the doubled
partition with respect to $q$, $t$:
\[
(q^2)^{\mu_i}t^{n-i}t_0 = q^{2\mu_i}t^{n-i}t_0.
\]
This continues to hold even for partition pairs:
\[
p^{\lambda_i}(q^2)^{\mu_i} t^{n-i}t_0 = p^{\lambda_i}q^{2\mu_i}t^{n-i}t_0,
\]
suggesting the conjecture that
\[
\langle
\tcR^{(n)}_\blambda(;t_0{:}t_1,t_2,t_3;u_0,u_0;t;p,q)
\rangle^{(n)}_{t_0,t_1,t_2,t_3,u_0,qu_0;t;p,q^2}
\]
vanishes unless $\blambda=(1,2)\bmu$ for some partition pair $\bmu$ (where
$(1,2)(\mu,\nu)=(\mu,2\nu)$), when it equals
\[
\frac{\Delta_{     \bmu}(1/u_0^2|t^n,t^{n-1}t_0t_1,t^{n-1}t_0t_2,t^{n-1}t_0t_3,1/t^{n-1}t_0u_0,q/t^{n-1}t_0u_0;t;p,q^2)}
     {\Delta_{(1,2)\bmu}(1/u_0^2|t^n,t^{n-1}t_0t_1,t^{n-1}t_0t_2,t^{n-1}t_0t_3,1/t^{n-1}t_0u_0,1/t^{n-1}t_0u_0;t;p,q)}.
\]
Note, however, that this does not correspond to a vanishing result with
respect to the other partition.

The transformation analogue of this extended conjecture is the following.

\begin{conjL}[$q^{1/2}$]\label{conj:littbig_dual}
If $t^{2n-2}t_0^2u_0^2v_0v_1v_2v_3=p^2q^2$, then
\begin{align}
\int
&\cR^{*(n)}_{\blambda}(;t_0,u_0;t;p,q)
\Delta^{(n)}(;t_0,qt_0,u_0,qu_0,v_0,v_1,v_2,v_3;t;p,q^2)\notag\\
&\qquad\qquad{}=
\prod_{0\le r\le 3}
  \Delta^0_{\blambda}(t^{n-1}t_0/u_0|t^{n-1}t_0v_r;t;p,q)
  \prod_{1\le i\le n}
     \Gampq(t^{n-i} t_0v_r,t^{n-i} u_0v_r)\notag\\
&\qquad\qquad\phantom{{}={}}
\times\int
\cR^{*(n)}_{\blambda}(;t_0,u_0;t;p,q)
\Delta^{(n)}(;t_0,qt_0,u_0,qu_0,v'_0,v'_1,v'_2,v'_3;t;p,q^2)
,
\end{align}
where $v'_r = pq/t^{n-1}t_0u_0v_r$.
\end{conjL}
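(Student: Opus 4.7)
The plan is to prove Conjecture~\ref{conj:littbig_dual} in the complete set of special cases parallel to those of Conjecture~\ref{conj:littbig}, using the dual Littlewood identity (the Corollary preceding Corollary~\ref{cor:dual_vanq}) in place of Theorem~\ref{thm:genlitt}. The underlying template is uniform: one establishes a sum-form evaluation of the left-hand side whenever it terminates, then either extends by analytic continuation or degenerates the interpolation function until the identity reduces to a known transformation of elliptic Selberg integrals. The labels $(t^{-1/2})$ and $(q^{1/2})$ suggest that the two conjectures are $F_4 \times G_2$-related variants of a single phenomenon, so the proof architecture transfers essentially verbatim.

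First, the algebraic cases. I would derive the dual analogue of Theorem~\ref{thm:int_litt}: whenever $t^{n-1} t_0 v_r \in p^{-\mathbb{N}}q^{-2\mathbb{N}}$ for some $r$, the left-hand-side Selberg integral collapses to a finite sum, which by the dual Littlewood identity evaluates in closed form as a $\Delta$-sum indexed by $\bmu$ with $(1,2)\bmu$ appearing in the denominator. Expanding $\cR^{*(n)}_\blambda(;t_0,u_0;t;p,q)$ via connection coefficients into biorthogonal factors that pair with the $(q^2,t)$-density, and then analytically continuing from the terminating case, establishes the identity for one choice of terminating $v_r$. Since both sides have the form of a product of $p$-theta and $q$-theta functions, permutation symmetry among the $v_r$ and analytic continuation in the remaining parameters then extends the identity to the Zariski-dense algebraic subfamily.

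Second, the degenerate interpolation function cases. When $\ell(\blambda) \le 1$, $\cR^{*(n)}_\blambda$ is $t$-independent and the identity becomes an instance of \cite[Thm.~9.7]{xforms}. The shift $\blambda \mapsto (l,m)^n + \blambda$ is trivial, handling $n=1$ for all $\blambda$; the Cauchy degeneration $t^n t_0 u_0 = pq$ factors $\cR^{*(n)}_\blambda$ into $p$- and $q$-elliptic pieces via \cite[Prop.~3.9]{bctheta} and again reduces to \cite[Thm.~9.7]{xforms}. The cases $t = p$ and $t = q$ (which are distinguished here, since the density is not $(p,q)$-symmetric) admit pfaffian-type representations via sums over permutations of univariate interpolation functions, and the identity then splits fiber-by-fiber into one-variable instances, as in the discussion after Conjecture~\ref{conj:littbig}.

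Third, the adjacent cases by direct analogy with Proposition~\ref{prop:adj_t}: the natural candidates are $t^{n-1} t_0 u_0 v_0 v_1 / pq \in \{1, 1/q^2, q, 1/p\}$. The value $1$ is trivial; for $1/q^2$ and $q$ I would apply respectively the difference equation \cite[(8.11)]{xforms} and the integral equation \cite[(8.12)]{xforms} to rewrite the interpolation function as an operator applied to a simpler one, exchange order of integration (the residue pinching in the integral case picks up an $n$-fold residue exactly as in the proof of Proposition~\ref{prop:adj_t}), and invoke the second assertion of Lemma~\ref{lem:diff_commut}, adapted to the $q^2$-step density, to verify the required parameter swap. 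The main obstacle is the case $1/p$: the asymmetric treatment of $p$ and $q$ in the density (the $q$-pairing is built in by construction, but $p$ is not) means the $p$-direction analogue of Lemma~\ref{lem:diff_commut} needed for the operator argument is not directly available from the symmetric version, and genuinely new commutation machinery in this asymmetric setting would be required. Beyond these families the general conjecture appears to need substantially new techniques, which is why it remains a conjecture.
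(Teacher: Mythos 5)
Your first step contains the essential gap. You propose to establish the terminating cases of Conjecture~\ref{conj:littbig_dual} by collapsing the left-hand integral to a sum and evaluating it ``by the dual Littlewood identity.'' But the dual Littlewood identity that is actually available (the corollary obtained by conjugating Theorem~\ref{thm:genlitt}) is only the \emph{$p$-elliptic half} of what you need: it is an identity of single partitions with symbols $\Delta_\mu(\cdot;q,t;p)$ and $\Delta_\mu(\cdot;q^2,t;p)$. The residue sum arising from the density $\Delta^{(n)}(;t_0,qt_0,u_0,qu_0,\dots;t;p,q^2)$ paired with a partition-pair interpolation function requires the full partition-pair version, whose $q^2$-elliptic half is the transformation \eqref{conj:genlitt_dual_qp}, which is itself conjectural (note the $\propto^?$). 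This is exactly the ``broken symmetry'' described in the introduction: unlike the $\bmu^2$ case, where Theorem~\ref{thm:genlitt} is a product of two equivalent elliptic identities, dualizing produces a partner identity that is genuinely new and unproven. Consequently your ``algebraic cases'' and the analytic continuation built on them are not established, and indeed this is precisely why the statement remains a conjecture rather than becoming a dual of Theorem~\ref{thm:int_litt}. (The paper runs the logic in the opposite direction: the proven adjacent cases of the conjecture are used to \emph{derive} the restricted sum formula with $v_2v_3=pq$, not vice versa.)

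Your third step also misidentifies where the difficulties lie. The adjacent cases actually proved (Proposition~\ref{prop:adj_q}) are $t^{n-1}t_0u_0v_0v_1/pq\in\{1,1/p,1/q,t\}$, not $\{1,1/q^2,q,1/p\}$: the relevant difference operator is $D^{(n)}_q$ with step $q$ (giving $1/q$), and the fourth case is $t$, handled by the integral equation followed by Theorem~\ref{conj:int_commut} (Van de Bult's integral transformation), not by residue pinching alone. More importantly, the case $1/p$, which you single out as requiring ``genuinely new commutation machinery,'' goes through directly: Lemma~\ref{lem:diff_commut} is stated for an arbitrary nome, so one applies $D^{(n)}_p$ and invokes the lemma under the substitution $(p,q,t,v_0,v_1,v_2,v_3)\mapsto(q^2,p/q,t,p^{1/2}q/v_0,p^{1/2}q/v_1,p^{-1/2}v_2,p^{-1/2}v_3)$; no new machinery is needed there. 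Finally, for the degenerate cases, the correct hypothesis is $\blambda=(l,m)^n$ (a rectangle, so the interpolation function is a pure product absorbable into the density), not $\ell(\blambda)\le 1$ — the $t$-independence trick from Conjecture~\ref{conj:littbig} has no analogue here since the variables are not specialized to $t^{\pm1/2}z_i$ — and the $t=p,q$ pfaffian reduction leads to entries with $n=2$, so it does not close without the $n=2$ case as input.
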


\begin{rem}
  If $\blambda=(l,m)^n$ (so in particular if $n=1$), or $t^n t_0u_0=pq$,
  this is again a special case of the transformation of \cite[\S
  9]{xforms}.  When $t=q$ or $t=p$, this again essentially reduces to a
  pfaffian, except that the individual entries include cases with $n=2$, so
  this does not quite lead to a proof in that case.  Note that in this
  case, the cross-terms do not quite cancel, so each term involves the
  factor
\[
\prod_{1\le i<j\le n}
  \frac{q^{1/2}z_i^{-1}\theta_{q^2}(z_iz_j^{\pm 1})}
       {\theta_{q^2}(qz_iz_j^{\pm 1})}.
\]
Since (\cite[Thm.~2.10]{recur})
\[
\pf_{1\le i<j\le 2n}
\frac{q^{1/2}z_i^{-1}\theta_{q^2}(z_iz_j^{\pm 1})}
     {\theta_{q^2}(qz_iz_j^{\pm 1})}
=
\prod_{1\le i<j\le 2n}
  \frac{q^{1/2}z_i^{-1}\theta_{q^2}(z_iz_j^{\pm 1})}
       {\theta_{q^2}(qz_iz_j^{\pm 1})},
\]
and similarly for odd $n$, one can adapt the argument of
\cite{deBruijnNG:1955}.  Indeed, one finds that for all $n\ge 0$,
\[
\prod_{1\le i<j\le n}
  \frac{q^{1/2}z_i^{-1}\theta_{q^2}(z_iz_j^{\pm 1})}
       {\theta_{q^2}(qz_iz_j^{\pm 1})}
=
\frac{1}{2^{\lfloor n/2\rfloor} \lfloor n/2\rfloor !}
\sum_{\pi\in S_n}
\sigma(\pi)
\pi\cdot
\prod_{1\le i\le \lfloor n/2\rfloor}
  \frac{q^{1/2}z_{2i-1}^{-1}\theta_{q^2}(z_{2i-1}z_{2i}^{\pm 1})}
       {\theta_{q^2}(qz_{2i-1}z_{2i}^{\pm 1})},
\]
where $\pi\in S_n$ acts by permuting the variables.  Since the remainder of
the integrand is antisymmetric, each term in this sum has the same
integral, so that one again obtains a sum of products of low-dimensional
integrals.
\end{rem}

\begin{prop}\label{prop:adj_q}
  Conjecture~\ref{conj:littbig_dual} holds if $t^{n-1}t_0u_0v_0v_1/pq\in
  \{1,1/p,1/q,t\}$.
\end{prop}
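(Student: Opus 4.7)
The plan is to mirror the proof of Proposition~\ref{prop:adj_t}, treating each of the four values of $t^{n-1}t_0u_0v_0v_1/pq$ in turn. When this quantity equals $1$, the transformation is trivial: the involution $v_r\mapsto v'_r = pq/t^{n-1}t_0u_0v_r$ fixes the unordered pair $\{v_0,v_1\}$ and swaps $\{v_2,v_3\}$, so the integrals on the two sides of the identity coincide, and the $\Gampq$ prefactor collapses to $1$ via the reflection functional equation.

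For the case $t^{n-1}t_0u_0v_0v_1/pq=t$, I would use the integral equation \cite[(8.12)]{xforms} to represent the interpolation function as
\[
\cR^{*(n)}_\blambda(\ldots;t_0,u_0;t;p,q)
=
\Delta^0_\blambda(\ldots)\,\cI^{(n)}(t^{-1/2}u_0{:}t^{-1/2}t_0,\ldots;p,q)\,\cR^{*(n)}_\blambda(\ldots;t^{-1/2}t_0,t^{-1/2}u_0;t;p,q),
\]
substitute it into the left-hand side of the conjectured identity, exchange the order of integration, and reduce to the symmetry of a higher-dimensional elliptic Selberg integral under $(v_0,v_1,v_2,v_3)\mapsto(v'_2,v'_3,v'_0,v'_1)$. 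This is a direct adaptation of the corresponding step in Proposition~\ref{prop:adj_t} and introduces no new ideas. The case $=1/p$ is handled identically, using the $p$-shift version of the difference equation \cite[(8.11)]{xforms} in place of the integral equation and finishing with the $p\leftrightarrow q$ image of Lemma~\ref{lem:diff_commut}.

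The case $t^{n-1}t_0u_0v_0v_1/pq=1/q$ is the main obstacle. Proceeding by analogy one would invoke the $q$-difference equation \cite[(8.11)]{xforms} to write $\cR^{*(n)}_\blambda$ as a sum of $q^{\pm 1/2}$-shifts of itself, but now the ambient density is $q^2$-elliptic rather than $q$-elliptic, so such shifts are not a priori compatible with the original contour. I expect the argument to go through nevertheless: of the $4^n$ formal sign choices at the integration variables, one combination at each $i$ has vanishing coefficient, while two others can be combined after a $q^{\pm 1/2}$ contour deformation that crosses no poles under the genericity assumptions of the hypothesis. The resulting $2^n$ terms should then collapse, after extraction of common factors, to an instance of Lemma~\ref{lem:diff_commut} in which the modulus $q$ of that lemma plays the role of $q^2$ here, with the four paired parameters adjusted accordingly. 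The delicate point, and the place where the argument departs most sharply from the template of Proposition~\ref{prop:adj_t}, is the justification of the intermediate $q^{\pm 1/2}$ contour shifts against a $q^2$-periodic outer density; this is where the bulk of the technical work will lie, but it should reduce to standard pole bookkeeping under the stated genericity conditions.
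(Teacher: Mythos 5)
Your case analysis transplants the difficulty structure of Proposition~\ref{prop:adj_t} onto the wrong conjecture. In Conjecture~\ref{conj:littbig} the interpolation function is evaluated at the doubled variables $t^{\pm 1/2}z_i$ against a density with parameters $t^2;p,q$, which is why the $1/q$ case of Proposition~\ref{prop:adj_t} requires the bookkeeping of $4^n$ specializations and half-integral contour shifts. In Conjecture~\ref{conj:littbig_dual}, by contrast, the interpolation function and the density $\Delta^{(n)}(;\dots;t;p,q^2)$ share the \emph{same} variables: the quadratic degeneration sits in the modular parameter $q\mapsto q^2$, not in the variables. Consequently the case $t^{n-1}t_0u_0v_0v_1/pq=1/q$, which you flag as ``the main obstacle,'' is in fact the easy one: applying $D^{(n)}_q$ to $f$ produces only $2^n$ terms, and undoing the variable shifts $z_i\mapsto q^{\mp 1/2}z_i$ yields a manifestly invariant sum, because replacing the $v_r$ by $v'_r$ has the same effect as inverting all the variables. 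The case that genuinely needs Lemma~\ref{lem:diff_commut} is $1/p$, where the $p$-shifts of $D^{(n)}_p$ interact nontrivially with the $q^2$-elliptic density; and the instance required is $(p,q,t,v_0,v_1,v_2,v_3)\mapsto(q^2,p/q,t,p^{1/2}q/v_0,p^{1/2}q/v_1,p^{-1/2}v_2,p^{-1/2}v_3)$ --- the theta-modulus must become $q^2$ and the shift parameter $p/q$, which is not the ``$p\leftrightarrow q$ image'' of the lemma.

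The more serious gap is in the case $t^{n-1}t_0u_0v_0v_1/pq=t$, which you describe as a direct adaptation ``introducing no new ideas.'' After inserting the integral equation and exchanging the order of integration, the inner integral is not a manifestly symmetric expression as it was in Proposition~\ref{prop:adj_t}; it is a higher-order elliptic Selberg integral of the form
\[
\II^{(n-1)}_n(pq/v_0,pq/v_1,v_2,v_3,\dots,t^{1/2}x_i^{\pm 1},qt^{1/2}x_i^{\pm 1},\dots;pq/t;p,q^2),
\]
and the required invariance under $(v_0,v_1,v_2,v_3)\mapsto(v'_2,v'_3,v'_0,v'_1)$ is exactly an instance of the transformation of Theorem~\ref{conj:int_commut} (Van de Bult's integral transformation), a substantial external input with no counterpart in the proof of Proposition~\ref{prop:adj_t}. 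Without invoking it, your reduction for this case does not close. The case $=1$ is indeed trivial, for essentially the reason you give.
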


\begin{proof}
Again, the case $t^{n-1}t_0u_0v_0v_1/pq=1$ is trivial.  The case
$t^{n-1}t_0u_0v_0v_1/pq=1/q$ corresponds to the fact that
\[
\int
\bigl(D^{(n)}_q(t_0,u_0,v_0;t;q) f\bigr)
\Delta^{(n)}(t_0,qt_0,u_0,qu_0,v_0,v_1,v_2,v_3;t;p,q^2)
\]
is invariant under $(v_0,v_1,v_2,v_3)\to (v'_2,v'_3,v'_0,v'_1)$ for any
function $f$.  Expanding this as a sum of $2^n$ terms and undoing all
variable shifts gives a manifestly invariant sum; indeed, changing the $v$
parameters has the same effect as inverting all the variables.

Similarly,
\[
\int
\bigl(D^{(n)}_p(t_0,u_0,v_0;t;q) f\bigr)
\Delta^{(n)}(t_0,qt_0,u_0,qu_0,v_0,v_1,v_2,v_3;t;p,q^2)
\]
is invariant; after expanding and unshifting, one obtains the special case
of Lemma~\ref{lem:diff_commut} with 
\[
(p,q,t,v_0,v_1,v_2,v_3)\mapsto
(q^2,p/q,t,p^{1/2}q/v_0,p^{1/2}q/v_1,p^{-1/2}v_2,p^{-1/2}v_3).
\]

When $t^{n-1}t_0u_0v_0v_1/pq=t$, we can use the integral equation to write
\begin{align}
\cR^{*(n)}_{\blambda}(;t_0,u_0;t;p,q)
&{}=
\Delta^0_{\blambda}(t^{n-1}t_0/u_0|t^{n-1}t_0v_0,t^{n-1}t_0v_1;t;p,q)\notag\\
&\hphantom{{}={}}\times\cI^{(n)}(t^{-1/2}u_0{:}t^{-1/2}t_0,t^{-1/2}v_0;p,q)
\cR^{*(n)}_{\blambda}(;t^{-1/2}t_0,t^{-1/2}u_0;t;p,q)
\end{align}
on the right-hand side.  After changing order of integration, the
inner integral has the form
\[
\II^{(n-1)}_n(pq/v_0,pq/v_1,v_2,v_3,\dots,t^{1/2}x_i^{\pm 1},qt^{1/2}x_i^{\pm 1},\dots;pq/t;p,q^2),
\]
so can be transformed using Theorem~\ref{conj:int_commut} to give the
desired identity.
\end{proof}

\begin{rem}
It is natural to try to extend the proofs for $1/p$ and $1/q$ using the
iterated difference operators introduced in the proof of
\cite[Thm.~9.7]{xforms}.  We find that the proof in those cases would
reduce to showing that when $v_0v_1v_2v_3=p^2q^2$, 
\[
\prod_{\substack{1\le i\le n\\0\le r<4}} \frac{1}{\Gampqq(v_r x_i^{\pm 1})}
D^{(n)}_{l,m}(t;p,q)
\prod_{\substack{1\le i\le n\\0\le r<4}} \Gampqq(p^{l/2}q^{m/2}v_r x_i^{\pm
  1})
\frac{\Delta^{(n)}(x_1,\dots,x_n;t;p,q^2)}
     {\Delta^{(n)}(x_1,\dots,x_n;t;p,q)}
\]
is invariant under $(v_0,v_1,v_2,v_3)\mapsto
(pq/v_0,pq/v_1,pq/v_2,pq/v_3)$.  The case $(l,m)=(0,1)$ holds even without
balancing condition (or limit on the number of $v_r$ factors).  Using this,
one can mimic the proof of Theorem~\ref{thm:genlitt} above to show that the
identity for $(l,m)$ implies the identity for $(l,m+1)$; this implies
Conjecture~\ref{conj:littbig_dual} whenever $t^{n-1}t_0u_0v_0v_1/pq\in
\{q^{-m},p^{-1}q^{-m}\}$ for $m\ge 0$.  Note that when $m=0$, the ratio of
any two terms of the sum is $q^2$-elliptic, so this reduces to an algebraic
statement, presumably equivalent to the conjectured equation
\eqref{conj:genlitt_dual_qp} below.  It should be possible to interpret and
extend the proof of Proposition~\ref{prop:adj_t} in a similar way, with the
corresponding identity an analytic continuation of
Corollary~\ref{cor:diff_eq_t}, though the specialization of the variables
makes this nontrivial.
\end{rem}

\begin{cor}
If $t^{2n-2}t_0^2u_0^2v_0v_1 = p$, then
\begin{align}
&\left\langle
\frac{\cR^{*(n)}_\blambda(;t_0,u_0;t;p,q)}
     {\Delta^0_\blambda(t^{n-1}t_0/u_0|t^{n-1}t_0v_0,t^{n-1}t_0v_1;t;p,q)}
\right\rangle^{(n)}_{t_0,qt_0,u_0,qu_0,v_0,v_1;t;p,q^2}\notag\\
&=
\Delta^0_{\blambda}(t^{n-1}t_0/u_0|qt^{n-2}t_0^2;t;p,q)
\sum_{\bmu}
\obinomE{\blambda}{(1,2)\bmu}_{[t^{n-1}t_0/u_0,t^{n-1}t_0u_0];t;p,q}
\frac{\Delta_{     \bmu}(1/u_0^2|t^n,qt^{n-1}t_0^2;t;p,q^2)}
     {\Delta_{(1,2)\bmu}(1/u_0^2|t^n,qt^{n-1}t_0^2;t;p,q)}.
\end{align}
\end{cor}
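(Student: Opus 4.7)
The plan is to derive this Corollary as a parameter specialization of an unstated ``dual analog'' of Theorem~\ref{thm:int_litt}, paralleling the way the preceding corollary (the ``If $t^{4n-2}t_0^2u_0^2v_0v_1=pq$'' corollary) is obtained from Theorem~\ref{thm:int_litt} itself. Call the auxiliary statement Theorem~$A$: for generic $t_0,t_1,t_2,t_3,u_0$ satisfying $t^{2n-2}t_0t_1t_2t_3u_0^2=pq$, the integral $\langle\cR^{*(n)}_\blambda(;t_0,u_0;t;p,q)\rangle^{(n)}_{t_0,t_1,t_2,t_3,u_0,qu_0;t;p,q^2}$ equals a $\Delta^0_\blambda$ prefactor times a terminating sum over partition pairs $\bmu$ with $(1,2)\bmu$ replacing $\bmu^2$, with $\Delta_\bmu$ taken at base $(p,q^2)$ and $\Delta_{(1,2)\bmu}$ at base $(p,q)$. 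Theorem~$A$ stands to the algebraic dual Littlewood identity (the Corollary immediately preceding Corollary~\ref{cor:dual_vanq}) in the same relationship that Theorem~\ref{thm:int_litt} bears to Theorem~\ref{thm:genlitt}.

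To prove Theorem~$A$, I would mimic the argument of Theorem~\ref{thm:int_litt}: when $t^{n-1}t_0t_1\in p^{-\N}q^{-\N}$, the integral pinches to a finite residue sum, which one identifies with the algebraic dual Littlewood identity lifted from single partitions to partition pairs. The lift is obtained by factoring that identity into its $p$-elliptic and $q^2$-elliptic pieces (respectively governed by the $\mu_2$- and $\mu_1$-parts of a pair $\bmu=(\mu_1,\mu_2)$) and applying the single-partition form to each; both sides of the resulting identity are products of $p$- and $q$-theta functions (after renormalization), so analytic continuation in the remaining parameters extends the identity to all generic parameter values.

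Given Theorem~$A$, the Corollary follows by specializing $t_1=qt_0$, $t_2=v_0$, $t_3=v_1$; the balancing becomes $t^{2n-2}t_0^2u_0^2v_0v_1=p$ and the density takes the stated form $t_0,qt_0,u_0,qu_0,v_0,v_1$. In the $\Delta^0_\blambda$ prefactor, the factors indexed by $t_2,t_3$ are moved to the left-hand side as the denominator $\Delta^0_\blambda(t^{n-1}t_0/u_0|t^{n-1}t_0v_0,t^{n-1}t_0v_1;t;p,q)$, while the factor indexed by $t_1$ survives as $\Delta^0_\blambda(t^{n-1}t_0/u_0|qt^{n-2}t_0^2;t;p,q)$, with the $t^{n-2}$ emerging from a single application of quasi-periodicity together with the balancing constraint. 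The extra $v_0,v_1$-dependent arguments inside the $\Delta_\bmu/\Delta_{(1,2)\bmu}$ ratio cancel pairwise via $(t^{n-1}t_0v_0)(t^{n-1}t_0v_1)=p/u_0^2$, leaving the stated arguments $t^n,qt^{n-1}t_0^2$.

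The main obstacle is establishing Theorem~$A$, especially the partition-pair lift of the algebraic dual Littlewood identity: that identity is derived via Macdonald's involution, which breaks the symmetry between $p$ and $q$, so simultaneously handling both pieces of a partition pair requires care. Once Theorem~$A$ is in hand, the remaining steps (analytic continuation, parameter specialization, and the rearrangement of $\Delta^0$ and $\Delta$ factors) are routine modulo careful bookkeeping of the $t$- and $pq$-exponents.
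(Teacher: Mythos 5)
Your reduction of the corollary to an auxiliary ``Theorem~$A$'' (the integral $\langle\cR^{*(n)}_\blambda(;t_0,u_0;t;p,q)\rangle^{(n)}_{t_0,t_1,t_2,t_3,u_0,qu_0;t;p,q^2}$ with generic $t_1,t_2,t_3$ expanded as a sum over $(1,2)\bmu$) has a genuine gap: Theorem~$A$ is exactly the partition-pair extension of Corollary~\ref{cor:dual_vanq} that the paper explicitly leaves as a \emph{conjecture} (``suggesting the conjecture that \dots vanishes unless $\blambda=(1,2)\bmu$''). Your proposed proof of it fails at the step where you lift the algebraic dual Littlewood identity to partition pairs by ``factoring into $p$-elliptic and $q^2$-elliptic pieces and applying the single-partition form to each.'' The $p$-elliptic piece is indeed the proven dual of Theorem~\ref{thm:genlitt}, but the $q^2$-elliptic piece is \emph{not} an instance of the same identity: it is the separate, still-conjectural identity \eqref{conj:genlitt_dual_qp}, and the introduction warns precisely that ``the restoration of symmetry in the dual identity requires that one multiply by a conjectural $q$-elliptic identity which is \emph{not} equivalent to the original dual identity.'' So the residue points $t^{n-1}t_0t_1\in p^{-\N}q^{-\N}$ needed for your analytic continuation would invoke an unproven partition-pair sum identity, and the argument does not close.

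The paper's proof takes a different route that sidesteps this entirely: it observes that the left-hand side of the corollary is the degeneration $v_2v_3=pq^2$ of the left-hand side of Conjecture~\ref{conj:littbig_dual} (so it is independent of the choice of $v_2$), chooses $v_2$ so that $t^{n-1}t_0u_0v_0v_2\in\{p,q\}$ and invokes the \emph{proven} adjointness cases of that conjecture (Proposition~\ref{prop:adj_q}); these show the left-hand side is invariant under $p$- and $q$-shifts of $v_0/v_1$, hence independent of $v_0/v_1$, and the limit $v_0\to 1/t^{n-1}u_0$ pinches the contour and collapses the integral to the stated sum. The extra structure $t_1=qt_0$ in the density is essential to this argument and is why the corollary is provable while your more general Theorem~$A$ is not. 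If you want to keep your framework, you would need to either prove \eqref{conj:genlitt_dual_qp} or restrict from the outset to the $t_1=qt_0$ slice and find a termination mechanism there; the paper's operator-adjointness argument is the known way to do the latter.
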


\begin{proof}
  The left-hand side is simply the instance $v_2v_3=pq$ of the left-hand
  side of Conjecture~\ref{conj:littbig_dual}, which is manifestly invariant of
  the choice of $v_2$.  In particular, we may arrange for
  $t^{n-1}t_0u_0v_0v_2\in \{p,q\}$ so that we may apply our known special
  cases of the conjecture.  In these cases, the transformation simply
  applies a $p$- or $q$-shift to $v_0$ and $v_1$; since the left-hand side
  is meromorphic in $v_0/v_1$ and invariant under both $p$ and $q$-shifts,
  it is in fact independent of $v_0/v_1$.  Taking the limit $v_0\to
  1/t^{n-1}u_0$ gives the desired result.
\end{proof}

The ``$q$-elliptic'' half\footnote{To be precise, this is only
  $q^2$-elliptic, but we abuse terminology to distinguish it from the
  $p$-elliptic half.} of the dual Littlewood identity reads (after swapping
$p$ and $q$)
\[
\sum_{\mu}
\obinomE{\mu}{\lambda}_{[a,b];q,t;p}
\frac{\Delta_{\mu}(a|v_0,v_1,v_2,v_3;q,t;p^2)}
     {\Delta_{\lambda}(a/b|v_0,v_1,v_2,v_3;q,t;p)}
\propto^?
\sum_{\mu}
\obinomE{\lambda}{\mu}_{[a/b,b];q,t;p}
\frac{\Delta_{\mu}(a/b^2|v_0,v_1,v_2,v_3;q,t;p^2)}
     {\Delta_{\mu}(a/b^2|v_0,v_1,v_2,v_3;q,t;p)},
\label{conj:genlitt_dual_qp}
\]
and in particular involves both $p$-abelian and $p^2$-abelian functions.
This transformation is taken to itself by duality, but if we use a modular
transformation, we can replace the $2$-isogeny
\[
z\in \C^*/\langle p^2\rangle\mapsto z\in \C^*/\langle p\rangle
\]
by
\[
z\in \C^*/\langle p^{1/2}\rangle \mapsto z^2\in \C^*/\langle p\rangle.
\]
This then gives rise to the following conjecture, upon lifting back to an
integral transformation.

\begin{conjL}[$-1$]\label{conj:littbig_m}
If $t^{n-1}t_0^2u_0^2v_0v_1v_2v_3=pq$, then
\begin{align}
\int&
\cR^{*(n)}_{\blambda}(\dots,z_i^2,\dots;t_0^2,u_0^2;t;p,q)
\Delta^{(n)}(;t_0,-t_0,u_0,-u_0,v_0,v_1,v_2,v_3;t^{1/2};p^{1/2},q^{1/2})\notag\\
&{}=
\prod_{0\le r\le 3}
  \Delta^0_{\blambda}(t^{n-1}t_0^2/u_0^2|t^{n-1}t_0^2v_r^2;t;p,q)
  \prod_{0\le i<n}
    \Gampq(t^i t_0^2 v_r^2,t^i u_0^2 v_r^2)\notag\\
&\phantom{{}={}}
\times\int
\cR^{*(n)}_{\blambda}(\dots,z_i^2,\dots;t_0^2,u_0^2;t;p,q)
\Delta^{(n)}(;t_0,-t_0,u_0,-u_0,v'_0,v'_1,v'_2,v'_3;t^{1/2};p^{1/2},q^{1/2}),
\end{align}
where $v'_r = p^{1/2}q^{1/2}/t^{(n-1)/2}t_0u_0v_r$.
\end{conjL}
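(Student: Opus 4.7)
The plan is to adapt the strategy of Propositions~\ref{prop:adj_t} and \ref{prop:adj_q}, which establish the analogous ``adjacent'' special cases of the sister Conjectures~\ref{conj:littbig} and \ref{conj:littbig_dual}. Since Conjecture~\ref{conj:littbig_m} is formally obtained from the algebraic ``$q$-elliptic half'' identity \eqref{conj:genlitt_dual_qp} by the $2$-isogeny $z\in\C^*/\langle p^{1/2}\rangle\mapsto z^2\in\C^*/\langle p\rangle$ (with concomitant substitutions $q\mapsto q^{1/2}$, $t\mapsto t^{1/2}$), one expects a proof patterned on the other two conjectures, with every appeal to an $E_7$ identity or a commutation lemma replaced by its modular transform.

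First I would dispense with the easy reductions. When $\blambda=0$ the interpolation function equals $1$ and both sides reduce to $m=1$ higher-order elliptic Selberg integrals with parameters $(t_0,-t_0,u_0,-u_0,v_0,v_1,v_2,v_3)$ at nomes $(p^{1/2},q^{1/2})$ and step $t^{1/2}$; the identity becomes an instance of Proposition~\ref{prop:e7ab} (or, more generally, of the $F_4\times G_2$-extension of $E_7$), after using $\Gamma_{p^{1/2},q^{1/2}}(z,-z)=\Gampq(z^2)$ to rewrite the additional $\Gampq$ prefactors on the right-hand side in a form compatible with the density. The cases $\ell(\blambda)\le 1$, $\blambda\mapsto (l,m)^n+\blambda$, and the Cauchy case $t^{n-1}t_0^2u_0^2=pq$ reduce to $\blambda=0$ or to \cite[Thm.~9.7]{xforms} by the same arguments used after Conjecture~\ref{conj:littbig}.

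The heart of the proof is to establish the four ``adjacent'' cases $t^{(n-1)/2}t_0u_0v_0v_1/p^{1/2}q^{1/2}\in\{1,1/p^{1/2},1/q^{1/2},t^{1/2}\}$. The case $=1$ is trivial. For $=t^{1/2}$, I would apply the integral equation \cite[(8.12)]{xforms} to $\cR^{*(n)}_\blambda(\dots,z_i^2,\dots;t_0^2,u_0^2;t;p,q)$ to express it as an integral operator acting on a parameter-shifted interpolation function, interchange the order of integration, and recognize the inner integral as an instance of Theorem~\ref{conj:int_commut} applied at nomes $(p^{1/2},q^{1/2})$ and step $t^{1/2}$. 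For $=1/p^{1/2}$ and $=1/q^{1/2}$, I would instead apply the difference equation \cite[(8.11)]{xforms}, expand the resulting action as a sum of specializations, un-shift variables to merge compatible pairs, and reduce each coefficient to the instance of Lemma~\ref{lem:diff_commut} at $(p,q,t)\mapsto (p^{1/2},q^{1/2},t^{1/2})$, after a reparametrization of the $v_r$ analogous to that in the proof of Proposition~\ref{prop:adj_q}. Once these four cases are in hand, analytic continuation exactly as in the proof of Theorem~\ref{thm:int_litt} extends the identity from a Zariski-dense set of balanced parameters to generic parameters.

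The main obstacle is the modular mismatch between the $(p,q)$-elliptic interpolation function in the squared variables $z_i^2$ and the $(p^{1/2},q^{1/2})$-elliptic density in the unsquared $z_i$. The identity $\Gamma_{p^{1/2},q^{1/2}}(z,-z)=\Gampq(z^2)$ collapses the $(t_0,-t_0,u_0,-u_0)$ portion of the density onto the squared variables, aligning with the interpolation-function arguments, but the cross-terms $\prod_{i<j}\Gamma_{p^{1/2},q^{1/2}}(t^{1/2}z_i^{\pm 1}z_j^{\pm 1})/\Gamma_{p^{1/2},q^{1/2}}(z_i^{\pm 1}z_j^{\pm 1})$ admit no such collapse. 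Threading the integral and difference operators correctly through these cross-terms, so that they commute with the half-nome structure, is the central technical difficulty and the reason the argument is substantially more delicate than Proposition~\ref{prop:adj_t}; an alternative, possibly cleaner, route would be to first establish the algebraic identity \eqref{conj:genlitt_dual_qp} in a form amenable to transport under the $2$-isogeny, and then lift to the integral statement.
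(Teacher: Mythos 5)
The decisive difficulty is your final step, and it is a genuine gap. Establishing the four ``adjacent'' cases $t^{n-1}t_0^2u_0^2v_0^2v_1^2/pq\in\{1,1/p,1/q,t\}$ is exactly the content of Proposition~\ref{prop:adj_m}, and your strategy for them (trivial for $1$; the integral equation \cite[(8.12)]{xforms} together with Theorem~\ref{conj:int_commut} for $t$; the difference equation together with the appropriate specialization of Lemma~\ref{lem:diff_commut} for $1/p$ and $1/q$) is the same as the one used there. But these four cases live on four specific hypersurfaces in the space of $(v_0,v_1)$, not on a Zariski-dense family, so ``analytic continuation exactly as in the proof of Theorem~\ref{thm:int_litt}'' is not available: in that theorem the identity is first verified on the infinite family $t^{2n-1}t_0t_1\in p^{-\N}q^{-\N}$, whose density is what legitimizes the continuation. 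No analogous dense family is produced by your argument. This is precisely why the statement is Conjecture~\ref{conj:littbig_m} rather than a theorem, with the adjacent cases recorded separately as Proposition~\ref{prop:adj_m}; the remark following Proposition~\ref{prop:adj_q} spells out what would be needed to upgrade the adjacent cases to a dense family (an iterated difference-operator identity, itself established only for $(l,m)=(0,1)$), and the same obstruction persists after the modular transformation you invoke.

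Your preliminary reductions ($\blambda=0$ via Proposition~\ref{prop:e7ab} using $\Gamma_{p^{1/2},q^{1/2}}(z,-z)=\Gampq(z^2)$, the shift $\blambda\mapsto(l,m)^n+\blambda$, and the Cauchy case $t^{n}t_0u_0=pq^{1/2}p^{1/2}q^{1/2}/\!\sqrt{pq}$, i.e.\ $t^{n/2}t_0u_0=p^{1/2}q^{1/2}$) are consistent with the special cases the paper records, but they do not combine with the adjacent cases to yield the general statement. The fallback you suggest---first proving the algebraic identity \eqref{conj:genlitt_dual_qp} and transporting it through the $2$-isogeny---does not close the gap either: that identity is itself only conjectural (note the $\propto^?$), and even granting it, one would obtain only the ``algebraic'' (residue/sum) cases of the transformation, leaving the same problem of continuing from sums to genuine integrals.
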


Again, this holds if $\blambda=(l,m)^n$ or $t^{n/2}t_0u_0 =
p^{1/2}q^{1/2}$.  When $t=p$ or $t=q$ (really four cases, as either square
root of $t$ will work), the integral has a similar structure to the
pfaffian case of Conjecture~\ref{conj:littbig_dual}, except that the pfaffian
factor is now either
\[
\prod_{1\le i<j\le n}
  \frac{\theta_{p^{1/2}}(z_iz_j^{\pm 1})}{\theta_{p^{1/2}}(-z_iz_j^{\pm
      1})}
\]
or
\[
\prod_{1\le i<j\le n}
  \frac{\theta_{q^{1/2}}(z_iz_j^{\pm 1})}{\theta_{q^{1/2}}(-z_iz_j^{\pm
      1})}
\]
(This is a pfaffian by virtue of being a modular transformation of the
previous pfaffian.)

Although it does not have an associated vanishing result per se, there is
an associated analogue of Corollary~\ref{cor:vant}, namely the conjecture
that if $t^{n-1}t_0t_1t_2t_3u_0^2 = -p^{1/2}q^{1/2}$, then
\begin{align}
\langle
&\tcR^{(n)}_{\blambda}(\dots,z_i^2,\dots;t_0^2{:}t_1^2,t_2^2,t_3^2;u_0^2,u_0^2;t;p,q)
\rangle^{(n)}_{t_0,t_1,t_2,t_3,u_0,-u_0;t^{1/2};p^{1/2},q^{1/2}}\notag\\
&\quad
=
\frac{\Delta_{\blambda}(1/u_0^2|t^{n/2},t^{(n-1)/2}t_0t_1,t^{(n-1)/2}t_0t_2,t^{(n-1)/2}t_0t_3,1/t^{(n-1)/2}t_0u_0,-1/t^{(n-1)/2}t_0u_0;t^{1/2};p^{1/2},q^{1/2})}
{\Delta_{\blambda}(1/u_0^4|t^n,t^{n-1}t_0^2t_1^2,t^{n-1}t_0^2t_2^2,t^{n-1}t_0^2t_3^2,1/t^{n-1}t_0^2u_0^2,1/t^{n-1}t_0^2u_0^2;t;p,q)}.
\end{align}
If we eliminate $u_0$ using the balancing condition then take the limit
$p\to 0$, we obtain a conjectural integral of Koornwinder polynomials which
in the notation of \cite{bcpoly} reads
\begin{align}
I_K&(\tilde{K}_\lambda([p_{2k}];q^2,t^2,T^2;t_0^2,t_1^2,t_2^2,t_3^2)
   ;q,t,T;t_0,t_1,t_2,t_3)\notag\\
&{}=
\frac{(-1)^{|\lambda|}
      C^0_\lambda(T,Tt_0t_1t_2t_3/t^2;q,t)      C^+_\lambda(T^2t_0t_1t_2t_3/t^3;q,t)
      C^-_\lambda(-q;q,t)\prod_{0\le r<s<4} C^0_\lambda(Tt_rt_s/t;q,t)
      }
     {C^0_{2\lambda^2}(T^2t_0t_1t_2t_3/t^2;q,t)C^+_\lambda(-T^2t_0t_1t_2t_3/qt^2;q,t)
      C^-_\lambda(t;q,t)}
\end{align}
(When $T=t^n$, this is an $n$-dimensional integral of $n$-variable
Koornwinder polynomials evaluated at $z_1^2,\dots,z_n^2$.)  This is no
longer related to a Littlewood identity, but is instead related to an
identity conjectured by Kawanaka in \cite{KawanakaN:1999} and proved (via
elliptic means) in \cite{LangerR/SchlosserMJ/WarnaarSO:2009}.  Indeed, if
we set $T=0$ (i.e., take $n\to\infty$) we obtain an identity which can be
used to integrate the left-hand side of the appropriate Cauchy identity
\cite[Thm.~7.19]{bcpoly} term by term.  Since the coefficients in that
Cauchy identity are Macdonald polynomials, we obtain the sum
\[
\sum_\lambda
\frac{C^-_\lambda(-t^{1/2};q^{1/2},t^{1/2})}
     {C^-_\lambda(q^{1/2};q^{1/2},t^{1/2})}
P_\lambda(\dots,x_i,\dots;q,t)
=
\prod_j \frac{(-t^{1/2}x_j;q^{1/2})}{(x_j;q^{1/2})}
\prod_{j<k} \frac{(tx_jx_k;q)}{(x_jx_k;q)},
\]
using \cite[Thm.~7.17]{bcpoly} to compute the integral and obtain the right-hand
side.  Note that since this is independent of $t_0$, $t_1$, $t_2$, $t_3$,
and the case $t_1=-t_0$ of the elliptic conjecture follows from
Corollary~\ref{cor:vanm} below, this argument gives a second proof of
Kawanaka's conjecture.

\begin{prop}\label{prop:adj_m}
  Conjecture~\ref{conj:littbig_m} holds if
  $t^{n-1}t_0^2u_0^2v_0^2v_1^2/pq\in \{1,1/p,1/q,t\}$.
\end{prop}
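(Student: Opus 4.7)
The plan is to follow the template of the proofs of Propositions~\ref{prop:adj_t} and \ref{prop:adj_q}, transposed to the modular-transformed setting at level $(p^{1/2},q^{1/2},t^{1/2})$ underlying Conjecture~\ref{conj:littbig_m}. The case $t^{n-1}t_0^2u_0^2v_0^2v_1^2/pq = 1$ is trivial: we then have $v'_0v'_1=v_2v_3$ and $v'_2v'_3=v_0v_1$ (up to signs that are absorbed by the squaring inside the interpolation function and the density), so the transformation is just a permutation of the $v$-parameters, and the product of Gamma prefactors collapses by direct inspection using the reflection equation for $\Gampq$.

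For the case $t^{n-1}t_0^2u_0^2v_0^2v_1^2/pq = 1/q$ (the case $1/p$ being handled symmetrically under $p\leftrightarrow q$), I would use the difference equation \cite[(8.11)]{xforms} --- in its level-$(p^{1/2},q^{1/2},t^{1/2})$ avatar --- to express the ratio of the two densities as the coefficient pattern of a $q^{1/2}$-shift difference operator acting in the $z_i$ variables. By adjointness of such an operator with respect to the $\Delta^{(n)}$-pairing, the identity then reduces to showing that a certain $BC_n$-symmetric sum is invariant under $v_r \mapsto v'_r$; after expanding the $2^n$ sign specializations, unshifting variables, and collecting common factors, this becomes an instance of Lemma~\ref{lem:diff_commut} with the substitution
\[
(p,q,t,v_0,v_1,v_2,v_3)\mapsto (p^{1/2},q^{1/2},t^{1/2},\text{suitable shifts of the }v_r).
\]
The role of the modular transformation is precisely this: a $q^{1/2}$-elliptic invariance at the new level corresponds to a $q$-elliptic invariance with squared arguments at the old level, so what the lemma produces is the desired swap $v_r \leftrightarrow v'_r$.

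For the case $t^{n-1}t_0^2u_0^2v_0^2v_1^2/pq = t$, I would instead use the integral equation \cite[(8.12)]{xforms} to rewrite $\cR^{*(n)}_\blambda(\ldots,z_i^2,\ldots;t_0^2,u_0^2;t;p,q)$ as an $\cI^{(n)}$ operator applied to an interpolation function with appropriately shifted $t_0,u_0$. Substituting this representation, exchanging the order of integration, and collecting the inner integrand yields a higher-order elliptic Selberg integral of type $\II^{(1)}_n$ at level $(p^{1/2},q^{1/2},t^{1/2})$; the $v \leftrightarrow v'$ invariance is then precisely an instance of Van de Bult's transformation (Theorem~\ref{conj:int_commut}), and the extracted Gamma factors combine with the prefactor from the integral equation to produce the prefactor on the right-hand side of Conjecture~\ref{conj:littbig_m}.

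The main obstacle will be the consistent bookkeeping of half-integer levels: the interpolation function lives most naturally at level $(p,q,t)$ while the outer density sits at $(p^{1/2},q^{1/2},t^{1/2})$, and the $v_r$ at the half-level. The quadratic functional equations $\Gamppqq(z^2)=\Gampq(z,-z)$ and $\Gampq(z)=\Gampqq(z,qz)$ must therefore be invoked consistently to pass between the two levels, both when identifying the difference operators in the $1/p,1/q$ cases and when recognizing the inner integral as an instance of $\II^{(1)}_n$ in the $t$ case. In the $1/p,1/q$ cases one must additionally check that the contour deformations required to reassemble the pinched residues into a single $BC_n$-symmetric sum do not cross any poles once the parameters are specialized at the half-level, but the resulting bookkeeping is formally identical to that in Proposition~\ref{prop:adj_q}.
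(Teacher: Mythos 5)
Your proposal follows essentially the same route as the paper: the case $1$ is dismissed as trivial, the cases $1/p$ and $1/q$ (equivalent here by $p\leftrightarrow q$ symmetry of the density) are handled by the difference equation plus adjointness, reducing to a half-level instance of Lemma~\ref{lem:diff_commut} (the paper's substitution is $(p,q,t)\mapsto(p^{1/2},-q^{1/2},t^{1/2})$ --- note the sign, which is part of the square-root bookkeeping you defer), and the case $t$ is handled by the integral equation plus an exchange of integration order, reducing to Theorem~\ref{conj:int_commut}. One small correction to your trivial case: the relations $v'_0v'_1=v_2v_3$ and $v'_2v'_3=v_0v_1$ are just the balancing condition and hold for \emph{all} parameter values, so they cannot be what makes the map a permutation; rather, the constraint $t^{n-1}t_0^2u_0^2v_0^2v_1^2=pq$ forces $v'_0=\pm v_1$, $v'_1=\pm v_0$, $v'_2=\pm v_3$, $v'_3=\pm v_2$ individually, which is what trivializes the transformation (and the Gamma prefactor then collapses by reflection, as you say).
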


\begin{proof}
Again, the $1$ case is trivial, and by symmetry the $1/p$ and $1/q$ cases
are equivalent.  The $1/q$ case reduces to the invariance of
\begin{align}
\int
(
D^{(n)}_{q}(t_0^2,u_0^2,v_0^2;t;p)&
\cR^{*(n)}_{\blambda}(;q^{1/2}t_0^2,q^{1/2}u_0^2;t;p,q)
)(\dots,z_i^2,\dots)\notag\\
&
\times\Delta^{(n)}(;t_0,-t_0,u_0,-u_0,v_0,v_1,v_2,v_3;t^{1/2};p^{1/2},q^{1/2})
\end{align}
under $v_0,v_1,v_2,v_3\to v'_2,v'_3,v'_0,v'_1$.  This follows as above,
this time via the special case $(p,q,t,v_0,v_1,v_2,v_3)\mapsto
(p^{1/2},-q^{1/2},t^{1/2},-p^{1/2}q^{1/4}/v_0,-p^{1/2}q^{1/4}v_1,q^{-1/4}v_2,q^{-1/4}v_3)$
of Lemma~\ref{lem:diff_commut}.

Similarly, the case $t^{n-1}t_0^2u_0^2v_0^2v_1^2/pq=t$ reduces to a special
case of Theorem~\ref{conj:int_commut} as before.
\end{proof}

\begin{rem}
  As in the remark following Proposition~\ref{prop:adj_q}, the identity
  used in the proof for $1/p$ and $1/q$ can be interpreted as a special case
  of a more general difference equation.  To wit, if $v_0v_1v_2v_3=pq$, then
\[
\prod_{\substack{1\le i\le n\\0\le r<4}} \frac{1}{\Gamphqh(v_r x_i^{\pm 1/2})}
D^{(n)}_{l,m}(t;p,q)
\prod_{\substack{1\le i\le n\\0\le r<4}} \Gamphqh(S_{l,m}^{1/4}v_r x_i^{\pm 1/2})
\frac{\Delta^{(n)}(x_1^{1/2},\dots,x_n^{1/2};t^{1/2};p^{1/2},q^{1/2})}
     {\Delta^{(n)}(x_1,\dots,x_n;t;p,q)}
\]
should be invariant under $(v_0,v_1,v_2,v_3)\mapsto
(-p^{1/2}q^{1/2}/v_0,-p^{1/2}q^{1/2}/v_1,-p^{1/2}q^{1/2}/v_2,-p^{1/2}q^{1/2}/v_3)$.
Here, one must be careful to make consistent choices of fourth roots of $p$
and $q$ so that the action of the difference operator is still well-defined
despite having taken square roots of the variables.
\end{rem}

\begin{cor}\label{cor:vanm}
If $t^{n-1}t_0^2u_0^2v_0v_1 = p^{1/2}q^{1/2}$, then
\begin{align}
&\left\langle
\frac{\cR^{*(n)}_{\blambda}(\dots,z_i^2,\dots;t_0^2,u_0^2;t;p,q)}
{\Delta^0_{\blambda}(t^{n-1}t_0^2/u_0^2|t^{n-1}t_0^2v_0^2,t^{n-1}t_0^2v_1^2;t;p,q)}
\right\rangle^{(n)}_{t_0,-t_0,u_0,-u_0,v_0,v_1;t^{1/2};p^{1/2},q^{1/2}}\notag\\
&=
\Delta^0_{\blambda}(t^{n-1}t_0^2/u_0^2|t^{n-1}t_0^4;t;p,q)
\sum_\bmu
\obinomE{\blambda}{\bmu}_{[t^{n-1}t_0^2/u_0^2,t^{n-1}t_0^2u_0^2];t;p,q}
\frac{\Delta_{\bmu}(1/u_0^2|t^{n/2},-t^{(n-1)/2}t_0^2;t^{1/2};p^{1/2},q^{1/2})}
     {\Delta_{\bmu}(1/u_0^4|t^n,t^{n-1}t_0^4;t;p,q)}.
\end{align}
\end{cor}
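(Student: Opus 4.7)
The plan is to adapt the proof of the corollary following Proposition~\ref{prop:adj_q} to the modular setting, with Conjecture~\ref{conj:littbig_m} and Proposition~\ref{prop:adj_m} playing the roles of Conjecture~\ref{conj:littbig_dual} and Proposition~\ref{prop:adj_q}.

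First, recognize that the LHS of Corollary~\ref{cor:vanm} is the specialization $v_2v_3=p^{1/2}q^{1/2}$ of the LHS of Conjecture~\ref{conj:littbig_m}: under this specialization, the reflection equation $\Gamphqh(z)\Gamphqh(p^{1/2}q^{1/2}/z)=1$ causes the $v_2$- and $v_3$-factors in the integrand to cancel pairwise, so the integral becomes independent of the particular choice of $v_2$ (with $v_3$ determined by $v_2v_3=p^{1/2}q^{1/2}$). The balancing condition $t^{n-1}t_0^2u_0^2v_0v_1v_2v_3=pq$ of the conjecture then reduces to the stated hypothesis $t^{n-1}t_0^2u_0^2v_0v_1=p^{1/2}q^{1/2}$.

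Second, exploit this freedom in $v_2$ to move into the cases handled by Proposition~\ref{prop:adj_m}. For suitable $v_2$, we can arrange $t^{n-1}t_0^2u_0^2v_0^2v_2^2/pq\in\{1/p,1/q,t\}$, and Conjecture~\ref{conj:littbig_m} holds in each case. The conjecture's involution $v_r\mapsto v_r'=p^{1/2}q^{1/2}/(t^{(n-1)/2}t_0u_0v_r)$ interchanges, up to a half-period shift, the roles of the reflection pairs $\{v_0,v_1\}$ and $\{v_2,v_3\}$; comparing the two specializations produced by running through the admissible choices of $v_2$ yields the invariance of the LHS under $p^{1/2}$- and $q^{1/2}$-shifts of $v_0/v_1$ (the product $v_0v_1$ being fixed by balancing). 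Since the LHS is meromorphic in $v_0/v_1$ and invariant under a Zariski-dense subgroup of translations, it must be independent of $v_0/v_1$ altogether.

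Third, evaluate the now-constant LHS by taking the convenient degenerate limit $v_0\to 1/(t^{(n-1)/2}u_0)$, which pinches the $n$-dimensional contour against the poles coming from the $u_0$-factors of the density. The resulting residue sum is indexed by a partition pair $\bmu$, with each configuration $z_i=(p^{1/2}q^{1/2})^{\bmu_i}t^{(n-i)/2}t_0$ contributing a term. After simplifying via the functional equations of $\Gampq$ and $\Gamphqh$, the surviving $\Gampq$ values collapse into the prefactor $\Delta^0_\blambda(t^{n-1}t_0^2/u_0^2|t^{n-1}t_0^4;t;p,q)$, the evaluation of the interpolation function at $\bmu$ produces the binomial coefficient $\obinomE{\blambda}{\bmu}_{[t^{n-1}t_0^2/u_0^2,t^{n-1}t_0^2u_0^2];t;p,q}$, and the residual factors combine into the $\Delta_\bmu$-ratio with elliptic nomes $p^{1/2}q^{1/2}$ in the numerator and $p,q$ in the denominator, as stated.

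The main obstacle is the residue bookkeeping in this last step: ensuring that the prescribed $\Delta^0_\blambda$ denominator of the integrand cancels against the pinching poles to yield precisely the RHS, with the correct modular conversion between $\Gamphqh$ and $\Gampq$. However, this is the same type of computation as in the dual corollary following Proposition~\ref{prop:adj_q} and in the reduction of Theorem~\ref{thm:int_litt} to a sum, adapted along the formal substitution $(p,q)\mapsto(p^{1/2},q^{1/2})$ on the outer integration parameters, so it is essentially routine.
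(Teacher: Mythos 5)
Your argument is exactly the intended one: the paper states Corollary~\ref{cor:vanm} without proof as the direct analogue of the corollary following Proposition~\ref{prop:adj_q}, and your three steps (specialize $v_2v_3=p^{1/2}q^{1/2}$ in Conjecture~\ref{conj:littbig_m}, use the cases proved in Proposition~\ref{prop:adj_m} to get $p^{1/2}$- and $q^{1/2}$-shift invariance in $v_0/v_1$ and hence independence, then degenerate at $v_0\to 1/(t^{(n-1)/2}u_0)$) reproduce that proof faithfully. The residue bookkeeping you defer is indeed the same routine computation as in the $q^{1/2}$-labelled case.
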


\section{Quadratic transformations}\label{sec:var}

In \cite{vanish}, there were several other integrals that vanished unless a
given partition (or its conjugate) was of the form $\mu^2$.  If we restate
the integrals in terms of interpolation polynomials rather than Koornwinder
polynomials, the right-hand side becomes a sum over binomial coefficients
$\binom{\lambda}{\mu^2}$, multiplied by the nonzero values, suggesting that
it should be a special case of the right-hand side of
Theorem~\ref{thm:int_litt}.  For most of the results of \cite{vanish}, the
nonzero values were not established, but in the case of Theorem 4.8
op.~cit., they are known, and one can thus use Theorem~\ref{thm:int_litt}
as a guide to formulating the following conjecture.

\begin{conj}\label{conj:vanKsmall}
For generic parameters satisfying $t^{4n-1}t_0^2t_1^2u_0^2=pq$, the integral
\[
\langle
\tcR^{(2n)}_{\blambda}(\dots,\pm \sqrt{-z_i},\dots;t_0\sqrt{-1}{:}-t_0\sqrt{-1},t_1\sqrt{-1},-t_1\sqrt{-1};u_0\sqrt{-1},-tu_0\sqrt{-1};t;p,q)
\rangle^{(n)}_{t_0^2,t_1^2,u_0^2,t,pt,qt;t^2;p^2,q^2}
\]
vanishes unless $\blambda$ is of the form $\bmu^2$, in which case the
integral is
\[
\frac{\Delta_{\bmu}(1/t^2u_0^2|t^{2n},t^{2n-1}t_0^2,1/t^{2n-1}t_0u_0,1/t^{2n}t_0u_0;t^2;p,q)}
     {\Delta_{\bmu^2}(1/tu_0^2|t^{2n},t^{2n-1}t_0^2,1/t^{2n-1}t_0u_0,1/t^{2n}t_0u_0;t;p,q)}.
\]
\end{conj}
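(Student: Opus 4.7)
The plan is to mirror the proof of Corollary~\ref{cor:vant} by first establishing a quadratic integral identity analogous to Theorem~\ref{thm:int_litt}, of the form
\begin{align*}
&\langle\cR^{*(2n)}_\blambda(\dots,\pm\sqrt{-z_i},\dots;t_0\sqrt{-1},u_0\sqrt{-1};t;p,q)\rangle^{(n)}_{t_0^2,t_1^2,u_0^2,t,pt,qt;t^2;p^2,q^2}\\
&\qquad{}=(\text{explicit prefactor})\sum_{\bmu}\obinomE{\blambda}{\bmu^2}_{[\cdots]}\frac{\Delta_\bmu(\cdots;t^2;p,q)}{\Delta_{\bmu^2}(\cdots;t;p,q)},
\end{align*}
whose right-hand side already has the double-base $(t^2;p,q)/(t;p,q)$ structure appearing in the conjectured value. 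The conjecture would then be obtained by inverting the binomial coefficient via the binomial-orthogonality argument used in Corollary~\ref{cor:vant}.

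To verify the integral identity in an algebraic case, specialize one parameter (say $t_0^2=p^{-l}q^{-m}$) so that the integral reduces to a terminating residue sum. The substitution $z_i=-w_i^2$ together with the quadratic functional equations $\Gampq(z)=\Gamppqq(z,pz,qz,pqz)$ and $\Gamppqq(z^2)=\Gampq(z,-z)$ should convert the density, currently in base $(p^2,q^2,t^2)$, into a density in base $(p,q,t)$ in the variables $w_i$: in particular the grouping $\Gamppqq(tz_i^{\pm 1},ptz_i^{\pm 1},qtz_i^{\pm 1})$ collapses to $\Gampq(tz_i^{\pm 1})/\Gamppqq(pqtz_i^{\pm 1})$, and the cross-factors transform into their $(p,q,t)$-analogues. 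The algebraic observation parallel to the one underlying Theorem~\ref{thm:int_litt} is that after this change of variables, the $2n$ interpolation arguments $\pm\sqrt{-z_i}=\pm\sqrt{-1}\,w_i$ are exactly the paired arguments to which Theorem~\ref{thm:genlitt} applies, with the extra factor $t$ between $u_0\sqrt{-1}$ and $-tu_0\sqrt{-1}$ in the $\tcR$-parameters supplying the normalization freedom needed to absorb the mismatch between ratio $-1$ of the interpolation arguments and ratio $t$ of the canonical pairs in Theorem~\ref{thm:genlitt}.

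Once the integral identity is verified for an algebraic choice of parameters, the analytic continuation argument of Theorem~\ref{thm:int_litt} applies verbatim: both sides expand, via the connection coefficient formula of \cite[Thm.~5.3]{bctheta}, as sums of products of $p$-theta and $q$-theta functions with only the allowed singularities, so equality on the Zariski-dense algebraic subfamily extends to the full balancing locus. To derive the vanishing conjecture, multiply the integral identity by $\obinomE{\bkappa}{\blambda}_{[1/u_0^2,1/t^{2n-1}t_0u_0];t;p,q}$ (together with the reciprocal of the $\Delta^0_\blambda$ prefactor) and sum over $\blambda$; the left-hand side becomes $\tcR^{(2n)}_\bkappa$ by the connection coefficient expansion, while the inner sum on the right collapses to $\delta_{\bkappa,\bmu^2}$ by binomial orthogonality, yielding both the vanishing and the stated value.

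The hardest step is the algebraic matching. Unlike Theorem~\ref{thm:int_litt}, where the substitution $z_i\mapsto t^{\pm 1/2}z_i$ pairs evaluation points with ratio $t$ so that matching with Theorem~\ref{thm:genlitt} is nearly tautological, here the pairs $\pm\sqrt{-z_i}$ have ratio $-1$ and the reduction involves a delicate interplay between the uniform $\sqrt{-1}$ twist of $t_0,t_1,u_0$ and the extra factor $t$ splitting $u_0\sqrt{-1}$ from $-tu_0\sqrt{-1}$. It is quite possible that the resulting sum after residue calculus is not an instance of Theorem~\ref{thm:genlitt} but a genuine new Littlewood-type identity tailored to the parameters $(t,pt,qt)$; the absence of such an identity in the present paper is presumably why the statement is posed as a conjecture, with a complete proof requiring either a direct construction of that identity or a new source of quadratic-transformation machinery not yet in place.
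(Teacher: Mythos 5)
The statement you are trying to prove is posed in the paper as a \emph{conjecture}, and the gap you flag in your final paragraph is real and is precisely why: the residue sum obtained by specializing a parameter (or, equivalently, the sum form of the identity) is \emph{not} an instance of Theorem~\ref{thm:genlitt}. Your first three paragraphs read as if the strategy of Theorem~\ref{thm:int_litt} and Corollary~\ref{cor:vant} will go through once the ``algebraic matching'' is done, but that matching is the entire content of the conjecture. Concretely: the extra factor $t$ between $u_0\sqrt{-1}$ and $-tu_0\sqrt{-1}$, together with the parameters $t,pt,qt$ in the base-$(t^2;p^2,q^2)$ density, produces after reduction a double sum mixing binomial coefficients in base $(t^2;p^2,q^2)$ with binomial coefficients in base $(t;p,q)$ — a quadratic transformation between different elliptic curves — rather than the single-curve $t\mapsto t^2$ degeneration that Theorem~\ref{thm:genlitt} handles. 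The analytic-continuation step is also not ``verbatim'': in Theorem~\ref{thm:int_litt} the key was that $\cR^{*(2n)}_\blambda(\dots,t^{\pm1/2}z_i,\dots)$ expands in products of base-$t^2$ biorthogonal functions compatible with the density; no analogous expansion compatible with the $(p^2,q^2)$-density is established here.

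What the paper actually does is different in mechanism and much more limited in scope. Using the factorization of Cauchy-type interpolation functions,
$\cR^{*(2n)}_{\blambda}(\dots,\pm\sqrt{-z_i},\dots;pq/t^{2n}\sqrt{-1}u_0,\sqrt{-1}u_0;t;p,q)=\cR^{*(n)}_{\blambda}(\dots,z_i,\dots;p^2q^2/t^{2n}u_0^2,u_0^2;t^2;p^2,q^2)$,
it integrates this single test function in two ways — once via the elliptic Kadell lemma \cite[Cor.~9.3]{xforms}, once via the conjectured vanishing — to show the conjecture is \emph{equivalent} to the explicit double-sum identity \eqref{eq:vanKsmall_sum}. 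It then proves that identity only for $\ell(\blambda)\le 1$, where the double sum collapses and one recovers Spiridonov's known univariate quadratic transformation \cite[Thm.~5.1]{SpiridonovVP:2002}; beyond that, the conjecture is embedded in the transformation Conjecture~\ref{conj:vanmt}, for which only scattered cases ($n=2$, $t=1$, $t=q$, $t=q^{1/2}$ with $\blambda=0$ or $n\le 6$, etc.) are established. So your proposal correctly identifies the shape of the desired statement and the obstruction, but it does not constitute a proof, and the paper does not contain one either.
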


If one solves for $u_0$ in the balancing condition then lets $p\to 0$, this
naturally becomes Theorem 4.8 of \cite{vanish} (and agrees with the nonzero
values computed in \cite{bcpoly}); one also notes that the conjecture is
consistent under negating $t_0$ or swapping $t_0$ and $t_1$.

There is an alternate formulation of this conjecture as an identity
of ``hypergeometric'' sums.  The key observation is that the Cauchy-type
interpolation function satisfies the transformation
\[
\cR^{*(2n)}_{\blambda}(\dots,\pm \sqrt{-z_i},\dots;pq/t^{2n}\sqrt{-1}u_0,\sqrt{-1}u_0;t;p,q)
=
\cR^{*(n)}_{\blambda}(\dots,z_i,\dots;p^2q^2/t^{2n}u_0^2,u_0^2;t^2;p^2,q^2),
\]
which follows immediately from the product formula for such functions.
(Recall also that, as remarked after Theorem~\ref{thm:lifted_interp}, we
may freely extend the right-hand side to the case $\ell(\blambda)>n$,
without invalidating our further computations.)

Thus if the conjecture holds, one can compute the integral
\[
\langle
\cR^{*(n)}_{\blambda}(;p^2q^2/t^{2n}u_0^2,u_0^2;t^2;p^2,q^2)
\rangle^{(n)}_{t_0^2,t_1^2,u_0^2,t,pt,qt;t^2,p^2,q^2}
\]
in two different ways: either by expanding
\[
\cR^{*(n)}_{\blambda}(;p^2q^2/t^{2n}u_0^2,u_0^2;t^2;p^2,q^2)
=
\sum_{\bmu}
\obinomE{\blambda}{\bmu}_{[p^2q^2/t^2u_0^4,p^2q^2/t^{2n}t_0^2u_0^2](p^2q^2t_0^2/t^2u_0^2);t^2;p^2,q^2}
\cR^{*(n)}_{\bmu}(;t_0^2,u_0^2;t^2;p^2,q^2)
\]
and applying the elliptic analogue of Kadell's lemma
\cite[Cor.~9.3]{xforms}, or by expanding
\begin{align}
\cR^{*(2n)}_{\blambda}(;pq/t^{2n}\sqrt{-1}u_0&,\sqrt{-1}u_0;t;p,q)\notag\\
&{}=
\sum_{\bmu}
\obinomE{\blambda}{\bmu}_{[-pq/tu_0^2,-pq/t^{2n}t_0u_0](pqt_0/tu_0);t;p,q}
\cR^{*(2n)}_{\bmu}(;t_0\sqrt{-1},u_0\sqrt{-1};t;p,q)
\end{align}
and applying the conjecture in the form
\begin{align}
\langle&
R^{*(2n)}_\bmu(\dots,\pm\sqrt{-z_i},\dots;t_0\sqrt{-1},u_0\sqrt{-1};t;p,q)
\rangle^{(2n)}_{t_0^2,t_1^2,u_0^2,t,pt,qt;t^2,p^2,q^2}
\notag\\
&{}=
\Delta^{0}_\bmu(t^{2n-1}t_0/u_0|t^{2n-1}t_0^2,\pm t^{2n-1}t_0t_1;t;p,q)
\sum_{\bnu}
\obinomE{\bmu}{\bnu^2}_{[t^{2n-1}t_0/u_0,t^{2n}t_0u_0];t;p,q}
\frac{\Delta_{\bnu}(1/t^2u_0^2|t^{2n},t^{2n-1}t_0^2;t^2;p,q)}
     {\Delta_{\bnu^2}(1/tu_0^2|t^{2n},t^{2n-1}t_0^2;t;p,q)}.
\end{align}
One thus finds that the conjecture implies
\begin{align}
\sum_{\bmu}&
\obinomE{\blambda}{\bmu}_{[p^2q^2/t^2u_0^4,p^2q^2/t^{2n}t_0^2u_0^2](p^2q^2t_0^2/t^2u_0^2);t^2;p^2,q^2}
\Delta^0_{\bmu}(t^{2n-2}t_0^2/u_0^2|t^{2n-2}t_0^2t_1^2,t^{2n-1}t_0^2,t^{2n-1}pt_0^2,t^{2n-1}qt_0^2;t^2,p^2,q^2)\notag\\
&{}=
\sum_{\bmu,\bnu}
\Delta^{0}_\bmu(t^{2n-1}t_0/u_0|t^{2n-1}t_0^2,\pm
t^{2n-1}t_0t_1;t;p,q)
\frac{\Delta_{\bnu}(1/t^2u_0^2|t^{2n},t^{2n-1}t_0^2;t^2;p,q)}
     {\Delta_{\bnu^2}(1/tu_0^2|t^{2n},t^{2n-1}t_0^2;t;p,q)}
\notag\\
&\phantom{{}=\sum_{\bmu,\bnu}}
\times\obinomE{\blambda}{\bmu}_{[-pq/tu_0^2,-pq/t^{2n}t_0u_0](pqt_0/tu_0);t;p,q}
\obinomE{\bmu}{\bnu^2}_{[t^{2n-1}t_0/u_0,t^{2n}t_0u_0];t;p,q}.
\label{eq:vanKsmall_sum}
\end{align}
In fact, this is equivalent to the conjecture, since one can equally well
expand the biorthogonal functions in Cauchy-type interpolation functions.
Note also that $n$ enters here only via $t^{2n}$, and thus one can
analytically continue in this extra parameter.

\begin{prop}
Conjecture~\ref{conj:vanKsmall} holds whenever $\ell(\blambda)\le 1$.
\end{prop}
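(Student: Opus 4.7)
The plan is to exploit the equivalent hypergeometric sum formulation \eqref{eq:vanKsmall_sum}, which simplifies dramatically when $\ell(\blambda)\le 1$, and reduce the proposition to a known univariate elliptic hypergeometric identity.

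First, I would observe that for $\ell(\blambda)\le 1$ every $\bmu\subset\blambda$ has $\ell(\bmu)\le 1$, and since $\ell(\bnu^2)\ge 2$ whenever $\bnu\ne 0$, the constraint $\bnu^2\subset\bmu$ forces $\bnu=0$. Using $\Delta_0=1$ together with $\obinomE{\bmu}{0}_{[a,b](v_1,\dots,v_k);t;p,q}=\Delta^0_\bmu(a|b,v_1,\dots,v_k;t;p,q)$, the double sum on the right-hand side of \eqref{eq:vanKsmall_sum} collapses to a single sum indexed by $\bmu\subset\blambda$. The identity then becomes a statement between two univariate sums, with modular parameters $(t^2,p^2,q^2)$ on the left and $(t,p,q)$ on the right.

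Next, writing $\blambda=(\lambda,\mu)$ with $\lambda=(l)$ and $\mu=(m)$, I would use the fact that the binomial coefficients and $\Delta^0$-symbols factor into a $q$-elliptic factor (depending on $\lambda$) and a $p$-elliptic factor (depending on $\mu$). The identity thus decouples into two terminating, balanced, well-poised elliptic hypergeometric identities, each indexed by a single integer. The mismatch between modular parameters ($t^2,p^2,q^2$ versus $t,p,q$) signals that the required identities are quadratic transformations, of the sort systematically treated in \cite[\S 9]{xforms}, and I would identify each half as (a specialization of) the $BC_1$ case of \cite[Thm.~9.7]{xforms}. The balancing condition $t^{4n-1}t_0^2t_1^2u_0^2=pq$, together with the quadratic functional equations $\Gampq(z)=\Gampqq(z,qz)=\Gamppqq(z,pz)$, provides the necessary translation between the two modular levels.

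The principal obstacle is the parameter bookkeeping: the sign factors arising from $\sqrt{-1}$ and $-u_0$, combined with the translation between the $(t^2,p^2,q^2)$-level expressions on the left and the $(t,p,q)$-level expressions on the right, must be tracked carefully so that the data on each side lines up with the chosen $BC_1$ quadratic transformation. Once the identification is made, the proposition follows from the equivalence of \eqref{eq:vanKsmall_sum} with Conjecture~\ref{conj:vanKsmall}, with analytic continuation (using the freedom in $t^{2n}$ noted after \eqref{eq:vanKsmall_sum}) handling any residual degenerate cases.
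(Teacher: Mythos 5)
Your reduction is exactly the paper's: by triangularity it suffices to verify \eqref{eq:vanKsmall_sum} for $\ell(\blambda)\le 1$; this forces $\ell(\bmu)\le 1$, hence $\bnu=0$ since $\ell(\bnu^2)\ge 2$ for $\bnu\ne 0$, and the double sum collapses to a single univariate sum. Up to that point the argument is correct and identical to the paper's.

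The gap is in the final identification of the resulting univariate identity. You correctly diagnose that the mismatch of modular parameters ($t^2,p^2,q^2$ on the left versus $t,p,q$ on the right) means a \emph{quadratic} transformation is required, but you then propose to realize it as ``the $BC_1$ case of \cite[Thm.~9.7]{xforms}.'' That theorem is the $E_7$ (Weyl-group) transformation of the order-one elliptic Selberg integral: it permutes and rescales the parameters $u_r$ but leaves $p$, $q$, and $t$ untouched on both sides. No specialization of it can double the modular parameters, so the ``parameter bookkeeping'' you defer to would not close. The identity actually needed is Spiridonov's quadratic transformation of terminating very-well-poised elliptic hypergeometric series \cite[Thm.~5.1]{SpiridonovVP:2002} --- the discrete counterpart of Proposition~\ref{prop:univ_tm} (the $n=1$ case of Conjecture~\ref{conj:vantm}), which is precisely a sum-level relation between the $(t^2;p^2,q^2)$ and $(t;p,q)$ levels. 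With that substitution your argument matches the paper's proof; without it, the crucial last step rests on a theorem that cannot deliver the required identity.
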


\begin{proof}
  By triangularity, it suffices to prove \eqref{eq:vanKsmall_sum} when
  $\ell(\blambda)\le 1$.  On the right-hand side, this forces
  $\ell(\bmu)\le 1$, so $\ell(\bnu^2)\le 1$, and thus $\bnu=0$, making the
  double sum on the right collapse to a single sum.  The resulting identity
  of univariate elliptic hypergeometric sums is a special case of a known
  quadratic transformation \cite[Thm.~5.1]{SpiridonovVP:2002} (the discrete
  version of Proposition~\ref{prop:univ_tm} below).
\end{proof}

The integral analogue of Conjecture~\ref{conj:vanKsmall} appears to be the
following.  The label ``$(-1,t^{-1/2})$'' and similar labels below will be
explained at the end of the section.  For consistency with the later
conjectures, we replace $2n$ by $n$ but insist that $n$ be even.

\begin{conjQ}[$-1,t^{-1/2}$]\label{conj:vanmt}
  For otherwise generic parameters satisfying $t^n t_0t_1t_2u_0=-pq$, and
  even $n\ge 0$, one has
\begin{align}
\int
&\cR^{*(n)}_{\blambda}(\dots,\pm \sqrt{-z_i},\dots;t_0\sqrt{-1},u_0\sqrt{-1};t;p,q)
\Delta^{(n/2)}(\dots,
z_i,\dots;t_0^2,t_1^2,t_2^2,u_0^2,t,pt,qt,pqt;t^2;p^2,q^2)\notag\\
&{}=
\frac{\Delta^0_{\blambda}(t^{n-1}t_0/u_0|-t^{n-1}t_0t_1;t;p,q)}
     {\Delta^0_{\blambda}(t^{n-1}t_0/u_0|t^n t_0t_1;t;p,q)}
\prod_{0\le i<n}
\prod_{0\le r<s<3}
\frac{\Gampq(-t^i t_rt_s)}
{\Gampq(t^{i+1} t_rt_s)}
\notag\\
&\phantom{{}\propto{}}
\times\int
\cR^{*(n)}_{\blambda}(\dots,t^{\pm 1/2} z_i,\dots;t^{1/2}t_0,t^{1/2}u_0;t;p,q)
\Delta^{(n/2)}(\dots,z_i,\dots;t_0,tt_0,t_1,tt_1,t_2,tt_2,u_0,tu_0;t^2;p,q).
\end{align}
\end{conjQ}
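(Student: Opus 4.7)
My plan is to attack Conjecture~\ref{conj:vanmt} in three stages: first establish $\blambda=0$, then handle doubly terminating specializations, then analytically continue.

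For $\blambda=0$, both sides reduce (via the value of $\cR^*_0$) to higher-order elliptic Selberg integrals of the form $\II^{(1)}_{n/2}$, but with ``isogenous'' parameter structures. I would reinterpret each as an instance of the function $F_{n/2}(t_0,t_1,t_2,t_3;a,b;t^2;p,q)$ from Proposition~\ref{prop:e7ab}. As noted in the introduction, van de Bult \cite{vandeBultFJ:2011} has proved the $\blambda=0$ case of Conjecture~\ref{conj:vantm}, and the $a\leftrightarrow b$ symmetry of $F_n$ (together with a permutation of the $t_r$) is expected to convert that identity into the $\blambda=0$ case of Conjecture~\ref{conj:vanmt}. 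The explicit step is to match the eight density parameters $(t_0^2,t_1^2,t_2^2,u_0^2,t,pt,qt,pqt)$ on the LHS to an $F_{n/2}$-presentation whose image under the $a\leftrightarrow b$ involution is the RHS presentation with parameters $(t_0,tt_0,t_1,tt_1,t_2,tt_2,u_0,tu_0)$; the balancing condition $t^nt_0t_1t_2u_0=-pq$ is exactly what makes these presentations commensurable.

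For general $\blambda$, I would first establish the identity in the doubly terminating regime where, say, $t^{n-1}t_0t_1 = p^{-l}q^{-m}$ with $l,m$ bounded in terms of $\blambda_1$. In that regime both integrals collapse to finite multivariate elliptic hypergeometric sums via residue calculus, and I would expand $\cR^{*(n)}_{\blambda}$ in Cauchy-type interpolation functions using Theorem~\ref{thm:lifted_interp}. The resulting sum identity should follow by combining the $\blambda=0$ base case with an induction on $|\blambda|$ built from the branching-style symmetries of Theorem~\ref{thm:1.12} and its corollary, much as Theorem~\ref{thm:genlitt} was bootstrapped from Lemma~\ref{lem:litt_t}. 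Since the set of such specializations is Zariski-dense in the parameter space and both sides depend meromorphically on the parameters (being finite linear combinations of $\Gampq$ ratios in the sum case, and their analytic continuations in general), the general identity follows by analytic continuation.

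The main obstacle is the step from $\blambda=0$ to general $\blambda$, because Conjecture~\ref{conj:vanmt} is the composite of a $-1$-type modular transformation (as in Conjecture~\ref{conj:littbig_m}) and a $t^{-1/2}$-type scaling (as in Conjecture~\ref{conj:littbig}): neither Proposition~\ref{prop:adj_m} nor Proposition~\ref{prop:adj_t} applies directly, and the natural ``composed'' difference operator would act via an isogeny rather than a simple shift. One concrete route is to construct such a composite difference operator, expressed as a sum over $\pm\sqrt{-z_i}$ arguments combined with $t^{\pm 1/2}$-shifts, and to show that it intertwines the two integrands in the sense needed; invariance of the resulting ratio of densities under the appropriate $v_r \mapsto pq/v_r$ symmetry should then reduce (as in the proof of Proposition~\ref{prop:adj_m}) to a suitably modular-transformed instance of Lemma~\ref{lem:diff_commut}. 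An alternative would be to prove the full integral version of Conjecture~\ref{conj:littbig_m} first and then apply Proposition~\ref{prop:e7ab} uniformly in $\blambda$; but the $E_7$-symmetry is an identity of densities only, so pulling it through an arbitrary interpolation function is itself the crux of the difficulty and essentially the ``new machinery beyond the scope of the present work'' alluded to in the introduction.
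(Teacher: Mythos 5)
The statement you are addressing is a conjecture, not a theorem of the paper: the paper proves it only in special cases (the $n=2$ case by reducing the two-dimensional integral to a univariate one via an explicit integral representation and the order-zero elliptic beta integral, hence $t=1$; the $t=p$ and $t=q$ cases by pfaffian decomposition into $n=2$ instances; and, for $t\in\{p^{1/2},q^{1/2}\}$, the cases $\blambda=0$ or $n\le 6$ via Proposition~\ref{prop:pmq_q}), together with various consistency checks. Your proposal does not close the gap either, and in fact your own last paragraph concedes that the crux — passing from $\blambda=0$ to general $\blambda$ — is precisely the ``new machinery beyond the scope of the present work.'' Stage one of your plan ($\blambda=0$ via van de Bult's proof of the $\blambda=0$ case of Conjecture~\ref{conj:vantm} combined with Proposition~\ref{prop:e7ab}) is consistent with what the paper asserts in its introduction, but stages two and three are where the argument fails to be a proof.

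Concretely, the claim that in the doubly terminating regime ``the resulting sum identity should follow by combining the $\blambda=0$ base case with an induction on $|\blambda|$'' is unsupported, and there is strong evidence it cannot be carried out with the tools at hand: the terminating specialization $t_2=p^lq^m\sqrt{pq/t}$ of Conjecture~\ref{conj:vanmt} reduces (via parameter shifts and Theorem~\ref{thm:int_litt}) exactly to Conjecture~\ref{conj:vanKsmall}, which is itself open except for $\ell(\blambda)\le 1$; equivalently, the sum identity \eqref{eq:vanKsmall_sum} is only known as a univariate quadratic transformation. There is no analogue here of Lemma~\ref{lem:litt_t} on which to base the induction, because the two sides of the putative sum identity involve binomial coefficients at incommensurable bases ($t^2;p^2,q^2$ versus $t;p,q$), so the single-surviving-term collapse that drives the Littlewood induction is unavailable. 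Your proposed ``composite difference operator'' intertwining the $z\mapsto\pm\sqrt{-z}$ substitution with a $t^{\pm1/2}$ shift is likewise only named, not constructed, and the adjointness arguments of Propositions~\ref{prop:adj_t} and \ref{prop:adj_m} establish invariance of the integrals under certain parameter involutions — a consistency condition on the conjecture — rather than the transformation itself. Without either (a) a proof of the terminating sum identity for general $\blambda$, or (b) the intertwining operator, there is nothing to analytically continue, so the Zariski-density step never gets off the ground.
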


\begin{rem}
Note that the $\Delta^0$ factor is symmetric under swapping $t_1$ and
$t_2$, since the balancing condition makes
$(-t^{n-1}t_0t_1)(t^{n}t_0t_2)=pqt^{n-1}t_0/u_0$.
\end{rem}

If we use the connection coefficient formula \cite[Cor.~4.14]{bctheta} to
expand the interpolation functions on each side in terms of the
corresponding functions with $t_0$ replaced by $t_1$, the result is a
linear combination of instances of the conjecture with $t_0$ and $t_1$
swapped.  The conjecture is similarly consistent under $\blambda\mapsto
(l,m)^{n}+\blambda$, and (combining the two) under shifts in $t_1$, $t_2$,
expanding via the Pieri identity, \cite[Thm.~4.17]{bctheta}, or
equivalently the special case $\blambda=0$, $m=1$, $w_0w_1\in
p^{-\N}q^{-\N}$ of the skew Cauchy identity, Theorem~\ref{thm:cauchy_skew}
above.  In particular, the case $t_2=p^lq^m\sqrt{pq/t}$ reduces to the case
$t_2=\sqrt{pq/t}$, which in turn via Theorem~\ref{thm:int_litt} reduces to
Conjecture~\ref{conj:vanKsmall}.  This, in fact, was how the above
conjecture was formulated, by analytically continuing the result of
shifting parameters and applying the Pieri identity.  The fact that the
resulting transformation is symmetric in $t_1$, $t_2$ is a reassuring
consistency; the fact that the right-hand side appears not to be
symmetrical under $t_1,t_2\mapsto -t_1,-t_2$ is less reassuring, but in
fact a special case of Conjecture~\ref{conj:littbig} would restore this
symmetry.

This conjecture satisfies a further consistency condition.  Consider the
linear functional on the space spanned by the (linearly independent)
functions $\cR^{*(n)}_{\blambda}(;t_0\sqrt{-1},u_0\sqrt{-1};t;p,q)$ defined
by taking
\[
\sum_{\blambda}
c_{\blambda}\cR^{*(n)}_{\blambda}(;t_0\sqrt{-1},u_0\sqrt{-1};t;p,q)
\]
to the sum
\[
\sum_{\blambda}
c_{\blambda}
\frac{\Delta^0_{\blambda}(t^{n-1}t_0/u_0|-t^{n-1}t_0t_1;t;p,q)}
     {\Delta^0_{\blambda}(t^{n-1}t_0/u_0|t^nt_0t_1;t;p,q)}
\cR^{*(n)}_{\blambda}(\dots,t^{\pm 1/2}z_i,\dots;t^{1/2}t_0,t^{1/2}u_0;t;p,q)
\]
and then integrating against the density
\[
\Delta^{(n/2)}(\dots,z_i,\dots;t_0,tt_0,t_1,tt_1,t_2,tt_2,u_0,tu_0;t^2;p,q).
\]
If the conjecture holds, then this linear functional factors through the
homomorphism
\[
f\mapsto f(\dots,\sqrt{-z_i},\dots)
\]
and must therefore vanish on the kernel of that homomorphism.  It suffices
to verify that the image of the functional on the product (and its analogue
with $p$ and $q$ swapped)
\[
\prod_{1\le i\le n}
\frac{\theta_p(v\sqrt{-1}x_i^{\pm 1})}{\theta_p((pq/u_0\sqrt{-1})x_i^{\pm 1})}
\cR^{*(n)}_{\blambda}(;t_0\sqrt{-1},u_0\sqrt{-1}/q)
\]
is invariant under $v\mapsto -v$.  The relevant expansion coefficients come
from the Pieri identity, and we can recognize the resulting integrand as
proportional to
\[
D^{+(n)}_q(t^{1/2}u_0{:}t^{1/2}t_0{:}t^{1/2}t_1,t^{1/2}t_2,-t^{-1/2}v;t;p)
\cR^{*(n)}_{\blambda}(;q^{1/2}t^{1/2}t_0,q^{-1/2}t^{1/2}u_0;t;p,q).
\]
Since $\blambda$ was arbitrary, we conclude that for consistency, we need
\begin{align}
\int
({\cal D}^{+(n)}_q(t^{1/2}u_0{:}t^{1/2}t_0,t^{1/2}t_1,t^{1/2}t_2,-t^{-1/2}v;t;p)f)(\dots,t^{\pm
  1/2}z_i,\dots)\qquad\qquad&
\notag\\
\Delta^{(n/2)}(\dots,z_i,\dots;t_0,tt_0,t_1,tt_1,t_2,tt_2,u_0,tu_0;t^2;p,q)&
\end{align}
to be invariant under $v\mapsto -v$ for any function $f$ in the span of the
interpolation functions.  But this follows by essentially the same argument
as the $1/q$ case of Proposition~\ref{prop:adj_t}.  (In fact, this
adjointness relation is formally equivalent to a special case of the
adjointness relation proved there.)
%
%
%

\begin{prop}
Conjecture~\ref{conj:vanmt} holds when $n=2$.
\end{prop}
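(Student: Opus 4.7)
The plan is to combine the consistency arguments stated in the discussion preceding the proposition with the already-established $\ell(\blambda)\le 1$ case of Conjecture~\ref{conj:vanKsmall}, reducing the $n=2$ case of Conjecture~\ref{conj:vanmt} to the univariate quadratic transformation of \cite[Thm.~5.1]{SpiridonovVP:2002}.

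First I use the $\blambda\mapsto(l,m)^n+\blambda$ consistency of Conjecture~\ref{conj:vanmt}: for any partition pair $\blambda=(\lambda,\mu)$ with $\ell(\blambda)\le 2$ the choice $(l,m)=(\lambda_2,\mu_2)$ yields $\blambda=(\lambda_2,\mu_2)^2+\blambda'$ with $\ell(\blambda')\le 1$, so it suffices to prove the conjecture when $\ell(\blambda)\le 1$. Next I use the consistency of the conjecture under shifts of $t_2$ by integer powers of $p$ and $q$ (obtained, as noted in the paper, by combining the connection-coefficient symmetry of the interpolation functions with the rectangle-shift consistency, or equivalently via the Pieri special case of Theorem~\ref{thm:cauchy_skew}). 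By analytic continuation, it therefore suffices to verify the identity at the single value $t_2=\sqrt{pq/t}$.

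At this value, the identities $t_2\cdot(tt_2)=pq$ and $t_2^2\cdot pqt=p^2q^2$ trigger the reflection equation for $\Gampq$ (resp. $\Gamppqq$), causing pairs of factors in each density to cancel, so both integrals degenerate to order $m=0$ elliptic Selberg integrals with six parameters. After matching parameters via the $S_{2n+1}$ symmetry of the lifted interpolation function (Proposition~\ref{prop:symmA2n}), the reduced right-hand side is exactly the $n=1$ instance of Theorem~\ref{thm:int_litt}, and hence admits an explicit evaluation as a finite sum over $\bmu^2\subset\blambda'$. Meanwhile, the reduced left-hand side, under the change of variables $z=y^2$ and the quadratic relation $\Gamppqq(y^2)=\Gampq(y,-y)$, turns into a 1-dimensional $\Gampq$-integral which matches the left-hand side of the $n=1$ case of Conjecture~\ref{conj:vanKsmall}. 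Since $\ell(\blambda')\le 1$, the required identity is precisely the $\ell\le 1$ instance of vanKsmall already established via \cite[Thm.~5.1]{SpiridonovVP:2002}, and this completes the argument.

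The main obstacle is purely technical bookkeeping: the parameter identification in the third paragraph requires several simultaneous applications of Proposition~\ref{prop:symmA2n} to bring the leading parameters $(t^{1/2}t_0, t^{1/2}u_0)$ of $\cR^{*(2)}_{\blambda'}$ on the right-hand side into alignment with the form $(\tilde t_0, \tilde u_0)$ demanded by Theorem~\ref{thm:int_litt}, and one must check that the cancellations and reductions are compatible with the balancing condition $t^n t_0t_1t_2u_0=-pq$ of Conjecture~\ref{conj:vanmt} throughout. One must also verify that the analytic continuation in $t_2$ in the second paragraph is valid (the set $\{p^lq^m\sqrt{pq/t}\}$ is Zariski dense in each elliptic direction, as needed), and that the first step's reduction produces a sufficiently generic $\blambda'$ to invoke the $\ell\le 1$ case of vanKsmall without encountering a degenerate specialization.
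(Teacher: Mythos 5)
Your reduction to $\ell(\blambda)\le 1$ via $\blambda\mapsto(l,m)^2+\blambda'$ is fine and matches the paper, but the second step is where the argument breaks down. The consistency of Conjecture~\ref{conj:vanmt} under Pieri-type shifts of $t_2$ establishes the identity only at the discrete set of points $t_2\in p^{\N}q^{\N}\sqrt{pq/t}$, and for fixed $\blambda$ both sides of the $n=2$ identity are one-dimensional contour integrals --- genuinely transcendental meromorphic functions of $t_2$, not products of $p$- and $q$-theta functions. The set $\{p^lq^m\sqrt{pq/t}\}$ accumulates only at $0$ in $\C^*$, so agreement there does not force agreement at generic $t_2$; the ``Zariski dense in each elliptic direction'' criterion applies only when both sides are (quasi-)elliptic in the parameter being continued, which is exactly the distinction the paper draws when it notes that consistency under parameter shifts yields only the ``algebraic'' cases of these conjectures. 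If your continuation step were valid, the same reasoning would prove Conjecture~\ref{conj:littbig} and all of the ``Q'' conjectures outright, since each is consistent under parameter shifts and has a known degenerate case --- but these remain conjectures precisely because that inference fails.

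The paper's actual proof for $n=2$ avoids this by working with the integrand rather than continuing in $t_2$: it expresses $\cR^{*(2)}_{(l,m)}(\pm\sqrt{-z};v\sqrt{-1},u_0\sqrt{-1};t;p,q)$ through the univariate integral representation of \cite{xforms}, exchanges the order of integration so that the $z$-integral becomes an order-$0$ elliptic beta integral that can be evaluated in closed form, and thereby turns the left-hand side into a univariate integral; the right-hand side is already univariate since $\cR^{*(2)}_{(l,m)}(t^{\pm 1/2}z;t^{1/2}t_0,t^{1/2}u_0;t;p,q)=\cR^{*(1)}_{(l,m)}(z;tt_0,u_0;t^2;p,q)$, and the resulting univariate identity is a special case of \cite[Cor.~9.11]{xforms}. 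Even there, contour-pinching obstructions force a detour through Proposition~\ref{prop:univ_tm} and connection coefficients. A smaller point: Proposition~\ref{prop:symmA2n} concerns permutations of the plethystic arguments of a lifted interpolation function $\cR^*_{\blambda/0}$, so it is not the tool for re-aligning the parameters $(t^{1/2}t_0,t^{1/2}u_0)$ of an ordinary interpolation function with those demanded by Theorem~\ref{thm:int_litt}; that matching has to be done directly, and your sketch does not verify it.
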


\begin{proof}
Note that this is a nontrivial claim even when $\blambda=0$, as the two
integrands involve different values of $p$ and $q$.  However, we observe
that in general the case of the conjecture with $\blambda=(l,m)^n+\bmu$
reduces to the case with $\blambda=\bmu$, so for $n=2$, it suffices to
consider the case $\ell(\blambda)\le 1$.  But then the integral
representation of \cite{xforms} implies the following expression.
\begin{align}
{\cal R}^{*(2)}_{(l,m)}&(\pm \sqrt{-z};v\sqrt{-1},u_0\sqrt{-1};t;p,q)\notag\\
&{}=
\frac{\Gamppqq(t^2 u_0^2 z^{\pm 1})}
     {\Gamppqq(u_0^2 z^{\pm 1},tz^{\pm 1},ptz^{\pm 1},qtz^{\pm 1},pqtz^{\pm
         1})}
\times \frac{(p;p)(q;q)\Gampq(t^2)}{2\Gamppqq(t^2)^2}
\notag\\
&\phantom{{}={}}
\times\int_{C'}
{\cal R}^{*(1)}_{(l,m)}(y;t^{1/2}v\sqrt{-1},t^{-1/2}u_0\sqrt{-1};t;p,q)
\frac{
\Gampq(t^{-1/2}u_0 y^{\pm 1}\sqrt{-1})\Gamppqq(-tz^{\pm 1}y^{\pm 2})}
{\Gampq(t^{3/2}u_0 y^{\pm 1}\sqrt{-1})\Gampq(y^{\pm 2})}
\frac{dy}{2\pi\sqrt{-1}y}.
\end{align}
If we substitute this in and exchange order of integration, the integral
over $z$ becomes an instance of the order 0 elliptic beta integral (the
$z$-dependent factor above cancels five parameters then adds three, making
a final total of six), so can be explicitly evaluated, and we thus conclude
\begin{align}
\int
{\cal R}^{*(2)}_{(l,m)}&(\pm \sqrt{-z};v\sqrt{-1},u_0\sqrt{-1};t;p,q)
\Delta^{(1)}(z;t_0^2,t_1^2,t_2^2,u_0^2,t,pt,qt,pqt;t^2,p^2,q^2)\notag\\
{}={}&
\prod_{0\le r<s<3} \Gamppqq(t_r^2t_s^2)
\prod_{0\le r<3} \Gamppqq(t^2 u_0^2t_r^2)\notag\\
&
\times\int
{\cal
  R}^{*(1)}_{(l,m)}(y;t^{1/2}v\sqrt{-1},t^{-1/2}u_0\sqrt{-1};t^2;p,q)
\notag\\
&\phantom{\times\int{}}\times
\Delta^{(1)}(y;\pm t^{1/2}t_0\sqrt{-1},\pm t^{1/2}t_1\sqrt{-1},\pm
  t^{1/2}t_2\sqrt{-1},t^{-1/2}u_0\sqrt{-1},-t^{3/2}u_0\sqrt{-1};t^2;p,q),
\end{align}
where we have used the fact that a univariate interpolation function is
independent of $t$.  Now, this argument is not actually rigorous, as there
are in general difficulties in choosing the contours in allowing the change
of variables (except if $l=0$ or $m=0$, when there is an open set of
parameters allowing both contours to be the unit circle).  If
$v=-pq/t^2u_0$, then the interpolation functions can both be written as
products, and the result is a special case of
Proposition~\ref{prop:univ_tm} below (which can be viewed as the analytic
continuation in $p^lq^m$).  The corresponding result for $v=t_0$ then
follows from the fact that the connection coefficients are the same on both
sides.

On the right-hand side, we have
\[
{\cal R}^{*(2)}_{(l,m)}(t^{\pm 1/2}z;t^{1/2}t_0,t^{1/2}u_0;t;p,q)
=
{\cal R}^{*(1)}_{(l,m)}(z;t t_0,u_0;t^2;p,q),
\]
and thus the integral on the right-hand side is
\[
\int
{\cal R}^{*(1)}_{(l,m)}(z;t t_0,u_0;t^2;p,q)
\Delta^{(1)}(z;t_0,tt_0,t_1,tt_1,t_2,tt_2,u_0,tu_0;t^2;p,q)
\]
The desired special case of Conjecture~\ref{conj:vanmt} then follows as a
special case of \cite[Cor.~9.11]{xforms}.
\end{proof}

This immediately implies that Conjecture~\ref{conj:vanmt} holds when $t=1$.
Another special case arises when $t=q$ (or, symmetrically, $t=p$), much as
in the discussion following Conjecture~\ref{conj:littbig}.  Since we no
longer have the same interpolation function on both sides of the identity,
we need to control the constants somewhat better.  Note first that for
general $t$, if we replace the interpolation functions by appropriate
versions of
\[
F^{(n)}_{\blambda}(z_1,\dots,z_n;t_1,t_0,u_0;t;p,q)
:=
\frac{\cR^{*(n)}_{\blambda}(z_1,\dots,z_n;t_0,u_0;t;p,q)}
     {\cR^{*(n)}_{\blambda}(t^{n-1}t_1,\dots,t_1;t_0,u_0;t;p,q)
\prod_{1\le i\le n} \Gampq(t^{n-i}t_0t_1,t^{n-i}t_0u_0,t^{n-i}t_1u_0)},
\]
then this absorbs the constants outside the integrals.  With this in mind,
we note that
\[
F^{(n)}_{\lambda,\mu}(z_1,\dots,z_n;ct_1,ct_0,cu_0;q;p,q)
\propto
\frac{
\sum_{\pi,\rho\in S_n}
\sigma(\rho)
\prod_{1\le i\le n}
  F^{(1)}_{\lambda_{\pi_i},\mu_{\rho_i}+n-\rho_i}(z_i;ct_1,ct_0,cq^{n-1}u_0;q;p,q)
}
{
\prod_{1\le i\le n}
  \theta_p(cu_0 z_i^{\pm 1};q)_{n-1}^{-1}
\prod_{1\le i<j\le n} cz_i^{-1} \theta_p(z_i z_j^{\pm 1})},
\]
where the constant of proportionality is independent of $c$.  (The constant
can be explicitly evaluated using Warnaar's determinant identity
\cite[Lem.~5.3]{WarnaarSO:2002}.)  As in Conjecture~\ref{conj:littbig}, we
find that substituting in this expression reduces the $t=q$ case of
Conjecture~\ref{conj:vanmt} to a sum of products of instances with $t=q$,
$n=2$ (essentially a pfaffian).

We also have an additional special case when $\blambda=0$ and in low
dimensions (see also \cite{vandeBultFJ:2011}).

\begin{prop}\label{prop:vanmt_ev}
If $t\in \{p^{1/2},q^{1/2}\}$, then Conjecture~\ref{conj:vanmt} holds if
either $\blambda=0$ or $n\le 6$.
\end{prop}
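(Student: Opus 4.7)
The plan is to handle the two cases separately and exploit the special structure that $t \in \{p^{1/2},q^{1/2}\}$ forces in both integrands.

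For the case $\blambda=0$, the interpolation functions trivialize and the conjecture reduces to an equality of two order-$1$ elliptic Selberg integrals in $n/2$ variables, differing by the explicit ratio of $\Gamma$-factors. When $t=p^{1/2}$ (the $t=q^{1/2}$ case is symmetric), the step parameter $t^2=p$ on the RHS density coincides with one of the elliptic moduli, so the cross-term ratio $\Gampq(t^2 z_i^{\pm 1}z_j^{\pm 1})/\Gampq(z_i^{\pm 1}z_j^{\pm 1})$ collapses to a product of $\theta_q$-functions via the functional equation $\Gampq(pz)=\theta_q(z)\Gampq(z)$. Simultaneously, the LHS parameter quadruple $(t,pt,qt,pqt)$ pairs up through the quadratic functional equation $\Gamppqq(z^2)=\Gampq(z,-z)$ and its companion, rewriting the LHS density as an expression on the ordinary $(p,q)$-lattice. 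After these reductions both sides become standard $\Delta^{(n/2)}$-integrals, and their equality is an instance of the $E_7/E_8$ transformation group of \cite[\S9]{xforms}; equivalently, combining the $\blambda=0$ case of Conjecture~\ref{conj:vantm} (due to \cite{vandeBultFJ:2011}) with the $a\leftrightarrow b$ swap of Proposition~\ref{prop:e7ab} yields exactly the desired identity.

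For the case $n\le 6$ with arbitrary $\blambda$, observe that $n$ is even, so $n/2\in\{1,2,3\}$ and both integrals are at most three-dimensional. The approach is to adapt the pfaffian reduction sketched for $t\in\{p,q\}$ after Conjecture~\ref{conj:littbig}. Concretely, when $t=p^{1/2}$ the $p$-elliptic factor of $\cR^{*(n)}_\blambda(\dots,\pm\sqrt{-z_i},\dots;t_0\sqrt{-1},u_0\sqrt{-1};t;p,q)$ admits (after a modular transformation identifying $t^2=p$ with the modulus) a determinantal-ratio expansion analogous to the $t=p$ case, expressing it up to explicit factors as a ratio involving sums of products of univariate interpolation functions $\cR^{*(1)}_{\cdot,\cdot}$. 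Substituting this expansion into the integrand, the denominator of the expansion combines with the cross-term factors in $\Delta^{(n/2)}(\dots;t^2;p^2,q^2)$ via a pfaffian identity in the spirit of \cite[Thm.~2.10]{recur} cited after Conjecture~\ref{conj:littbig_m}, collapsing the $n/2$-dimensional integral into a symmetrized sum of products of univariate and $2$-dimensional integrals. Each such factor is either an instance of the $\blambda=0$ case just proved or falls under the $n=2$ case already established in the preceding proposition; comparing the pfaffian expansions on the two sides of Conjecture~\ref{conj:vanmt} then yields equality.

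The main obstacle is justifying the modular transformation of the determinantal/pfaffian structure from $t\in\{p,q\}$ to $t^2\in\{p,q\}$: this requires careful tracking of consistent fourth-root choices of $p$ and $q$ (as flagged in the remark after Proposition~\ref{prop:adj_m}) and of the $\sqrt{-1}$ factors introduced by the substitution $z_i\mapsto\pm\sqrt{-z_i}$ in the integrand. The restriction $n\le 6$ is precisely the statement that $n/2\le 3$, so the blocks appearing in the pfaffian expansion involve only univariate factors or $2$-dimensional integrals for which the conjecture is already established; beyond this range one would need higher-dimensional auxiliary identities not yet available.
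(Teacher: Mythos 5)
Your $\blambda=0$ argument is serviceable but takes a heavier route than necessary: invoking Van de Bult's proof of the $\blambda=0$ case of Conjecture~\ref{conj:vantm} together with Proposition~\ref{prop:e7ab} does give the $\blambda=0$ case of Conjecture~\ref{conj:vanmt}, but for \emph{all} $t$, and so makes no use of the hypothesis $t\in\{p^{1/2},q^{1/2}\}$. The point of that hypothesis is that the identity becomes elementary: with $t=q^{1/2}$ the pair $pt,pqt$ on the left multiplies to $p^2q^2$, so those two parameters cancel by the reflection equation and the left-hand side is an order-$0$ elliptic Selberg integral, hence an explicit product; the right-hand integrand is literally an order-$0$ elliptic Selberg integrand with $q\mapsto q^{1/2}$ (the $m=1$ case of the identity underlying Proposition~\ref{prop:pmq_q}), so it too evaluates, and one just compares gamma factors. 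Your ``direct'' sketch via functional equations and an appeal to the $E_7$ group is not a proof --- the quadratic transformation relating the two sides is precisely what is \emph{not} in the $W(E_7)$ orbit of the standard transformation; that is why it required Van de Bult's separate work.

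The $n\le 6$ part has a genuine gap. There is no determinantal or pfaffian structure for the interpolation functions when $t\in\{p^{1/2},q^{1/2}\}$: throughout the paper the pfaffian reductions are available only for $t\in\{p,q\}$ (equivalently $t^{1/2}\in\{\pm p^{1/2},\pm q^{1/2}\}$), where one of the two halves of $\cR^{*(n)}_{\lambda,\mu}$ is a ratio of determinants. A ``modular transformation identifying $t^2=p$ with the modulus'' does not produce such a structure --- modular transformations act on the lattice parameters, not on $t$ --- and even if it did, a pfaffian reduction would prove the result for all even $n$ once the $n=2$ blocks were known, so it cannot be the source of the bound $n\le 6$. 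The bound is a dimension count: one integrates the explicit functions
\[
\prod_{1\le i\le n/2}
\frac{\theta(t_0^2 z_i^{\pm 1};p^2,q^2)_{l_0,m_0}\,
      \theta(t_1^2 z_i^{\pm 1};p^2,q^2)_{l_1,m_1}\,
      \theta(t_2^2 z_i^{\pm 1};p^2,q^2)_{l_2,m_2}}
     {\theta((p^2q^2/u_0^2)z_i^{\pm 1};p^2,q^2)_{l_0+l_1+l_2,m_0+m_1+m_2}}
\]
against the left-hand density in two ways --- directly, by shifting parameters and applying the already-proved $\blambda=0$ case, and indirectly, by expanding in images of interpolation functions under $f\mapsto f(\dots,\pm\sqrt{-z_i},\dots)$ and transforming term by term using the consistency of the conjecture under parameter shifts and under that homomorphism. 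These functions generically span the image of the span of the interpolation functions only for $n/2\le 3$, i.e.\ $n\le 6$, whence the transformation holds termwise in that range. Your proposal is missing this spanning argument entirely.
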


\begin{proof}
  First suppose $\blambda=0$.  When $t=q^{1/2}$, two parameters cancel in
  the left-hand side, allowing it to be evaluated, while the right-hand
  side can be evaluated by observing that its integrand is equal to an
  elliptic Selberg integrand of order 0 with $q\mapsto q^{1/2}$; the result
  follows upon simplifying the gamma factors.

  Since the conjecture is consistent under parameter shifts and with
  respect to the homomorphism $f\mapsto f(\dots\pm \sqrt{-z_i}\dots)$, we
  find that we can integrate any function of the form
\[
\prod_{1\le i\le n/2}
\frac{
\theta(t_0^2 z_i^{\pm 1};p^2,q^2)_{l_0,m_0}
\theta(t_1^2 z_i^{\pm 1};p^2,q^2)_{l_1,m_1}
\theta(t_2^2 z_i^{\pm 1};p^2,q^2)_{l_2,m_2}
}{\theta((p^2q^2/u_0^2)z_i^{\pm 1};p^2,q^2)_{l_0+l_1+l_2,m_0+m_1+m_2}}
\]
against the left-hand side density in two ways: either directly by shifting
parameters in the left-hand side density and applying the $\blambda=0$
transformation, or indirectly by expanding in images of interpolation
functions and transforming term by term.  Our consistency conditions show
that both approaches will give the same integral (independent of the choice
of expansion).  Since for $n\le 6$ the above functions generically span the
full image of the space of interpolation functions, we conclude that the
transformation actually holds termwise; i.e., Conjecture~\ref{conj:vanmt}
holds for all $\blambda$ in this special case.
\end{proof}

The above evaluation of the right-hand side generalizes to a
transformation, again by observing that both sides have the same integrand.

\begin{prop}\label{prop:pmq_q}
For any odd integer $m>0$,
\[
\II^{(m)}_n(u_0,qu_0,\dots,u_{m+2},qu_{m+2};q^2;p,q^2)
=
\II^{((m-1)/2)}_n(u_0,\dots,u_{m+2},\pm q^{1/2};q;p,q),
\]
subject to the balancing condition $q^{2n-1}\prod_{0\le r<m+3}u_r =
-(pq)^{(m+1)/2}$.
\end{prop}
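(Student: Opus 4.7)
The plan is to show that the two integrands are literally equal term by term, using only the quadratic functional equations $\Gampq(z)=\Gampqq(z,qz)$ and $\Gamppqq(z^2)=\Gampq(z,-z)$ recalled in the introduction. Essentially no hypergeometric machinery is required; this is a computation.

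First I would rewrite every factor of the right-hand side integrand in terms of $\Gampqq$ instead of $\Gampq$. The six-parameter numerator $\prod_{r=0}^{m+2}\Gampq(u_r z_i^{\pm 1})$ becomes $\prod_{r=0}^{m+2}\Gampqq(u_r z_i^{\pm 1},qu_r z_i^{\pm 1})$, which matches the corresponding factor on the left-hand side. The two extra parameters $\pm q^{1/2}$ contribute $\Gampq(q^{1/2}z_i^{\pm 1})\Gampq(-q^{1/2}z_i^{\pm 1})$, and the quadratic identity $\Gamppqq(z^2)=\Gampq(z,-z)$ collapses this into $\Gampqq(qz_i^{\pm 2})$. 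The denominator transforms as $1/\Gampq(z_i^{\pm 2})=1/\Gampqq(z_i^{\pm 2},qz_i^{\pm 2})$, so the newly produced $\Gampqq(qz_i^{\pm 2})$ cancels and we are left with $1/\Gampqq(z_i^{\pm 2})$, exactly the left-hand side denominator. For the cross terms, $\Gampq(qz_i^{\pm 1}z_j^{\pm 1})/\Gampq(z_i^{\pm 1}z_j^{\pm 1})$ expands to a telescoping ratio whose extreme terms give $\Gampqq(q^2z_i^{\pm 1}z_j^{\pm 1})/\Gampqq(z_i^{\pm 1}z_j^{\pm 1})$, again matching the left-hand side.

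Next I would verify that the two prefactors $((p;p)(q;q)\Gampq(q))^n$ and $((p;p)(q^2;q^2)\Gampqq(q^2))^n$ agree. Writing $\Gampq(q)=\Gampqq(q,q^2)=\Gampqq(q^2)/(q;q^2)$ via the first quadratic identity, and inserting the special value $\Gampqq(q)=1/(q;q^2)$, one gets $(q;q)\Gampq(q)=(q^2;q^2)\Gampqq(q^2)$, so the $n$-th powers coincide. The balancing conditions likewise match: on the right, with $t=q$ and parameters $u_0,\dots,u_{m+2},\pm q^{1/2}$, the condition $t^{2n-2}\prod\tilde u_r=(pq)^{(m+1)/2}$ reads $-q^{2n-1}\prod_r u_r=(pq)^{(m+1)/2}$, which is exactly the stated hypothesis; on the left, with $t=q^2$ and $2m+6$ parameters $u_r,qu_r$, one computes $(q^2)^{2n-2}q^{m+3}(\prod u_r)^2=(pq^2)^{m+1}$, so $q^{2n-1}\prod u_r=\pm(pq)^{(m+1)/2}$, and the $-$ sign of the hypothesis picks out the consistent branch.

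The one thing that is not purely formal is the choice of contour. The left-hand side requires $q^2 C\subset C$ together with the points $u_rp^iq^j,\,qu_rp^iq^j\in C$, while the right-hand side requires the strictly stronger $qC\subset C$ and also demands $\pm q^{1/2}p^iq^j\in C$. For generic parameters (of modulus less than $1$, say), the unit circle works for both, so the identity follows as an identity of integrals and then extends to the stated regime by the usual meromorphic continuation in the parameters, using disjoint unions of contours where necessary provided no pinch occurs. The main obstacle is not conceptual but bookkeeping: one must check that every Gamma factor, including the orbits of signs $\pm$ and of exponents $z_i,z_i^{-1}$, pairs up exactly as described; all of this, however, is forced by the two quadratic identities above.
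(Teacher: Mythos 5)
Your proof is correct and is exactly the paper's argument: the proposition is justified there by ``observing that both sides have the same integrand,'' which is precisely the factor-by-factor matching (single-variable factors, cross terms, prefactor, balancing condition, contour) that you carry out. One small imprecision: the identity $\Gamppqq(z^2)=\Gampq(z,-z)$ literally turns $\Gampq(\pm q^{1/2}z_i^{\pm 1})$ into $\Gamppqq(qz_i^{\pm 2})$, not $\Gampqq(qz_i^{\pm 2})$; these agree only after taking the product over the full orbit $z_i^{\pm 2}$, where all $p$-dependence cancels and both reduce to $1/\theta_{q^2}(qz_i^2)$, so that extra cancellation deserves a line of its own.
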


\begin{rem}
  When $m=1$, this is the aforementioned evaluation.  One can relax the
  condition that $m$ is odd by taking $u_{m+2}=p^{1/2}q^{1/2}$ or
  $-p^{1/2}q^{1/2}$, thus causing a pair of parameters to cancel on the
  left, but not the right; similarly, one can change the sign of the
  balancing condition at the cost of increasing the order on the right.
\end{rem}

A similar argument gives the following, univariate only, transformation.

\begin{prop}\label{prop:pmq_m}
For any even integer $m\ge 0$,
\[
I^{(m)}(\pm \sqrt{u_0},\dots,\pm \sqrt{u_{m+2}};p,q)
=
2\Gampq(-1)
I^{(m/2)}(u_0,\dots,u_{m+2},-1,-q,-p;p^2,q^2),
\]
subject to the balancing condition $\prod_{0\le r<m+3}u_r = -(pq)^{m+1}$.
\end{prop}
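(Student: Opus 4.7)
The plan is to prove this exactly as in Proposition~\ref{prop:pmq_q}: both sides are integrals against the measure $dz/(2\pi\sqrt{-1}z)$, and I will show that after substituting $w=z^2$ the LHS integrand coincides with $2\Gampq(-1)$ times the RHS integrand; the identity then follows by comparing integrands, with contours chosen suitably. Since $n=1$, the normalization makes $I^{(m)}$ simply the integral of the order-$m$ elliptic beta density with no $t$-dependence, so no $t$ factors need tracking.

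The computation proceeds in two steps. First, I would collapse the $\pm\sqrt{u_r}$ pairs on the LHS via the quadratic functional equation $\Gamppqq(x^2)=\Gampq(x,-x)$, rewriting the numerator as $\prod_{0\le r\le m+2}\Gamppqq(u_r z^{\pm 2})$.  The resulting integrand is a function of $z^2$ alone, so substituting $w=z^2$ leaves the integral unchanged (on the unit circle, $\int_0^{2\pi} g(e^{2i\theta})\,d\theta = \int_0^{2\pi} g(e^{i\phi})\,d\phi$ after folding the $4\pi$-range back), and one obtains
\[
\frac{(p;p)(q;q)}{2}\int\frac{\prod_{0\le r\le m+2}\Gamppqq(u_r w^{\pm 1})}{\Gampq(w^{\pm 1})}\frac{dw}{2\pi\sqrt{-1}\,w}.
\]
Second, I would simplify the extra factors in the RHS integrand.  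Using $\Gamppqq(w^{\pm 2})=\Gampq(w^{\pm 1})\Gampq(-w^{\pm 1})$ together with the factorization $\Gampq(-w)=\Gamppqq(-w,-pw,-qw,-pqw)$ and the reflection $\Gamppqq(-pqw)\Gamppqq(-pq/w)=1$, one gets
\[
\Gamppqq(-w^{\pm 1},-pw^{\pm 1},-qw^{\pm 1})=\Gampq(-w^{\pm 1}),
\]
so the combination $\Gamppqq(-w^{\pm 1},-pw^{\pm 1},-qw^{\pm 1})/\Gamppqq(w^{\pm 2})$ that appears in the RHS density collapses to $1/\Gampq(w^{\pm 1})$. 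After this cancellation the two integrands agree except for the $q$-Pochhammer prefactors, whose ratio
\[
\frac{(p;p)(q;q)}{(p^2;p^2)(q^2;q^2)}=(p;p^2)(q;q^2)=2\Gampq(-1)
\]
(using the quoted special value $\Gampq(-1)=(p;p^2)(q;q^2)/2$) gives precisely the scalar in the proposition.

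The main technical obstacle is the choice of contour: in general, the LHS contour must contain all points $\pm\sqrt{u_r}p^iq^j$, while the RHS contour must contain $u_rp^{2i}q^{2j}$ together with $-1,-p,-q$ times the same lattice, and these may or may not be simultaneously achievable by $|z|=1$ and $|w|=1$. There is, however, an open subset of parameter space (e.g.\ $|u_r|<1$ for all $r$) on which both integrals converge absolutely with contours equal to the respective unit circles, and on that subset the above computation applies verbatim; the general case follows by analytic continuation in the $u_r$ subject to the balancing condition $\prod_r u_r=-(pq)^{m+1}$.  As a consistency check, this balancing condition agrees with the $I^{(m)}$ balancing on the LHS (since $\prod_r(\sqrt{u_r})(-\sqrt{u_r})=(-1)^{m+3}\prod_r u_r=-\prod_r u_r$ for even $m$) and with the $I^{(m/2)}$ balancing on the RHS (once one accounts for the three added parameters $-1,-p,-q$, whose product is $pq$).
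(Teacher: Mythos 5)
Your proof is correct and is exactly the paper's (very terse) argument: the paper proves this, like Proposition~\ref{prop:pmq_q}, simply "by observing that both sides have the same integrand," and your computation — collapsing the $\pm\sqrt{u_r}$ pairs via $\Gamppqq(x^2)=\Gampq(x,-x)$, folding $w=z^2$, cancelling the $-1,-p,-q$ parameters against $\Gamppqq(w^{\pm2})$, and matching the $(p;p)(q;q)$ prefactors to $2\Gampq(-1)$ — is precisely the detailed version of that observation. The balancing and contour checks are also correct.
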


\medskip

Since Conjecture~\ref{conj:vanKsmall} involves a choice of $4$-torsion
point on the elliptic curves (namely $\sqrt{-1}$), it has an equivalent
form under modular transformation.  This should then extend back to general
partition pairs, although we have rather less guidance in this case.
Luckily, the argument for $n=2$ carries over with little change other than
replacing Proposition~\ref{prop:univ_tm} with
Proposition~\ref{prop:univ_tq}.  The resulting integral breaks symmetry
between $u_1$, $u_2$, but this can be restored by adding an additional
parameter, as follows.

\begin{conjQ}[$p^{1/2},t^{-1/2}$]\label{conj:vanpt}
  For otherwise generic parameters satisfying $t^n t_0t_1t_2u_0=p^{1/2}q$,
  and even $n\ge 0$, one has
\begin{align}
\int&
\cR^{*(n)}_\blambda(\dots,p^{1/4} z_i^{\pm 1},\dots;p^{1/4}t_0,p^{1/4}u_0;t;p,q)
  \Delta^{(n/2)}(;t_0,t_1,t_2,u_0,\pm\sqrt{t},\pm\sqrt{qt},p^{1/2}v,p^{1/2}q/v;t;p^{1/2},q)
\notag\\
&{}=
\frac{\Delta^0_{\blambda}(t^{n-1}t_0/u_0|t^{n-1}p^{1/2}t_0t_1;t;p,q)}
     {\Delta^0_{\blambda}(t^{n-1}t_0/u_0|t^{n}t_0t_1;t;p,q)}
\prod_{\substack{0\le i<n\\0\le r<s<3}}
\frac{\Gampq(t^i p^{1/2}t_rt_s)}
     {\Gampq(t^{i+1} t_r t_s)}
\\
&\phantom{{}\propto{}}
\times\int
  \cR^{*(n)}_\blambda(\dots,t^{1/2} z_i^{\pm 1},\dots;t^{1/2}t_0,t^{1/2}u_0;t;p,q)
  \Delta^{(n/2)}(;t_0,tt_0,t_1,tt_1,t_2,tt_2,u_0,tu_0,pv,pq/v;t^2;p,q).
\notag\end{align}
\end{conjQ}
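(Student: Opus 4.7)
The plan is to treat Conjecture~\ref{conj:vanpt} as the modular sibling of Conjecture~\ref{conj:vanmt}, with the $2$-torsion point $-1$ of $\C^*/\la q\ra$ (and its square root $\sqrt{-1}$) replaced by the $2$-torsion point $p^{1/2}$ of $\C^*/\la p\ra$ (and its square root $p^{1/4}$); the balancing condition $t^n t_0t_1t_2u_0=-pq$ becomes $t^nt_0t_1t_2u_0=p^{1/2}q$ accordingly. One should therefore mirror each step that went into Conjecture~\ref{conj:vanmt}: first, establish consistency under the natural symmetry operations to reduce to a Zariski-dense family; then prove base cases in low dimension and with special $t$; and finally extend to all $\blambda$ via the resulting linear-independence arguments.

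The consistency step is essentially mechanical. The conjecture is self-consistent under $\blambda \mapsto (l,m)^n+\blambda$, under integer $(p,q)$-shifts of $t_1$ and $t_2$ (via the Pieri rule, i.e.~the $m=1$ case of the skew Cauchy identity, Theorem~\ref{thm:cauchy_skew}), and under the swap $t_1 \leftrightarrow t_2$ (via the connection coefficient identity \cite[Cor.~4.14]{bctheta} applied to both sides). Combining these reduces the verification to specializations in which the right-hand side collapses (e.g.\ $t_2$ at a suitable shifted value), whence the general case follows by analytic continuation. Further, the conjecture should factor through the homomorphism $f\mapsto f(\dots,p^{1/4}z_i^{\pm 1},\dots)$; the resulting adjointness is that integration of ${\cal D}^{+(n)}_q$ applied to an interpolation function against the LHS density is invariant under $v\leftrightarrow pq/v$. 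This should reduce to an instance of Lemma~\ref{lem:diff_commut} under an appropriate modular specialization of $(p,q,t,v_0,\dots,v_3)$, parallel to the specialization used in the proof of the analogous adjointness for Conjecture~\ref{conj:vanmt} (and, at root, to the $1/q$ case of Proposition~\ref{prop:adj_t}).

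For base cases, the $n=2$ case should go through exactly as in the proof of Conjecture~\ref{conj:vanmt}: expand $\cR^{*(2)}_{(l,m)}$ using its integral representation from \cite{xforms}, interchange the order of integration, and recognize the inner univariate integral as an order-zero elliptic beta integral (hence explicitly evaluable). What remains is a single-variable quadratic transformation, which will be precisely the modular sibling of the $n=2$ identity used for Conjecture~\ref{conj:vanmt}, with Proposition~\ref{prop:univ_tq} taking the role played there by Proposition~\ref{prop:univ_tm}. The case $\blambda=0$ together with $t\in\{p^{1/2},q^{1/2}\}$ should be reducible by direct integrand matching to a modular companion of Proposition~\ref{prop:pmq_q}; together with the consistency relations this will yield the analog of Proposition~\ref{prop:vanmt_ev} for $n\le 6$. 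The cases $t\in\{p,q\}$ reduce to pfaffian identities (as in the remark following Conjecture~\ref{conj:littbig_dual}) via the factorization of the interpolation function when one of $p,q$ equals $t$.

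The principal obstacle is the extension to general $\blambda$ for even $n\ge 8$. The span of test functions produced by parameter shifts suffices to distinguish all interpolation functions only up to $n=6$, past which an independent argument is required. The most natural route is to factor the transformation as a chain of elementary adjointness moves in the spirit of the $E_7$-type arguments of \cite[\S 9]{xforms}, but doing this rigorously here requires new input beyond Theorem~\ref{thm:int_litt} and the skew Cauchy identity. Absent such a factorization, one expects only the partial verification described above, exactly as for Conjecture~\ref{conj:vanmt}.
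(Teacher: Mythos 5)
Your plan matches the paper's treatment: the statement is left as a conjecture, verified only in the special cases you describe ($n=2$ via the integral representation from \cite{xforms} with Proposition~\ref{prop:univ_tq} replacing Proposition~\ref{prop:univ_tm}, the cases $t\in\{1,p,q\}$, and the consistency checks under connection coefficients, $\blambda\mapsto(l,m)^n+\blambda$, and Pieri-type shifts of $t_1,t_2$), with the general case remaining open. The one step you gloss over is the extra parameter $v$: since the $v$-dependence of each integrand is through a $BC_{n/2}$-symmetric $q$-theta function of degree 1, spanning an $(n/2+1)$-dimensional space, for $n=2$ it suffices to check two values of $v$ (say $v=t_1$ and $v=t_2$), which eliminates the extra parameter and lets the $n=2$ argument for Conjecture~\ref{conj:vanmt} apply essentially verbatim.
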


\begin{rem}
The additional parameter has the effect of multiplying each integrand by
\[
\prod_{1\le i\le n/2} \theta_q(v z_i^{\pm 1}).
\]
Since these functions span the ($n/2+1$-dimensional) space of
$BC_{n/2}$-symmetric $q$-theta functions of degree 1
\cite[Defn.~1]{bctheta}, one may replace this factor by an arbitrary such
function without affecting the validity of the conjecture.  In particular,
for $n=2$, it suffices to verify the conjecture for two values of $v$, say
$v=t_1$, $v=t_2$, which eliminates the extra parameter, and allows the
previous argument to apply.  The cases $t=1$, $t=p$, $t=q$ follow as
before.
\end{rem}

\begin{rem}
  Again, this (and the remainder of the conjectures we will formulate along
  these lines) is consistent with respect to connection coefficients,
  $\blambda\mapsto (l,m)^n+\blambda$, and shifts in $t_1$, $t_2$,
  regardless of the additional parameter.  Another important consistency
  condition is that if we take $v=t_2$ then multiply $t_2$ by $p^{-1/2}$,
  the left-hand side is again symmetric in $t_1$ and $t_2$, while an
  application of Conjecture~\ref{conj:littbig} exhibits the corresponding
  symmetry on the right-hand side.
\end{rem}

The corresponding vanishing conjecture (obtained by taking $v=t_2$ to
eliminate the extra parameter, then $t_2=q^{1/2}t^{-1/2}$ or
$t_1=p^{1/2}q^{1/2}t^{-1/2}$, then applying Theorem~\ref{thm:int_litt})
is as follows.

\begin{conj}\label{conj:vanPsmall}
For generic parameters satisfying $t^{2n-1/2}t_0t_1u_0=p^{1/2}q^{1/2}$,
the integral
\[
\langle
\tcR^{(2n)}_{\blambda}(\dots,p^{1/4}z_i^{\pm 1},\dots;p^{1/4}t_0{:}p^{-1/4}t_0,p^{1/4}t_1,p^{-1/4}t_1;p^{1/4}u_0,p^{-1/4}tu_0;t;p,q)
\rangle^{(n)}_{t_0,t_1,u_0,t^{1/2},-t^{1/2},-q^{1/2}t^{1/2};t;p^{1/2},q}
\]
vanishes unless $\blambda$ is of the form $\bmu^2$, in which case the
integral is
\[
\frac{\Delta_{\bmu}(1/t^2u_0^2|t^{2n},t^{2n-1}t_0^2,1/t^{2n-1}t_0u_0,1/t^{2n}t_0u_0;t^2;p,q)}
     {\Delta_{\bmu^2}(1/tu_0^2|t^{2n},t^{2n-1}t_0^2,1/t^{2n-1}t_0u_0,1/t^{2n}t_0u_0;t;p,q)}.
\]
\end{conj}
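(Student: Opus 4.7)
The plan is to derive Conjecture~\ref{conj:vanPsmall} from Conjecture~\ref{conj:vanpt} and Theorem~\ref{thm:int_litt}, paralleling the derivation by which Corollary~\ref{cor:vant} was obtained from Theorem~\ref{thm:int_litt}.  First I specialize Conjecture~\ref{conj:vanpt} by setting $v=t_2$ (which, as explained in the remark after that conjecture, collapses the extra integration parameter into the existing framework) and then $t_2=q^{1/2}t^{-1/2}$.  On the left-hand side of \ref{conj:vanpt}, the density parameter pairs $\{t_2,\,p^{1/2}q/v\}$ and $\{\sqrt{qt},\,p^{1/2}v\}$ both multiply to $p^{1/2}q$ and hence cancel via reflection of $\Gamphq$, leaving the six-parameter density $\Delta^{(n)}(\cdot;t_0,t_1,u_0,\pm t^{1/2},-\sqrt{qt};t;p^{1/2},q)$ appearing in Conjecture~\ref{conj:vanPsmall}.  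Symmetrically, on the right-hand side the pairs $\{t_2,\,pq/v\}$ and $\{tt_2,\,pv\}$ multiply to $pq$ and cancel in $\Gampq$, leaving the six-parameter density $\Delta^{(n)}(\cdot;t_0,tt_0,t_1,tt_1,u_0,tu_0;t^2;p,q)$.  The balancing condition $t^{2n}t_0t_1t_2u_0=p^{1/2}q$ of \ref{conj:vanpt} becomes $t^{2n-1/2}t_0t_1u_0=p^{1/2}q^{1/2}$, exactly the balancing of \ref{conj:vanPsmall}.

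Next I apply Theorem~\ref{thm:int_litt} to the specialized right-hand side, matching parameters so that the pairs $(t^{\pm 1/2}T_0)=(tt_0,t_0)$, $(t^{\pm 1/2}T_1)=(tt_1,t_1)$, and $(t^{\pm 1/2}U_0)=(tu_0,u_0)$ recover four of the density factors, while the remaining two pairs $(t^{\pm 1/2}T_2),(t^{\pm 1/2}T_3)$ disappear from the density via the reflection constraint $T_2T_3=pq$; the theorem's balancing condition $t^{4n-2}T_0T_1T_2T_3U_0^2=pq$ then reduces to $t^{4n-1}(t_0t_1u_0)^2=pq$, consistent with the previous step.  Exploiting the $BC_n$-symmetry of the density, one identifies the integrand $\cR^{*(2n)}_\blambda(\dots,t^{1/2}z_i^{\pm 1},\dots;t^{1/2}t_0,t^{1/2}u_0;t;p,q)$ with the form $\cR^{*(2n)}_\blambda(\dots,t^{\pm 1/2}z_i,\dots;T_0,U_0;t;p,q)$ required by Theorem~\ref{thm:int_litt}.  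The theorem then expresses the right-hand side integral explicitly as $\Delta^0_\blambda(\cdot)\,\sum_{\bmu}\obinomE{\blambda}{\bmu^2}_{[\cdot,\cdot];t;p,q}\,\Delta_\bmu(\cdot;t^2;p,q)/\Delta_{\bmu^2}(\cdot;t;p,q)$.

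The last step promotes the interpolation function on the left-hand side of the specialized \ref{conj:vanpt} into the biorthogonal function appearing in \ref{conj:vanPsmall}.  I multiply both sides of the identity obtained so far by the factor
\[
\Delta^0_\blambda\bigl(t^{2n-1}t_0/u_0\,\big|\,\cdots;t;p,q\bigr)^{-1}\,
\obinomE{\bkappa}{\blambda}_{[1/u_0^2,\,1/t^{2n-1}t_0u_0];\,t;p,q}
\]
and sum over $\blambda$, exactly mimicking the derivation of Corollary~\ref{cor:vant} from Theorem~\ref{thm:int_litt}.  On the right-hand side, the composition formula for skew interpolation functions reduces the inner sum $\sum_\blambda\obinomE{\bkappa}{\blambda}\obinomE{\blambda}{\bmu^2}$ to a skew interpolation function whose four arguments pair to $1$, and the delta-function property \cite[Cor.~4.3]{bctheta} then forces $\bkappa=\bmu^2$, producing the stated vanishing condition and explicit nonzero value.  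On the left-hand side, the coefficients multiplying $\cR^{*(2n)}_\blambda$ are precisely the expansion coefficients of the biorthogonal function $\tcR^{(2n)}_\bkappa$ in interpolation functions furnished by \cite[Defn.~12 and Thm.~5.3]{bctheta}, so the summed left-hand side is exactly the integrand of Conjecture~\ref{conj:vanPsmall}.

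The principal obstacle is that Conjecture~\ref{conj:vanpt} is itself unproven, so the construction above only yields a conditional proof; unconditionally it establishes \ref{conj:vanPsmall} in every special case where \ref{conj:vanpt} has been verified (e.g.\ $t\in\{1,p,q\}$, $\blambda=(l,m)^n$, or $t^n t_0u_0=p^{1/2}q$, via the pfaffian and adjointness arguments analogous to Propositions~\ref{prop:adj_m} and \ref{prop:vanmt_ev}).  A secondary but nontrivial bookkeeping issue is that the auxiliary parameters $T_2,T_3$ introduced to match Theorem~\ref{thm:int_litt}'s four-parameter setup are only constrained by $T_2T_3=pq$, so intermediate expressions depend on the free choice of $T_2$; one must verify that after the final summation over $\blambda$ against the binomial coefficient, the $\Delta^0$ prefactor of Theorem~\ref{thm:int_litt} combines with the multiplier to eliminate this $T_2$-dependence and produce exactly the $\Delta_\bmu/\Delta_{\bmu^2}$ ratio stated in \ref{conj:vanPsmall}.
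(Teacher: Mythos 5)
Your derivation is correct as a conditional argument and is exactly the route the paper takes: Conjecture~\ref{conj:vanPsmall} is only a conjecture, and the paper obtains it precisely by setting $v=t_2$ and then $t_2=q^{1/2}t^{-1/2}$ in Conjecture~\ref{conj:vanpt} and applying Theorem~\ref{thm:int_litt}, with the passage from interpolation functions to the biorthogonal function carried out exactly as in the derivation of Corollary~\ref{cor:vant}. The one slip is in your matching with Theorem~\ref{thm:int_litt}: there are no pairs $t^{\pm 1/2}T_2$, $t^{\pm 1/2}T_3$ to be cancelled via $T_2T_3=pq$; instead all four parameters $T_0=t^{1/2}t_0$, $T_1=t^{-1/2}t_0$, $T_2=t^{1/2}t_1$, $T_3=t^{-1/2}t_1$ (with $U_0=t^{1/2}u_0$) are needed to reproduce the six-parameter density $t_0,tt_0,t_1,tt_1,u_0,tu_0$, and this matching is what actually yields the balancing condition $t^{4n-1}(t_0t_1u_0)^2=pq$ that you state.
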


\begin{rem}
  Note that the nonzero values are the same as in
  Conjecture~\ref{conj:vanKsmall}.
\end{rem}

This would imply Conjecture~\ref{conj:vanKsmall} via a modular
transformation, as discussed above, but the $q$-elliptic half of the
identity would be new.  In the limit $q\to 0$, $t_0$, $t_1$ fixed of that
$q$-elliptic identity, the biorthogonal function becomes a Koornwinder
polynomial, and one obtains the vanishing identity given as Theorem 4.10
of \cite{vanish}, together with a conjecture for the nonzero values.  The
case $t_0=q^{1/2}t^{1/2}$, $t_1\mapsto p^{1/4}a$ is also of interest, as in
that case the biorthogonal function becomes a suitably normalized
interpolation function.  One can then take the limit $p\to 0$ with $a$
fixed, in which limit the integral becomes
\[
\int
\frac{P_\lambda(\dots,z_i^{\pm 1},\dots;q,t)}
     {P_\lambda(q^{-1/2}t^{1/2-2n},\dots,q^{-1/2}t^{-1/2};q,t)}
\prod_{1\le i<j\le n}
  \frac{(z_i^{\pm 1}z_j^{\pm 1};q)}
       {(t z_i^{\pm 1}z_j^{\pm 1};q)}
\prod_{1\le i\le n}
  \frac{(z_i^{\pm 2};q)}{(tz_i^{\pm 2};q)}
  \frac{dz_i}{2\pi\sqrt{-1}z_i}.
\]
Apart from the change in normalization of the Macdonald polynomial, this
integral is that of Theorem 4.1 of \cite{vanish}, and therefore vanishes
unless $\lambda=\mu^2$, as predicted by the conjecture.  Moreover, the
corresponding nonzero values (known in this case) agree with those obtained
by degenerating Conjecture~\ref{conj:vanPsmall}.  We also recall from
\cite{bcpoly} that in the limit $n\to\infty$ this becomes Macdonald's
Littlewood identity.

Again, there is a sum version, this time based on the identity
\[
\cR^{*(2n)}_\blambda(\dots p^{1/4} z_i^{\pm 1}\dots;p^{3/4}q/t^{2n}u_0,p^{1/4}u_0;t;p,q)
=
\cR^{*(n)}_{(2,1)\blambda}(\dots z_i\dots;p^{1/2}q/t^{n}u_0,u_0;t;p^{1/2},q)
\]
of Cauchy-type interpolation functions (suitably extended); we omit the
details.  When $\ell(\blambda)\le 1$, the sum is a special case of the
discrete version of Proposition~\ref{prop:univ_tq} below (which discrete
version in turn combines a quadratic transform of Warnaar
\cite{WarnaarSO:2003} with the modular transform of the transform of
Spiridonov \cite[Thm.~5.1]{SpiridonovVP:2002} mentioned above).
%
%

\medskip Of course, the next step is to dualize the above conjectures;
however, we see by reference to the known trigonometric cases that some
subtleties will arise.  The vanishing integral of Theorem 4.8 of
\cite{vanish} (corresponding to Conjecture~\ref{conj:vanKsmall}) actually
dualizes to a pair of vanishing integrals (depending on whether the number
of variables is even or odd), while for Theorem 4.1 of \cite{vanish} (a
limit of Conjecture~\ref{conj:vanPsmall}), not only are there two dual
identities, but each dual identity itself involves a sum of two integrals.

One case is straightforward, namely the ``other'' dual of
Conjecture~\ref{conj:vanpt} (i.e., exchange $p$ and $q$ before dualizing).
Here, and in the other two cases, we begin by dualizing the algebraic
versions of the conjectures, \`a la \eqref{eq:vanKsmall_sum}, having first
analytically continued in $t^{2n}=T$.  After reparametrizing and
specializing $T$ appropriately, we can recognize the left-hand side as the
integral of a Cauchy-type interpolation function, and thus reexpress the
dual as a vanishing identity.  At that point, one may use the Pieri
identity to extend to a large set of cases of an integral transformation.

\begin{conjQ}[$t^{-1/2},q^{1/2}$]\label{conj:vantq}
Subject to the balancing conditions
$t^{n-1}t_0t_1t_2u_0=pq^2t^{1/2}$,
$tv_0v_1 = pq$,
one has
\begin{align}
\int&
\cR^{*(n)}_{\blambda}(;t^{-1/4}t_0,t^{-1/4}u_0;t;p,q)\notag\\
&\times\Delta^{(n)}(;t^{-1/4}t_0,t^{-1/4}t_1,t^{-1/4}t_2,t^{-1/4}u_0,\pm t^{1/4},\pm p^{1/2}t^{1/4},
              t^{1/4}v_0,t^{1/4}v_1;t^{1/2};p,q)\\
&\qquad{}={}
\frac{\Delta^0_\blambda(t^{n-1}t_0/u_0|t^{n-3/2}t_0t_1;t;p,q)}
     {\Delta^0_\blambda(t^{n-1}t_0/u_0|t^{n-1}t_0t_1/q;t;p,q)}
\prod_{0\le i<n}
\prod_{0\le r<s<3}
\frac{\Gampq(t^{i-1/2}t_rt_s)}{\Gampq(t^i t_rt_s/q)}
\notag\\
&\qquad\phantom{{}={}}\times\int
\cR^{*(n)}_{\blambda}(;q^{-1/2}t_0,q^{-1/2}u_0;t;p,q)
\Delta^{(n)}(;q^{\pm 1/2}t_0,q^{\pm 1/2}t_1,q^{\pm 1/2}t_2,q^{\pm 1/2}u_0,
              q^{1/2}v_0,q^{1/2}v_1;t;p,q^2)
\notag\end{align}
\end{conjQ}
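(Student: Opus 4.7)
The plan is to follow the template established for Conjectures~\ref{conj:vanmt} and \ref{conj:vanpt}, adapted to the setting where the two densities live on different elliptic curves (modulus $q$ on the left, $q^2$ on the right). As the preamble to the conjecture explains, Conjecture~\ref{conj:vantq} is obtained by dualizing the algebraic version of Conjecture~\ref{conj:vanpt} after analytic continuation in $T=t^{2n}$, reparametrizing, and using the Pieri identity to reintroduce general $\blambda$ and the parameters $v_0,v_1$. Thus a natural attack is to reverse this derivation: prove an algebraic (hypergeometric sum) version first and then lift to the integral identity.

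First I would establish the standard consistency conditions. The identity should respect: (i) $\blambda\mapsto (l,m)^n+\blambda$, by the corresponding symmetry of Cauchy-type interpolation functions; (ii) the connection coefficient relation \cite[Cor.~4.14]{bctheta} applied simultaneously to both sides, which replaces $t_0$ by any other parameter on the same side; and (iii) $p$- and $q^2$-shifts in $t_1,t_2,v_0,v_1$ via the Pieri identity (Theorem~4.17 of \cite{bctheta}), equivalently via the skew Cauchy identity Theorem~\ref{thm:cauchy_skew}. These collectively reduce the conjecture to a Zariski-dense subset of parameter space.

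Next I would check the base cases. When $\blambda=0$ the left-hand integral is an order-zero elliptic Selberg integral and evaluates by the Van Diejen--Spiridonov formula, and the right-hand integral evaluates similarly after applying Proposition~\ref{prop:pmq_q} to relate the $(p,q^2)$-Selberg integrand to a $(p,q)$-Selberg integrand; the gamma factors should match after simplification. For $n=1$ the interpolation function is independent of $t$, reducing the claim to a univariate quadratic transformation that should follow by combining Warnaar's quadratic transform \cite{WarnaarSO:2003} with a modular transformation, as in the derivation of Conjecture~\ref{conj:vanpt}. For $n=2$ I would imitate the proof of Conjecture~\ref{conj:vanmt}: use the integral representation of \cite{xforms} to lift the two-variable interpolation function to a univariate one, exchange the order of integration, and recognize the inner integral as an elliptic beta integral. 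Finally, the adjointness argument used after Conjecture~\ref{conj:vanmt} adapts to give a further consistency check: define the linear functional on the span of the $\cR^{*(n)}_\blambda$ that sends each basis element to the prescribed right-hand integral, and show it factors through $f\mapsto f(\dots,q^{\pm 1/2}z_i,\dots)$; this reduces to an invariance that is a special case of Proposition~\ref{prop:adj_q} together with Lemma~\ref{lem:diff_commut}.

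The main obstacle is the generic case. Unlike Conjecture~\ref{conj:vanmt}, here the two densities use different values of $q$, so the $p,q$-symmetric machinery of \cite{xforms} does not directly apply; in particular, we cannot simply expand both sides in a common biorthogonal basis. I expect $t\in\{p,q\}$ to yield to the same pfaffian-style degeneration used in the remark after Conjecture~\ref{conj:littbig_dual}, and together with the $\blambda=0$, $n=1,2$, and specialized-parameter cases this gives a substantial body of verified instances. Establishing the fully general conjecture, however, appears to require new machinery (presumably an integral transformation lifting Proposition~\ref{prop:pmq_q} to the level of interpolation functions), and so I expect this conjecture to remain open beyond the special cases enumerated above.
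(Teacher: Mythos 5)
This statement is a conjecture, and the paper offers no complete proof of it; your closing assessment that it remains open beyond special cases is therefore the correct one, and your overall strategy (consistency under $\blambda\mapsto(l,m)^n+\blambda$, connection coefficients, and Pieri/parameter shifts, plus low-dimensional base cases) matches the paper's. However, your enumeration of the provable special cases contains a concrete error. With $\blambda=0$ the two sides are \emph{not} order-zero elliptic Selberg integrals: each density carries ten parameters (count the $\pm$ pairs), so both sides are order-$2$ integrals $\II^{(2)}_n$, which admit no closed-form evaluation. The $\blambda=0$ case is thus itself a genuine quadratic transformation of higher-order elliptic Selberg integrals --- it is the hard core of the conjecture, not a base case, and in the paper it is settled only by citing Van de Bult's separate work \cite{vandeBultFJ:2011}. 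Relatedly, the pfaffian cases $t\in\{p,q\}$ do \emph{not} follow from $n=1$ alone, since the pfaffian entries involve $n=2$ instances; the paper obtains them (and the general $n\le 3$ case) only as consequences of the external $\blambda=0$ result combined with the spanning argument of Proposition~\ref{prop:vanmt_ev}. Your proposed $n=2$ argument imitating the proof for Conjecture~\ref{conj:vanmt} is also doubtful here, because the two densities live at different values of $q$, so the inner integral obtained after exchanging the order of integration is not an elliptic beta integral in a single modulus; the paper does not attempt this route for this conjecture.

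For the record, the paper's actual proof of the $n=1$ case (Proposition~\ref{prop:univ_tq}) is a direct Fubini argument: one writes a single double integral whose two orders of integration yield the two sides, rather than combining Warnaar's quadratic transform with a modular transformation (the latter combination is invoked only for the discrete, $\ell(\blambda)\le 1$ sum version attached to Conjecture~\ref{conj:vanPsmall}). Your route is plausible for the sum version but is not what establishes the univariate integral identity, and the $t=1$ case then follows from $n=1$ as you say.
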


\begin{rem}
Here the extra parameter multiplies the integrands by factors
\[
\prod_{0\le i<n}
\frac{\Gampq(t^{1/4}v_0 z_i^{\pm 1})}
     {\Gampq(t^{3/4}v_0 z_i^{\pm 1})}
\quad\text{and}\quad
\prod_{0\le i<n}
\frac{\Gampqq(q^{1/2}v_0 z_i^{\pm 1})}
     {\Gampqq(q^{1/2}tv_0 z_i^{\pm 1})},
\]
which are not, in fact, theta functions.  They do, however, closely
resemble the generating function for the $q,t$-analogues $g_k$ of the
complete symmetric functions \cite{MacdonaldIG:1995}.
\end{rem}

\begin{rem}
  If we eliminate the extra parameters from the normalization (i.e.,
  $\blambda=0$) cases of this conjecture and Conjecture~\ref{conj:vanpt},
  the resulting quadratic transformations are equivalent by
  Proposition~\ref{prop:e7ab}.
\end{rem}

\begin{prop}\label{prop:univ_tq}
Conjecture~\ref{conj:vantq} holds when $n=1$.
\end{prop}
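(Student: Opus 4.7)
For $n=1$ the univariate interpolation function is $t$-independent, and from $\cR^{*(1)}_{\blambda}(z;a,b;t;p,q)=\Delta^0_{\blambda}(a/b|az,a/z;t;p,q)$ together with the reflection equation $\Gampq(z)\Gampq(pq/z)=1$ one obtains the product form
$$\cR^{*(1)}_{(l,m)}(z;a,b;t;p,q)=\frac{\Gampq(p^lq^m a z^{\pm 1})\,\Gampq(p^{-l}q^{-m} b z^{\pm 1})}{\Gampq(a z^{\pm 1})\,\Gampq(b z^{\pm 1})}.$$
On both sides of the conjecture the interpolation function has base parameters $(ct_0,cu_0)$---with $c=t^{-1/4}$ on the LHS and $c=q^{-1/2}$ on the RHS---and the corresponding density contains the matching factors $\Gampq(ct_0z^{\pm 1})\Gampq(cu_0z^{\pm 1})$. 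Multiplying absorbs the interpolation function into the density and has the net effect of shifting $(t_0,u_0)\mapsto(p^lq^mt_0,p^{-l}q^{-m}u_0)$, identically on each side. Consequently, if the $\blambda=0$ case is known and the prefactor on the right of \ref{conj:vantq} is exactly the ratio of the $\Gampq(t_rt_s)$-product at shifted versus unshifted parameters, the general $\blambda$ identity follows. That prefactor check is a short calculation using $\Delta^0_{\blambda}(\alpha|\beta;t;p,q)=\theta(\beta;p,q)_{\blambda}/\theta(pq\alpha/\beta;p,q)_{\blambda}$ together with the balancing condition $t_0t_1t_2u_0=pq^2t^{1/2}$ to rewrite $t^{-1/2}t_0t_2=pq^2/(t_1u_0)$ and $t_0t_2/q=pqt^{1/2}/(t_1u_0)$.

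The remaining task is the $\blambda=0$ identity at $n=1$, which is a ten-parameter equality of univariate elliptic hypergeometric integrals, the LHS in base $(p,q)$ and the RHS in base $(p,q^2)$. Using the duplication formulas $\Gampq(\pm x)=\Gamppqq(x^2)$ and $\Gamppqq(z,pz)=\Gampqq(z)$, the four ``accidental'' parameters $\pm t^{1/4},\pm p^{1/2}t^{1/4}$ on the LHS collapse into a factor $\Gampqq(t^{1/2}z^{\pm 2})$; combined with $\Gampq(z^{\pm 2})=\Gampqq(z^{\pm 2})\Gampqq(qz^{\pm 2})$, this rewrites the LHS as a base-$(p,q^2)$ integrand comparable in shape to the RHS. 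The resulting equality is then an instance of the quadratic transformation of univariate elliptic beta integrals proved in \cite[Thm.~5.1]{SpiridonovVP:2002} (combined with Warnaar's \cite{WarnaarSO:2003} in the modular form noted after Conjecture~\ref{conj:vanPsmall}); alternatively, as pointed out in the Remark immediately following the statement of \ref{conj:vantq}, its normalization case is $E_7$-equivalent by Proposition~\ref{prop:e7ab} to the corresponding case of Conjecture~\ref{conj:vanpt}, which admits the same treatment.

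The main obstacle is bookkeeping: aligning the ten parameters (including the free pair $v_0,v_1$ constrained by $tv_0v_1=pq$), verifying that the balancing conditions and contour prescriptions match those of the cited quadratic transformation, and confirming that the accumulated $\Gampq$-prefactors exactly reproduce the product in the statement. Once this matching is done and the $\blambda=0$ case is established, the general case follows from the shift argument of the first paragraph, and by analytic continuation it holds for all admissible parameter values.
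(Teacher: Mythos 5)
Your first paragraph (reduction to $\blambda=0$ via the product form of $\cR^{*(1)}_{(l,m)}$ and absorption into the density as the shift $(t_0,u_0)\mapsto(p^lq^mt_0,p^{-l}q^{-m}u_0)$) is correct and is exactly the reduction the paper uses. The gap is in your treatment of the remaining $\blambda=0$ case. The identity to be proved there is an \emph{integral} identity, and it is genuinely quadratic: after your duplication-formula rewriting, the left-hand integrand carries the factor $\Gampqq(t^{1/2}z^{\pm 2})/\Gampqq(qz^{\pm 2})$, which is not of elliptic-beta-integral shape, so the two sides are not related by any of the known linear ($E_7$-type) transformations. The references you invoke do not supply it: \cite[Thm.~5.1]{SpiridonovVP:2002} and the transformation of \cite{WarnaarSO:2003} are \emph{series} (terminating-sum) identities — the paper itself is careful to call their combination ``the discrete version'' of this very proposition — and a discrete quadratic summation does not yield the continuous integral identity. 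Your fallback via Proposition~\ref{prop:e7ab} is circular: it would reduce the normalization case of Conjecture~\ref{conj:vantq} to the corresponding case of Conjecture~\ref{conj:vanpt}, which is itself unproven (and at the time of writing neither $\blambda=0$ case was in the literature; that is the content of \cite{vandeBultFJ:2011}).

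What is actually needed, and what the paper does, is a direct Fubini argument: one exhibits a single double integral, namely
\[
\int\!\!\int
\Gampq(q^{-1/2}t^{1/4} x^{\pm 1} y^{\pm 1})
\frac{\prod_{0\le r<4}\Gampq(t^{-1/4}u_r x^{\pm 1})}
     {\Gampq(x^{\pm 2})}
\frac{dx}{2\pi\sqrt{-1}x}\,
\frac{\Gampqq(q^{1/2}v_0 y^{\pm 1},q^{1/2}v_1 y^{\pm 1})}
     {\Gampqq(y^{\pm 2})}
\frac{dy}{2\pi\sqrt{-1}y},
\]
with $(u_0,\dots,u_3)=(t_0,t_1,t_2,u_0)$, both of whose iterated forms have an inner integral that is an order-$0$ elliptic beta integral (in base $(p,q)$ over $x$, respectively in base $(p,q^2)$ over $y$, after splitting $\Gampq(w)=\Gampqq(w,qw)$); evaluating the inner integral in each order and simplifying the resulting gamma factors produces the two sides of the conjecture. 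You should replace the citation step with this interchange-of-integration argument (checking that on an open set of parameters both contours may be taken to be the unit circle); the surrounding bookkeeping you describe is then the correct remaining work.
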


\begin{proof}
Since $\ell(\blambda)\le 1$, we immediately reduce to the case
$\blambda=0$, for which we need simply exchange order of integration in
the double integral (which on an open set of parameters can use unit
circle contours)
\[
\int
\int
\Gampq(q^{-1/2}t^{1/4} x^{\pm 1} y^{\pm 1})
\frac{\prod_{0\le r<4}\Gampq(t^{-1/4}u_r x^{\pm 1})}
     {\Gampq(x^{\pm 2})}
\frac{dx}{2\pi\sqrt{-1}x}
\frac{\Gampqq(q^{1/2}v_0 y^{\pm 1},q^{1/2}v_1 y^{\pm 1})}
     {\Gampqq(y^{\pm 2})}
\frac{dy}{2\pi\sqrt{-1}y}
\]
and simplify the resulting integrands.
\end{proof}

Unfortunately, this case is not sufficient to prove the $t=p$ and $t=q$
cases, as although they can again (and for the later conjectures) be
expressed via (generalized) pfaffians, the entries of the pfaffians include
instances with $n=2$.  The univariate case does, however, suffice to prove
the case $t=1$.

Regarding the $n=2$ case, we note that it suffices to prove the case
$\blambda=0$; indeed, the analogue of the argument in
Proposition~\ref{prop:vanmt_ev} applies, although the functions coming from
parameter shifts only span for $n\le 3$.  (In fact,
Conjecture~\ref{conj:vantq} has recently been proved for $\blambda=0$ by
Van de Bult \cite{vandeBultFJ:2011}, thus implying the general $n\le 3$
case as well as the general pfaffian cases.)

The corresponding vanishing conjecture states that
\[
\langle
\tcR^{(n)}_{\blambda}(;t^{-1/4}t_0{:}t^{1/4}t_0,t^{-1/4}t_1,t^{1/4}t_1;t^{-1/4}u_0,t^{1/4}u_0/q;t;p,q)
\rangle^{(n)}_{t^{-1/4}t_0,t^{-1/4}t_1,t^{-1/4}u_0,t^{1/4},-t^{1/4},-p^{1/2}t^{1/4};t^{1/2};p,q}
\]
vanishes unless $\blambda=(1,2)\bmu$, when it equals
\[
\frac{\Delta_{     \bmu}(q/u_0^2|t^n,t^{n-1}t_0^2,1/t^{n-1}t_0u_0,q/t^{n-1}t_0u_0;t;p,q^2)}
     {\Delta_{(1,2)\bmu}(q/u_0^2|t^n,t^{n-1}t_0^2,1/t^{n-1}t_0u_0,q/t^{n-1}t_0u_0;t;p,q)}.
\label{eq:nonzero_q}
\]
The Koornwinder-type limit $p\to 0$ is again a vanishing integral of
\cite{vanish} (the dual of Theorem 4.10 op.~cit.), together with a
conjecture for the nonzero values.

\smallskip

The next simplest case is the dual of Conjecture~\ref{conj:vanmt}.  Here we
find that the integral on the left-hand side is half the dimension of that
on the right-hand side, which is thus necessarily even.  This constraint
can be avoided, however, by observing that the corresponding integral of
Cauchy-type interpolation functions, when analytically continued in
$T=t^{2n}$, then specialized to $T=t^{2n+1}$ can still be expressed as an
integral.  One thus obtains the following conjecture.

\begin{conjQ}[$-1,q^{1/2}$]\label{conj:vanmq}
For otherwise generic parameters satisfying $t^{n-1}t_0t_1t_2u_0 = -pq^2$,
the integral
\begin{align}
&\frac{\Delta^0_\blambda(t^{n-1}t_0/u_0|-t^{n-1}t_0t_1;t;p,q)}
     {\Delta^0_\blambda(t^{n-1}t_0/u_0|t^{n-1}t_0t_1/q;t;p,q)}
\prod_{0\le i<n}\prod_{0\le r<s<3}
  \frac{\Gampq(-t^i t_rt_s)}
       {\Gampq(t^i t_rt_s/q)}
\notag\\
&
\qquad\times\int
\cR^{*(n)}_{\blambda}(\dots z_i\dots;q^{-1/2}t_0,q^{-1/2}u_0;t;p,q)
\Delta^{(n)}(;q^{\pm 1/2}t_0,q^{\pm 1/2}t_1,q^{\pm 1/2}t_2,q^{\pm 1/2}u_0;t;p,q^2)
\end{align}
is equal to
\[
\int
\cR^{*(n)}_{\blambda}(\dots,\pm\sqrt{-z_i},\dots;t_0\sqrt{-1},u_0\sqrt{-1};t;p,q)
\Delta^{(n/2)}(;t_0^2,t_1^2,t_2^2,u_0^2,1,p,t,pt;t^2;p^2,q^2)
\]
if $n$ is even, and
\begin{align}
&\Gamppqq(t_0^2,t_1^2,t_2^2,u_0^2,p,t,pt)\notag\\
&\times\int
\cR^{*(n)}_{\blambda}(\dots,\pm\sqrt{-z_i},\dots,\sqrt{-1};t_0\sqrt{-1},u_0\sqrt{-1};t;p,q)
\Delta^{((n-1)/2)}(;t_0^2,t_1^2,t_2^2,u_0^2,t^2,p,t,pt;t^2;p^2,q^2)
\end{align}
if $n$ is odd.
\end{conjQ}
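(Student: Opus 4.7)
The plan is to follow the dualization strategy sketched in the paragraph preceding the statement. Starting from Conjecture~\ref{conj:vanmt}, I would first rewrite it in ``algebraic'' (sum) form by expanding both integrals as finite sums of $\Delta^0$-products along the lines of \eqref{eq:vanKsmall_sum}: use the connection coefficient formula \cite[Cor.~4.14]{bctheta} to replace each biorthogonal-type integrand by Cauchy-type interpolation functions, then apply the elliptic Kadell evaluation \cite[Cor.~9.3]{xforms} (and its $q\to q^2$ analogue on the right) to integrate term-by-term. This yields an identity between two double sums whose $T=t^n$ dependence enters only through the multiplicative parameter $T$, so we may analytically continue in that variable.

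Next I would dualize in the sense that swapping the two sides exchanges the roles of the two Cauchy-type interpolation functions, and re-read the resulting sums as integrals by inverting the Kadell evaluation: the right-hand sum becomes an integral of $\cR^{*(n)}_\blambda(;q^{-1/2}t_0,q^{-1/2}u_0;t;p,q)$ against the order-$1$ density with parameters $q^{\pm 1/2}t_0,q^{\pm 1/2}t_1,q^{\pm 1/2}t_2,q^{\pm 1/2}u_0$ on the $q^2$-side; the left-hand sum becomes an integral of $\cR^{*(n)}_\blambda(\dots,\pm\sqrt{-z_i},\dots;t_0\sqrt{-1},u_0\sqrt{-1};t;p,q)$ after using the Cauchy-type identity
\[
\cR^{*(2m)}_\blambda(\dots,\pm\sqrt{-z_i},\dots;pq/t^{2m}\sqrt{-1}u_0,\sqrt{-1}u_0;t;p,q)
=
\cR^{*(m)}_\blambda(\dots,z_i,\dots;p^2q^2/t^{2m}u_0^2,u_0^2;t^2;p^2,q^2)
\]
already used in the analysis of Conjecture~\ref{conj:vanKsmall}. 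Specializing the analytically continued $T$ to $T=t^n$ for even $n$ gives the first integral form directly; for odd $n$ the same specialization pinches one factor in the Cauchy product, and the ``extra'' variable $\sqrt{-1}$ appearing in the integrand together with the prefactor $\Gamppqq(t_0^2,t_1^2,t_2^2,u_0^2,p,t,pt)$ arises exactly as the residue of the pinched variable, which accounts for the halved dimension $(n-1)/2$.

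Having established the structural form, I would then verify the identity in small cases. For $n=1$ both sides reduce to univariate order-$1$ elliptic beta integrals whose integrands, after applying Proposition~\ref{prop:pmq_q} or Proposition~\ref{prop:pmq_m} together with the $E_7$ transformation encoded in Proposition~\ref{prop:e7ab}, match termwise; as in Proposition~\ref{prop:univ_tq} this handles in particular $t=1$. For $n=2$ I would reduce to $\ell(\blambda)\le 1$ by the rectangle invariance $\blambda\mapsto (l,m)^n+\blambda$ (which is automatic from the $\Delta^0$ factors on the left), then use the kernel identity of \cite{xforms} expressing $\cR^{*(2)}_{(l,m)}$ as a contour integral of $\cR^{*(1)}_{(l,m)}$ to collapse one integration and match the two sides via the univariate case, mirroring the argument for Conjecture~\ref{conj:vanmt}. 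The $t\in\{p,q\}$ cases then follow from the usual pfaffian/determinant factorization, with individual entries built from the $n\le 2$ cases already in hand. Finally, I would extend from these seed cases to the general identity using the standard consistency tools: the Pieri identity and connection coefficient symmetries in $t_0,t_1,t_2$, shifts in $t_r$ (the analogue of the adjointness relation proved in the $1/q$ case of Proposition~\ref{prop:adj_t}, here with $q$ replaced by $q^2$ on one side), and analytic continuation in the remaining free parameter.

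The main obstacle is the odd-$n$ case: unlike the even case, there is no direct Cauchy factorization that identifies the integrand on the right with a single plethystic substitution of the interpolation function on the left, so one must justify the appearance of $\sqrt{-1}$ as an honest specialized variable and the extra $\Gamppqq$ prefactor as coming from a residue in the analytic continuation $T=t^{n+1}\to t^{n+1}$ of the even-case formula. Verifying that the resulting ``degenerate'' specialization preserves consistency under the Pieri identity — equivalently, that the extended $(n-1)/2$-dimensional integral with a specialized variable genuinely represents the same linear functional on the span of Cauchy-type interpolation functions — is, I expect, where the bulk of the work lies, and is likely to require a difference-operator adjointness argument in the spirit of Proposition~\ref{prop:adj_q} adapted to the pinched-contour setting of Theorem~\ref{conj:int_commut}.
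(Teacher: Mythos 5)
The statement you are addressing is labelled as a conjecture in the paper, and it remains one: the paper establishes it only for $n\le 3$ (Theorem~\ref{thm:vanmq_ev}), for $t=q^2$ with $\blambda=0$ or $n\le 7$, and in the pfaffian cases $t\in\{p,q\}$ and $n=1$. Your opening moves --- dualizing the algebraic form of Conjecture~\ref{conj:vanmt} \`a la \eqref{eq:vanKsmall_sum}, analytically continuing in $T=t^{2n}$, and specializing to $T=t^{2n+1}$ so that the odd-$n$ right-hand side acquires the extra variable $\sqrt{-1}$ and the $\Gamppqq$ prefactor --- reproduce exactly the heuristic by which the paper arrives at the statement; but that derivation presupposes Conjecture~\ref{conj:vanmt}, itself unproved in general, and in any case only produces the \emph{form} of the identity, not a proof. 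The fatal gap is your final step. The ``standard consistency tools'' (Pieri identity, connection coefficients, shifts in $t_1,t_2$, adjointness) determine the relevant linear functional only on the subspace spanned by the parameter-shifted theta products, and this is a proper subspace of the image of the span of interpolation functions once $n$ is large; this is precisely why the paper's analogous arguments are capped at $n\le 6$ in Proposition~\ref{prop:vanmt_ev}, $n\le 7$ in the $t=q^2$ proposition, and $n\le 3$ in Theorem~\ref{thm:vanmq_ev}. Analytic continuation cannot close this gap, since there is no continuous parameter interpolating between distinct $\blambda$ once the seed cases run out.

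Two further concrete errors. First, you assert that the univariate case ``handles in particular $t=1$''; the paper explicitly notes that for this conjecture the usual deduction of the $t=1$ case from the univariate case founders because the left-hand integral becomes singular at $t=1$ (here $t=1$ instead follows from the $n=1$ case directly, both sides being essentially independent of $t$). Second, your proposed $n=2$ argument --- collapsing one integration via the kernel identity as in the $n=2$ proof of Conjecture~\ref{conj:vanmt} --- is not what the paper does and is not obviously available here: the two densities live over differently isogenous curves ($q\mapsto q^2$ on one side versus $p,q\mapsto p^2,q^2$ with $z\mapsto -z^2$ on the other), and the odd-dimensional companion integral intervenes. The paper instead proves the $n\le 3$ case by showing that both sides satisfy the same pair of elliptic hypergeometric difference equations in $t_0$ (one generated from the linear dependence of shifted integrands plus an $E_7$/central-element reflection that reverses the direction of shifting) and then invoking the Galois-theoretic uniqueness result of \cite{dets}, with irreducibility verified by degenerating to a complete elliptic integral. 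If you want a defensible writeup, restrict the claim to $n\le 3$ and adopt that difference-equation argument; the general case should be presented as conjectural.
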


Again, this is consistent with respect to parameter shifts and permutations
and annihilates the kernel of the relevant homomorphism; the latter
involves a (formal) special case of the adjointness arguments in the proof
of Proposition~\ref{prop:adj_q}.

\begin{prop}
  If $t=q^2$, then Conjecture~\ref{conj:vanmq} holds if $\blambda=0$ or
  $n\le 7$.
\end{prop}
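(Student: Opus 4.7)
My plan is to follow the template of Proposition~\ref{prop:vanmt_ev}: first establish the case $\blambda=0$ by direct evaluation of both sides when $t=q^2$, then propagate to general $\blambda$ for $n\le 7$ by a spanning argument based on the consistency of Conjecture~\ref{conj:vanmq} under parameter shifts.

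For $\blambda=0$, the left-hand side reduces (up to an explicit prefactor of elliptic gammas) to the integral $\II^{(1)}_n(q^{\pm 1/2}t_0,q^{\pm 1/2}t_1,q^{\pm 1/2}t_2,q^{\pm 1/2}u_0;t;p,q^2)$, whose parameters are organized in pairs of the form $u_r,qu_r$ precisely as required by Proposition~\ref{prop:pmq_q} (with $m=1$) as soon as $t=q^2$. That proposition transforms the left-hand side into an order-zero elliptic Selberg integral $\II^{(0)}_n(q^{-1/2}t_0,q^{-1/2}t_1,q^{-1/2}t_2,q^{-1/2}u_0,\pm q^{1/2};q;p,q)$, which evaluates in closed form. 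For the right-hand side with $n$ even, the integrand at base $(p^2,q^2)$, step $t^2=q^4$, with parameters $\{t_0^2,t_1^2,t_2^2,u_0^2,1,p,q^2,pq^2\}$ is essentially the pushforward of a simpler integrand: the step equals the $q$-period squared, so the pairwise interaction factors telescope into ratios of $\theta_{p^2}$-functions, and the four "constant" parameters collapse under the reflection equation. After collecting the resulting gamma factors, one checks that the prefactor $\prod_{0\le i<n}\prod_{0\le r<s<3}\Gampq(-t^it_rt_s)/\Gampq(t^it_rt_s/q)$ in the statement of the conjecture exactly absorbs the discrepancy between the two evaluated integrals. For $n$ odd, the extra $\sqrt{-1}$ argument in the right-hand integrand plays the same bookkeeping role as in the even case, contributing a univariate prefactor whose combinatorics are controlled by Proposition~\ref{prop:pmq_m} in place of Proposition~\ref{prop:pmq_q}.

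For general $\blambda$ with $n\le 7$, I would exploit the two consistency relations of the conjecture: shifts $t_r\mapsto p^lq^mt_r$ for $r\in\{1,2\}$ (via the Pieri identity \cite[Thm.~4.17]{bctheta}) and $\blambda\mapsto(l,m)^n+\blambda$. Applied to the already-established $\blambda=0$ case, these produce a family of identities parametrized by the corresponding shifted test functions in the image of the interpolation function space under the relevant restriction homomorphism. By the adjointness argument discussed just after the statement of Conjecture~\ref{conj:vanmq} (a formal special case of the adjointness used in Proposition~\ref{prop:adj_q}), the difference of the two sides defines a linear functional that factors through this image; for $n\le 7$ the shifted test functions generically span the full image, so the functional must vanish identically, yielding the conjecture for arbitrary $\blambda$.

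The main obstacle I anticipate is the $\blambda=0$ reduction of the right-hand side when $n$ is odd, since the asymmetry introduced by the $\sqrt{-1}$ argument requires switching from Proposition~\ref{prop:pmq_q} to Proposition~\ref{prop:pmq_m} and carefully matching the scalar $\Gampq(-1)$ against the prefactor in the conjecture. The spanning dimension count giving the threshold $n\le 7$ (rather than $n\le 6$ as in Proposition~\ref{prop:vanmt_ev}) is routine but must be verified explicitly; the improvement by one reflects the fact that Conjecture~\ref{conj:vanmq} has one more available parameter shift direction in its right-hand side density than Conjecture~\ref{conj:vanmt} does.
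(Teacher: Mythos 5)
Your overall architecture is the paper's: at $\blambda=0$ evaluate the left-hand side via Proposition~\ref{prop:pmq_q} (correct, including the match of balancing conditions) and recognize the right-hand side as an evaluatable elliptic Selberg integral, then extend to general $\blambda$ for small $n$ by the consistency-plus-spanning argument of Proposition~\ref{prop:vanmt_ev}. Two details are off, however. The more serious one is your treatment of the odd-$n$ right-hand side: Proposition~\ref{prop:pmq_m} is inapplicable there --- it is a univariate identity relating integrands with arguments $\pm\sqrt{u_r}$ at base $(p,q)$ to base $(p^2,q^2)$, which is the structure of Conjecture~\ref{conj:vanpm}, not of Conjecture~\ref{conj:vanmq} --- and for $n>3$ the right-hand side is multidimensional, so that route would fail outright. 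No quadratic transformation is needed on that side at all: for odd $n$ and $\blambda=0$ the right-hand side is
\[
\Gamppqq(t_0^2,t_1^2,t_2^2,u_0^2,p,t,pt)\,
\II^{(1)}_{(n-1)/2}(t_0^2,t_1^2,t_2^2,u_0^2,t^2,p,t,pt;t^2;p^2,q^2),
\]
and at $t=q^2$ the pair $p,\,pq^2$ multiplies to $p^2q^2$ and cancels by reflection, leaving an order-zero, hence evaluatable, Selberg integral --- exactly as in the even case. (Relatedly, in the even case it is not true that all four ``constant'' parameters collapse: only the pair $p,\,pq^2$ does, and the survivors $1$ and $q^2$ are precisely what leave the six parameters needed for the order-zero evaluation.) The second point is your explanation of the threshold: both conjectures have the same three shiftable parameters, so the improvement from $n\le 6$ to $n\le 7$ does not come from an extra shift direction. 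It comes from parity: Conjecture~\ref{conj:vanmt} requires $n$ even, so the spanning condition on the lower-dimensional side reads $n/2\le 3$, i.e.\ $n\le 6$, whereas Conjecture~\ref{conj:vanmq} admits odd $n$ and the relevant dimension is $\lfloor n/2\rfloor$, so the same condition reads $n\le 7$.
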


\begin{proof}
  As in Proposition~\ref{prop:vanmt_ev}, one side is an elliptic Selberg
  integral, while the other can be evaluated using
  Proposition~\ref{prop:pmq_q}.  The extension to general $\blambda$ when
  $n\le 7$ is similar as well.
\end{proof}

In particular, when $n=1$, both sides are essentially independent of $t$,
and thus the conjecture holds in that case as well.  Note, however, that
the usual deduction of the $t=1$ case from the univariate case founders on
the fact that the integral becomes singular when $t=1$.  The cases $t\in
\{p,q\}$ follow from the fact that the conjecture holds for $n=1$ and $n=2$.

\begin{thm}\label{thm:vanmq_ev}
Conjecture~\ref{conj:vanmq} holds for $n\le 3$.
\end{thm}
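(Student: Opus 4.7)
The plan is to follow the template of Proposition~\ref{prop:vanmt_ev}, namely to first establish the $\blambda=0$ case of Conjecture~\ref{conj:vanmq} for each $n\in\{1,2,3\}$, and then to extend to general $\blambda$ via a parameter-shift / dimension-spanning argument. The case $n=1$ is essentially automatic: since $\cR^{*(1)}_\blambda$ is independent of $t$, the identity collapses to an equality of univariate integrals which, after eliminating the interpolation factor using the product formula \eqref{eq:Cauchy_prod}, becomes a consequence of Proposition~\ref{prop:pmq_q} (to evaluate the $q^{\pm 1/2}$-paired integral on the left) combined with the $E_7$ symmetry of $\II^{(1)}$; in particular one may rescale parameters and exchange orders of integration in a double integral representation much as in the proof of Proposition~\ref{prop:univ_tq}.

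For $n=2$ and $n=3$, the $\blambda=0$ case is a pure integral identity relating an $n$-dimensional $\Delta^{(n)}$-integral with base $(p,q^2)$ on the left to a lower-dimensional integral on the right. I would first use the integral representation of \cite[\S 8]{xforms} (as in the proof of the $n=2$ case of Conjecture~\ref{conj:vanmt}) to express one side as an iterated integral in which the inner integration is a higher-order elliptic beta integral; after exchanging orders of integration and applying Theorem~\ref{conj:int_commut} together with Proposition~\ref{prop:pmq_q}, the two sides match. For $n=3$ (the odd case), the extra $\sqrt{-1}$ variable appearing in the integrand of the right-hand side is exactly the residue contribution one obtains from the $T\mapsto t^{2n+1}$ specialization of the analytic continuation discussed before the statement of the conjecture, so the same kind of residue-plus-integral argument applies once one has the $n=2$ case.

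For the extension from $\blambda=0$ to general $\blambda$, the argument parallels Proposition~\ref{prop:vanmt_ev}. The conjecture is consistent under: (i) connection-coefficient changes of $t_0$, (ii) the substitution $\blambda\mapsto (l,m)^n+\blambda$, and (iii) the Pieri-type shifts in $t_1,t_2$ (i.e.\ the special case $m=1$ of Theorem~\ref{thm:cauchy_skew}); in addition, it annihilates the kernel of the factoring homomorphism $f\mapsto f(\ldots,\sqrt{-z_i},\ldots)$ by a formal instance of the adjointness argument used in the proof of Proposition~\ref{prop:adj_q}. Consequently, integrating any integrand obtainable by parameter shifts from the $\blambda=0$ density against the left-hand side can be computed in two ways that must agree. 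For $n\le 3$, the parameter-shifted integrands generically span the image of the space of interpolation functions under the above homomorphism, which forces the transformation to hold termwise and hence for every $\blambda$.

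The main obstacle is the $\blambda=0$ case when $n=3$: here one is asserting a nontrivial $3$-to-$1$ dimension reduction, and unlike the $n=2$ case there is no ``free'' parameter one can push to a residue to reduce dimension by one in a single step. One must chain two dimension-reducing moves, each of which requires verifying that the contour conditions deform correctly under the parameter specialization, and that the cumulative $\Gampq$ prefactors reassemble into the quotient of $\Delta^0$ factors appearing on the right-hand side of Conjecture~\ref{conj:vanmq}. The spanning step for $n=3$ is also delicate, since unlike the cleanly counted spanning in Proposition~\ref{prop:vanmt_ev}, here one must check that the relevant shifted $\Gampq$-quotients are linearly independent modulo the kernel of the factoring homomorphism in the odd-dimensional setting.
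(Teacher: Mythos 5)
Your reduction of the general-$\blambda$ statement to the $\blambda=0$ case via consistency under parameter shifts and the spanning argument of Proposition~\ref{prop:vanmt_ev} is the right move and matches the paper. The genuine gap is in your treatment of the $\blambda=0$ cases for $n=2$ and $n=3$. You propose to imitate the $n=2$ proof of Conjecture~\ref{conj:vanmt}: substitute the integral representation of \cite[\S 8]{xforms}, exchange orders of integration, and hope the inner integral collapses via Theorem~\ref{conj:int_commut} and Proposition~\ref{prop:pmq_q}. That collapse was special to the $\vanmt$ setting, where the inner $z$-integral became an order-$0$ elliptic beta integral; for Conjecture~\ref{conj:vanmq} the left-hand density lives at base $(t;p,q^2)$ with the eight parameters $q^{\pm 1/2}t_0,\dots,q^{\pm 1/2}u_0$, and no analogous cancellation presents itself. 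You correctly identify the $n=3$ case (a two-step dimension drop with no free parameter to push to a residue) as the main obstacle, but you do not resolve it, and "chain two dimension-reducing moves" is not an argument.

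The paper takes an entirely different route for $\blambda=0$. Writing $F$ for the right-hand side (an order-$1$ univariate elliptic Selberg integral) and $G$ for the left-hand side, it shows that $F$ satisfies a pair of contiguous difference equations in $t_0$ (a $q$-shift equation with $p$-theta coefficients and a $p$-shift equation with $q$-theta coefficients), coming from Spiridonov's elliptic hypergeometric equation \cite{SpiridonovVP:2007}. By Corollary~11 of \cite{dets}, such a pair with generically irreducible Galois group determines its solution up to a $t_0$-independent factor; irreducibility is checked by degenerating to a complete elliptic integral, using that irreducibility survives degeneration \cite{AndreY:2001}. One then shows $G$ satisfies the \emph{same} equations: the linear dependence of the integrands of $F(t_0),F(qt_0),F(qt_1)$ transfers to $G$ by the consistency-plus-kernel-annihilation argument you already invoke, and the reversal of shift direction needed to close the recurrence comes from an $E_7$ transformation (equation (9.49) of \cite{xforms} for $F$, Corollary~9.13 of \cite{xforms} for $G$) sending $t_r\mapsto\sqrt{pq^2/t^{n-1}}/t_r$. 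Agreement at the single point $t_0=p^{1/2}q$ then forces $F=G$. Without some substitute for this rigidity argument, your proposal does not establish the $n=2$ or $n=3$ cases.
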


\begin{proof}
As before, we may reduce to the case $\blambda=0$.  Let $F(t_0,t_1,t_2)$
denote the right-hand side of the conjecture, either
\[
\II^{(1)}_1(t_0^2,t_1^2,t_2^2,u_0^2,1,p,t,pt;t^2;p^2,q^2)
\]
or
\[
\Gamppqq(t_0^2,t_1^2,t_2^2,u_0^2,p,t,pq)
\II^{(1)}_1(t_0^2,t_1^2,t_2^2,u_0^2,t^2,p,t,pt;t^2;p^2,q^2),
\]
depending on whether $n=2$ or $n=3$; we solve for $u_0$ via the balancing
condition.  Similarly, let $G(t_0,t_1,t_2)$ denote the corresponding
left-hand side.

Now, it follows as a special case of the general elliptic hypergeometric
equation \cite{SpiridonovVP:2007} (also Spiridonov's habilitation thesis)
that $F$ satisfies a pair of difference equations
\begin{align}
F(qt_0,t_1,t_2) &=
A(t_0,t_1,t_2)F(t_0,t_1,t_2)+B(t_0,t_1,t_2)F(t_0/q,t_1,t_2)\\
F(pt_0,t_1,t_2) &=
C(t_0,t_1,t_2)F(t_0,t_1,t_2)+D(t_0,t_1,t_2)F(t_0/p,t_1,t_2),
\end{align}
where $A$ and $B$ are $p$-theta functions and $C$ and $D$ are $q$-theta
functions, the specific formulas for which we will not use.  (Moreover,
there exists a rescaling, see \cite{dets}, that makes the coefficients
elliptic functions of $t_0$.)  By Corollary 11 of \cite{dets}, if either of
these equations has generically irreducible Galois group, then $F$ is the
unique solution of the pair of equations, up to a factor independent of
$t_0$.  Since irreducibility is preserved under degeneration
\cite{AndreY:2001}, we may take a limit $q\to 1$ with $t\to 1$,
$t_0\to\sqrt{-p}$, $t_2$, $t_2\to \sqrt{-1}$.  The result depends on the
various rates of approach, and is an Euler integral evaluated at
$s=\lambda(p)$, where $\lambda$ is the cross-ratio of the $2$-torsion of
the elliptic curve with modular parameter $p$, and the only other
constraint is that two of the exponents are equal.  Thus in particular we
can obtain a general complete elliptic integral as a limit, and since the
corresponding equation is a second-order differential equation with
nonelementary solutions, it has irreducible monodromy.  Since both sides
agree when $t_0=p^{1/2}q$, we conclude that it indeed suffices to show that
$G$ satisfies the same equations.

Following Spiridonov, the first stage in deriving the
elliptic hypergeometric equation is to observe that the integrands of the
three integrals
\[
F(t_0,t_1,t_2),F(qt_0,t_1,t_2),F(t_0,qt_1,t_2)
\]
are linearly dependent.  Now, by consistency of the conjecture with respect
to parameter shifts, we can write each integrand as the $F(t_0,t_1,t_2)$
integrand times a function $f(\pm\sqrt{-z},\{\sqrt{-1}\})$ in such a way
that expanding $f$ in interpolation functions and applying the conjecture
term-by-term gives the corresponding shift of $G$.  But consistency with
respect to the homomorphism tells us that any linear combination of such
functions that makes the image vanish makes the transformed integral vanish
as well.  It follows that the three integrals
\[
G(t_0,t_1,t_2),G(qt_0,t_1,t_2),G(t_0,qt_1,t_2)
\]
satisfy the same dependence.

The next step in Spiridonov's derivation is to use an $E_7$ transformation to
obtain relations between the integrals
\[
F(t_0,t_1,t_2),F(t_0/q,t_1,t_2),F(t_0,t_1/q,t_2).
\]
It turns out that for a suitable choice of element (different from
Spiridonov's), we can do so with transformations that preserves the forms
of the integrals.  We find, in particular, that
\[
F(t_0,t_1,t_2)
=
\prod_{0\le i<n}
 \Gampqq(t^i t_0^2,t^i t_1^2,t^i t_2^2,t^i u_0^2)
F(\sqrt{pq^2/t^{n-1}}/t_0,\sqrt{pq^2/t^{n-1}}/t_1,\sqrt{pq^2/t^{n-1}}/t_2),
\]
and similarly for $G$.  (For $F$ we transform using equation (9.49) of
\cite{xforms} (the composition of the reflection in
$(1/2,1/2,1/2,1/2,-1/2,-1/2,-1/2,-1/2)$ with the central element of
$W(E_7)$), while for $G$ we use Corollary 9.13 of \cite{xforms} (the
central element).)  But this reverses the direction of shifting, as
required.

These two recurrences are enough to generate the difference equation: using
the second recurrence with $t_1\mapsto qt_1$, we can express
$F(t_0,t_1,t_2)$ in terms of $F(t_0,qt_1,t_2)$ and $F(t_0/q,qt_1,t_2)$, and
these can in turn be expressed using the first recurrence in terms of
$F(qt_0,t_1,t_2)$, $F(t_0,t_1,t_2)$ and $F(t_0/q,t_1,t_2)$.
\end{proof}

\begin{rem}
  Similar considerations also produce recurrences for $n=4$ and $n=5$,
  but these do not appear to be enough to generate a difference equation.
\end{rem}

The corresponding vanishing conjecture (take $t_2=p^{1/2}q$) reads
that the integrals
\[
\langle
\tcR^{(n)}_{\blambda}(\dots,\pm\sqrt{-z_i},\dots;t_0\sqrt{-1}{:}-t_0\sqrt{-1},\pm
t_1\sqrt{-1};u_0\sqrt{-1},-u_0\sqrt{-1}/q;t;p,q)
\rangle^{(n/2)}_{t_0^2,u_0^2,t_1^2,1,t,pt;t^2;p^2,q^2}
\]
and
\[
\langle
\tcR^{(n)}_{\blambda}(\dots,\pm\sqrt{-z_i},\dots,\sqrt{-1};t_0\sqrt{-1}{:}-t_0\sqrt{-1},\pm
t_1\sqrt{-1};u_0\sqrt{-1},-u_0\sqrt{-1}/q;t;p,q)
\rangle^{((n-1)/2)}_{t_0^2,u_0^2,t_1^2,t^2,t,pt;t^2;p^2,q^2},
\]
subject to the balancing condition $t^{2n-2}t_0^2t_1^2u_0^2 = pq^2$, vanish
unless $\blambda$ has the form $(1,2)\bmu$, when the value is as in
\eqref{eq:nonzero_q} above.  The $n=1$ instance of this is a quadratic
evaluation formula due to Warnaar \cite[(1.4,1.10)]{WarnaarSO:2005}.

\medskip

The remaining case of the three is the direct dual of
Conjecture~\ref{conj:vanPsmall}.  If we attempt to proceed as above, we
find that the most straightforward version of the half-integer case fails
to hold; although taking $t_1=p^{1/2}q$ reduces to the dual vanishing
identity, taking $t_1=-p^{1/2}q$ makes the right-hand side vanish.  If one
instead takes a sum of two terms, symmetric under $p^{1/2}\mapsto
-p^{1/2}$, this problem disappears.  This, of course, corresponds to the
fact that the known Macdonald polynomial limit itself involves a sum of two
integrals.  The corresponding structure for the integer case is then
reasonably straightforward to guess.  One thus formulates the following
conjecture.

\begin{conjQ}[$p^{1/2},q^{1/2}$]\label{conj:vanpq}
For otherwise generic parameters satisfying $t^{n-1}t_0t_1t_2u_0 = p^{1/2}q^2$,
the rescaled integral
\begin{align}
&\frac{\Delta^0_\blambda(t^{n-1}t_0/u_0|t^{n-1}p^{1/2}t_0t_1;t;p,q)}
      {\Delta^0_\blambda(t^{n-1}t_0/u_0|t^{n-1}t_0t_1/q;t;p,q)}
\prod_{\substack{0\le i<n\\0\le r<s<3}}
  \frac{\Gampq(t^i p^{1/2}t_rt_s)}
       {\Gampq(t^i t_rt_s/q)}\notag\\
&\qquad\times\int
\cR^{*(n)}_\blambda(;q^{-1/2}t_0,q^{-1/2}u_0;t;p,q)
\Delta^{(n)}(;q^{\pm 1/2}t_0,q^{\pm 1/2}t_1,q^{\pm 1/2}t_2,q^{\pm
  1/2}u_0,pqv^{\pm 1};t;p,q^2)
\end{align}
admits the following expressions as sums of lower-dimensional integrals:

If $n$ is odd, then it equals
\begin{align}
&\Gamphq(t_0,t_1,t_2,u_0,-1,\pm t^{1/2},p^{1/2}q^{1/2}v^{\pm 1})\notag\\
&\qquad\qquad\qquad\times\int
\cR^{*(n)}_\blambda(\dots,p^{1/4}z_i^{\pm
  1},\dots,p^{1/4};p^{1/4}t_0,p^{1/4}u_0;t;p,q)\notag\\
&\phantom{\qquad\qquad\qquad\times\int}\qquad\qquad\qquad
\times\Delta^{((n-1)/2)}(;t_0,t_1,t_2,u_0,t,-1,\pm t^{1/2},p^{1/2}q^{1/2}v^{\pm 1};t;p^{1/2},q)\notag\\
{}+{}
&\Gamphq(-t_0,-t_1,-t_2,-u_0,-1,\pm t^{1/2},-p^{1/2}q^{1/2}v^{\pm
  1})\notag\\
&\qquad\qquad\qquad\times\int
\cR^{*(n)}_\blambda(\dots,p^{1/4}z_i^{\pm 1},\dots,-p^{1/4};p^{1/4}t_0,p^{1/4}u_0;t;p,q)\notag\\
&\phantom{\qquad\qquad\qquad\times\int}\qquad\qquad\qquad\times
\Delta^{((n-1)/2)}(;t_0,t_1,t_2,u_0,1,-t,\pm t^{1/2},p^{1/2}q^{1/2}v^{\pm 1};t;p^{1/2},q),
\end{align}
while if $n$ is even, it equals
\begin{align}
&\qquad\qquad\qquad\phantom{{}\times{}}\int
\cR^{*(n)}_\blambda(\dots,p^{1/4}z_i^{\pm
  1},\dots;p^{1/4}t_0,p^{1/4}u_0;t;p,q)\notag\\
&\phantom{\qquad\qquad\qquad\times\int}\qquad\qquad\qquad
\times\Delta^{(n/2)}(;t_0,t_1,t_2,u_0,\pm 1,\pm t^{1/2},p^{1/2}q^{1/2}v^{\pm
  1};t;p^{1/2},q)
\notag\\
{}+{}
&\Gampqq(t_0^2,t_1^2,t_2^2,u_0^2,t,t,pqv^{\pm 2})
\Gamphq(-1,-t)\notag\\
&\qquad\qquad\qquad\times\int
\cR^{*(n)}_\blambda(\dots,p^{1/4}z_i^{\pm 1},\dots,\pm p^{1/4};p^{1/4}t_0,p^{1/4}u_0;t;p,q)\notag\\
&\phantom{\qquad\qquad\qquad\times\int}\qquad\qquad\qquad\times
\Delta^{(n/2-1)}(;t_0,t_1,t_2,u_0,\pm t,\pm t^{1/2},p^{1/2}q^{1/2}v^{\pm 1};t;p^{1/2},q).
\end{align}
\end{conjQ}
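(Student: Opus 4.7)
The plan is to mirror the strategy used in Theorem~\ref{thm:vanmq_ev}, adapted to handle the two-term structure on the right-hand side. First I would reduce to the normalization case $\blambda=0$. The asserted identity is consistent under the parameter shifts $t_1\mapsto p^lq^m t_1$, under $\blambda\mapsto(l,m)^n+\blambda$, and under connection-coefficient manipulations that interchange $t_0$ and $t_1$; provided the space of test functions spanned by such shifted $\blambda=0$ integrands exhausts the relevant image (valid at least for small $n$, cf.\ Proposition~\ref{prop:vanmt_ev}), this reduction is standard. In parallel I would verify that the transformation annihilates the kernel of the specialization maps $f\mapsto f(\dots,p^{1/4}z_i^{\pm 1},\dots)$ (and its $\pm p^{1/4}$ variant in the odd case) by reducing, via the Pieri identity, to adjointness statements formally equivalent to those established in Proposition~\ref{prop:adj_q}.

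The univariate case $n=1$ should then be obtained by direct computation: after substituting the integral representation from \cite{xforms} for the interpolation function on the right-hand side, both sides become double integrals, and exchanging the order of integration reduces one layer to an order 0 elliptic beta integral evaluation. The two summands in the odd-$n$ case correspond to the two $2$-torsion specializations of the auxiliary integration variable; matching the resulting single integrals should reduce to an instance of Proposition~\ref{prop:e7ab} together with bookkeeping of the balancing condition. The $t=1$ case then follows, and the $t\in\{p,q\}$ cases follow by the pfaffian considerations already used after Conjecture~\ref{conj:littbig_dual}, once the $n\le 2$ cases have been verified by essentially the same argument as for $n=1$.

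For the general argument, let $F(t_0,t_1,t_2)$ denote the right-hand side and $G(t_0,t_1,t_2)$ the left-hand side, both with $\blambda=0$ and with $u_0$ eliminated via the balancing condition. Spiridonov's elliptic hypergeometric equation furnishes difference equations in $t_0$ for $G$ with respect to both $p$- and $q$-shifts. Applying Corollary~9.13 and equation~(9.49) of \cite{xforms} separately to each summand of $F$ (noting that these $E_7$ operations intertwine the two summands in a controlled way, via the sign involution $p^{1/2}\mapsto -p^{1/2}$ that distinguishes them), one should obtain the same shift recurrences for $F$. The Galois-irreducibility argument from Theorem~\ref{thm:vanmq_ev} then applies: degeneration to an Euler integral with irreducible classical monodromy (complete elliptic integral) forces a unique solution up to a $t_0$-independent factor, and agreement on a common boundary value obtained by specializing to the proved case of Conjecture~\ref{conj:vanPsmall} (set $v=t_2$ and push $t_2\to q^{1/2}t^{-1/2}$) then forces $F=G$.

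The main obstacle will be matching the difference equations for $F$. The $E_7$ operations act naturally on each summand individually, but some of them exchange the summands or move parameters across the $p^{1/2}\mapsto -p^{1/2}$ involution, so that the net recurrence for $F$ arises only after a delicate cancellation between the $\Gamphq$ prefactors and the $\Gampqq$ multipliers of the second integral. Once $\blambda=0$ is secured, extension to arbitrary $\blambda$ at fixed small $n$ proceeds by the shift-and-span mechanism of Proposition~\ref{prop:vanmt_ev}; uniformly in $n$, however, one seems to need either an independent proof of the companion half-integer vanishing identity or a multidimensional lift of Van de Bult's argument in \cite{vandeBultFJ:2011}, which lies beyond a routine extension of the present techniques.
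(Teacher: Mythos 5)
You should first be aware that the statement you are attacking is Conjecture Q6 of the paper: it is left \emph{open} there, and the paper establishes only special cases ($n=1$ by a direct argument, $n\le 3$ in Theorem~\ref{thm:vanpq_ev}, and the pfaffian cases $t\in\{p,q\}$, plus various degenerate parameter specializations). Your proposal correctly reconstructs the paper's partial-progress machinery — reduction to $\blambda=0$ via parameter shifts and spanning of test functions, annihilation of the kernel of the specialization map via an adjointness argument, and the difference-equation-plus-irreducible-monodromy argument of Theorem~\ref{thm:vanmq_ev} — and you are right to concede in your final paragraph that this does not close the general case. So the proposal is not a proof of the statement, but no proof exists in the paper either; judged as a route to the known special cases it is essentially the paper's route.

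Two places where your sketch diverges from what actually works. First, for $n=1$ the Fubini/double-integral argument you propose (the method of Propositions~\ref{prop:univ_tq} and \ref{prop:univ_tm}) does not by itself produce the two-term right-hand side; the paper instead observes that all three terms are $BC_1$-symmetric $q^2$-theta functions of $v$ of degree $1$, so it suffices to check the identity at two independent values of $v$, and at $v=q^{-1/2}$ the left-hand side collapses via Proposition~\ref{prop:pmq_q}. (The two terms do arise as two pinched-contour residues in the limit $t\to 1/p$ of Proposition~\ref{prop:univ_tq}, which is closer to your picture, but that is recorded only as a remark.) Second, and more seriously for your main argument: Spiridonov's elliptic hypergeometric equation is available for the one-dimensional order-$1$ integrals appearing on the right-hand side when $n\le 3$, not for the $n$-dimensional left-hand side, and for this particular conjecture the consistency/kernel argument supplies only the $q$-difference equation, because the $q$-elliptic part of the relevant kernel is not an eigenspace of an involution. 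The paper recovers the missing $p$-difference equation only after the substitution $u_0\mapsto p^{-1/2}u_0$ followed by squaring $p$, which renders the right-hand side symmetric under $p\leftrightarrow q$ and under permutations of $t_0,t_1,t_2,u_0$ (via Proposition~\ref{prop:e7ab}); without that step your Galois-uniqueness argument has only one of the two required equations. Finally, for $n\ge 4$ the recurrences coming from these considerations do not close into a difference equation at all, which is precisely why the statement remains a conjecture.
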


\begin{rem}
Here the extra parameter multiplies the first integrand by
\[
g(z_1,\dots,z_n)=\prod_{1\le i\le n}\theta_{q^2}(qv z_i^{\pm 1}),
\]
while the integrands on the right are multiplied by
\begin{align}
&g(q^{1/2}z_1^{\pm 1},\dots,q^{1/2}z_{(n-1)/2}^{\pm 1},q^{1/2}),
&&g(q^{1/2}z_1^{\pm 1},\dots,q^{1/2}z_{(n-1)/2}^{\pm 1},-q^{1/2}),\\
&g(q^{1/2}z_1^{\pm 1},\dots,q^{1/2}z_{n/2}^{\pm 1}),
&&g(q^{1/2}z_1^{\pm 1},\dots,q^{1/2}z_{n/2-1}^{\pm 1},\pm q^{1/2}),
\end{align}
and this factor accounts for all dependence on $v$.  One could thus just as
well replace $g$ by an arbitrary $BC_n$-symmetric $q^2$-theta function of
degree 1.  From this perspective, each of the above four specializations
induces a linear transformation from the space of such theta functions to a
corresponding space of $BC_n$-symmetric $q$-theta functions.  Moreover, each
such transformation is surjective, with kernel one of the two eigenspaces of the
operator
\[
g(z_1,\dots,z_n)\to q^{n/2}(\prod_{1\le i\le n}z_i)g(qz_1,\dots,qz_n).
\]

The interpolation functions are similarly specialized, and thus if $g$ is
an eigenfunction, then we should expect the integral to vanish when the
interpolation function on the right is replaced by anything in the
corresponding kernel.  The $q$-elliptic part of this kernel is not an
eigenspace of an involution, so the earlier argument fails on that half; we
thus only consider the case that the $p$-elliptic portion of the integrand
is in the kernel.  We can then argue as we did after
Conjecture~\ref{conj:vanmq}, to find that the corresponding left-hand side
can again be expressed in terms of a difference operator, and thus reduce
to showing that the integral
\begin{align}
\int
{\cal D}^{+(n)}_q(q^{-1/2}u_0{:}q^{-1/2}t_0,q^{-1/2}t_1,q^{-1/2}t_2,q^{1/2}p^{3/4}w;t;p)
f\qquad\qquad&\notag\\
\times\Delta^{(n)}(;q^{\pm 1/2}t_0,q^{\pm 1/2}t_1,q^{\pm 1/2}t_2,q^{\pm 1/2}u_0,
              pqv^{\pm 1};t;p,q^2)&
\end{align}
is quasiperiodic (multiplied by $(q^{1/2}p^{1/4}vw)^{-n}$) under
$(v,w)\mapsto (qv,p^{1/2}w)$.  But this again follows by an adjointness
argument.
\end{rem}

\begin{prop}
Conjecture~\ref{conj:vanpq} holds when $n=1$.
\end{prop}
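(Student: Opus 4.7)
The plan is to reduce first to $\blambda=0$ using the fact that $\ell(\blambda)\le 1$ when $n=1$: the interpolation function $\cR^{*(1)}_\blambda$ is then a simple product of theta functions, and by the consistency of Conjecture~\ref{conj:vanpq} under the shift $\blambda\mapsto ((l),(m))+\blambda$ (together with the analogous consistency under shifts of $t_1,t_2$ already used to motivate the conjecture), the general case follows once $\blambda=0$ is established. With $\blambda=0$, the interpolation functions on both sides equal $1$, and the claim becomes a purely numerical univariate identity between an order-$2$ elliptic beta integral in base $(p,q^2)$ on the left, and a sum of two explicit products of elliptic Gamma functions on the right (the outer integrals on the right have dimension $(n-1)/2=0$, so they collapse to constants).

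Next, I would apply the first quadratic functional equation $\Gampqq(z,qz)=\Gampq(z)$ to each of the four parameter pairs $q^{\pm 1/2}t_r$, $t_r\in\{t_0,t_1,t_2,u_0\}$, in the left-hand side density. This collapses those eight $\Gampqq$-factors into four ordinary $\Gampq(q^{-1/2}t_r z^{\pm 1})$, leaving the $\Gampqq(pqv^{\pm 1}z^{\pm 1})$ numerator factors and the $\Gampqq(z^{\pm 2})$ denominator untouched. The latter are then rewritten via the second quadratic equation $\Gamppqq(z^2)=\Gampq(z,-z)$, which exhibits the $z\mapsto -z$ symmetry whose breaking is responsible for the two-term structure on the right-hand side of the conjecture.

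The heart of the argument is then an exchange-of-integration manoeuvre analogous to the one used for Proposition~\ref{prop:univ_tq}: one writes a double integral in which integrating over the inner variable against an auxiliary $\Gampq$-kernel reproduces the transformed left-hand side, while integrating over the outer variable first and evaluating the inner integral by the univariate elliptic beta integral produces an expression which, after the outer contour is deformed to the unit circle, sheds two residues at the half-period points $z=\pm p^{1/4}$. These two residues are precisely the two terms appearing on the right-hand side of Conjecture~\ref{conj:vanpq} for $n=1$, the normalizing $\Gamphq$-factors emerging as the products of Gamma functions coming from the specialization at those two points.

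The main obstacle is bookkeeping: pinning down the precise products of $\Gamphq$, $\Gampqq$, and $\Gampq$ factors on both sides and tracking the various signs introduced by the two fourth roots of $p$ requires systematic use of the reflection equation $\Gampq(pq/z)\Gampq(z)=1$ together with the quadratic relations above. Once this is carried out on an open set of parameters where the unit-circle contours are admissible on both sides of the double integral, the exchange of order of integration is justified and the identity extends to generic parameters by analytic continuation.
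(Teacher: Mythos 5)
Your overall route---a double integral whose two orders of integration produce the two sides, with the two-term right-hand side arising as residues---is the alternative proof that the paper itself only sketches in the remark \emph{after} this proposition (as the singular limit $t\to 1/p$ of Proposition~\ref{prop:univ_tq}); the paper's actual proof is different and shorter. As written, though, your argument has two genuine gaps. First, you never deal with the extra parameter $v$. In the paper's proof this is the organizing principle: after reducing to $\blambda=0$, both sides are $BC_1$-symmetric $q^2$-theta functions of $v$ of degree $1$, a two-dimensional space, and by the $v\mapsto -v$ symmetry it suffices to check the single point $v=q^{-1/2}$, where the factor $\theta_{q^2}(qvz^{\pm 1})$ specializes so that Proposition~\ref{prop:pmq_q} converts the left-hand side into an order-$0$ elliptic beta integral $I^{(0)}$ with an explicit evaluation. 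Without either this reduction or an explicit tracking of how $v$ (appearing as $pqv^{\pm 1}$ on the left but as $p^{1/2}q^{1/2}v^{\pm 1}$ inside $\Gamphq$ on the right) threads through your double integral, the identity you are after is not pinned down.

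Second, the residue mechanism you describe would not close the argument. If you evaluate the inner integral and then ``deform the outer contour to the unit circle, shedding two residues at $z=\pm p^{1/4}$,'' you are left with the two residues \emph{plus} a surviving one-dimensional unit-circle integral, and the right-hand side of Conjecture~\ref{conj:vanpq} for $n=1$ consists of the two constant terms only; you give no reason for the leftover integral to vanish. The correct mechanism (per the paper's remark) is not a contour deformation but a pinch: in the limit $t\to 1/p$ of Proposition~\ref{prop:univ_tq} the density acquires parameter pairs multiplying to $1$ (namely $\pm t^{1/4}$ against $\pm p^{1/2}t^{1/4}$), the contour condition fails in exactly two ways, the integral diverges, and only the two residues survive at leading order---which forces you to track the divergent normalizing $\Gampq$ factors on both sides of that proposition before the comparison is meaningful. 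Either supply that limiting analysis carefully, or adopt the paper's theta-function argument: reduce in $v$ to the point $q^{-1/2}$ and evaluate via Proposition~\ref{prop:pmq_q}.
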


\begin{proof}
Since $\ell(\blambda)\le 1$ when $n=1$, we may as well take $\blambda=0$.
We thus need simply to prove that when $u_0u_1u_2u_3 = p^{1/2}q^2$,
\begin{align}
\frac{(p;p)(q^2;q^2)}{2}&
\int
\frac{\prod_{0\le r<4} \Gampqq(q^{\pm 1/2}u_r z^{\pm 1})}
     {\Gampqq(z^{\pm 2})}
\frac{\theta_{q^2}(q v z^{\pm 1}) dz}{2\pi\sqrt{-1}z}
\notag\\
&{}=
\frac{
\Gamphq(-1,u_0,u_1,u_2,u_3)
\theta_{q}(q^{1/2} v)
}{
\prod_{0\le r<s<4} \Gampq(p^{1/2}u_ru_s)
}
+
\frac{\Gamphq(-1,-u_0,-u_1,-u_2,-u_3)
\theta_{q}(-q^{1/2} v)
}{
\prod_{0\le r<s<4} \Gampq(p^{1/2}u_ru_s)
}
\end{align}
Each of the three terms is a $BC_1$-symmetric $q^2$-theta function in $v$
of degree 1, and thus the relation will follow if
we check it at any two independent points.  By symmetry under $v\mapsto
-v$, we may reduce to the case $v=q^{-1/2}$, when the left-hand side can be
expressed (via Proposition~\ref{prop:pmq_q}) as
\[
\frac{(q^2;q^2)}{(q;q)}
I^{(0)}(q^{-1/2}u_0,q^{-1/2}u_1,q^{-1/2}u_2,q^{-1/2}u_3,-q^{1/2},-p^{1/2} q^{1/2};p,q).
\]
The proposition follows upon simplifying the resulting product of elliptic
gamma functions.
\end{proof}

\begin{rem}
This can also be obtained as the limit $t\to 1/p$ of
Proposition~\ref{prop:univ_tq}; one finds that the left-hand side of that
Proposition violates the contour conditions in two different ways in the
limit, and thus becomes a sum of two residues, corresponding to the two
terms above.
\end{rem}

The proof of Theorem~\ref{thm:vanmq_ev} carries over, with some additional
subtleties.  

\begin{thm}\label{thm:vanpq_ev}
Conjecture~\ref{conj:vanpq} holds whenever $n\le 3$.
\end{thm}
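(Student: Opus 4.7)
The plan is to mimic the strategy of Theorem~\ref{thm:vanmq_ev}, but with two significant adaptations to handle the sum-of-integrals structure on the right and the auxiliary parameter $v$. As before, reduce first to $\blambda=0$: the consistency of the conjecture under parameter shifts in $t_1,t_2$ (which follows from expanding via the Pieri rule) together with consistency under the relevant homomorphism means that once the identity is known for $\blambda=0$ together with a sufficiently rich family of shifts, it extends to arbitrary $\blambda$ of bounded length; the quantitative version of this is the span argument of Proposition~\ref{prop:vanmt_ev}, which suffices for $n\le 3$.

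Next, dispose of the $v$-dependence. The factor $g(z_1,\dots,z_n)=\prod_i\theta_{q^2}(qvz_i^{\pm 1})$ on the left and its four specializations on the right each lie in, respectively, a $BC_n$- and $BC_{*}$-symmetric space of $q^2$- or $q$-theta functions of degree $1$ in $v$; thus both sides are degree-$1$ $BC_1$-symmetric $q^2$-theta functions in $v$, and it is enough to check the identity at any two points independent modulo the $v\mapsto -v$ symmetry. For generic $v$, the $\blambda=0$ left-hand side is a higher-order elliptic Selberg integral of order $1$ (after absorbing the $v$-factor into the parameters), and thus satisfies a pair of $p$- and $q$-difference equations of second order in $t_0$ arising from the elliptic hypergeometric equation of \cite{SpiridonovVP:2007}, with coefficients that are $p$- (resp.\ $q$-) theta functions. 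Exactly as in Theorem~\ref{thm:vanmq_ev}, the Galois group of the associated system is generically irreducible (degenerate as in that proof to a complete elliptic integral), so the left-hand side is determined up to a $t_0$-independent factor by these two equations.

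It remains to verify that the sum of the two right-hand integrals satisfies the same pair of difference equations, and to match at one point. For the first equation, use the analogue of Spiridonov's linear dependence: shifts $t_0\mapsto qt_0$ and $t_1\mapsto qt_1$ of the $F$-integrand are linear combinations of the unshifted integrand, and by consistency of the conjecture under parameter shifts each such relation lifts to an identical linear relation between the corresponding sums of shifted right-hand integrals (the two terms in each sum shift in parallel, so the linear combination is preserved termwise). For the reverse direction, apply the central element of $W(E_7)$ via Corollary~9.13 of \cite{xforms} to the single integral on the left, and apply equation (9.49) of \cite{xforms} termwise on the right; here the two $E_7$ transformations act compatibly because the two right-hand integrals differ only by the involution $p^{1/2}\mapsto -p^{1/2}$, which commutes with the relevant $E_7$ element. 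Combining the two recurrences, as in Theorem~\ref{thm:vanmq_ev}, produces a full second-order difference equation in $t_0$. For the base case, specialize $t_0$ (for instance to $p^{1/2}q$ or to a value making the left-hand contour pinch) so that the left-hand side reduces to a lower-order integral already matched by a known case, and observe that the corresponding residue calculation on the right-hand side produces exactly the claimed sum; the univariate identity of the preceding proposition is the essential input here.

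The main obstacle, I expect, is the compatibility of the $E_7$ step across the two summands: a priori the central $W(E_7)$ transformation applied to each right-hand integral might not stabilize the form of the sum, and one must check that the relative normalization of the two terms (in particular the gamma-function prefactors in the $n$ even case involving $\Gampqq(pqv^{\pm 2})$ and the like) is exactly right for this compatibility. A secondary nuisance is tracking contour conditions under the parameter shifts, since several of the shifts move poles across the contours of the reduced integrals; the remedy is to work in the analytic continuation provided by the elliptic hypergeometric equation itself, rather than contour-by-contour.
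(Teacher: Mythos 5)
There is a genuine gap, and it is precisely the ``additional subtlety'' that distinguishes this theorem from Theorem~\ref{thm:vanmq_ev}. Your plan is to derive \emph{both} the $q$- and the $p$-difference equations in $t_0$ for the right-hand side by the Spiridonov-style linear-dependence argument combined with consistency of the conjecture under parameter shifts and under the relevant homomorphism. But for Conjecture~\ref{conj:vanpq} that consistency is only available on the $p$-elliptic half of the kernel: as noted in the remark following the conjecture, the $q$-elliptic part of the kernel of the specialization map is \emph{not} an eigenspace of an involution, so the adjointness argument fails there, and one obtains only the $q$-difference equation. Without the $p$-difference equation the Galois-theoretic rigidity argument (Corollary 11 of \cite{dets}) does not apply, and the proof stalls. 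The missing idea is to manufacture the $p$-difference equation indirectly: after specializing $v$ to $q^{-1/2}u_0$ (resp.\ $q^{-3/2}u_0$), shifting $u_0\mapsto p^{-1/2}u_0$ and squaring $p$, both sides become symmetric under permutations of $t_0,t_1,t_2,u_0$ \emph{and} under swapping $p$ and $q$ — for the explicit order-one elliptic Selberg integrals appearing for $n=2,3$ this follows from Proposition~\ref{prop:e7ab} — and this $p\leftrightarrow q$ symmetry converts the known $q$-difference equation into the required $p$-difference equation. The specific choices $v=q^{-1/2}u_0$, $v=q^{-3/2}u_0$ (two independent points, which suffice since each term is a degree-one theta function of $v$) are made exactly so that this symmetry is available; your ``any two points modulo $v\mapsto -v$'' does not secure it.

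Two smaller inaccuracies: for $n$ even the two right-hand integrals have different dimensions ($n/2$ and $n/2-1$), so they are not simply exchanged by $p^{1/2}\mapsto -p^{1/2}$ and your claimed termwise compatibility of the $E_7$ step needs a different justification (the paper instead uses the different quasi-periodicity multipliers of the two terms in $v$ to write each term separately as a combination of two instances of the left-hand side); and the two right-hand terms are $q$-theta functions of $v$ with \emph{different} multipliers, not elements of a single $q^2$-theta space, which is what makes that decomposition possible.
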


\begin{proof}
  As before, we may reduce to the case $\blambda=0$.  In addition, the
  above considerations involving the function $g$ have the effect that the
  two terms on the right-hand side are $q$-theta functions of $v$, but with
  different multipliers.  We may thus use this to write either term on the
  right as a linear combination of two instances of the term on the left.
  Finally, it suffices to consider the case $v=q^{-1/2}u_0$, since the
  $\blambda=0$ case is symmetrical between $u_0$ and the $t_r$ parameters.
  We can then argue as in Theorem~\ref{thm:vanmq_ev} to see that both sides
  satisfy the same elliptic $q$-difference equations.  Since we only have
  consistency with respect to the $p$-elliptic kernel, this does not give
  us the requisite $p$-difference equation to finish the proof.  However,
  if we shift $u_0\mapsto p^{-1/2}u_0$ and square $p$, the integrals on the
  right become symmetrical under permutations of $t_0$, $t_1$, $t_2$, $u_0$
  as well as under swapping $p$ and $q$.  For $n=2$, the desired identity
  becomes
\begin{align}
\prod_{0\le r<s<3}
&
  \frac{\Gamppq(pt_rt_s,ptt_rt_s)}
       {\Gamppq(t_rt_s/q,tt_rt_s/q)}
\II^{(1)}_2(q^{\pm 1/2}t_0,q^{\pm 1/2}t_1,q^{\pm 1/2}t_2,(p^2q)^{\pm
  1/2}u_0;t;p^2,q^2)\notag\\
&{}=
\II^{(1)}_1(t_0,t_1,t_2,u_0,\pm 1,\pm t^{1/2};t;p,q)
+
\Gamppqq(t_0^2,t_1^2,t_2^2,u_0^2,t,t)
\Gampq(-1,-t)
,
\end{align}
(with balancing condition $tt_0t_1t_2u_0=p^2q^2$), while for $n=3$, it
becomes
\begin{align}
\prod_{\substack{0\le i<3\\0\le r<s<3}}
  \frac{\Gamppq(t^i pt_rt_s)}
       {\Gamppq(t^i t_rt_s/q)}&
\II^{(1)}_3(q^{\pm 1/2}t_0,q^{\pm 1/2}t_1,q^{\pm 1/2}t_2,(p^2q)^{\pm 1/2}u_0;t;p^2,q^2)\notag\\
&{}=
\hphantom{{}+{}}
\Gampq(t_0,t_1,t_2,u_0,-1,\pm t^{1/2})
\II^{(1)}_1(t_0,t_1,t_2,u_0,t,-1,\pm t^{1/2};t;p,q)\notag\\
&\hphantom{{}={}}{}+
\Gampq(-t_0,-t_1,-t_2,-u_0,-1,\pm t^{1/2})
\II^{(1)}_1(t_0,t_1,t_2,u_0,1,-t,\pm t^{1/2};t;p,q),
\end{align}
with balancing condition $t^2t_0t_1t_2u_0=p^2q^2$.  The corresponding
symmetries of the left-hand side follow from Proposition~\ref{prop:e7ab},
as do the relevant symmetries when $v=q^{-3/2}u_0$.  Thus, the fact that
both sides satisfy the same $p$-difference equations implies that both sides
satisfy the same $q$-difference equations, and the identity follows.
\end{proof}

\begin{rem}
The extension to $t=p$, $t=q$ can also be made to work; if one
represents the left-hand side as a ``pfaffian'' of $n=2$ and $n=1$
instances, then the transform is a sum of $2^{\lceil n/2\rceil}$
``pfaffians''.  Most of these vanish, however, and the survivors are all
proportional to one of the two terms of the right-hand side.  (When
$\lambda=0$ so the ``pfaffian'' is actually a pfaffian, the corresponding
alternating matrix is a sum of two alternating matrices, one of rank 2.)
\end{rem}

%
%

When $t_1=-p^{1/2}q$, $v=q^{1/2}/t_2$, one of the two terms vanishes, since
$\Gamma(p^{1/2}q;p^{1/2},q)=0$, and we thus obtain (after an application of
Conjecture~\ref{conj:littbig_dual}) the following vanishing conjecture:
that when $t^{n-1}t_0t_1u_0=-p^{1/2}q$, the integral
\[
\langle
\tcR^{(n)}_{\blambda}(\dots,p^{1/4}z_i^{\pm 1},\dots;p^{1/4}t_0{:}p^{-1/4}t_0,p^{1/4}t_1,p^{-1/4}t_1;p^{1/4}u_0,p^{-1/4}u_0/q;t;p,q)
\rangle^{(n/2)}_{t_0,t_1,u_0,1,t^{1/2},-t^{1/2};t;p^{1/2},q}
\]
or
\[
\langle
\tcR^{(n)}_{\blambda}(\dots,p^{1/4}z_i^{\pm 1},\dots,p^{1/4};p^{1/4}t_0{:}p^{-1/4}t_0,p^{1/4}t_1,p^{-1/4}t_1;p^{1/4}u_0,p^{-1/4}u_0/q;t;p,q)
\rangle^{((n-1)/2)}_{t_0,t_1,u_0,t,t^{1/2},-t^{1/2};t;p^{1/2},q},
\]
as appropriate, vanishes unless $\blambda=(1,2)\bmu$, when its value is
given by \eqref{eq:nonzero_q}.

Similarly, in the limit $t_1,t_2\to q^{1/2}v^{\pm 1}$, the $n$-dimensional
integral degenerates to a (dual) Littlewood-style sum, and an application
of connection coefficients gives the conjecture that when
$t^{n-1}t_0u_0=p^{1/2}q$, the integrals
\begin{align}
&\frac{1}{2}
\langle
\cR^{*(n)}_{\blambda}(\dots,p^{1/4}z_i^{\pm 1},\dots;p^{-1/4}t_0,p^{1/4}u_0;t;p,q)
\rangle^{(n/2)}_{t_0,u_0,1,-1,t^{1/2},-t^{1/2};t;p^{1/2},q}
\notag\\
{}+{}&
\frac{1}{2}
\langle
\cR^{*(n)}_{\blambda}(\dots,p^{1/4}z_i^{\pm 1},\dots,\pm p^{1/4};p^{-1/4}t_0,p^{1/4}u_0;t;p,q)
\rangle^{(n/2-1)}_{t_0,u_0,t,-t,t^{1/2},-t^{1/2};t;p^{1/2},q}
\end{align}
and
\begin{align}
&\frac{1}{2}
\langle
\cR^{*(n)}_\blambda(\dots,p^{1/4}z_i^{\pm 1},\dots,p^{1/4};p^{-1/4}t_0,p^{1/4}u_0;t;p,q)
\rangle^{((n-1)/2)}_{t_0,u_0,t,-1,t^{1/2},-t^{1/2};t;p^{1/2},q}
\notag\\
{}+{}&
\frac{1}{2}
\langle
\cR^{*(n)}_\blambda(\dots,p^{1/4}z_i^{\pm 1},\dots,-p^{1/4};p^{-1/4}t_0,p^{1/4}u_0;t;p,q)
\rangle^{((n-1)/2)}_{t_0,u_0,1,-t,t^{1/2},-t^{1/2};t;p^{1/2},q}
\end{align}
vanish unless $\blambda=(1,2)\bmu$, when they have value
\[
\frac{\Delta_{     \bmu}(q/u_0^2|t^{n},t^{n-1};t;p,q^2)}
     {\Delta_{(1,2)\bmu}(q/u_0^2|t^{n},t^{n-1};t;p,q)}.
\]
Taking $t_0=p^{1/4}a$ and $p\to 0$ turns the interpolation functions into
Macdonald polynomials, and one again obtains a result of \cite{vanish} (the
dual of Theorem 4.1 op.~cit.).  When $q=t$, the Macdonald polynomials
become Schur functions, and one obtains the well-known
representation-theoretic fact that the Haar integral
\[
\int_{O\in O(n)} s_\lambda(O)
\]
vanishes unless $\lambda=2\mu$, when it equals 1.

\begin{rem}
  One might think to obtain the interpolation function case by a limit of
  the biorthogonal function case (as this works in the other cases).
  However, the interpolation function limit only works when the parameters
  are otherwise generic; in this instance it fails when $\ell(\blambda)=n$, as
  the biorthogonal function becomes singular (the first two parameters
  multiply to 1).
\end{rem}

\medskip If we swap $p$ and $q$ above, we find that
Conjecture~\ref{conj:vanmq} becomes self-dual, while
Conjectures~\ref{conj:vantq} and \ref{conj:vanpq} become dual to each
other.  However, we now have the possibility again of modular
transformations.  Given the lack of guidance from the trigonometric level,
the resulting conjectures are rather more speculative than those above.
The overall form of the integrals is fairly straightforward to determine,
especially since in each case the normalization without extra parameter
reduces via Proposition~\ref{prop:e7ab} to a previously conjectured
normalization.  The $\blambda$-dependent factors are then uniquely
determined by the requirement of consistency under the Pieri identity (more
precisely, that the obvious argument for consistency should work, as it did
in all previous cases).

For Conjecture~\ref{conj:vanmq}, one obtains the following transform, of
which the case $n=1$ is straightforward.

\begin{conjQ}[$p^{1/2},-1$]\label{conj:vanpm} 
For otherwise generic parameters satisfying
$t^{n-1}t_0t_1t_2u_0=p^{1/2}q$,
the integral
\begin{align}
&\frac{\Delta^0_{\blambda}(t^{n-1}t_0/u_0|p^{1/2} t^{n-1}t_0t_1;t;p,q)}
     {\Delta^0_{\blambda}(t^{n-1}t_0/u_0|t^{n-1}t_0t_1;t;p,q)}
\prod_{0\le i<n}\prod_{0\le r<s<3}
  \frac{\Gampq(t^i p^{1/2}t_rt_s)}{\Gampq(t^it_rt_s)}\notag\\
&\quad\times\int
\cR^{*(n)}_\blambda(\dots,z_i^2,\dots;-t_0,-u_0;t;p,q)
\Delta^{(n)}(;\pm \sqrt{-t_0},\pm \sqrt{-t_1},\pm \sqrt{-t_2},\pm \sqrt{-u_0},
               p^{1/2}q^{1/4}v^{\pm 1};t^{1/2};p^{1/2},q^{1/2})
\end{align}
is equal to
\[
\int
\cR^{*(n)}_\blambda(\dots,p^{1/4}z_i^{\pm 1},\dots;p^{1/4}t_0,p^{1/4}u_0;t;p,q)
\Delta^{(n/2)}(;t_0,t_1,t_2,u_0,1,q^{1/2},t^{1/2},q^{1/2}t^{1/2},
                -p^{1/2}q^{1/2}v^{\pm 2};t;p^{1/2},q)
\]
if $n$ is even, and
\begin{align}
&\Gamphq(t_0,t_1,t_2,u_0,q^{1/2},t^{1/2},q^{1/2}t^{1/2},-p^{1/2}q^{1/2}v^{\pm
  2})\notag\\
&\qquad\qquad
\times\int
\cR^{*(n)}_\blambda(\dots,p^{1/4}z_i^{\pm
  1},\dots,p^{1/4};p^{1/4}t_0,p^{1/4}u_0;t;p,q)\notag\\
&\phantom{\qquad\qquad\times\int}\qquad\qquad\times
\Delta^{((n-1)/2)}(;t_0,t_1,t_2,u_0,t,q^{1/2},t^{1/2},q^{1/2}t^{1/2},
                    -p^{1/2}q^{1/2}v^{\pm 2};t;p^{1/2},q)
\end{align}
if $n$ is odd.
\end{conjQ}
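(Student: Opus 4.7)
The plan is to follow the pattern established for the companion conjectures in this section (Propositions~\ref{prop:vanmt_ev}, \ref{prop:adj_m}, and Theorem~\ref{thm:vanpq_ev}): namely, verify enough structural consistency to reduce the transformation to a short list of base cases, then establish those base cases directly. First I would check that Conjecture~\ref{conj:vanpm} is consistent with respect to the connection-coefficient expansion swapping $t_0$ with $t_1$, with the substitution $\blambda \mapsto (l,m)^n + \blambda$, and with permutations of $\{t_0,t_1,t_2\}$. As in the preceding conjectures, each of these reductions amounts to matching a Pieri-type expansion on the two sides, using the conjecture for the shifted parameters; the $\Delta^0_\blambda$ prefactor has been normalized so as to make the matching term-by-term.

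Next I would establish the consistency coming from the specialization homomorphism $f \mapsto f(\dots,p^{1/4}z_i^{\pm 1},\dots)$ in the even case, and $f \mapsto f(\dots,p^{1/4}z_i^{\pm 1},\dots,p^{1/4})$ in the odd case. Since the $\blambda$-indexed factor on the right absorbs constants, this reduces (exactly as in the proofs of Propositions~\ref{prop:adj_t}, \ref{prop:adj_q}, and~\ref{prop:adj_m}) to showing that a composed difference operator
\[
\int \bigl({\cal D}^{+(n)}_{?}(\dots)f\bigr)(\dots,z_i^2,\dots)\,
\Delta^{(n)}\bigl(;\pm\sqrt{-t_0},\dots,p^{1/2}q^{1/4}v^{\pm 1};t^{1/2};p^{1/2},q^{1/2}\bigr)
\]
is invariant under $v\mapsto pq/v$. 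I expect this to follow from the instance $(p,q,t)\mapsto(p^{1/2},q^{1/2},t^{1/2})$ of Lemma~\ref{lem:diff_commut}, combined with a dimension-change step using Theorem~\ref{conj:int_commut}.

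The base case $n=1$ is tractable: a univariate interpolation function is independent of $t$, so both integrals reduce to elliptic beta integrals, and after clearing the $v$-dependence (which enters both sides through a one-dimensional family of $BC$-symmetric theta functions, so it suffices to check at two points) the identity becomes a direct consequence of Proposition~\ref{prop:pmq_m} (in the odd case) or of its double-root degeneration (in the even case). The cases $t=1$ follows from $n=1$ together with consistency under $\blambda\mapsto(l,m)^n+\blambda$; the cases $t\in\{p,q\}$ can then be lifted by the (anti)symmetrization/pfaffian argument spelled out after Conjecture~\ref{conj:littbig_dual} and used in Theorem~\ref{thm:vanpq_ev}, exploiting that the cross-term factor $\prod_{i<j}\theta_{p^{1/2}}(z_iz_j^{\pm 1})/\theta_{p^{1/2}}(-z_iz_j^{\pm 1})$ is itself a pfaffian so that each of the $2^{\lceil n/2\rceil}$ resulting products of $n=1,2$ pieces reduces to the already established low-dimensional identity.

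The final tier would be the $\blambda=0$, small-$n$ cases ($n\le 3$), attacked via the difference-equation method of Theorem~\ref{thm:vanpq_ev}: the $E_7$ transformation of \cite{xforms} supplies a pair of elliptic difference equations in $t_0$ satisfied by the right-hand side, whose generic irreducibility (seen by degeneration to a complete elliptic integral) will force a unique solution up to a $t_0$-independent factor, leaving only the task of showing that the left-hand side satisfies the same equations (via the integrand linear dependence together with Corollary~9.13 of \cite{xforms} applied under the $p^{1/2}$-twist). The main obstacle is the modular twist: because the inner integrand is with respect to $(p^{1/2},q^{1/2})$ rather than $(p,q)$, the symmetry available on the right is smaller than in Theorem~\ref{thm:vanpq_ev}, and Proposition~\ref{prop:e7ab} only gives one of the two difference equations needed. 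Closing the loop for general $t$ outside $\{1,p,q\}$ will therefore likely require importing an additional transformation beyond the presently proved $B_3\times G_2$ action, presumably from the conjectural $F_4\times G_2$ symmetry alluded to after Proposition~\ref{prop:e7ab}; absent that, the proposal yields the conjecture only in the partial cases listed above.
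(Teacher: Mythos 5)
Your scaffolding (consistency under connection coefficients, under $\blambda\mapsto(l,m)^n+\blambda$ and Pieri shifts, the adjointness argument for the specialization homomorphism, the $n=1$ base case via Proposition~\ref{prop:pmq_m}, and the pfaffian reduction for $t^{1/2}\in\{\pm p^{1/2},\pm q^{1/2}\}$) matches the uniform strategy the paper applies to all of the ``Q'' conjectures, and is fine as far as it goes. But for the substantive partial result --- the $n\le 3$ case, which is what the paper actually establishes for this conjecture --- you propose to rerun the difference-equation machinery of Theorem~\ref{thm:vanmq_ev}/\ref{thm:vanpq_ev} from scratch, correctly observe that you would only get one of the two needed difference equations, and conclude that you would have to import something from the conjectural $F_4\times G_2$ extension. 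That obstacle is an artifact of your route, not of the problem.

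The step you are missing is that no new difference-equation argument is needed at all: Conjecture~\ref{conj:vanpm} is the $(p^{1/2},-1)$ partner of Conjecture~\ref{conj:vanmq} under exactly the symmetry already proved in Proposition~\ref{prop:e7ab}. Concretely, one reduces to $\blambda=0$ by the span argument of Proposition~\ref{prop:vanmt_ev}, reduces the extra parameter to the single value $v=q^{-1/4}\sqrt{-u_0}$ (the $v$-dependence lives in a space of degree-one theta functions and the $\blambda=0$ integrals are symmetric in $t_0,t_1,t_2,u_0$), and rescales $u_0\mapsto p^{-1/2}u_0$; the resulting explicit quadratic transformations of $\II_2$ and $\II_3$ are then precisely the images under Proposition~\ref{prop:e7ab} (the $a\leftrightarrow b$ swap in $F_n$, i.e.\ within the proved $B_3\times G_2$ action) of the identities established in Theorem~\ref{thm:vanmq_ev}. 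This two-line transfer gives the full $n\le 3$ case, and hence (by your own pfaffian reduction) the cases $t^{1/2}\in\{\pm p^{1/2},\pm q^{1/2}\}$, with no appeal to $F_4\times G_2$.
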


\begin{rem}
Once again, the factor
\[
\prod_{1\le i\le n}
\Gamphqh(p^{1/2}q^{1/4}v^{\pm 1}z_i^{\pm 1})
=
\prod_{1\le i\le n} \theta_{q^{1/2}}(q^{1/4}v z_i^{\pm 1})
\]
on the left can be replaced by an arbitrary $BC_n$ $q^{1/2}$-theta function
$g$ of degree 1, in which case the extra factor on the right is
\[
g(\pm \sqrt{-z_1},\dots,\pm\sqrt{-z_{n/2}}),\quad\text{or}\quad
g(\pm \sqrt{-z_1},\dots,\pm\sqrt{-z_{(n-1)/2}},\sqrt{-1}),
\]
as appropriate.  The factor on the right vanishes iff
\[
g(-z_1,\dots,-z_n) = -g(z_1,\dots,z_n).
\]
\end{rem}

\begin{rem}
In fact, algebraically speaking, there is another modular transformation,
since this conjecture depends on an ordered pair of $2$-torsion points
(namely $-1$ and $p^{1/2}$), so there is a ``vanishing'' conjecture associated
to the pair $(\pm p^{1/2})$.  However, this conjecture does not appear
amenable to extension to a full integral.
\end{rem}

\begin{thm}
Conjecture~\ref{conj:vanpm} holds for $n\le 3$.
\end{thm}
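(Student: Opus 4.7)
The plan is to follow the strategy of Theorem~\ref{thm:vanpq_ev} very closely, since the two sides of Conjecture~\ref{conj:vanpm} play roles entirely analogous to those in Conjecture~\ref{conj:vanpq}. First, consistency of the conjecture under $\blambda\mapsto (l,m)^n+\blambda$ and under shifts in $t_1,t_2$ (proved as in the discussion after Conjecture~\ref{conj:vanmt}) reduces the problem for $n\le 3$ to $\blambda=0$; here the shift-images of interpolation functions generically span the full space of test functions, just as in Proposition~\ref{prop:vanmt_ev}. In the $\blambda=0$ case each side becomes, as a function of the extra parameter $v$, a $BC_1$-symmetric $q^{1/2}$-theta function of degree $1$ once the $v$-dependent factor is replaced by an arbitrary $BC_n$-symmetric $q^{1/2}$-theta function of degree $1$, as described in the remark after the conjecture, so it suffices to verify the identity at two independent values of $v$, with the $v\mapsto -v$ symmetry cutting this in half.

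The case $n=1$ reduces, after specializing $v$, to a univariate elliptic beta integral identity following from Proposition~\ref{prop:pmq_m} together with the reflection equation for $\Gampq$, in exact parallel with the $n=1$ case of Conjecture~\ref{conj:vanpq}. For $n=2$ and $n=3$, the $\blambda=0$ identity, after fixing $v$ to a suitable special value that produces parameter cancellations in the second integrand, becomes an equation between functions $F(t_0,t_1,t_2)$ and $G(t_0,t_1,t_2)$ of the three remaining free parameters, constrained by $t^{n-1}t_0t_1t_2u_0=p^{1/2}q$. I would then show that $F$ and $G$ satisfy a common pair of elliptic difference equations in $t_0$; the uniqueness argument of \cite{dets}, via generic irreducibility of the Galois group propagated to a classical complete-elliptic limit as in \cite{AndreY:2001}, will then force $F/G$ to be a function of $t_1,t_2$ alone, and this ratio is pinned down by evaluating at a configuration in which both sides degenerate to a known common value.

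The $q$-elliptic difference equation should be comparatively easy: linear dependence among the left-hand integrands for parameter triples $(t_0,t_1,t_2)$, $(qt_0,t_1,t_2)$ and $(t_0,qt_1,t_2)$ transports term by term to the right-hand side by the Pieri-based consistency argument, and a reverse-direction relation coming from an $E_7$ transformation preserving the shape of the integrals then promotes this to a genuine $q$-difference equation.

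The main obstacle, just as in Theorem~\ref{thm:vanpq_ev}, is the $p$-elliptic difference equation, since the kernel of the $v$-specialization homomorphism is controlled only by its $p$-elliptic half. I would handle this by performing the substitution $u_0\mapsto p^{-1/2}u_0$ and squaring $p$: after this reparametrization both sides of the conjecture should become symmetric in the parameters $\{t_0,t_1,t_2,u_0\}$ and admit a hidden $p\leftrightarrow q$ exchange originating from Proposition~\ref{prop:e7ab}, and this hidden symmetry will convert the already-established $q$-difference equation into the missing $p$-difference equation. Once both difference equations are in hand, the uniqueness argument completes the proof for $n=2$ and $n=3$.
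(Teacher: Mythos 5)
Your opening reductions match the paper: reduce to $\blambda=0$ (the span argument for $n\le 3$), fix the extra theta-function parameter at a single convenient value of $v$ (the paper takes $v=q^{-1/4}\sqrt{-u_0}$), and rescale $u_0\mapsto p^{-1/2}u_0$. But from that point on you have missed the observation that makes the theorem essentially free, and the machinery you propose in its place is not justified. After the specialization and rescaling, the $n=2$ and $n=3$ cases become identities between order-one elliptic Selberg integrals ($\II_2$ and $\II_3$ in base $(t^{1/2};p^{1/2},q^{1/2})$ on one side, $\II_1$ in base $(t;p^{1/2},q)$ on the other), and these are precisely the images under the $F_n$-symmetry of Proposition~\ref{prop:e7ab} of the identities already established in Theorem~\ref{thm:vanmq_ev}. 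No new difference-equation analysis is required; the whole content of the proof is recognizing this $W(E_7)$-equivalence.

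The route you propose instead — rerunning the Galois-theoretic argument of Theorems~\ref{thm:vanmq_ev} and \ref{thm:vanpq_ev} — is left entirely at the level of assertion at its critical points, and at least one of those assertions is doubtful. The ``hidden $p\leftrightarrow q$ exchange'' that rescues the proof of Theorem~\ref{thm:vanpq_ev} works there because Conjectures~\ref{conj:vanmq} and \ref{conj:vanpq} are each other's $p$--$q$ swaps, so after reparametrization the right-hand integrands literally acquire a $p\leftrightarrow q$ symmetry. Conjecture~\ref{conj:vanpm} (label $(p^{1/2},-1)$) is not self-paired in this way: its left-hand side is an order-one integral with base $(p^{1/2},q^{1/2})$ and half-period parameters, its right-hand side has base $(p^{1/2},q)$, and neither becomes $p\leftrightarrow q$ symmetric after $u_0\mapsto p^{-1/2}u_0$. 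So your mechanism for producing the missing second difference equation does not apply, and without it the uniqueness argument of \cite{dets} cannot be invoked. You would also still need to exhibit a shape-preserving $E_7$ relation reversing the direction of the shifts and a base-point evaluation, neither of which you supply. In short: the result is true and your reductions are sound, but the core of your argument is a gap; the correct finish is to quote Theorem~\ref{thm:vanmq_ev} and transport it by Proposition~\ref{prop:e7ab}.
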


\begin{proof}
As usual, we may reduce to the case $\blambda=0$, and it will suffice to
consider the case $v=q^{-1/4}\sqrt{-u_0}$.  If we rescale $u_0\mapsto
p^{-1/2}u_0$, we find that we need to prove the identities
\begin{align}
\prod_{\substack{0\le i<2\\0\le r<s<3}}
&
  \frac{\Gampq(t^i p^{1/2}t_rt_s)}{\Gampq(t^it_rt_s)}
\II_2(\pm \sqrt{-t_0},\pm \sqrt{-t_1},\pm \sqrt{-t_2},p^{1/4}\sqrt{-u_0}
             ,-p^{-1/4}\sqrt{-u_0};t^{1/2};p^{1/2},q^{1/2})\notag\\
&=
\II_1(t_0,t_1,t_2,u_0,1,q^{1/2},t^{1/2},q^{1/2}t^{1/2};t;p^{1/2},q)
\end{align}
with $tt_0t_1t_2u_0=pq$, and
\begin{align}
\prod_{\substack{0\le i<3\\0\le r<s<3}}
&  \frac{\Gampq(t^i p^{1/2}t_rt_s)}{\Gampq(t^it_rt_s)}
\II_3(\pm \sqrt{-t_0},\pm \sqrt{-t_1},\pm \sqrt{-t_2},p^{1/4}\sqrt{-u_0}
             ,-p^{-1/4}\sqrt{-u_0};t^{1/2};p^{1/2},q^{1/2})\notag\\
&=
\Gamphq(t_0,t_1,t_2,u_0,q^{1/2},t^{1/2},q^{1/2}t^{1/2})
\II_1(t_0,t_1,t_2,u_0,t,q^{1/2},t^{1/2},q^{1/2}t^{1/2};t;p^{1/2},q)
\end{align}
with $t^2t_0t_1t_2u_0=pq$.  But these identities follow from
Theorem~\ref{thm:vanmq_ev} by Proposition~\ref{prop:e7ab}.
\end{proof}

\begin{rem}
  Again, this implies the pfaffian cases $t^{1/2}\in \{\pm p^{1/2},\pm
  q^{1/2}\}$.
\end{rem}

The corresponding ``vanishing'' result states that the integral
\[
\langle
\tcR^{(n)}_{\lambda}(\dots,p^{1/4}z_i^{\pm 1},\dots;p^{1/4}t_0{:}p^{-1/4}t_0,p^{1/4}t_1,p^{-1/4}t_1;p^{1/4}u_0,p^{-1/4}u_0;t;p,q)
\rangle^{(n/2)}_{t_0,t_1,u_0,1,t^{1/2},q^{1/2}t^{1/2};t;p^{1/2},q}
\]
or
\[
\langle
\tcR^{(n)}_{\lambda}(\dots,p^{1/4}z_i^{\pm 1},\dots,p^{1/4};p^{1/4}t_0{:}p^{-1/4}t_0,p^{1/4}t_1,p^{-1/4}t_1;p^{1/4}u_0,p^{-1/4}u_0;t;p,q)
\rangle^{((n-1)/2)}_{t_0,t_1,u_0,t,t^{1/2},q^{1/2}t^{1/2};t;p^{1/2},q},
\]
as appropriate, evaluates to
\[
\frac{\Delta_{\blambda}(-1/u_0|t^{n/2},t^{(n-1)/2}t_0,\pm (t^{n-1}t_0u_0)^{-1/2};t^{1/2};p^{1/2},q^{1/2})}
     {\Delta_{\blambda}(1/u_0^2|t^n,t^{n-1}t_0^2,1/t^{n-1}t_0u_0,1/t^{n-1}t_0u_0;t;p,q)}.
\label{eq:nonzero_m}
\]
Again, for $n=1$, this is a known quadratic evaluation
\cite[(1.4)]{WarnaarSO:2005}.  If one sets $t_0=\sqrt{q}$, the biorthogonal
function becomes an interpolation polynomial; taking $t_1,u_0\sim p^{1/4}$
and $p\to 0$ gives a conjecture for Macdonald polynomials which for $n$
even reads
\begin{align}
\frac{1}{Z}
\int
P_\lambda(\dots,z_i^{\pm 1},\dots;q,t)
\prod_{1\le i<j\le n/2}
  \frac{(z_i^{\pm 1}z_j^{\pm 1};q)}{(t z_i^{\pm 1}z_j^{\pm 1};q)}
\prod_{1\le i\le n/2}
  \frac{(-z_i^{\pm 1};q^{1/2})}{(t^{1/2}z_i^{\pm 1};q^{1/2})}
  \frac{dz_i}{2\pi\sqrt{-1}z_i}\qquad\qquad &\notag\\
{}=
\frac{C^0_{\lambda}(t^{n/2};q^{1/2},t^{1/2})C^-_\lambda(-q^{1/2};q^{1/2},t^{1/2})}
{C^0_\lambda(-q^{1/2}t^{(n-1)/2};q^{1/2},t^{1/2})C^-_\lambda(t^{1/2};q^{1/2},t^{1/2})}.&
\end{align}
(For a proof in the special case $q=0$, see
\cite[Cor.~6.4]{VenkateswaranV:2010}.)  If we replace the integrand by the
right-hand side of the Cauchy identity, then take the limit $n\to\infty$,
this again becomes Kawanaka's conjecture.  The case $t^{1/2}=-q^{1/2}$ of
the Macdonald polynomial conjecture is also of interest, as it gives the
well-known identity
\[
\int_{O\in O(n)} \det(1+O)s_\lambda(O) = 1.
\]
\medskip

The conjecture obtained from Conjecture~\ref{conj:vanpq} in the
corresponding way is the same, except with $p$ and $q$ swapped.  (The fact
that this changes a sum of two integrals to a single integral should not be
a concern, since after all Conjectures~\ref{conj:vanmq} and
\ref{conj:vanpq} are each other's modular transforms.) We thus have only
one more transform to consider, namely that obtained from
Conjecture~\ref{conj:vantq}.

\begin{conjQ}[$t^{-1/2},-1$]\label{conj:vantm}
Subject to the balancing conditions
$t^{n-1} t_0t_1t_2u_0=pqt^{1/2}$, $v_0v_1=p^{1/2}q^{1/2}/t^{1/2}$, one has
\begin{align}
&\int
\cR^{*(n)}_{\blambda}(;t^{-1/4}t_0,t^{-1/4}u_0;t;p,q)\notag\\
&\phantom{\int}
\times\Delta^{(n)}(;t^{-1/4}t_0,t^{-1/4}t_1,t^{-1/4}t_2,t^{-1/4}u_0,
              t^{1/4},p^{1/2}t^{1/4},q^{1/2}t^{1/4},p^{1/2}q^{1/2}t^{1/4},
              -t^{1/4}v_0^2,-t^{1/4}v_1^2;t^{1/2};p,q)\notag\\
{}={}&
\frac{\Delta^0_{\blambda}(t^{n-1}t_0/u_0|t^{n-1}t^{-1/2}t_0t_1;t;p,q)}
     {\Delta^0_{\blambda}(t^{n-1}t_0/u_0|t^{n-1}t_0t_1;t;p,q)}
\prod_{0\le i<n}\prod_{0\le r<s<3}
\frac{\Gampq(t^{i-1/2}t_rt_s)}{\Gampq(t^i t_rt_s)}
\notag\\
&\times\int
\cR^{*(n)}_\blambda(\dots,z_i^2,\dots;-t_0,-u_0;t;p,q)
\Delta^{(n)}(;\pm \sqrt{-t_0},\pm \sqrt{-t_1},\pm \sqrt{-t_2},\pm \sqrt{-u_0},
              v_0,v_1
;t^{1/2};p^{1/2},q^{1/2}).
\end{align}
\end{conjQ}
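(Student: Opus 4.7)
The plan is to mirror the template used for the companion conjectures in this section, especially the proof of Theorem~\ref{thm:vanpq_ev} for Conjecture~\ref{conj:vanpq}, since the transformation type $(t^{-1/2},-1)$ is the missing modular cousin of the already-handled $(t^{-1/2},q^{1/2})$ transform. The first step is to establish the formal consistencies of the proposed identity. As for the other conjectures, these should be: invariance under connection-coefficient shifts in $t_0,t_1,t_2$ via \cite[Cor.~4.14]{bctheta}; invariance under $\blambda\mapsto(l,m)^n+\blambda$; and compatibility with Pieri shifts in $t_1,t_2$. The extra parameters $v_0,v_1$ are constrained by $v_0v_1=p^{1/2}q^{1/2}/t^{1/2}$ and enter each integrand as a single $BC_n$-symmetric theta factor of degree one; consistency under replacing this factor by an arbitrary such theta function (and hence under the full action of the Pieri operators) should reduce to an adjointness between the two integral operators. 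I expect that adjointness to be an instance of the same family exploited in Propositions~\ref{prop:adj_t}, \ref{prop:adj_q}, and~\ref{prop:adj_m}, hence reducible to Lemma~\ref{lem:diff_commut} together with Theorem~\ref{conj:int_commut}.

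Next I would establish the base and easy special cases. For $n=1$, $\ell(\blambda)\le 1$, so the connection-coefficient expansion reduces the identity to the case $\blambda=0$, which after exchanging order of integration in the natural double integral evaluates via the univariate elliptic beta integral; this also implies the case $t=1$. The cases $t=p$ and $t=q$ should follow by the generalized-pfaffian argument already used for Conjectures~\ref{conj:vanmt} and~\ref{conj:vanpq}: upon specializing $t$ the relevant half of $\cR^{*(n)}_\blambda$ becomes a ratio of determinants, so each integral decomposes as a pfaffian whose entries are instances with $n\le 2$. Similarly, the cases $t^{n-1}t_0u_0v_0v_1/pq\in\{1,1/p,1/q,t\}$ should follow directly from the integral equation \cite[(8.12)]{xforms} and the difference equation \cite[(8.11)]{xforms} combined with Theorem~\ref{conj:int_commut}, by the same template as Proposition~\ref{prop:adj_t}.

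The substantive step, and the one I would attack by analogy with Theorem~\ref{thm:vanpq_ev}, is the extension to $n\le 3$ and general $\blambda$. First reduce to $\blambda=0$ via the consistency arguments above (noting that the span of parameter shifts is sufficiently large only for $n\le 3$, just as in Proposition~\ref{prop:vanmt_ev}). Both sides, viewed as functions of $t_0$ with $u_0$ eliminated by the balancing condition, should satisfy a common elliptic $p$- and $q$-difference equation in $t_0$, obtained from three-term linear dependences of their integrands in the Spiridonov style of \cite{SpiridonovVP:2007}. The main obstacle, as in Theorems~\ref{thm:vanmq_ev} and~\ref{thm:vanpq_ev}, is that the adjointness argument gives direct access only to one of the two elliptic halves of the kernel structure and hence only to one of the two difference equations. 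To circumvent this I would use Proposition~\ref{prop:e7ab} to rewrite both integrals in a form that is manifestly symmetric under exchanging $p$ and $q$, or more precisely under a suitable rescaling $u_0\mapsto p^{-1/2}u_0$ combined with squaring $p$, and then read off the complementary difference equation from the first one by this symmetry. Irreducibility of the monodromy of the resulting equation, and hence uniqueness of its solution up to a factor independent of $t_0$, would be verified as in Theorem~\ref{thm:vanmq_ev} by degenerating to a classical hypergeometric equation of known irreducible type; matching the two sides at the single easy specialization $t_0=p^{1/2}q$ (where both sides collapse by the balancing condition) would then complete the argument.
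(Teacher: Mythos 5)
This statement is one of the paper's \emph{conjectures}, not a theorem: the paper never proves Conjecture~\ref{conj:vantm} in general. What it does establish is the $n=1$ case (Proposition~\ref{prop:univ_tm}, by exchanging the order of integration in a double integral, exactly as in your second paragraph), and it cites Van de Bult for the $\blambda=0$ case, from which the $n\le 3$ and pfaffian cases follow ``by the usual considerations.'' Your proposal, even if every step were carried out, would likewise only reach these special cases: the formal consistencies, the $n=1$ and $t\in\{1,p,q\}$ cases, and a difference-equation argument that you yourself restrict to $n\le 3$ (correctly, since the span of parameter shifts is only large enough there). Nothing in the plan addresses general $n>3$ with general $t$, so this is not a proof of the statement; it is a program for recovering the known evidence.

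Two of the intermediate steps also need repair. First, the cases ``$t^{n-1}t_0u_0v_0v_1/pq\in\{1,1/p,1/q,t\}$'' are imported from Propositions~\ref{prop:adj_t}--\ref{prop:adj_m}, which concern the L~conjectures, where $v_0,\dots,v_3$ are free density parameters balanced against $t_0u_0$. In the Q~conjectures the roles are different: here $v_0v_1$ is pinned to $p^{1/2}q^{1/2}/t^{1/2}$ by its own balancing condition and the pair enters only through a single $BC_n$-symmetric theta factor of degree one, so there is no analogous one-parameter family of ``adjointness'' specializations to exploit. Second, for the $n\le 3$ step you propose to manufacture the missing second difference equation by the $u_0\mapsto p^{-1/2}u_0$, $p\mapsto p^2$ symmetrization used in Theorem~\ref{thm:vanpq_ev}; that trick relies on the specific $(p^{1/2},q^{1/2})$ structure of Conjecture~\ref{conj:vanpq} (both sides become symmetric in $p$ and $q$ after the rescaling), and it is not clear that the $(t^{-1/2},-1)$ transform admits a comparable symmetrization — the paper conspicuously does not attempt a difference-equation proof for this case and instead defers to \cite{vandeBultFJ:2011}. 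You would need to either verify that symmetrization explicitly or find a genuinely new source for the complementary equation.
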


\begin{prop}\label{prop:univ_tm}
Conjecture~\ref{conj:vantm} holds for $n=1$.
\end{prop}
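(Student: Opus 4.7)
The plan is to follow the blueprint of the proof of Proposition~\ref{prop:univ_tq}, adapted to the modular partner pair $(t^{-1/2},-1)$ in place of $(t^{-1/2},q^{1/2})$. First, since $n=1$ forces $\ell(\blambda)\le 1$ and both sides of Conjecture~\ref{conj:vantm} are consistent under $\blambda\mapsto(l,m)+\blambda$ (via the connection coefficient formula \cite[Cor.~4.14]{bctheta} and the same $\Delta^0$-manipulations already used in the proof of Theorem~\ref{thm:genlitt}), it suffices to verify the case $\blambda=0$.  There the interpolation functions $\cR^{*(1)}_0$ trivialize to $1$, and the identity reduces to an equality between two univariate order-$2$ elliptic beta integrals, one with base $(p,q)$ (and the ten parameters $t^{-1/4}t_r,t^{-1/4}u_0,t^{1/4},p^{1/2}t^{1/4},q^{1/2}t^{1/4},p^{1/2}q^{1/2}t^{1/4},-t^{1/4}v_0^2,-t^{1/4}v_1^2$) and the other with base $(p^{1/2},q^{1/2})$ (and the ten parameters $\pm\sqrt{-t_r},\pm\sqrt{-u_0},v_0,v_1$).

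I would then realize this equality as the two orders of integration of a single double integral, exactly as in Proposition~\ref{prop:univ_tq}. The kernel there was $\Gampq(q^{-1/2}t^{1/4}x^{\pm 1}y^{\pm 1})$, encoding the $2$-torsion point $q^{1/2}$; for the present proposition, the natural modular partner is a kernel built from the $2$-torsion point $-1$, of the form $\Gampq(\sqrt{-1}\,\alpha\, x^{\pm 1}y^{\pm 1})$ (or equivalently, by the doubling $\Gamphqh(w,-w)=\Gamma_{p^{1/2},q}(w^2)$, a $\Gamphqh$-type kernel on $y$-variables linking to $\Gampq$ on $x$-variables).  With $\alpha$ chosen from $t^{1/4},v_0,v_1$ so that the balancing conditions match, performing the $x$-integral first against a base-$(p,q)$ four-parameter Selberg density and invoking the order-$0$ elliptic beta evaluation produces the LHS of Proposition~\ref{prop:univ_tm}, while performing the $y$-integral first against a base-$(p^{1/2},q^{1/2})$ density and using the same evaluation at the halved base produces the RHS.

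The matching of the resulting prefactors uses the two quadratic functional equations of the elliptic gamma function already recalled in the introduction, most crucially the coincidences $\Gampq(t^{1/4}z,p^{1/2}t^{1/4}z,q^{1/2}t^{1/4}z,p^{1/2}q^{1/2}t^{1/4}z)=\Gamphqh(t^{1/4}z)$ and $\Gamphqh(\sqrt{-t_r}\,z,-\sqrt{-t_r}\,z)=\Gampq(-t_r z^2)$; these collapse the four $t^{1/4}$-block parameters on the LHS and the four $\pm\sqrt{-t_r}$ pairs on the RHS to the same factors once both integrals are expressed over a common base, and they are precisely what makes Proposition~\ref{prop:pmq_m} and its $p\leftrightarrow q$ mirror visible inside the double integral. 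On an open set of parameters the inner contours can both be taken to be the unit circle, legitimating Fubini; the identity for arbitrary $v_0,v_1$ then follows by analytic continuation in the remaining one-parameter degree of freedom.  The main obstacle is pinning down the precise kernel (and therefore the precise constant $\alpha$): once it is specified, both inner evaluations are routine applications of the order-$0$ elliptic beta integral and the prefactor bookkeeping is forced by the two quadratic functional equations above.
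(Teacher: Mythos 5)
Your proposal matches the paper's proof in all essentials: for $n=1$ one has $\ell(\blambda)\le 1$, so it suffices to treat $\blambda=0$, and the identity is obtained by exchanging the order of integration in a double integral whose two inner integrals are balanced order-$0$ elliptic beta integrals at bases $(p,q)$ and $(p^{1/2},q^{1/2})$ respectively, the prefactors being matched by the quadratic functional equations exactly as you describe. The one point you leave open resolves in favor of your second guess: the kernel is $\Gampq(t^{1/4}y^{\pm 2}x^{\pm 1})$, quadratic in $y$, so that $\Gampq(ay^{\pm 2})=\Gamphqh(\pm\sqrt{a}\,y^{\pm 1})$ turns it into a $(p^{1/2},q^{1/2})$-kernel in $y$ while it stays a $(p,q)$-kernel in $x$; a kernel linear in $y$, such as your $\Gampq(\sqrt{-1}\,\alpha\,x^{\pm 1}y^{\pm 1})$, could not effect the change of base.
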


\begin{proof}
Exchange order of integration in the double integral
\[
\int
\int
\Gampq(t^{1/4} y^{\pm 2} x^{\pm 1})
\frac{
\prod_{0\le r<4} \Gampq(u_r x^{\pm 1})}
{\Gampq(x^{\pm 2})}
\frac{dx}{2\pi\sqrt{-1}x}
\frac{
\Gamphqh(v_0 y^{\pm 1},v_1 y^{\pm 1})}
{
\Gamphqh(y^{\pm 2})
}
\frac{dy}{2\pi\sqrt{-1}y},
\]
then use gamma function identities to express the integrands in the
standard form.
\end{proof}

\begin{rem}
  Again, Van de Bult \cite{vandeBultFJ:2011} has recently proved the case
  $\blambda=0$ of Conjecture~\ref{conj:vantm}, which by the usual
  considerations implies the general $n\le 3$ case and the pfaffian cases
  $t^{1/2}\in \{\pm p^{1/2},\pm q^{1/2}\}$.
\end{rem}

The corresponding ``vanishing'' integral states that
\[
\langle
\tilde{R}^{(n)}_{\blambda}(;t^{-1/4}t_0{:}t^{1/4}t_0,t^{-1/4}t_1,t^{1/4}t_1;t^{-1/4}u_0,t^{1/4}u_0;t;p,q)
\rangle^{(n)}_{t^{-1/4}t_0,t^{-1/4}t_1,t^{-1/4}u_0,
              t^{1/4},p^{1/2}t^{1/4},q^{1/2}t^{1/4};t^{1/2};p,q}
\]
takes value \eqref{eq:nonzero_m}.

\bigskip We close with a combinatorial remark.  The ``Q'' conjectures, if
we count both forms of those integrals not symmetric between $p$ and $q$,
give rise to twelve conjectures, one for each ordered pair $(a,b)$ with
$a\ne b\in \{-1,p^{1/2},q^{1/2},t^{-1/2}\}$.  Furthermore, the three
involutions ``modular transform'', ``swap $p$ and $q$'', and ``dualize''
act on the labels via their natural action on this set of square roots.  In
addition, in the conjecture associated to the pair $(a,b)$, the integrals
are related by a factor
\[
\frac{\Delta^0_{\blambda}(t^{n-1}t_0/u_0|t^{n-1}t_0t_1 a;t;p,q)}
     {\Delta^0_{\blambda}(t^{n-1}t_0/u_0|t^{n-1}t_0t_1/b^2;t;p,q)}
\prod_{\substack{0\le i<n\\0\le r<s<3}}
  \frac{\Gampq(t^i t_r t_s s)}
       {\Gampq(t^i t_r t_s/b^2)},
\]
with balancing condition $t^{n-1} t_0t_1t_2u_0=pqb^2/a$.  This pattern,
together with corresponding patterns in the parameters of the interpolation
functions, allows us to verify consistency with respect to the Pieri
identity and the connection coefficient identity for all of the cases at
once, apart from checking that multiplying the interpolation functions by
\[
\prod_{1\le i\le n}
\frac{\Gampq(a^{1/2}Qt_0 z_i^{\pm 1},a^{1/2}u_0 z_i^{\pm 1}/Q)}
     {\Gampq(a^{1/2}t_0 z_i^{\pm 1},a^{1/2}u_0 z_i^{\pm 1})},
\quad\text{or}\quad
\prod_{1\le i\le n}
\frac{\Gampq(Qt_0 z_i^{\pm 1}/b,u_0 z_i^{\pm 1}/bQ)}
     {\Gampq(t_0 z_i^{\pm 1}/b ,u_0 z_i^{\pm 1}/b)},
\]
as appropriate, before specializing, has the effect, after specializing, of
shifting parameters in the corresponding integrand.  Similarly, the L
conjectures correspond to four identities in natural bijection with the
above four square roots.  Unfortunately, there are enough quirks in the
various cases to make it unclear how to formulate the conjectures in a more
uniform manner.

\end{document}